\theoremstyle{definition}
\newtheorem{theorem}{Theorem}
\newtheorem*{theorem*}{Theorem}
\newtheorem{prop}[theorem]{Proposition}
\newtheorem*{prop*}{Proposition}
\newtheorem{lem}[theorem]{Lemma}
\newtheorem*{lem*}{Lemma}
\newtheorem{cor}[theorem]{Corollary}
\newtheorem*{cor*}{Corollary}
\newtheorem{definition}[theorem]{Definition}
\newtheorem*{definition*}{Definition}
\newtheorem{rem}[theorem]{Remark}
\newtheorem*{rem*}{Remark}
\newtheorem{notation}{Notation}    
\newtheorem*{notation*}{Notation}    
\numberwithin{theorem}{subsection} 	 
\numberwithin{equation}{section}
\newcommand*{\plim}[1]{\mathop{\varprojlim}\limits_{#1}}	
\newcommand*{\set}[2]{\left\{#1\ \middle|\ #2\right\}}		
\newcommand*{\Ker}[0]{\text{Ker}}						
\newcommand*{\pro}[0]{\text{pro-}}						
\newcommand*{\ab}[0]{\text{ab}}							
\newcommand*{\nil}[0]{\text{nil}}							
\newcommand*{\sep}[0]{\text{sep}}						
\newcommand*{\op}[0]{\text{op}}
\newcommand*{\aru}[1]{\ar@{}[u]|*=0[@]{\rotatebox{90}{$#1$}}}
\begin{document}

\title{The geometrically  $m$-step solvable Grothendieck conjecture\\ for genus $0$ curves over finitely generated fields}
\author{Naganori Yamaguchi}
\date{}
\maketitle

\begin{abstract}
In this paper, we present some partial results for the geometrically   $m$-step solvable Grothendieck conjecture in anabelian geometry. Among other things, we prove the  geometrically  $3$-step  solvable Grothendieck conjecture for genus $0$ curves over fields finitely generated over the prime field of arbitrary characteristic.

\end{abstract}

%\tableofcontents

\addcontentsline{toc}{section}{Introduction}
\section*{Introduction}
\hspace{\parindent} In this paper, a curve over a field $k$ is defined as a one-dimensional scheme geometrically connected, separated and of finite type over $k$. \par
Let $i=1,2$. Let $X$ (resp. $X_{i}$) be a smooth proper curve over $k$. Let  $E$ (resp. $E_{i}$) be  a closed subscheme of $X$ (resp. $X_{i}$) which is finite \'{e}tale over $k$. Set $U:=X-E$ (resp. $U_{i}:=X_{i}-E_{i}$). Let  $\pi_1(U)$  denote the \'{e}tale fundamental group of $U$. Let $\pi_1^{\text{tame}}(U)$ be the tame fundamental group of $U$. \par
Let $p:=\text{ch}(k)$. Let $g(U)$ be  the genus of $U$ ($:=$ the genus of $X$). Let  $r(U):=|E_{\overline{k}}|$. We say that $U$ is affine when $r(U)>0$, and that  $U$ is hyperbolic when $2-2g(U)-r(U)<0$. \par
The Grothendieck conjecture (in anabelian geometry) asks: if a $G_k$-isomorphism $\pi_1(U_{1})\cong\pi_1(U_{2})$ exists, does a $k$-isomorphism $U_{1}\cong U_{2}$ exist? About this conjecture, we already have many results (\cite{Na1990-411}, \cite{Ta1997}, \cite{Mo1999}, \cite{St2002}, \cite{St2002P}, etc....). For example, we have the following two results.

\begin{theorem}(\cite{Mo1999}Theorem A)\label{thmt1}
Assume that $k$ is  finitely generated over $\mathbb{Q}$ and  $U_{1}$ is  hyperbolic. Then the following holds.
\begin{equation*}
\pi_1(U_{1})\underset{G_k}{\cong} \pi_1(U_{2})\Longleftrightarrow U_{1}\underset{k}{\cong}U_{2}
\end{equation*}
\end{theorem}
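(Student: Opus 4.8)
My plan is to reduce the theorem to a morphism (\emph{Hom}) version of the conjecture and then to attack that via $p$-adic Hodge theory.

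The reverse implication is formal: a $k$-isomorphism $U_1 \xrightarrow{\sim} U_2$ induces, by functoriality of $\pi_1$ together with the compatibility of the homotopy exact sequences $1 \to \pi_1(U_{i,\overline{k}}) \to \pi_1(U_i) \to G_k \to 1$ with the projections to $G_k$, a $G_k$-isomorphism $\pi_1(U_1) \cong \pi_1(U_2)$. So the content lies entirely in the forward direction, which I would deduce from the \emph{Hom}-form: it suffices to prove that for every hyperbolic curve $U$ over $k$ and every geometrically connected smooth $k$-variety $V$ the natural map
\[
\mathrm{Hom}^{\mathrm{dom}}_k(V, U) \longrightarrow \mathrm{Hom}^{\mathrm{open}}_{G_k}\bigl(\pi_1(V), \pi_1(U)\bigr) \big/ \mathrm{Inn}\bigl(\pi_1(U_{\overline{k}})\bigr)
\]
is bijective. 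Granting this, feeding the given $G_k$-isomorphism and its inverse into the statement produces dominant $k$-morphisms $U_1 \to U_2$ and $U_2 \to U_1$ whose composites induce inner automorphisms on the relevant $\pi_1$'s; since a dominant endomorphism of a hyperbolic curve inducing an inner automorphism on $\pi_1$ must be the identity, the two morphisms are mutually inverse $k$-isomorphisms.

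Before attacking the \emph{Hom}-form I would read off the numerical invariants: for a prime $\ell \neq p$ the maximal pro-$\ell$ quotient of the geometric fundamental group determines $2g + r$ and detects whether $U$ is affine, while the weight filtration on $H^1(U_{i,\overline{k}}, \mathbb{Q}_\ell)$ coming from the $G_k$-action separates $g$ from $r$ (the cuspidal part being pure of weight $2$). Hence $g(U_1) = g(U_2)$ and $r(U_1) = r(U_2)$, so $U_2$ is hyperbolic as well. Next I would reduce the \emph{Hom}-form from an arbitrary $V$ to the case in which $V$ too is a hyperbolic curve: a dominant morphism out of $V$ is determined by, and can be reassembled from, its restrictions to a sufficiently rich family of hyperbolic curves in $V$ (a Lefschetz/Bertini-type argument), and $\pi_1(V)$ is correspondingly topologically generated, compatibly over $G_k$, by the images of the fundamental groups of such curves.

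The heart of the matter is then the case of two hyperbolic curves, where I would bring in $p$-adic Hodge theory. The crucial point is that $k$, being finitely generated over $\mathbb{Q}$, is \emph{sub-$p$-adic}: it embeds into a field finitely generated over $\mathbb{Q}_p$ for a suitable prime $p$, and likewise for the function field of $V$. After base change to such a field, the $G_k$-modules $H^1(U_{\overline{k}}, \mathbb{Q}_p)$ and $H^1(V_{\overline{k}}, \mathbb{Q}_p)$ are Hodge--Tate (Faltings), and the Hodge filtration on them — whose graded pieces are the spaces of regular differentials and of regular functions, i.e.\ genuinely geometric invariants of the curves — is recoverable from the $G_k$-module structure alone, via the Hodge--Tate decomposition and Sen theory. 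An open $G_k$-homomorphism $\pi_1(V) \to \pi_1(U)$ induces, on geometrically pro-$p$ abelianizations, a morphism between the $p$-adic Tate modules of the relevant (generalized) Jacobians, hence a $G_k$-morphism $H^1(U_{\overline{k}}, \mathbb{Q}_p) \to H^1(V_{\overline{k}}, \mathbb{Q}_p)$; compatibility with the Hodge filtrations forces this map to be induced by a morphism of curves, and a rigidity/descent argument — together with a valuation-theoretic analysis along the function field of $V$ — pins down the morphism $V \to U$ over the original field $k$. It is precisely here that passing to the \emph{geometrically pro-$p$} part is essential: $p$-adic Hodge theory is a pro-$p$ phenomenon, and this reduction is what makes its machinery available; for our \emph{Isom}-statement the given isomorphism of full fundamental groups of course induces an isomorphism of geometrically pro-$p$ quotients, so nothing is lost.

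I expect this last paragraph to contain essentially all of the difficulty. The manipulations in the first two paragraphs are formal once one has the homotopy exact sequence and standard algebraic geometry in hand; by contrast, turning an isomorphism of profinite groups into an isomorphism of algebraic curves is in no way formal. The key obstacle is to \emph{produce an honest scheme morphism} — not merely a matching of cohomology groups or of Galois representations — out of the purely group-theoretic datum, and this appears to require the full strength of the comparison theorems of $p$-adic Hodge theory (to detect the Hodge filtration hidden inside the Galois action), together with a delicate globalization step to bring the morphism, constructed after a $p$-adic base change, back down to the finitely generated field $k$.
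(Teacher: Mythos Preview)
The paper does not prove this statement at all: Theorem~\ref{thmt1} is stated in the introduction purely as background, with the citation ``(\cite{Mo1999}Theorem A)'' in lieu of proof, so there is no ``paper's own proof'' to compare your proposal against. Your outline is a reasonable high-level sketch of Mochizuki's original argument (reduction to a Hom-form, passage to sub-$p$-adic fields, and the use of $p$-adic Hodge theory to extract the Hodge filtration from the Galois action), but none of this is carried out or even sketched in the present paper, whose own contributions concern the $m$-step solvable variant for genus~$0$ curves and proceed by entirely different methods (rigidity invariants, Kummer theory, weight filtrations on abelianized fundamental groups).
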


\begin{theorem}(\cite{St2002}Theorem 1)\label{thmt2}
Assume that  $k$ is   finitely generated over a finite field  and $U_{1}$ is non-isotrivial affine hyperbolic. Then the following holds.
\begin{equation*}
\pi_1^{\text{tame}}(U_{1})\underset{G_k}{\cong} \pi_1^{\text{tame}}(U_{2})\Longleftrightarrow
U_{1}(n_{1})\underset{k}{\cong}U_{2}(n_{2}) \text{ for some }n_{1}, n_{2}\in \mathbb{N}\cup \left\{0\right\}
\end{equation*}
Here, $U_{i}(n_{i})$ is  the $n_{i}$-th Frobenius twist of $U_{i}$ for $i=1,2$  and  $\mathbb{N}$ is  the set of all positive integers. 
\end{theorem}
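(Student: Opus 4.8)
The equivalence has an elementary implication and a hard one, and the plan is to reduce the hard one to the Grothendieck conjecture for affine hyperbolic curves over finite fields by a specialisation argument, in parallel with the passage from finitely generated fields of characteristic $0$ to finite fields.

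For the implication ``$U_1(n_1)\cong_k U_2(n_2)$ for some $n_1,n_2$ $\Rightarrow$ $\pi_1^{\text{tame}}(U_1)\cong_{G_k}\pi_1^{\text{tame}}(U_2)$'' it suffices to show $\pi_1^{\text{tame}}(U_i)\cong_{G_k}\pi_1^{\text{tame}}(U_i(n_i))$ for $i=1,2$ and then to compose with the isomorphism induced by the assumed $k$-isomorphism. For this I would use that the relative $p$-power Frobenius $U_i\to U_i(1)$ is a $k$-morphism which is a finite purely inseparable universal homeomorphism, hence induces an equivalence of tame (indeed \'{e}tale) sites; the $G_k$-compatibility of the induced isomorphism of tame fundamental groups is automatic because the morphism is defined over $k$. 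Iterating $U_i\to U_i(1)\to\cdots\to U_i(n_i)$ gives the claim.

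For the converse, fix a $G_k$-isomorphism $\phi\colon\pi_1^{\text{tame}}(U_1)\xrightarrow{\sim}\pi_1^{\text{tame}}(U_2)$. I would choose a normal integral scheme $S$ of finite type over $\mathbb{F}_p$ with function field $k$, spread $U_i$ out to a smooth affine curve $\mathcal U_i\to S$ with relative smooth compactification $\mathcal X_i\to S$ and $\mathcal E_i:=\mathcal X_i\setminus\mathcal U_i$ finite \'{e}tale over $S$, and --- after shrinking $S$ --- propagate $\phi$ to an isomorphism of the tame fundamental groups of the families $\mathcal U_1/S$ and $\mathcal U_2/S$ over $\pi_1(S)$, using that $\pi_1^{\text{tame}}$ commutes with the relevant cofiltered limits and that tameness survives the specialisations in play. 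Specialising at a closed point $s\in S$ would give $\pi_1^{\text{tame}}(\mathcal U_{1,s})\cong_{G_{\kappa(s)}}\pi_1^{\text{tame}}(\mathcal U_{2,s})$ over the finite field $\kappa(s)$; by the Grothendieck conjecture for affine hyperbolic curves over finite fields (Tamagawa, in the form valid for tame fundamental groups) this forces $\mathcal U_{1,s}$ to be $\kappa(s)$-isomorphic to some Frobenius twist of $\mathcal U_{2,s}$, for every $s$ in a dense open of $S$. Finally I would globalise: since $U_1$ is non-isotrivial, the families $\mathcal U_i$ have non-constant moduli maps, the relative isomorphism scheme $\underline{\mathrm{Isom}}_S\big(\mathcal U_1(n_1),\mathcal U_2(n_2)\big)$ is quasi-finite over $S$ by hyperbolicity, and suitable agreement of the moduli maps over a dense set of closed points forces agreement over the generic point, after shrinking $S$ and choosing $n_1,n_2$ --- which yields the desired $k$-isomorphism $U_1(n_1)\cong U_2(n_2)$.

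\textbf{Main obstacle.} The substance is in making the specialisation step legitimate, and this is what I expect to be genuinely hard. Since $\phi$ sees only the tame quotient, one must reconstruct, purely group-theoretically from $\pi_1^{\text{tame}}$, the residue field $\kappa(s)$, the cuspidal inertia subgroups recording $\mathcal E_{i,s}$, the decomposition groups of the remaining closed points of $\mathcal X_{i,s}$, and the Frobenius classes inside them --- this is the heart of Tamagawa's reconstruction (closed points as limits over open subgroups; residue degrees and local zeta factors read off from weights in cuspidalised abelianisations). One must also pin down the Frobenius-twist indeterminacy exactly: over a finite field, passing from schemes to tame fundamental groups loses precisely a power of the $p$-power Frobenius, and neither the spreading-out nor the specialisation may enlarge this, uniformly in $s$. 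And non-isotriviality of $U_1$ is used both to recover the correct model $S$ (equivalently, the constant field) and to make the fibrewise isomorphisms rigid enough to globalise. The single hardest point, I believe, is controlling the interaction of specialisation with tameness on the group side --- guaranteeing that $\phi$ specialises to, and only to, tame isomorphisms over the finite residue fields, neither losing nor gaining covers, while keeping the Frobenius-twist ambiguity uniformly controlled --- which is exactly where the tame quotient, hyperbolicity, and non-isotriviality all genuinely enter.
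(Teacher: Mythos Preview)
This theorem is not proved in the paper at all: it is quoted in the introduction as a known result from \cite{St2002} (Stix), alongside Mochizuki's Theorem~A, purely to situate the paper's own contributions on the $m$-step solvable conjecture. There is therefore no ``paper's own proof'' to compare your proposal against.

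For what it is worth, your outline --- the easy direction via the relative Frobenius being a universal homeomorphism, and the hard direction via spreading out and specialising to closed points where Tamagawa's finite-field theorem applies --- is broadly the shape of the argument in \cite{St2002}. Your identification of the main difficulty (controlling specialisation of the \emph{tame} fundamental group, reconstructing decomposition data group-theoretically, and globalising fibrewise isomorphisms using non-isotriviality) is also on target. But none of this is carried out or even sketched in the present paper; if you want a detailed comparison you should consult Stix's article directly.
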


Let $m\in\mathbb{N}$.  For any topological  group $G$, we define   $G^{[0]}:=G$,  $G^{[m]}:=[G^{[m-1]},G^{[m-1]}]$, and   $G^{m}:=G/G^{[m]}$. Let $\pi_{1}^{m}(U_{k^\text{sep}}):=\pi(U_{k^\text{sep}})^{m}$ and $\pi_1^{(m)}(U):=\pi_1(U)/\pi_1(U_{k^\text{sep}})^{[m]}$.  We write  $\pi_1^{\text{pro-}p'}(U_{k^\text{sep}})$ for the maximal pro-prime-to-$p$ quotient of $\pi_1(U_{k^{\text{sep}}})$. (We define $\pi_1^{\text{pro-}0'}(U_{k^\text{sep}})):=\pi_1(U_{k^\text{sep}})$.)  Let  $\pi^{\text{pro-}p',m}(U_{k^\text{sep}}):=\pi^{\text{pro-}p'}(U_{k^\text{sep}})^{m}$ and $\pi_1^{(\text{pro-}p',m)}(U):=\pi_1(U)/\text{Ker}(\pi_{1}(U_{k^\text{sep}})\rightarrow \pi_1^{\text{pro-}p',m}(U_{k^\text{sep}}))$.  \par
We consider the following question.
\begin{equation*}
\pi_1^{(m)}(U_{1})\underset{G_k}{\cong} \pi_1^{(m)}(U_{2})\Longrightarrow U_{1}\underset{k}{\cong}U_{2}\text{ ?}
\end{equation*}
This question is referred to as the (geometrically) $m$-step solvable Grothendieck conjecture. For the $m$-step solvable Grothendieck conjecture, we have some previous results,  such as \cite{Na1990-405}Theorem B ($m=2$, $k$ a number field  satisfying certain conditions, $U_{1}$ hyperbolic, $g(U_{1})=0$) and \cite{Mo1999}Theorem A$'$ ($m\geq 5$, $\ell$ a prime,  $k$ a sub-$\ell$-adic field, $U_{1}$ hyperbolic, $g(U_{1})$ general).\par
In this paper, we present some new results for the $m$-step solvable Grothendieck conjecture for genus $0$ hyperbolic curves over finitely generated field  with a certain non-isotriviality assumption in positive characteristic.
More precisely, when $p\neq0$, we consider the following assumption for $U_{1}$ (cf. Theorem \ref{2.4.1}).

\begin{equation*}
(*):\text{For each }S'\subset E_{1,\overline{k}}\ \text{with}\ |S'|=4,\text{ the curve }X_{1,\overline{k}}-S'\ \text{is\ not\  isotrivial.}
\end{equation*}
It is clear that $U_{1}$ is not isotrivial if $U_{1}$ satisfies ($*$) and $r(U_{1})\geq 4$. (We will explain later about this condition.) The following theorem is the main result that we  prove in this paper.

\begin{theorem}\label{intr0}
Let  $m\geq 3$. Assume that $k$ is finitely generated over the prime field and  $U_{1}$ is  a genus 0 hyperbolic curves over $k$. If, moreover  $p>0$, we assume the condition (*). Then the following holds.
\[
\pi_1^{(\text{pro-}p',m)}(U_{1})\underset{G_k}{\cong} \pi_1^{(\text{pro-}p',m)}(U_{2})\Longrightarrow
\begin{cases}
U_{1}\underset{k}{\cong}U_{2} & (p=0) \\
U_{1}(n_{1})\underset{k}{\cong} U_{2}(n_{2})\text{ for some }n_{1},n_{2}\in \mathbb{N}\cup \left\{0\right\} & (p>0)
\end{cases}
\]
\end{theorem}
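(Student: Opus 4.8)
The plan is to deduce from the given isomorphism, first, a $G_k$-equivariant bijection between the cusps of $U_1$ and $U_2$, and then an identification of the cross-ratios of those cusps; the two together reconstruct $U_1$ from $U_2$. Write $\phi\colon\pi_1^{(\text{pro-}p',m)}(U_1)\xrightarrow{\sim}\pi_1^{(\text{pro-}p',m)}(U_2)$ for the isomorphism: by hypothesis it is compatible with the canonical surjections onto $G_k$ and carries $\pi_1^{\text{pro-}p',m}(U_{1,k^{\text{sep}}})$ onto $\pi_1^{\text{pro-}p',m}(U_{2,k^{\text{sep}}})$. Since $g(U_1)=0$ and $U_1$ is hyperbolic, $r:=r(U_1)\geq 3$.

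First I would apply the recovery of cuspidal inertia subgroups from the $m$-step solvable quotient, available for $m\geq 3$ by the results above: $\phi$ matches, $G_k$-equivariantly, the set of inertia subgroups at the cusps of $U_1$ with that of $U_2$. Hence $r(U_2)=r$ and we obtain a $G_k$-equivariant bijection $\beta\colon E_{1,\overline k}\xrightarrow{\sim}E_{2,\overline k}$. If $r=3$, then each $X_i$ is a conic carrying a closed point of odd degree, hence $X_i\cong\mathbb{P}^1_k$ by Springer's theorem, and the unique $g\in\mathrm{PGL}_2(\overline k)$ realizing $\beta$ is $G_k$-invariant, so $g\in\mathrm{PGL}_2(k)$ gives a $k$-isomorphism $U_1\cong U_2$ and we are done. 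Assume henceforth $r\geq 4$.

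For the cross-ratios: after a finite base change $k'/k$ — harmless, since the hypothesis restricts to $G_{k'}$ and the essentially canonical reconstruction below descends — I may assume that three chosen cusps $P,Q,T$ of $U_1$, and their images $\beta(P),\beta(Q),\beta(T)$, are $k$-rational, so $X_i\cong\mathbb{P}^1_k$ and $X_i-\{P_i,Q_i\}\cong\mathbb{G}_{m,k}$. The open immersion $U_i\hookrightarrow X_i-\{P_i,Q_i\}$ induces, since $\pi_1^{\text{pro-}p'}(\mathbb{G}_{m,\overline k})=\hat{\mathbb{Z}}^{p'}(1)$ is abelian, a surjection of $\pi_1^{(\text{pro-}p',m)}(U_i)$ onto the arithmetic pro-prime-to-$p$ fundamental group of $\mathbb{G}_{m,k}$, whose kernel is the normal closure of the inertia at the remaining cusps; as $\phi$ matches these inertia subgroups it descends to a $G_k$-isomorphism between the two copies of that group, identifying the inertia at a cusp $R_i$ with that at $\beta(R_i)$. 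The inertia at $R_i$ is a decomposition subgroup at the corresponding point of $\mathbb{G}_m$, whose conjugacy class is the prime-to-$p$ Kummer class $\delta(\lambda_i)\in H^1(G_k,\hat{\mathbb{Z}}^{p'}(1))$ of the cross-ratio $\lambda_i:=(P_i,Q_i;R_i,T_i)$. Hence $\delta(\lambda_i)$ for $U_1$ equals that for $U_2$, up to the indeterminacy of $\phi$ on geometric fundamental groups. Since $k$ is finitely generated over the prime field, $k^{\times}$ is free abelian modulo its finite torsion, so $\delta$ determines its argument up to a prime-to-$p$ root of unity; and when $p>0$ the geometric identification supplied by $\phi$ need not be induced by a morphism of schemes but differs from such a one by a power of the relative Frobenius, acting on inertia by multiplication by a power of $p$ and thus replacing $\lambda_i$ by a $p$-power. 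Condition (*) ensures that the cross-ratios in play are transcendental over the prime field, which is what ultimately confines the total ambiguity to roots of unity and Frobenius twists — this is where the positive-characteristic analysis, in the spirit of Theorem~\ref{thmt2}, enters.

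Finally, letting $P,Q,T$ range over all cusps, the cross-ratio of every four-element subset of $E_{1,\overline k}$ agrees, $G_k$-equivariantly and up to the ambiguity above, with that of $E_{2,\overline k}$ via $\beta$; the multiplicative relations among cross-ratios of overlapping quadruples then force the per-point root-of-unity discrepancies to vanish (in characteristic $p$ using the transcendence supplied by (*)), so that $\beta$ is realized by an element of $\mathrm{PGL}_2(\overline k)$ defined over $k$, resp.\ over $k$ after replacing $U_1$ and $U_2$ by suitable Frobenius twists $U_1(n_1)$ and $U_2(n_2)$. This projective equivalence identifies the smooth compactifications carrying $E_1$ to $E_2$, hence $U_1\cong_k U_2$, resp.\ $U_1(n_1)\cong_k U_2(n_2)$; descending the auxiliary base change $k'/k$ completes the proof. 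The principal obstacle is the middle step: reading the cross-ratios out of the $m$-step solvable quotient with only the expected ambiguity — controlling the passage from cuspidal inertia of $U_i$ through the twice-punctured $\mathbb{G}_m$-quotient to the Kummer classes, and, in characteristic $p$, isolating the Frobenius-twist indeterminacy by means of (*).
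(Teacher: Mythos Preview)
Your overall architecture—recover the $G_k$-set of cusps, then the cross-ratios, then descend—matches the paper's, and reading off cross-ratios via Kummer classes of the decomposition sections in $\pi_1^{(\text{pro-}p',1)}(\mathbb{G}_{m,k})$ is a legitimate alternative to the paper's rigidity invariant (Definition~\ref{2.1.1}). But the middle step as you have written it has a gap. The $G_k$-automorphism of $\pi_1^{(\text{pro-}p',1)}(\mathbb{G}_{m,k})$ induced by $\phi$ may scale the geometric kernel $\hat{\mathbb{Z}}^{p'}(1)$ by an \emph{arbitrary} unit $u\in(\hat{\mathbb{Z}}^{p'})^\times$; nothing forces it a priori to come from a scheme morphism or from Frobenius. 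So you do not obtain $\delta(\lambda_1)=\delta(\lambda_2)$ but only $\delta(\lambda_2)=u\cdot\delta(\lambda_1)$, and the injectivity of $\delta$ (your ``free modulo torsion'' remark) is then beside the point. What must be shown is that the constraint $\lambda_1,\lambda_2\in k^\times$ forces $u=\pm1$ (char~$0$), resp.\ $u=\pm p^s$ (char~$p$), \emph{a posteriori}; this is essentially Proposition~\ref{2.1.5}/Corollary~\ref{2.1.6}, whose output is $\langle\lambda_1\rangle=\langle\lambda_2\rangle$ (resp.\ $\langle\lambda_1\rangle^{p^s}=\langle\lambda_2\rangle$). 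The residual ambiguity is therefore inversion $\lambda\leftrightarrow\lambda^{-1}$ (plus a $p$-power in char~$p$), not merely a root of unity.

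Resolving that ambiguity is where the real work lies. In characteristic $0$, combining $\langle\lambda\rangle$ with $\langle1-\lambda\rangle$ (Lemma~\ref{2.2.1}) pins down $\lambda$ \emph{except} in the case $\{\lambda_1,\lambda_2\}=\{\rho,\rho^{-1}\}$ with $\rho$ a primitive sixth root of unity; excluding this requires a separate argument at the $2$-step solvable level via a degree-$2$ cover (Lemma~\ref{2.2.3}), which your purely level-$1$ sketch cannot see. In characteristic $p$, even after Lemma~\ref{3.3.2} yields $\lambda_2=\lambda_1^{p^{n}}$ for each quadruple, the exponent $n$ depends on the quadruple, and for $r\geq5$ one must show that all these exponents can be reconciled to a single global Frobenius twist; this is the most delicate point, handled in the paper by Lemmas~\ref{2.3.5} and~\ref{2.3.6}. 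Your final paragraph gestures at both issues but does not engage them; these are exactly where the proof lives.
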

\noindent Next, we give an outline of the proof of Theorem 0.0.3. The flow of this proof is based on  \cite{Na1990-411} in many parts (in particular, when $p=0$). However, our proof differs from that of   \cite{Na1990-411} in the following two points.
\begin{itemize}
\item[(P.1)]  We consider the case that $k$ is a field finitely generated over the prime field, while, in  \cite{Na1990-411}, Nakamura only considers the case that $k$ is a number field. In particular, we deal with the case that $p>0$. In this case, the proof developed in \cite{Na1990-411} does not work and we need to develop a  new argument (c.f. subsection \ref{chp}). 
\item[(P.2)]  Most of our arguments start with $\pi^{(\text{pro-}p',m)}(U)$ because we deal with  the $m$-step solvable Grothendieck conjecture, while,  in  \cite{Na1990-411}, most of  Nakamura's arguments  start with the full fundamental group $\pi(U)$. Thus, we need to prove various  results for $\pi^{(\text{pro-}p',m)}(U)$ that are already known for $\pi(U)$. However, since many of the known proofs for $\pi(U)$ cannot be used as is, we need to  develop a new argument for the proof for $\pi^{(\text{pro-}p',m)}(U)$ (c.f. subsection \ref{subsection1.1}, subsection \ref{subsection1.2}, and subsection \ref{subsection1.4}).
\end{itemize}
 
 \noindent  Let us  explain the two steps of the proof by focusing the difference between \cite{Na1990-411}  and our proof (in particular, (P.1) and (P.2)).\ \\

\noindent \underline{Step 1} (contents in section 1)\par
In this step, we show the group-theoretical reconstruction of decomposition  groups of $\Pi^{(m)}(U)$ from $\Pi^{(m+2)}(U)$ (contents in section 1). This step is divided into two parts.\par
 First (contents in subsection 1.2, subsection  1.3, and subsection  1.4),  we show the group-theoretical reconstruction of inertia  groups of  $\pi_{1}^{(\text{pro-}p',m)}(U)$ from $\pi_{1}^{(\text{pro-}p',m+2)}(U)$.  We consider the notion of   maximal cyclic subgroups of cyclotomic type. (Roughly speaking, this is a cyclic subgroup of $\pi_{1}^{\text{pro-}p',m}(U_{k^{\text{sep}}})$ on which  the Galois group acts via the cyclotomic character.)   The  maximal cyclic subgroups of cyclotomic type are  first defined in  \cite{Na1990-411} Definition 3.3 in the case of the full fundamental group.  However, the definition  is not sufficient for our proof and  we need to redefine it, taking (P.1) and (P.2) into careful consideration.  By using the  maximal cyclic subgroups of cyclotomic type, we obtain  the group-theoretical reconstruction of inertia  groups of  $\pi_{1}^{(\text{pro-}p',m)}(U)$ from $\pi_{1}^{(\text{pro-}p',m+2)}(U)$ (Proposition \ref{1.4.2}). \par
 Next (contents in subsection 1.1), we show that  $D_{y}=N_{\pi_{1}^{(\text{pro-}p',m)}(U)}(I_{y})$, where  $D_{y}:=D_{y,\pi_{1}^{(\text{pro-}p',m)}(U)}$ and $ I_{y}:=I_{y,\pi_{1}^{(\text{pro-}p',m)}(U)}$ stand for the decomposition group and the  inertia group at a cusp $y$ in $\pi_{1}^{(\text{pro-}p',m)}(U)$, respectively. The main pat of this part is to prove the following proposition:

\begin{prop}[Proposition \ref{1.1.9}]\label{intr1}
Let $C$ be a  full class of finite groups which contains a non-identity group. Let  $\mathcal{F}$ be a free pro-$C$ group  and  $X \subset  \mathcal{F}$  a set of free generators of $\mathcal{F}$.  If  $m\geq 2$,  then $Z_{\mathcal{F}^m}(x^n) = \langle x\rangle$ holds for every $x\in X$ and every $n\in \mathbb{Z}-\left\{0\right\}$. 
\end{prop}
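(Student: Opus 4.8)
The plan is to reduce the statement to a computation inside the free pro-$C$ group and its lower central quotients, exploiting the fact that an element of a free profinite group has a well-understood centralizer. First I would recall the classical fact that if $\mathcal{F}$ is a free pro-$C$ group with a basis $X$, then for any $x\in X$ and any $n\in\mathbb{Z}-\{0\}$ the centralizer $Z_{\mathcal{F}}(x^n)$ equals the procyclic subgroup $\overline{\langle x\rangle}$; indeed $x$ is contained in a unique maximal procyclic subgroup, because the pro-$C$ completion of a free group of rank one is $\widehat{\mathbb{Z}}_C$ and the subgroup generated by $x$ is already closed and isomorphic to $\widehat{\mathbb{Z}}_C$, hence self-centralizing. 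The inclusion $\overline{\langle x\rangle}\subseteq Z_{\mathcal{F}^m}(x^n)$ is immediate, so the content is the reverse inclusion in the quotient $\mathcal{F}^m=\mathcal{F}/\mathcal{F}^{[m]}$.

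The key step is to take $g\in\mathcal{F}$ whose image $\bar g\in\mathcal{F}^m$ commutes with $\bar x^{\,n}$, i.e. $[g,x^n]\in\mathcal{F}^{[m]}$, and to deduce that $\bar g\in\overline{\langle x\rangle}\mathcal{F}^{[m]}/\mathcal{F}^{[m]}$. For this I would argue by induction on $m$, using the filtration by the derived series $\mathcal{F}^{[m]}$. The base case $m=2$ is where the inequality $m\ge 2$ is used: modulo $\mathcal{F}^{[2]}$ the group $\mathcal{F}^{1}=\mathcal{F}^{\mathrm{ab}}$ is a free profinite $\widehat{\mathbb{Z}}_C$-module on the $\bar x$'s, and the obstruction to lifting a commuting element lives in $\mathcal{F}^{[1]}/\mathcal{F}^{[2]}$, which as a Galois-type module (or simply as an abelian group) is identified with the exterior square $\bigwedge^2 \mathcal{F}^{\mathrm{ab}}$; the condition $[g,x^n]\in\mathcal{F}^{[2]}$ forces the class of $g\wedge x^n$ to vanish, and since $X$ is a basis and $n\ne 0$ this forces the image of $g$ in $\mathcal{F}^{\mathrm{ab}}$ to be a multiple of $\bar x$ in the torsion-free $\widehat{\mathbb{Z}}_C$-module $\mathcal{F}^{\mathrm{ab}}$. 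Writing $g=x^a h$ with $h\in\mathcal{F}^{[1]}$, the problem is then pushed one step down the derived series, and one iterates: at each stage one analyzes the commutator pairing $\mathcal{F}^{[j]}/\mathcal{F}^{[j+1]}\times\mathcal{F}^{\mathrm{ab}}\to\mathcal{F}^{[j+1]}/\mathcal{F}^{[j+2]}$ induced by $(u,v)\mapsto[u,v]$, together with freeness, to conclude that the successive ``correction terms'' of $g$ are themselves powers of $x$ modulo deeper terms, until one exhausts $\mathcal{F}^{[m]}$ and lands in $\overline{\langle x\rangle}\mathcal{F}^{[m]}$.

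Concretely, I expect the cleanest route is a direct argument rather than a formal induction: pass to the metabelian-type quotient $\mathcal{F}/\mathcal{F}^{[m]}$, note it is residually (finite and in $C$), and reduce to the corresponding statement for finite quotients, where one can use that a free pro-$C$ group modulo the $m$-th derived subgroup still has each basis element generating a self-centralizing procyclic subgroup because the relevant graded pieces (derived quotients of a free group) are torsion-free and the Lie-algebra-type bracket with $\bar x$ is injective away from $\overline{\langle \bar x\rangle}$. The main technical input is therefore the injectivity of the map ``bracket with $x$'' on the appropriate quotient of $\mathcal{F}$ modulo $\overline{\langle x\rangle}$; this is a statement about the free Lie algebra (or free metabelian Lie algebra) that is classical in the discrete setting and passes to the pro-$C$ setting by taking inverse limits.

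The hard part will be handling the profinite topology carefully: one must ensure that the various ``correction term'' subgroups are closed, that $\overline{\langle x\rangle}\mathcal{F}^{[m]}$ is closed so that the conclusion $\bar g\in\langle \bar x\rangle$ genuinely follows (using that $\langle x\rangle$ is already closed in $\mathcal{F}^m$ because its image is a quotient of $\widehat{\mathbb{Z}}_C$, which is compact), and that the exterior/bracket computations commute with the inverse limit defining the pro-$C$ completion. The hypothesis that $C$ contains a non-identity group guarantees $\widehat{\mathbb{Z}}_C\neq 1$ so that $x^n\neq 1$ and the statement is non-vacuous; the hypothesis $m\ge 2$ is exactly what makes the first obstruction group $\mathcal{F}^{[1]}/\mathcal{F}^{[2]}$ available to run the argument (for $m=1$ the claim would assert $Z_{\mathcal{F}^{\mathrm{ab}}}(x^n)=\langle x\rangle$, which is false as $\mathcal{F}^{\mathrm{ab}}$ is abelian of rank $\ge 1$). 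I would close by remarking that once $\bar g=\bar x^{\,a}$ for some $a\in\widehat{\mathbb{Z}}_C$ is established, we are done since $\langle x\rangle=\overline{\langle x\rangle}$ in $\mathcal{F}^m$.
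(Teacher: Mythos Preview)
Your outline has a genuine gap at the crux of the $m=2$ case. You assert $\mathcal{F}^{[1]}/\mathcal{F}^{[2]}\cong\bigwedge^2\mathcal{F}^{\mathrm{ab}}$, but this conflates the derived series with the lower central series: the exterior square is $\gamma_2/\gamma_3$, while $\mathcal{F}^{[1]}/\mathcal{F}^{[2]}$ is the abelianization of the commutator subgroup, an infinite-rank $\mathbb{Z}_C$-module as soon as $|X|\ge2$. Your wedge argument does give the first reduction (namely $\bar g\in\mathbb{Z}_C\,\bar x$ inside $\mathcal{F}^{\mathrm{ab}}$), so that $g=x^{a}h$ with $h\in\mathcal{F}^{[1]}$; but then $[h,x^n]$ lies in $\mathcal{F}^{[1]}$, not in $\mathcal{F}^{[2]}$, and its class in $\mathcal{F}^{[1]}/\mathcal{F}^{[2]}$ is $(1-x^n)\cdot\bar h$ for the conjugation $\mathbb{Z}_C[[\mathcal{F}^{1}]]$-module structure. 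What you actually need is that $x^n-1$ acts injectively on this module. Your proposed pairing $\mathcal{F}^{[j]}/\mathcal{F}^{[j+1]}\times\mathcal{F}^{\mathrm{ab}}\to\mathcal{F}^{[j+1]}/\mathcal{F}^{[j+2]}$ does not exist as written (for $u\in\mathcal{F}^{[j]}$ and $v\in\mathcal{F}$ the commutator $[u,v]$ lands in $\mathcal{F}^{[j]}$, not $\mathcal{F}^{[j+1]}$), and no graded Lie-algebra computation substitutes here, since $\mathcal{F}^{[1]}/\mathcal{F}^{[2]}$ is not a finite graded piece of the free Lie algebra.

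The paper supplies exactly this missing injectivity via a pro-$C$ Blanchfield--Lyndon exact sequence: $\mathcal{F}^{[1]}/\mathcal{F}^{[2]}$ embeds in a free $\mathbb{Z}_C[[\mathcal{F}^{1}]]$-module, and one then checks by a direct computation in the completed group ring that $x^n-1$ is a non-zero-divisor in $\mathbb{Z}_C[[\mathcal{F}^{1}]]$ for $n\in\mathbb{Z}\setminus\{0\}$ (and in fact \emph{fails} to be one for suitable $\alpha\in\mathbb{Z}_C\setminus\mathbb{Z}$, so the integrality of $n$ is essential, not cosmetic). The paper's inductive step for $m>2$ is also structurally different from your sketch: rather than peeling off successive correction terms inside $\mathcal{F}$, it passes to open subgroups $\tilde H\supset\mathcal{F}^{[1]}$, uses Nielsen--Schreier to place some power $x^N$ into a basis of $\tilde H$, and applies the $(m-1)$-case to $\tilde H$.
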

 \noindent If we replace $\mathcal{F}^{m}$ in Proposition \ref{intr1}  with $\mathcal{F}$, the assertion  is well-known. However, the following proposition is a new result, as far as the author knows, which may be of some independent interest.  (This is the problem that comes from (P.2)). To prove  Proposition \ref{intr1}, we establish and  use the   Blanchfield-Lyndon theory for free pro-$C$ groups, which is given in Appendix (see Theorem A.2).  By Proposition \ref{intr1}, we obtain that  $D_{y}=N_{\pi_{1}^{(\text{pro-}p',m)}(U)}(I_{y})$ when $r\geq  2$ and  $\pi_{1}^{\text{pro-}p',m}(U_{k^\text{sep}})$ is not abelian (see Proposition \ref{1.2.3}).  By using these results, we  get   the group-theoretical reconstruction of decomposition  groups of $\pi_{1}^{(\text{pro-}p',m)}(U)$ from $\pi_{1}^{(\text{pro-}p',m+2)}(U)$ when $m\geq 2$ and $r_{1}\geq 2$ (see Corollary \ref{1.4.5}). When $m=1$,   $D_{y}=N_{\pi_{1}^{(\text{pro-}p',m)}(U)}(I_{y})$ is not correct. To get the result in the case that $m=1$ and $r_{1}\geq 3$, we must show that  $D_{y,\pi_{1}^{(\text{nil},m)}(U)}=N_{\pi_{1}^{(\text{nil},m)}(U)}(I_{y,\pi_{1}^{\text{nil},m}(U_{k^\text{sep}})})$, where   $\pi_{1}^{\nil,m}(U_{k^{\text{sep}}})$ stands for the maximal nilpotent $m$-step solvable quotient of $\pi_{1}(U_{k^{\text{sep}}})$ and $\pi_{1}^{(\nil,m)}(U):= \pi_{1}(U)/\text{Ker}(\pi_{1}(U_{k^{\text{sep}}})\rightarrow \pi_{1}^{\nil,m}(U_{k^{\text{sep}}}))$ (see Proposition \ref{1.2.7}). \ \\

\noindent \underline{Step 2} (contents in section 2)\par
In this step, we show Theorem \ref{intr0} by using the result of Step 1. Since we face the problem (P.1)  frequently in this step, we mainly explain  the case that $p>0$. This step is divided by the  following three parts. \par 
First, (contents in subsection 2.1, subsection 2.2, and subsection 2.3), we show Theorem \ref{intr0} when $U_{i}=\mathbb{P}^{1}_{k}-\left\{0,\infty,1,\lambda_{i}\right\}$, where   $\lambda_{i}\in k^{\times}-\{1\}$.  First, we reconstruct $\langle\lambda\rangle$ and $\langle1-\lambda\rangle$ (resp. $\langle\gamma\rangle^{p^{u}}$ and  $\langle1-\gamma\rangle^{p^{v}}$ for some $u,v\in\mathbb{Z}$)  from  $\pi_{1}^{(\text{pro-}p',3)}(\mathbb{P}^{1}_{k}-\{0,\infty,1,\lambda\})$  when $p=0$ (resp. when $p>0$)  (see Proposition \ref{2.1.3} and Corollary \ref{2.1.6}).  This  is done by using the results of Step 1 and the rigidity invariant  (see Definition \ref{2.1.1}).  We reconstruct $\lambda\in k^{\times}-\left\{1\right\}$ from  $\langle\lambda\rangle$ and $\langle1-\lambda\rangle$ when $p=0$ and $\lambda\notin\left\{\rho,\rho^{-1}\right\}$ (or when $p>0$ and $\lambda$ is not a torsion element),  where $\rho\in \overline{k}$ is a primitive $6$-th root of unity (see Lemma \ref{2.2.1}).  When $p=0$, this result is basically shown in \cite{Na1990-411}Theorem 4.4. (The only difference between   \cite{Na1990-411}Theorem 4.4 and   Lemma \ref{2.2.1}  lies in what comes from  (P.1).)  For the analogy of this reconstruction in the case that $p>0$,  we need  the assumption that $\lambda\in k^{\times}$ is not a torsion element.  This assumption is the cause of the assumption $(*)$ in Theorem 0.0.3.  By using this result and Step 1,  Theorem \ref{intr0} holds when $U_{i}=\mathbb{P}^{1}_{k}-\{0,1,\infty,\lambda_{i}\}$, where $\lambda_{1}, \lambda_{2}\in k-\{0,1\}$ (resp.  $\lambda_{1}, \lambda_{2}\in k-k_{0}$ when $p=0$ (resp. $p>0$). \par
 Next,  (contents in subsection 2.2 and  subsection 2.3), we show Theorem \ref{intr0} when $U_{i}=\mathbb{P}^{1}_{k}-E_{i}$ with  $E_{i}\subset \mathbb{P}^{1}_{k}(k)$. Note that we have that  $|E_{1}|\geq 3$ by the assumption of the hyperbolicity of $U_{1}$. When $|E_{1}|\geq 3$, the assertion is easy. When $|E_{1}|=4$, the assertion is already shown. Hence we explain the case that $|E_{1}|\geq 5$.    When $p=0$,  we obtain  that Theorem \ref{intr0} for $|E_{1}|\geq 5$ follows from Theorem \ref{intr0} for $|E_{1}|=4$ (see Proposition \ref{2.2.4}) by dividing $\pi_{1}^{(\text{pro-}p',m)}(\mathbb{P}^{1}_{k}-E_{1})$ by inertia groups over $E_{1}-S'$ for various   subsets   $S_{1}'$ of $E_{1}$ satisfying $|S_{1}'|=4$.  When $p>0$, the analogy of this is more difficult, since  we have to consider the Frobenius twists which appear in the following way: Suppose we are given an isomorphism $\alpha$ between $\pi_{1}^{(\text{pro-}p',m)}(\mathbb{P}^{1}_{k}-E_{i})$ for $i=1,2$.  By using Step 1 and  $\alpha$, we get the subset $S'_{2}$ of $E_{2}$ with $|S_{2}'|=4$. There exist  isomorphisms of $\mathbb{P}_{k}^{1}$ such that $\mathbb{P}_{k}^{1}-S_{i}'$ maps $\mathbb{P}_{k}^{1}-\{0,\infty,1,\lambda_{i}\}$ for some $\lambda_{i}\in k^{\times}-\{1\}$ ($i=1,2$). Then we get $n\in\mathbb{N}$ with $\lambda_{2}=\lambda_{1}^{p^{n}}$ by Lemma \ref{2.3.5}. This $n$ depends on the isomorphism of $\mathbb{P}_{k}^{1}$  and $S_{1}'$.  We have to unify the twists, which is the most difficult part of considering the case that $p>0$.    For this,  we obtain several relations of the twists by  taking  another isomorphism of $\mathbb{P}_{k}^{1}$ (see the proof of Proposition \ref{2.3.7}) and Lemma \ref{2.3.6}.  Thus, we  obtain  Theorem \ref{intr0} when $U_{1}$, $U_{2}$ are  punctured projective lines (satisfying $(*)$).  \par 
   Finally (contents in subsection 2.4),  by using Galois  descent theory, we reduce the general case to above results. Thus, the proof of Theorem \ref{intr0} is done.

\section*{Acknowledgments}
I gratefully acknowledge Professor Akio Tamagawa (Research Institute for Mathematical Sciences, Kyoto University) for his advice and his help.

%%%%%%%%%%%%%%%%%%%%%%%%%%%%%%%%%%%%%%%%%%%%%%%%%%%%%%%%%%%%%%%%%%%%%%%%%%%%%%%%%%%%%%%%%%%%%
%%%%%%%%%%%%%%%%%%%%%%%%%%%%%%%%%%%%%%%%%%%%%%%%%%%%%%%%%%%%%%%%%%%%%%%%%%%%%%%%%%%%%%%%%%%%%
%%%%%%%%%%%%%%%%%%%%%%%%%%%%%%%%%%%%%%%%%%%%%%%%%%%%%%%%%%%%%%%%%%%%%%%%%%%%%%%%%%%%%%%%%%%%%

\section{Reconstruction of inertia groups and decomposition groups at cusps}\label{section1}

\hspace{\parindent}In this section, we reconstruct inertia groups and decomposition groups at cusps of  the maximal $m$-step solvable quotient of fundamental groups of curves, group-theoretically. In subsection \ref{subsection1.1}, we show some properties of  the maximal $m$-step solvable quotients of free pro-$C$ groups. Here the most important ingredient is pro-$C$ Blanchfield-Lyndon theory. (See the proof of Lemma \ref{1.1.6}.) In subsection \ref{subsection1.2}, we introduce some   notations and show basic properties of inertia groups and decomposition groups at cusps. In particular, we investigate the intersection of two inertia groups. In subsection \ref{subsection1.3}, we investigate the weight filtration of the abelianized fundamental group of curves. In subsection \ref{subsection1.4},   we reconstruct inertia groups by using  the notion of  maximal cyclic group of cyclotomic type. \par
This section mainly refers to sections 2 and  3 of  \cite{Na1990-411}.

%%%%%%%%%%%%%%%%%%%%%%%%%%%%%%%%%%%%%%%%%%%%%%%%%%%%%%%%%%%%%%%%%%%%%%%%%%%%%%%%%%%%%%%%%%%%%

\subsection{The maximal $m$-step solvable quotients of  free pro-$C$ groups}\label{subsection1.1}
\hspace{\parindent}In this subsection, we give the definition of free pro-$C$ groups and some results on them. The main object in this subsection is the centralizer of the group generated by an element of  a basis of  a free pro-$C$ group. In later subsections, this group coincides with inertia groups. 

For   a topological group  $G$ and $m\in \mathbb{N}$, we set $G^{[0]}:=G$,\ $G^{[m]}:=[G^{[m-1]},G^{[m-1]}]$, and  $G^{m}:=G/G^{[m]}$  (For topological groups, the term ``generated''  means ``topologically generated'' in this paper.) For a class of finite groups $C$,  we say that $C$ is almost full if $C$ is closed under taking quotients, subgroups and direct products. We say that $C$ is  full if $C$ is almost full and closed under taking extensions.    Moreover, we introduce the following notations:  $\text{prime}(C)$:=$\set{l\in\mathbb{N}}{l\text{ is a prime satisfying }\mathbb{Z}/l\mathbb{Z}\in C.}$,  $\mathbb{N}(C):=\set{n\in\mathbb{N}}{\mathbb{Z}/n\mathbb{Z}\in C}$, and  $\mathbb{Z}_{C}:=\underset{l\in \text{prime}(C)}\prod\mathbb{Z}_{l}$. We denote the maximal pro-$C$ quotient of $G$ by $G^{C}$. (If $C$ coincides with the class of all $\ell $-groups, we write $G^{\text{pro-}\ell}$ instead of $G^{C}$.)  For an  almost full class  $C$ of finite groups and  for each set $X$, a free pro-$C$ group with basis $X$ exists and is uniquely determined up to an isomorphism (\cite{Fa2008}Lemma 17.4.6). We denote it by $ F^{C}(X)$. (When $X=\emptyset$, $F^{C}(X)=\left\{1\right\}$.)

\begin{notation}\label{notation1}
Throughout this paper, we fix the following notations
\begin{enumerate}[(i)]
\item $m\in\mathbb{N}$.
\item $C$ is a full class of finite groups which contains a non-identity group. 
\item $\mathcal{F}$ is a free pro-$C$ group and $X$ is a free basis of $\mathcal{F}$.
\end{enumerate}
\end{notation}

\begin{prop}\label{1.1.5}
Let $x\in X$ and $\alpha\in \mathbb{Z}_{C}-\left\{0\right\}$. Then $Z_{\mathcal{F}^m}(x^\alpha) \ \subset\  \langle x\rangle\cdot\mathcal{F}^{[m-1]}/\mathcal{F}^{[m]}$ holds. Here, $Z_{\mathcal{F}^m}(x^\alpha) $ stands for the centralizer of $x^\alpha $ in $\mathcal{F}^m$.
\end{prop}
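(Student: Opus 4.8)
The plan is to induct on $m$. For $m=1$ there is nothing to prove, since $\mathcal{F}^{[0]}/\mathcal{F}^{[1]}=\mathcal{F}^1$ and the right-hand side is all of $\mathcal{F}^1$. For $m\ge 2$ I would first observe that $\langle x\rangle\cdot\mathcal{F}^{[m-1]}/\mathcal{F}^{[m]}$ is exactly the preimage of $\langle\overline x\rangle$ under $\pi\colon\mathcal{F}^m\twoheadrightarrow\mathcal{F}^{m-1}$, so the assertion is equivalent to $\pi(Z_{\mathcal{F}^m}(x^\alpha))\subset\langle\overline x\rangle$. Given $g\in Z_{\mathcal{F}^m}(x^\alpha)$, the image $\pi(g)$ lies in $Z_{\mathcal{F}^{m-1}}(\overline x^\alpha)$, which by the inductive hypothesis is contained in $\langle\overline x\rangle\cdot\mathcal{F}^{[m-2]}/\mathcal{F}^{[m-1]}$. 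When $m\ge 3$ this last factor meets $\langle\overline x\rangle$ trivially, and after multiplying $g$ by a suitable power of $x$ (harmless, since $\langle x\rangle$ is procyclic) I may assume $g\in\mathcal{F}^{[m-2]}/\mathcal{F}^{[m]}$ with $\pi(g)$ in the top layer; the task then becomes to show that the image of $g$ in $V:=\mathcal{F}^{[m-2]}/\mathcal{F}^{[m-1]}$ vanishes. Since $\mathcal{F}^{[m-2]}/\mathcal{F}^{[m]}$ is normal in $\mathcal{F}^m$, that image is fixed by conjugation by $x^\alpha$, so it is enough that $\overline x^\alpha-1$ act injectively on $V$. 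The case $m=2$ needs a separate, direct treatment: here $V=\mathcal{F}^{\mathrm{ab}}$ carries no conjugation action, and instead one uses the Magnus embedding $\mathcal{F}^2\hookrightarrow\mathcal{F}^{\mathrm{ab}}\ltimes\bigoplus_{x'\in X}\mathbb{Z}_C[[\mathcal{F}^{\mathrm{ab}}]]$ and writes out $[g,x^\alpha]=1$ coordinate-wise, turning $\pi(g)\in\langle\overline x\rangle$ into a divisibility statement in $\mathbb{Z}_C[[\mathcal{F}^{\mathrm{ab}}]]$.

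In both regimes the crux is a non-zero-divisor property. For $m\ge 3$ I would invoke pro-$C$ Blanchfield–Lyndon theory: $\mathcal{F}^{[m-2]}$ is again a free pro-$C$ group, so $V$ is its abelianization, and it embeds as a $\mathbb{Z}_C[[\mathcal{F}^{m-2}]]$-module into a free $\mathbb{Z}_C[[\mathcal{F}^{m-2}]]$-module (the relation module of $\mathcal{F}\twoheadrightarrow\mathcal{F}^{m-2}$); for $m=2$ the Magnus coordinates play the analogous role. Everything then reduces to showing that $\overline x^\alpha-1$ is a non-zero-divisor on the relevant free module, equivalently in the relevant completed group algebra. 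Since $\alpha\ne 0$, pick a prime $l\in\mathrm{prime}(C)$ with $\alpha_l\ne 0$ and project onto the factor $\mathbb{Z}_l[[\,\cdot\,]]$; after splitting off the prime-to-$l$ torsion of the abelian group involved, this factor becomes a product of Iwasawa power-series rings $W[[t_1,\dots,t_n]]$ over unramified extensions $W$ of $\mathbb{Z}_l$, and in each such factor $\overline x^\alpha-1$ is either a unit — when the prime-to-$l$ part of $\overline x^\alpha$ is nontrivial there — or equals $t_x\cdot u$ with $t_x$ the Iwasawa variable attached to the pro-$l$ part of $\overline x$ and $u$ nonzero (hence a non-zero-divisor); this dichotomy is precisely where $\alpha_l\ne 0$ is used. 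Thus $\overline x^\alpha-1$ is a non-zero-divisor over $\mathbb{Z}_l[[\,\cdot\,]]$, and since the group-theoretic datum we want to see trivial (the image of $g$ in $V$, resp. the non-$\langle\overline x\rangle$ part of $\pi(g)$ when $m=2$) is still faithfully recorded over this single factor, one concludes it is $0$, closing the induction.

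The step I expect to be the main obstacle is exactly this last point: $\overline x^\alpha-1$ genuinely fails to be a non-zero-divisor in $\mathbb{Z}_C[[\langle\overline x\rangle]]$ as soon as some component $\alpha_l$ vanishes (it dies in the prime-to-$l$ augmentation components of the other $\mathbb{Z}_{l'}$-factors), so one cannot simply cite a blanket torsion-freeness of the relation module; one must isolate a good prime and verify that restricting to its factor loses none of the relevant information. The other ingredient that has to be set up with care is the pro-$C$ Blanchfield–Lyndon embedding of the relation module, together with the coordinatewise form of the commutator relation in the Magnus model used for $m=2$.
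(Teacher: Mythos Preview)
Your inductive scheme has a real gap at the step $m-1\to m$ for $m\ge 3$. After adjusting by a power of $x$, you have $g\in\mathcal{F}^{[m-2]}/\mathcal{F}^{[m]}$ centralizing $x^\alpha$, and you want $\pi(g)=0$ in $V=\mathcal{F}^{[m-2]}/\mathcal{F}^{[m-1]}$. From this you only extract the consequence $(\overline{x}^{\alpha}-1)\cdot\pi(g)=0$ in $V$, and then try to conclude via injectivity of $\overline{x}^{\alpha}-1$ on $V$. But exactly this injectivity fails: by Theorem~A.2 (with $\mathcal{N}=\mathcal{F}^{[m-2]}$) the module $V$ embeds into $\bigoplus_i\mathbb{Z}_C[[\mathcal{F}^{m-2}]]$, and Lemma~\ref{1.1.7}(2) shows that $\overline{x}^{\alpha}-1$ is a zero-divisor in $\mathbb{Z}_C[[\mathcal{F}^{1}]]$ whenever some $\alpha_\ell=0$; the same phenomenon produces nonzero elements of $V$ annihilated by $\overline{x}^{\alpha}-1$ (for instance via the map $\tau_1$ in the proof of Lemma~\ref{1.1.6}). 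So there \emph{are} nonzero $v\in V$ with $(\overline{x}^{\alpha}-1)v=0$; the content of the proposition is precisely that such $v$ do not arise as $\pi(g)$ for $g$ centralizing $x^\alpha$ one level up, and your argument discards the information needed to see this.

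Your proposed fix --- project to $\mathbb{Z}_\ell[[\,\cdot\,]]$ for a prime $\ell$ with $\alpha_\ell\neq 0$ --- rescues the $m=2$ base case but not the induction step. In the Magnus model the equations eventually reduce to $\pi(g)\equiv 1$ in $\mathbb{Z}_\ell[[\mathcal{F}^{\mathrm{ab}}/\langle\overline{x}\rangle]]$, and since $\pi(g)$ is a \emph{group-like} element there, this forces $\pi(g)\in\langle\overline{x}\rangle$ in the full group $\mathcal{F}^{\mathrm{ab}}$. For $m\ge 3$, however, the datum $\pi(g)$ lives in the $\mathbb{Z}_C$-module $V$, and the projection $V\to V\otimes_{\mathbb{Z}_C}\mathbb{Z}_\ell$ kills the $\ell'$-components of $\pi(g)$ for $\ell'\neq\ell$; these are not ``faithfully recorded''. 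And you cannot simply vary $\ell$: for primes $\ell'$ with $\alpha_{\ell'}=0$, the image of $\overline{x}^{\alpha}$ in the pro-$\ell'$ direction is trivial and $\overline{x}^{\alpha}-1$ is again a zero-divisor in $\mathbb{Z}_{\ell'}[[\mathcal{F}^{m-2}]]$ (it vanishes in the trivial-character factor of the Iwasawa decomposition), so no prime handles those components. The induction hypothesis is genuinely too weak to close the loop.

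The paper avoids this entirely by \emph{not} inducting on $m$. Instead it works at level $m$ directly: for every open normal $N\supset\mathcal{F}^{[m-1]}$ one embeds $G=\mathcal{F}/N$ into $GL_s(\mathbb{Z}_\ell)$ (for a single prime $\ell$ with $\alpha_\ell\neq 0$), builds a block upper-triangular pro-$C^m$ group $G'\subset GL_{2s}(\mathbb{Z}_\ell)$, and defines $\psi\colon\mathcal{F}^m\to G'$ so that the commutation $[y,x^{\alpha g}]=1$ forces the diagonal blocks of $\psi(y)$ to agree, yielding $\rho_N(y)\in\langle\rho_N(x)\rangle$; taking the limit over $N$ gives the claim. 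This argument also only needs $\alpha_\ell\neq 0$ for one $\ell$, but it never passes through the module $V$, so the faithfulness issue above does not arise.
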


\begin{proof}(cf. \cite{Na1994} Lemma 2.1.2) 
If $m=1$, the assertion is clear because $\mathcal{F}^{[0]}/\mathcal{F}^{[1]}=\mathcal{F}^{1}$. Thus, we may assume that $m\geq 2$. First, we consider the case $|X|<\infty$.\par
Let $y\in Z_{\mathcal{F}^m}(x^\alpha) $. For each $N\overset{\op}\lhd\mathcal{F}$ such that $(F/N)^{[m-1]}=\left\{1\right\}  $ (or, equivalently, $N\supset \mathcal{F}^{[m-1]}$), consider the natural surjection $\rho_N:\mathcal{F}\rightarrow \mathcal{F}/N$. Then 
\begin{eqnarray*}
\rho_N(y)\in \langle \rho_N(x)\rangle \text{ for all }N&\Rightarrow&\ y\in \langle x \rangle\cdot N/\mathcal{F}^{[m]} \text{ for all }N\\
&\Rightarrow& y\in \bigcap_N \left(\langle x\rangle\cdot N/\mathcal{F}^{[m]}\right) =\langle x\rangle\cdot \bigcap_N \left(N/\mathcal{F}^{[m]}\right) =\langle x\rangle\cdot \mathcal{F}^{[m-1]}/\mathcal{F}^{[m]}
\end{eqnarray*}
Thus, it suffices to prove that $\rho_N(y)\in\langle\rho_N(x)\rangle$ for all $N$. We fix any $N$ and write $G:=\mathcal{F}/N $and $\rho:=\rho_N$.\par
Write $\alpha=(\alpha_\ell)_{\ell\in \text{prime}(C)}\in \mathbb{Z}_C-\left\{0\right\}$ and fix $\ell \in \text{prime}(C)$ such that $\alpha_\ell\neq 0$. We also fix a sufficiently large $s$ and an injection $G\hookrightarrow GL_s(\mathbb{Z}_\ell)$ (say, arising from a permutation representation). Via this injection, we regard $G$ as a subgroup of $GL_s(\mathbb{Z}_\ell)$. Further, we set
\[
  G' := \set{\ \left(
    \begin{array}{cc}
      A & B \\
      0 & C 
    \end{array}
  \right)\in GL_{2s}(\mathbb{Z}_{\ell})}{A\in G,\ C\in \langle \rho(x)\rangle}\text{.}
\]
By consider  the diagonal component $\left(
\begin{array}{cc}
      A & 0 \\
      0 & C 
    \end{array}
  \right)$
and the unipotent component
$ \left(
    \begin{array}{cc}
      1 & B \\
      0 & 1 
    \end{array}
  \right)$,
we obtain the following exact sequence.
\begin{equation}\label{G'の完全系列}
1\rightarrow \left\{\left(\begin{array}{cc}1 & B \\  0& 1 \end{array}\right) \middle| B\in M_s(\mathbb{Z}_\ell)\right\}\rightarrow G'\rightarrow G\times \langle \rho(x)\rangle\rightarrow 1
\end{equation}
The right term of (\ref{G'の完全系列}) is a pro-$C^{m-1}$ group and the left term is a pro-$C^{1}$ group because $\ell\in\text{prime}(C)$. Thus, $G'$ is a pro-$C^m$ group. \par
We set $x_1:=x$ and $X:=\left\{x_1,x_2,\cdots,x_r\right\}$.  We define a map $X\rightarrow G'$ by
\begin{equation*}
x\mapsto \left(
    \begin{array}{cc}
      \rho(x) & \rho(x) \\
      0 & \rho(x) 
    \end{array}
  \right)
,x_i\mapsto \left(
    \begin{array}{cc}
      \rho(x_i) & 0 \\
      0 & 1 
    \end{array}
  \right)\ (2\leq i\leq r).
\end{equation*}
This map extends to $\psi:\mathcal{F}^m\rightarrow G'$. Set $g:=|G|$, then we get  
\begin{equation*}
\psi(x^{\alpha g})=
\left(
    \begin{array}{cc}
      \rho(x) & \rho(x) \\
      0 & \rho(x) 
    \end{array}
  \right)^{\alpha g}
=\left(
    \begin{array}{cc}
      1 & g\alpha_{\ell} \\
      0 & 1 
    \end{array}
  \right)\ \ \ \ 
\end{equation*} 
Write $\psi(y)=
\left(
    \begin{array}{cc}
      \rho(y) & B \\
      0 & C 
    \end{array}
  \right)$. 
As $y\in Z_{\mathcal{F}^m}(x^{\alpha})$, $y$ and $x^{\alpha g}$ are commutative. Therefore, the following two products in $G'$ are equal.
\begin{eqnarray*}
\psi(y)\psi(x^{\alpha g})&=&
\left(
    \begin{array}{cc}
      \rho(y) & B \\
      0 & C 
    \end{array}
  \right)
\left(
    \begin{array}{cc}
      1 & g\alpha_{\ell} \\
      0 & 1 
    \end{array}
  \right)
=
\left(
    \begin{array}{cc}
      \rho(y) & g\alpha_{\ell}\rho(y)+B\\
      0 & C 
    \end{array}
  \right)
\\
\psi(x^{\alpha g})\psi(y)&=&
\left(
    \begin{array}{cc}
      1 & g\alpha_{\ell} \\
      0 & 1 
    \end{array}
  \right)
\left(
    \begin{array}{cc}
      \rho(y) & B \\
      0 & C 
    \end{array}
  \right)
=
\left(
    \begin{array}{cc}
      \rho(y) & g\alpha_{\ell}C+B\\
      0 & C 
    \end{array}
  \right)
\end{eqnarray*}
By comparing the (1,2) components,  we get $\rho(y)=C\in \langle\rho(x)\rangle$. Thus, the assertion holds if $|X|<\infty$.\par
Finally, we consider the case that  $|X|=\infty$. Let $\left\{X_i\right\}_{i\in I}$ be the set of all finite subsets of $X$ that contain $x$. 
Let $u\in\mathbb{N}\cup\{0\}$ and set   $C^u:=\set{M\in C}{M^{[u]}=\left\{1\right\}}$. We remark that $C^{u}$ is also almost full and $F^C(X)^u\simeq F^{C^u}(X)$.   For each pair $i,j\in I$ that satisfy $X_i\subset X_j$, define the map $\tau^{u}_{ij}:F^{C^{u}}(X_j)\twoheadrightarrow  F^{C^{u}}(X_i)$ by $x\mapsto x$ for  $x\in X_i$ and $x\mapsto 1$ for  $x\in X_j-X_i$.  Also, for each $i\in I$, define the map $\tau^{u}_i:(\mathcal{F}^{u}=)F^{C^{u}}(X)\twoheadrightarrow  F^{C^{u}}(X_i)$ by $x\mapsto x$ for $x\in X_i$ and $x\mapsto 1$ for  $x\in X-X_i$.  We have that   $\left\{\tau_{ij}\right\}_{i,j\in I}$ forms a projective system. Hence we get $F^{C^{u}}(X)\rightarrow\mathop{\varprojlim}\limits F^{C^{u}}(X_i) $. By \cite{Fa2008}Lemma 17.4.9(b), it is isomorphic. Therefore,  we get $\tau_i^{u}: \mathcal{F}^{u}\simeq \plim{i\in I}F^C(X_i)^u$. For each $i\in I$, consider the following commutative diagram for $X_i$.
\[
\xymatrix{
Z_{\mathcal{F}^m}(x^\alpha) \ar@{}[r]|-*[@]{\subset} & \mathcal{F}^m\ar[r]^{\tau^m_i}\ar[d]_{p} & F^{C}(X_i)^m\ar[d]^{p_i} & Z_{F^{C}(X_i)^m}((\tau_i^m(x))^\alpha) \ar@{}[l]|-*[@]{\supset} \ar@{|->}[d]  \\
\langle x\rangle \ar@{}[r]|-*[@]{\subset} & \mathcal{F}^{m-1}\ar[r]^{\tau^{m-1}_i} & F^{C}(X_i)^{m-1} & \langle \tau_i^{m-1}(x)\rangle \ar@{}[l]|-*[@]{\supset}
}
\]
 As $|X_i|< \infty$, we obtain $p_i(Z_{F^C(X_i)^m}((\tau_i^{m}(x) )^\alpha))\subset \langle \tau_i^{m-1}(x) \rangle$. Therefore, $\tau^{m-1}_i\circ p(Z_{\mathcal{F}^m}(x^\alpha))\subset \langle \tau_i^{m-1}(x) \rangle$ holds for all $i\in I$. Observe that $\plim{i\in I}{\langle \tau_i^{m-1}(x) \rangle}\subset \mathcal{F}^{m-1}$ is equal to $\langle x\rangle\subset \mathcal{F}^{m-1}$. So, we obtain $p(Z_{\mathcal{F}^m}(x^\alpha)) \subset\langle x\rangle$, hence $Z_{\mathcal{F}^m}(x^\alpha) \subset\langle x\rangle\cdot \mathcal{F}^{[m-1]}/\mathcal{F}^{[m]}$.
\end{proof}

In Proposition \ref{1.1.5}, if $\alpha\in \mathbb{Z}-\left\{0\right\}$, we have a more accurate and stronger result. (See Proposition \ref{1.1.9} below.) For a  pro-$C$ group $G$, we define the completed group ring of $G$ as $\mathbb{Z}_C[[G]]= \plim{H\overset{\op}\lhd G}\mathbb{Z}_C[G/H]$. The conjugate action $\mathcal{F}^{m-1}\curvearrowright\mathcal{F}^{[m-1]}/\mathcal{F}^{[m]}$ naturally extends to an action $\mathbb{Z}_C[[\mathcal{F}^{m-1}]]\curvearrowright\mathcal{F}^{[m-1]}/\mathcal{F}^{[m]}$, by which $\mathcal{F}^{[m-1]}/\mathcal{F}^{[m]}$ is regarded as a $\mathbb{Z}_C[[\mathcal{F}^{m-1}]]$ module.\par 

  Y. Ihara gave  Theorem A-1 and Theorem A-2 in \cite{Ih1999} about Blanchfield-Lyndon theory for free profinite groups. They are equivalent to the existence of  the following exact sequence.

\begin{equation}\label{appeq1}
1\rightarrow  \mathcal{M}^{\text{ab}}\rightarrow  \bigoplus_{1\leq i\leq r}\hat{\mathbb{Z}}[[\hat{F}/ \mathcal{M}]](x_{i}-1)\overset{\tilde{f}}\rightarrow \hat{\mathbb{Z}}[[\hat{F}/ \mathcal{M}]]\overset{\tilde{s}}\rightarrow \hat{\mathbb{Z}}\rightarrow 1 
\end{equation}
Here,  $\hat{F}$ is the profinite completion of  a free discrete group $F$ with basis $\left\{x_1,\cdots,x_r\right\}$,   and $ \mathcal{M}$ is a closed normal subgroup of $\hat{F}$, $\bigoplus_{1\leq i\leq r}\hat{\mathbb{Z}}[[\hat{F}/ \mathcal{M}]](x_{i}-1)$ is a free $\hat{\mathbb{Z}}[[\hat{F}/ \mathcal{M}]]$-module with basis $\left\{x_{i}-1\right\}_{1\leq i\leq r}$, $\tilde{f}$ is the sum of all components and  $\tilde{s}$ is the augmentation homomorphism.  The following proposition is to  give  Blanchfield-Lyndon theory for free pro-$C$ groups for an arbitrary non-trivial full class $C$ of finite groups. \par

\begin{prop}\label{bLtheory}
We set  $X=\left\{x_1,\cdots,x_r\right\}$ (in particular, assume that $|X|$ is finite). Let $\mathcal{N}$ be a closed normal subgroup of $\mathcal{F}$. Then the following exact sequence of  $\mathbb{Z}_C[[\mathcal{F}/\mathcal{N}]] $-modules exists.
\begin{equation}\label{appeq2}
1\rightarrow \mathcal{N}^{\text{ab}}\rightarrow \bigoplus_{1\leq i\leq r}\mathbb{Z}_{C}[[\mathcal{F}/\mathcal{N}]](x_{i}-1)\overset{f}\rightarrow \mathbb{Z}_C[[\mathcal{F}/\mathcal{N}]]\overset{s}\rightarrow \mathbb{Z}_{C}\rightarrow 1 
\end{equation}
Here, $ \bigoplus_{1\leq i\leq r}\mathbb{Z}_{C}[[\mathcal{F}/\mathcal{N}]](x_{i}-1)$ is a free $\mathbb{Z}_C[[\mathcal{F}/\mathcal{N}]] $-module with basis $\left\{x_{i}-1\right\}_{1\leq i\leq r}$, $f $ is the sum of all components and  $s$ is the augmentation homomorphism. 
\end{prop}
\begin{proof}
Let $\rho: \hat{F}\rightarrow \mathcal{F}$ be the natural surjection.  Since all terms of (\ref{appeq1}) is abelian, the following sequence induced by (\ref{appeq1}) is also exact.
\begin{equation}\label{appeq3}
1\rightarrow \left(\rho^{-1}(\mathcal{N})^{\text{ab}}\right)^{C}\rightarrow  \left(  \bigoplus_{1\leq i\leq r}\hat{\mathbb{Z}}[[\hat{F}/\rho^{-1}(\mathcal{N})]](x_{i}-1)\right)^{C}\xrightarrow{\tilde{f}^{C}} \left(\hat{\mathbb{Z}}[[\hat{F}/\rho^{-1}(\mathcal{N})]]\right)^C\xrightarrow{\tilde{s}^{C}} \mathbb{Z}_C\rightarrow 1 
\end{equation}
Clearly,  $(\hat{\mathbb{Z}}[[\hat{F}/\rho^{-1}(\mathcal{N})]])^{C}=\mathbb{Z}_{C}[[\hat{F}/\rho^{-1}(\mathcal{N})]]=\mathbb{Z}_{C}[[\mathcal{F}/\mathcal{N}]]$,  and  $\tilde{f}^{C}$ and $\tilde{s}^{C}$ coincide with $f$ and $s$ in (\ref{appeq2}), respectively. As is well known, abelianization commutes with taking the maximal pro-$C$ quotients. Therefore,   $(\rho^{-1}(\mathcal{N})^{\text{ab}})^C\cong (\rho^{-1}(\mathcal{N})^{C})^{\text{ab}}\cong \mathcal{N}^{\text{ab}}$.  $\hat{\mathbb{Z}}[[\hat{F}/\rho^{-1}(\mathcal{N})]] $ acts on all terms of (\ref{appeq3}) and these actions factor through  $\mathbb{Z}_C[[\mathcal{F}/\mathcal{N}]] $. Thus, (\ref{appeq2}) is an exact sequence of $\mathbb{Z}_C[[\mathcal{F}/\mathcal{N}]] $-modules. 
\end{proof}

\begin{lem}\label{1.1.7}
Let $x\in X$. Assume that $|X|$ is finite. Then the following hold.
\begin{enumerate}[(1)]
\item Let  $n\in\mathbb{N}$. Then  $\mathbb{Z}_{C}[[\mathcal{F}^{1}]]\ni x^n-1$ is a non-zero-divisor.
\item Let $S$ be a non-empty subset of  $\text{prime}(C)$.  Define     $\gamma=(\gamma_p)_{p\in \text{prime}(C)}\in \mathbb{Z}_C$ as  $\gamma_{p}=0$ (resp. $\gamma_{p}=1$) when $p\in S$ (resp. $p\notin S$).  Then  $\mathbb{Z}_{C}[[\mathcal{F}^{1}]]\ni x^\gamma-1\ $ is a zero-divisor.
\end{enumerate}
\end{lem}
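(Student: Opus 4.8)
The plan is to treat the completed group ring $\mathbb{Z}_C[[\mathcal{F}^1]]$ concretely. Since $|X|$ is finite, say $X=\{x_1,\dots,x_r\}$ with $x=x_1$, we have $\mathcal{F}^1\simeq \mathbb{Z}_C^{\,r}$ (the free abelian pro-$C$ group on $X$), so $\mathbb{Z}_C[[\mathcal{F}^1]]\simeq \mathbb{Z}_C[[T_1^{\pm 1},\dots,T_r^{\pm 1}]]$, the ring of "Laurent power series", i.e. the localization of $\mathbb{Z}_C[[T_1,\dots,T_r]]$ at the $T_i$; this is the completed group algebra identification used throughout Blanchfield–Lyndon theory (Theorem A.2). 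Under this identification the element $x^n-1$ becomes $T_1^{\,n}-1$ and, for part (2), $x^\gamma-1$ becomes $T_1^{\,\gamma}-1$ where $T_1^{\,\gamma}=\prod_{\ell}(\text{$\ell$-component of }T_1)^{\gamma_\ell}$ is the element of the pro-$C$ completion obtained by raising the generator to the exponent $\gamma\in\mathbb{Z}_C$ coordinate-wise over $\mathrm{prime}(C)$.

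For part (1): I would reduce to showing $T^n-1$ is a non-zero-divisor in $\mathbb{Z}_C[[T^{\pm 1},T_2^{\pm1},\dots]]$. Because $\mathbb{Z}_C=\prod_{\ell\in\mathrm{prime}(C)}\mathbb{Z}_\ell$ and the power-series ring decomposes accordingly as a product over $\ell$, it suffices to prove the statement with $\mathbb{Z}_\ell$ coefficients for each prime $\ell\in\mathrm{prime}(C)$. There, $\mathbb{Z}_\ell[[T^{\pm1},T_2^{\pm1},\dots,T_r^{\pm1}]]$ is an integral domain — indeed $\mathbb{Z}_\ell[[T_1,\dots,T_r]]$ is a domain (it is a regular local ring, e.g. by the Weierstrass preparation / standard fact that a formal power series ring over a domain is a domain), and localizing preserves this — and $T^n-1$ is visibly a nonzero element, hence not a zero-divisor. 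That finishes (1). (As the text notes, one could instead give the elementary direct argument: if $(T^n-1)f=0$ then comparing the lowest-degree homogeneous parts in $T$ forces $f=0$; I would mention this as the alternative but run the domain argument since it is cleanest.)

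For part (2): here $\gamma$ has $\gamma_\ell=0$ for $\ell\in S$ and $\gamma_\ell=1$ for $\ell\notin S$, with $S\neq\emptyset$. Then $x^\gamma$ is the element whose $\ell$-component is trivial for every $\ell\in S$ and equals $x_1$ for $\ell\notin S$. Under the product decomposition $\mathbb{Z}_C[[\mathcal{F}^1]]\cong\prod_{\ell\in\mathrm{prime}(C)}\mathbb{Z}_\ell[[\mathcal{F}^1]]$, the element $x^\gamma-1$ has $\ell$-component equal to $0$ for each $\ell\in S$ (since the $\ell$-component of $x^\gamma$ is $1$) and equal to $T_1-1$ for $\ell\notin S$. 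An element of a product of rings is a zero-divisor as soon as it is a zero-divisor in one factor or vanishes in one factor; since $S\neq\emptyset$, $x^\gamma-1$ vanishes in the $\ell$-factor for any $\ell\in S$, so it is annihilated by the idempotent supported on that factor, which is a nonzero element of $\mathbb{Z}_C[[\mathcal{F}^1]]$. Hence $x^\gamma-1$ is a zero-divisor.

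The main obstacle is purely bookkeeping: pinning down the identification of $\mathbb{Z}_C[[\mathcal{F}^1]]$ with the product over $\ell\in\mathrm{prime}(C)$ of (localized) formal power series rings and tracking what $x^n-1$ and $x^\gamma-1$ correspond to — in particular making precise the meaning of $x^\gamma$ for $\gamma\in\mathbb{Z}_C$ and why its $\ell$-component is trivial exactly when $\gamma_\ell=0$. Once that dictionary is in place, (1) is "a power series ring over a domain is a domain" and (2) is "an element vanishing in a factor of a product ring is a zero-divisor," both immediate. I would cite Theorem A.2 / the standard structure of completed group algebras of free abelian pro-$C$ groups for the identification and keep the rest terse.
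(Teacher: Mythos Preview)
Your plan rests on a faulty identification of the completed group ring. The decomposition $\mathbb{Z}_C=\prod_{\ell}\mathbb{Z}_\ell$ of the \emph{coefficient} ring does give $\mathbb{Z}_C[[\mathcal{F}^1]]\cong\prod_\ell\mathbb{Z}_\ell[[\mathcal{F}^1]]$, but the \emph{group} $\mathcal{F}^1\cong\mathbb{Z}_C^{\,r}$ is unchanged in every factor: you obtain $\mathbb{Z}_\ell[[\mathbb{Z}_C^{\,r}]]$, not the Iwasawa algebra $\mathbb{Z}_\ell[[\mathbb{Z}_\ell^{\,r}]]$. (Even the latter is $\mathbb{Z}_\ell[[T_1,\dots,T_r]]$ via $T_i=x_i-1$, not a Laurent-series localization with $T_i=x_i$.) When $|\text{prime}(C)|\geq 2$ the ring $\mathbb{Z}_\ell[[\mathbb{Z}_C^{\,r}]]$ is \emph{not} a domain: for any $\ell'\in\text{prime}(C)\setminus\{\ell\}$, the averaging elements $\frac{1}{M}\sum_{i=0}^{M-1}x^{i}$ over the prime-to-$\ell$ part $M$ of each $N\in\mathbb{N}(C)$ form a compatible system of nontrivial idempotents in the inverse limit. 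So your domain argument for (1) fails outside the pro-$\ell$ case. The paper instead works directly in each finite quotient $A[\mathbb{Z}/N\mathbb{Z}]$ (with $A:=\mathbb{Z}_C[[\bigoplus_{X\setminus\{x\}}\mathbb{Z}_C]]$): if $(x^n-1)y=0$ then the coefficients of $y_N$ are constant on cosets of $\langle n\rangle\subset\mathbb{Z}/N\mathbb{Z}$, and comparing along the transition maps $A[\mathbb{Z}/kM\mathbb{Z}]\to A[\mathbb{Z}/M\mathbb{Z}]$ forces each coefficient into $\bigcap_{k\in\mathbb{N}(C)} kA=\{0\}$.

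The same conflation breaks (2). Under the coefficient decomposition, the image of the group element $x^\gamma$ in the $\ell$-th factor $\mathbb{Z}_\ell[[\mathbb{Z}_C^{\,r}]]$ is still the group element $x^\gamma$, which is nontrivial whenever $\gamma\neq 0$; so $x^\gamma-1$ does not vanish in any factor and is not annihilated by the idempotent you describe. You are implicitly invoking the group decomposition $\mathcal{F}^1=\prod_\ell(\mathcal{F}^1)^{\text{pro-}\ell}$, but the completed group ring does not split as a product of rings along that. The paper's route is instead to observe that $x^\gamma$ lies in (and topologically generates) the closed subgroup $G:=\prod_{p\notin S}\mathbb{Z}_p\subset\langle x\rangle$, reduce to $|X|=1$, and apply Theorem~A.2 to $\mathcal{N}:=\ker(\mathcal{F}\twoheadrightarrow G)$: this identifies the annihilator of $x^\gamma-1$ in $\mathbb{Z}_C[[G]]$ with $\mathcal{N}^{\text{ab}}$, which surjects onto $\prod_{p\in S}\mathbb{Z}_p\neq 0$.
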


\begin{proof}
Since  $\mathcal{F}$ is  a free pro-$C$ group with basis $X$, we have an isomorphism $\mathcal{F}^{1}\cong\underset{X}\bigoplus\mathbb{Z}_{C}$. Via this isomorphism, we identity $\mathcal{F}^{1}$ with $\underset{X}\bigoplus\mathbb{Z}_{C}$.\\
(1) $x$ is invertible in $\mathbb{Z}_{C}[[\mathcal{F}^{1}]]$ and $x^{-n}-1=-x^{-n}(x^{n}-1)$. Thus, we may assume that $n>0$.\par
Set $A:=\mathbb{Z}_{C}[[\underset{X-\left\{x\right\}}\bigoplus\mathbb{Z}_{C}]]$. We get $\mathbb{Z}_{C}[[\mathcal{F}^{1}]]=\plim{N\in\mathbb{N}(C)}A[\mathbb{Z}/N\mathbb{Z}]$ by definition. Let $y=(y_N)_{N}$ be an element of $\mathbb{Z}_{C}[[\mathcal{F}^{1}]]$ that satisfies  $(x^n-1)y=0$. We fix $N\in\mathbb{N}(C)$. Set $\rho:\mathbb{Z}_{C}[[\mathcal{F}^{1}]]\rightarrow A[\mathbb{Z}/N\mathbb{Z}]$ and $y_N={\displaystyle\sum^{N-1}_{i=0}} c_{i}^{N}\rho(x)^{i}\in A[\mathbb{Z}/N\mathbb{Z}]$.  By assumption, we have that  $0=(\rho(x)^n-1)y_N=(\rho(x)^n-1)(\sum c_i^N\rho(x)^i)$. Thus, $\sum(c_{i-n}^{N}-c_{i}^{N})\rho(x)^i=0$ and then $c_{i-n}^N=c_i^N$. (Here, we identify the set of subscripts $\left\{0,1,\cdots,i,\cdots,N-1\right\} $ with $\mathbb{Z}/N\mathbb{Z}$.) In other words, if $\overline{i}-\overline{i}' \in\langle n\rangle\subset  \mathbb{Z}/N\mathbb{Z}$, Then $c_{i}^{N}=c_{i'}^{N}$. \par
Set $n=n'n''(n'\in\mathbb{N}(C),n''$ coprime to all elements of $\text{prime}(C)$) and  take any $M\in \mathbb{N}(C)$ that satisfies $n'\mid M$ and any $k\in\mathbb{N}(C)$. As $\langle n\rangle=\langle n'\rangle \subset \mathbb{Z}/kM\mathbb{Z}$, we get $c_{i-n'}^{kM}=c_i^{kM}$, hence $c_{i-M}^{kM}=c_i^{kM}$ holds. Considering the projection $A[\mathbb{Z}/kM\mathbb{Z}]\rightarrow A[\mathbb{Z}/M\mathbb{Z}]$, we get $c_i^M=kc_i^{kM}$. (See the following diagram (\ref{diagram1.2}).)
\begin{eqnarray}\label{diagram1.2}
A[\mathbb{Z}/kM\mathbb{Z}]\ \ \ \ \ \ \ \ \ \ \ \ &\longrightarrow&\ \ \ \ \ \ \ \ \ \  A[\mathbb{Z}/M\mathbb{Z}]\nonumber \\
\begin{matrix}
c_0^{kM} 	&c_M^{kM} 	&\cdots 	& c_{(k-1)M}^{kM} \\
\vdots 	&\vdots&\ddots 	&\vdots 		\\
c_{M-1}^{kM} 	&c_{2M-1}^{kM} &\cdots 	& c_{kM-1}^{kM} \\
\end{matrix}
&\longmapsto&
\begin{matrix}
c_0^{M} =c_0^{kM}+\cdots+c_{(k-1)M}^{kM}=kc_0^{kM}\\
\vdots 	\\
c_{M-1}^M=c_{M-1}^{kM}+\cdots+c_{kM-1}^{kM} =kc_{M-1}^{kM}\\
\end{matrix}
\end{eqnarray}
Thus, $c_i^M\in \underset{k\in\mathbb{N}(C)}\bigcap k\cdot A=\left\{0\right\}$ for all $i$, hence $y_M=0$. Because $\left\{M\in\mathbb{N}(C)\mid n'|M\right\}$ is a cofinal subset of $\mathbb{N}(C)$, we get $y=0$.\\
(2) When $S= \text{prime}(C)$, the assertion is clear because $\gamma=0$. Hence we may assume that $\text{prime}(C)-S\neq \emptyset$. (In particular,  $|\text{prime}(C)|\geq 2$.)   Let $G:=\prod_{p\notin S}\mathbb{Z}_{p}$. We have that
\[
x^{\gamma}-1\in\mathbb{Z}_{C}[[G]]\subset \mathbb{Z}_{C}[[\mathbb{Z}_{C}]]]\subset \mathbb{Z}_{C}[[\mathcal{F}^{1}]].
\]
If an element of $\mathbb{Z}_{C}[[G]]$ is a zero-divisor, then it is also a zero-divisor of $\mathbb{Z}_{C}[[\mathcal{F}^{1}]]$. Thus, we may assume that $|X|=1$.  Let $\mathcal{N}:=\text{Ker}(\pi:\mathcal{F}\twoheadrightarrow G)$. By Theorem A.2, we get that $\mathcal{N}^{\text{ab}}\xrightarrow{\sim}\{a\in \mathbb{Z}_{C}[[G]]\mid a(\pi(x)-1)=0\}$. (We have that $x^{\gamma}=\pi(x)$ in $\mathbb{Z}_{C}[[G]]$.)  Since $\mathcal{N}^{\text{ab}}\twoheadrightarrow\prod_{p\in S}\mathbb{Z}_{p}\neq \{0\}$, $x^{\gamma}-1$ is zero-devisor in $\mathbb{Z}_{C}[[G]]$. This implies that  $x^{\gamma}-1$ is also  zero-devisor in $\mathbb{Z}_{C}[[\mathcal{F}^{1}]]$.
\end{proof}

\begin{lem}\label{1.1.6}
Let $x\in X$ and $\alpha\in\mathbb{Z}_C-\left\{0\right\}$. Assume that $|X|$ is finite and $|X|\geq 2$. Then the following hold.
\begin{equation*}
\mathbb{Z}_{C}[[\mathcal{F}^{1}]]\ni x^\alpha-1\ \text{is a non-zero-divisor }\iff Z_{\mathcal{F}^2}(x^\alpha)=\langle x\rangle
\end{equation*}
\end{lem}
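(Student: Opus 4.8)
The plan is to reformulate both sides of the equivalence as a single statement about the $R$-module $M:=\mathcal{F}^{[1]}/\mathcal{F}^{[2]}$, where $R:=\mathbb{Z}_{C}[[\mathcal{F}^{1}]]$ (a \emph{commutative} ring, since $\mathcal{F}^{1}$ is abelian) acts on $M$ through the conjugation action of $\mathcal{F}^{1}=\mathcal{F}^{2}/M$. Write $X=\{x_{1},\dots,x_{r}\}$ with $x=x_{1}$ ($r=|X|$ finite, $r\ge2$), and let $\sigma_{i}\in\mathcal{F}^{1}$ be the image of $x_{i}$; thus ``$x^{\alpha}-1$'' is the element $\sigma_{1}^{\alpha}-1\in R$. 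First I would handle the right-hand side: in $1\to M\to\mathcal{F}^{2}\to\mathcal{F}^{1}\to1$ one has $\langle x\rangle\cap M=\{1\}$ (if $x^{c}\in M$ then $c\sigma_{1}=0$ in the free $\mathbb{Z}_{C}$-module $\mathcal{F}^{1}$, so $c=0$), so every $y\in\mathcal{F}^{2}$ whose image in $\mathcal{F}^{1}$ lies in $\langle\sigma_{1}\rangle$ is uniquely $y=x^{c}n$ with $c\in\mathbb{Z}_{C}$, $n\in M$. By Proposition~\ref{1.1.5} every $y\in Z_{\mathcal{F}^{2}}(x^{\alpha})$ has this shape, and since $x^{c}\in Z_{\mathcal{F}^{2}}(x^{\alpha})$ we get $y\in Z_{\mathcal{F}^{2}}(x^{\alpha})\iff n\in Z_{\mathcal{F}^{2}}(x^{\alpha})\iff x^{\alpha}nx^{-\alpha}=n\iff(\sigma_{1}^{\alpha}-1)\cdot n=0$, using that conjugation by $x^{\alpha}$ on $M$ is the $R$-action of $\sigma_{1}^{\alpha}$. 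Hence $Z_{\mathcal{F}^{2}}(x^{\alpha})=\langle x\rangle\cdot\{n\in M:(\sigma_{1}^{\alpha}-1)n=0\}$, which equals $\langle x\rangle$ precisely when multiplication by $\sigma_{1}^{\alpha}-1$ is injective on $M$. So it remains to prove: $\sigma_{1}^{\alpha}-1$ is a non-zero-divisor in $R$ iff it acts injectively on $M$.

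For this I would use pro-$C$ Blanchfield--Lyndon theory (Theorem~A.2), applied to $1\to\mathcal{F}^{[1]}\to\mathcal{F}\to\mathcal{F}^{1}\to1$: it gives an isomorphism of $R$-modules
\[
M=(\mathcal{F}^{[1]})^{\text{ab}}\ \cong\ \ker\bigl(\partial:R^{\oplus r}\to R\bigr),\qquad\partial(e_{i})=\sigma_{i}-1 .
\]
The implication ``non-zero-divisor $\Rightarrow$ injective on $M$'' is then immediate, because a non-zero-divisor of $R$ acts injectively on the free module $R^{\oplus r}$, hence on its submodule $M$.

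For the converse I would argue by contraposition. If $(\sigma_{1}^{\alpha}-1)b=0$ for some $b\in R\setminus\{0\}$, then for each $j\in\{2,\dots,r\}$ the element $n_{j}:=b\bigl((\sigma_{j}-1)e_{1}-(\sigma_{1}-1)e_{j}\bigr)\in R^{\oplus r}$ satisfies $\partial(n_{j})=0$ by commutativity of $R$, so $n_{j}\in M$, and $(\sigma_{1}^{\alpha}-1)n_{j}=0$ because each coordinate is a multiple of $(\sigma_{1}^{\alpha}-1)b$. If all the $n_{j}$ vanished, then $b$ would annihilate every $\sigma_{i}-1$, hence the whole augmentation ideal $I$ of $R$; but $\mathrm{Ann}_{R}(I)=0$, which I would prove by projecting to the finite levels $\mathbb{Z}_{C}[(\mathbb{Z}/N\mathbb{Z})^{r}]$, where the annihilator of the augmentation ideal is $\mathbb{Z}_{C}\cdot\sum_{g}g$ and the transition maps scale these norm elements by positive integers, so any element of $\mathrm{Ann}_{R}(I)$ has each finite-level component in $\bigcap_{q\in\mathbb{N}(C)}q\,\mathbb{Z}_{C}=\{0\}$. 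This forces $b=0$, a contradiction, so some $n_{j}$ is a nonzero element of $M$ killed by $\sigma_{1}^{\alpha}-1$; hence $\sigma_{1}^{\alpha}-1$ is not injective on $M$.

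The hard part is the second step, namely establishing and correctly invoking the pro-$C$ Blanchfield--Lyndon identification of $M$ with the syzygy module $\ker(\partial:R^{\oplus r}\to R)$ as $R$-modules --- this is exactly what the Appendix (Theorem~A.2) is for. Everything else is formal linear algebra over the commutative ring $R$; the only other point requiring mild care is the vanishing $\mathrm{Ann}_{R}(I)=0$, together with the role of the hypothesis $|X|\ge2$, which enters precisely in forming the elements $n_{j}$: for $|X|=1$ one has $\mathcal{F}^{2}=\mathcal{F}^{1}$, so $Z_{\mathcal{F}^{2}}(x^{\alpha})=\langle x\rangle$ automatically while $x^{\alpha}-1$ may still be a zero-divisor, and the equivalence genuinely fails.
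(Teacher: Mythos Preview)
Your proof is correct and follows essentially the same route as the paper: reduce via Proposition~\ref{1.1.5} to showing that $x^{\alpha}-1$ is a non-zero-divisor in $R=\mathbb{Z}_{C}[[\mathcal{F}^{1}]]$ iff it acts injectively on $M=\mathcal{F}^{[1]}/\mathcal{F}^{[2]}$, then use the Blanchfield--Lyndon embedding $M\hookrightarrow R^{\oplus r}$ from Theorem~A.2 together with the specific syzygy element $\beta\mapsto(\beta(\sigma_{2}-1),-\beta(\sigma_{1}-1),0,\dots,0)$ to sandwich $M$ between $R$ and $R^{\oplus r}$.

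The only difference is cosmetic. Where you argue by contraposition that if all $n_{j}$ vanish then $b\in\mathrm{Ann}_{R}(I)$ and then prove $\mathrm{Ann}_{R}(I)=0$ by a finite-level norm computation, the paper simply cites Lemma~\ref{1.1.7}(1): since $\sigma_{2}-1=\sigma_{2}^{1}-1$ with $1\in\mathbb{Z}\setminus\{0\}$, it is already known to be a non-zero-divisor, so $b\neq0$ forces $n_{2}\neq0$ directly. Your $\mathrm{Ann}_{R}(I)=0$ argument is correct but is a detour through a weaker statement; you could shorten the write-up by invoking Lemma~\ref{1.1.7}(1) instead.
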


\begin{proof}
Set $x_1:=x$ and $X:=\left\{x_1,\cdots,x_r\right\}$. We consider the following commutative diagram.
\begin{equation}\label{eq1.3}
\vcenter{
\xymatrix{
\mathbb{Z}_{C}[[\mathcal{F}^{1}]]\ \ar@{^{(}-_>}[r]^{\tau_1}\ar[d]^{\psi_1}&\mathcal{F}^{[1]}/\mathcal{F}^{[2]}\ar@{^{(}-_>}[r]^{\tau_2}\ar[d]^{\Psi_{x^{\alpha}-1}}&\mathbb{Z}_{C}[[\mathcal{F}^{1}]]^{\oplus r}\ar[d]^{\psi_2} \\
\mathbb{Z}_{C}[[\mathcal{F}^{1}]]\ar@{^{(}-_>}[r]^{\tau_1}&\mathcal{F}^{[1]}/\mathcal{F}^{[2]}\ar@{^{(}-_>}[r]^{\tau_2}&\mathbb{Z}_{C}[[\mathcal{F}^{1}]]^{\oplus r}
}}
\end{equation}
Here, all vertical arrows are multiplication by $x^{\alpha}-1$, the injection $\tau_2$ is induced by the  isomorphism of $\mathbb{Z}_{C}[[\mathcal{F}^1]]$-modules $\mathcal{F}^{[1]}/\mathcal{F}^{[2]}\cong \set{(a_1,...a_r)\in \mathbb{Z}_{C}[[\mathcal{F}^{1}]]^{\oplus r}}{\Sigma_{i=1}^r a_i(x_i-1)=0}$, the isomorphism is obtained in the case $\mathcal{N}=\mathcal{F}^{[1]}$ of Proposition \ref{bLtheory}, and the map $\tau_1$ is defined to send $\beta\in\mathbb{Z}_{C}[[\mathcal{F}^{1}]]$ to $(\beta(x_2-1),-\beta(x_1-1),0,\cdots,0)\in\mathcal{F}^{[1]}/\mathcal{F}^{[2]}$. (Observe that $\tau_1$ is injective by Lemma \ref{1.1.7}(1).) \par
By definition,  we have that ``$\mathbb{Z}_{C}[[\mathcal{F}^{1}]]\ni x^\alpha-1\text{ is a non-zero-divisor }\Leftrightarrow\text{Ker}(\psi_1)=\left\{0\right\} \Leftrightarrow\text{Ker}(\psi_2)=\left\{0\right\}$''. The diagram (\ref{eq1.3}) implies  ``$\text{Ker}(\psi_1)=\left\{0\right\} \Leftrightarrow\text{Ker}(\psi_2)=\left\{0\right\} \Leftrightarrow\text{Ker}(\Psi_{x^{\alpha}-1})=\left\{0\right\}''. $ By Proposition \ref{1.1.5} and the fact that $\langle x\rangle\subset Z_{\mathcal{F}^2}(x^\alpha)$, we get that  $Z_{\mathcal{F}^2}(x^\alpha)=\langle x\rangle\cdot(Z_{\mathcal{F}^2}(x^\alpha)\cap\mathcal{F}^{[1]}/\mathcal{F}^{[2]})$. Since we have that $Z_{\mathcal{F}^2}(x^\alpha)\cap\mathcal{F}^{[1]}/\mathcal{F}^{[2]}= \left\{ h\in \mathcal{F}^{[1]}/\mathcal{F}^{[2]}\mid x^{\alpha}hx^{-\alpha}=h \right\}=\text{Ker}(\Psi_{x^{\alpha}-1})$. Hence  we obtain that  $\text{Ker}(\Psi_{x^{\alpha}-1})=\left\{0\right\}\Leftrightarrow Z_{\mathcal{F}^2}(x^\alpha)=\langle x\rangle.$
\end{proof}

\begin{lem}\label{1.1.8}
Let $x\in X$. Then the following hold.
\begin{enumerate}[(1)]
\item Let  $n\in \mathbb{Z}-\left\{0\right\}$. Then $Z_{\mathcal{F}^2}(x^n) = \langle x\rangle$
\item Assume that $|X|\geq 2$. Let  $S \subset  \text{prime}(C)$ and $\gamma \in \mathbb{Z}_C$ as in Lemma \ref{1.1.7}.  Then  $Z_{\mathcal{F}^2}(x^\gamma) \varsupsetneqq \langle x\rangle$
\end{enumerate}
\end{lem}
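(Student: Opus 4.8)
The plan is to reduce both statements to the finitely-generated case already handled by Lemma \ref{1.1.7} and Lemma \ref{1.1.6}. For part (1), first observe that the case $|X|=1$ is trivial, since then $\mathcal{F}^1 \cong \mathbb{Z}_C$ is abelian and $\mathcal{F}^{[1]}=\{1\}$, so $\mathcal{F}^2 = \mathcal{F}^1 = \langle x\rangle$ and the centralizer is the whole group, which equals $\langle x\rangle$. So assume $|X|\geq 2$. If $X$ is finite, then Lemma \ref{1.1.7}(1) says $x^n-1$ is a non-zero-divisor in $\mathbb{Z}_C[[\mathcal{F}^1]]$, and Lemma \ref{1.1.6} (applicable since $n\in\mathbb{Z}-\{0\}\subset\mathbb{Z}_C-\{0\}$ and $|X|\geq 2$) converts this into $Z_{\mathcal{F}^2}(x^n)=\langle x\rangle$. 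If $X$ is infinite, I would run the same cofinal-limit argument as in the last paragraph of the proof of Proposition \ref{1.1.5}: write $\{X_i\}_{i\in I}$ for the finite subsets of $X$ containing $x$ and at least one other generator, use $\mathcal{F}^2 \cong \varprojlim_{i} F^C(X_i)^2$ via the retractions $\tau_i^2$, apply the finite case to each $F^C(X_i)^2$ to get $Z_{F^C(X_i)^2}(\tau_i^2(x)^n)=\langle \tau_i^2(x)\rangle$, and pass to the limit using $\varprojlim_i \langle \tau_i^2(x)\rangle = \langle x\rangle$ inside $\mathcal{F}^2$, together with the fact that a $\tau_i^2$-image of an element centralizing $x^n$ centralizes $\tau_i^2(x)^n$. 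Combined with the trivial inclusion $\langle x\rangle \subset Z_{\mathcal{F}^2}(x^n)$, this gives equality.

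For part (2), we are given $|X|\geq 2$, and the element $\gamma\in\mathbb{Z}_C$ is a nonzero element (it is not the zero element as long as $\mathrm{prime}(C)\supsetneq S$; if $S=\mathrm{prime}(C)$ then $\gamma=0$ and the claim $Z_{\mathcal{F}^2}(x^0)=Z_{\mathcal{F}^2}(1)=\mathcal{F}^2\supsetneq\langle x\rangle$ is immediate since $|X|\geq 2$ makes $\mathcal{F}^2$ strictly larger than the rank-one subgroup $\langle x\rangle$). So assume $\mathrm{prime}(C)-S\neq\emptyset$. By Lemma \ref{1.1.7}(2), $x^\gamma-1$ is a zero-divisor in $\mathbb{Z}_C[[\mathcal{F}^1]]$; this requires the reduction to $|X|=1$ in that lemma, but here we have the reverse inclusion available: for the finitely-generated case $|X|<\infty$, Lemma \ref{1.1.6} gives the equivalence between "$x^\gamma-1$ is a non-zero-divisor" and "$Z_{\mathcal{F}^2}(x^\gamma)=\langle x\rangle$", so the failure of the former (by \ref{1.1.7}(2)) yields $Z_{\mathcal{F}^2}(x^\gamma)\supsetneq\langle x\rangle$ directly. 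For infinite $X$, I would again reduce to a finite subset: choose a finite $X_i \ni x$ with $|X_i|\geq 2$; the retraction $\tau_i^2 : \mathcal{F}^2 \to F^C(X_i)^2$ has a section $s_i$ (the inclusion of generators), and a nontrivial element of $Z_{F^C(X_i)^2}(\tau_i^2(x)^\gamma)$ not in $\langle\tau_i^2(x)\rangle$ pulls back via $s_i$ to an element of $\mathcal{F}^2$ that still commutes with $x^\gamma$ (since $s_i$ is a homomorphism and $s_i(\tau_i^2(x))=x$) and is not in $\langle x\rangle$ (since $\tau_i^2 \circ s_i = \mathrm{id}$ on $F^C(X_i)^2$). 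This produces the required strict containment.

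The main obstacle is the bookkeeping in part (2) for infinite $X$: one must be careful that the splitting $s_i$ genuinely lands an element witnessing $Z_{\mathcal{F}^2}(x^\gamma)\supsetneq\langle x\rangle$, i.e. that commuting with $\tau_i^2(x)^\gamma$ downstairs lifts to commuting with $x^\gamma$ upstairs — this works because $s_i$ is a group homomorphism sending $\tau_i^2(x)$ to $x$, but it needs to be stated. Everything else is an application of the two preceding lemmas plus the inverse-limit technique already deployed in Proposition \ref{1.1.5}; no new ideas are needed. One should also note at the outset that $\langle x\rangle \subset Z_{\mathcal{F}^2}(x^\alpha)$ always holds (for $\alpha=n$ or $\alpha=\gamma$), so in part (1) only the reverse inclusion is at stake and in part (2) only strictness.
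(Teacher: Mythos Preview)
Your proposal is correct and follows essentially the same approach as the paper: handle $|X|=1$ trivially, invoke Lemma~\ref{1.1.7} and Lemma~\ref{1.1.6} for finite $|X|\geq 2$, and reduce the infinite case to the finite one. The paper's proof is a three-line sketch that simply points to Proposition~\ref{1.1.5} for the infinite reduction; your section argument for part~(2) makes explicit what the paper leaves implicit (the projection technique of Proposition~\ref{1.1.5} proves inclusions, not strict containments, so a section is indeed the natural device here), but this is an elaboration rather than a different route.
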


\begin{proof}
When $|X|=1$, the assertion (1) is clear. When $|X|$ is finite, the assertions follow from Lemma \ref{1.1.7} and Lemma \ref{1.1.6}. The case $|X|=\infty$ is reduced to the case $|X|<\infty$, just similarly as at the end of the proof of Proposition \ref {1.1.5}.
\end{proof}

The next proposition is the main result of this subsection. 

\begin{prop}\label{1.1.9}
Assume that $m\geq 2$. Then $Z_{\mathcal{F}^m}(x^n) = \langle x\rangle$ holds for all $x\in X$ and all $n\in \mathbb{Z}-\left\{0\right\}$.
\end{prop}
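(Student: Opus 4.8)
The plan is to reduce everything, via Proposition~\ref{1.1.5} and the pro-$C$ Blanchfield--Lyndon sequence of Theorem~A.2, to the non-zero-divisor statement of Lemma~\ref{1.1.7}(1); no separate induction on $m$ is needed (and for $m=2$ the assertion is already Lemma~\ref{1.1.8}(1)). Only the inclusion $Z_{\mathcal{F}^m}(x^n)\subseteq\langle x\rangle$ needs proof. By Proposition~\ref{1.1.5}, any $y\in Z_{\mathcal{F}^m}(x^n)$ is of the form $y=x^az$ with $a\in\mathbb{Z}_C$ and $z\in\mathcal{F}^{[m-1]}/\mathcal{F}^{[m]}$; since $x^a$ commutes with $x^n$, so does $z$, so it suffices to show that the only element of $\mathcal{F}^{[m-1]}/\mathcal{F}^{[m]}$ commuting with $x^n$ in $\mathcal{F}^m$ is the identity. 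For this I may assume $X=\{x_1=x,x_2,\dots,x_r\}$ is finite, by the same inverse-limit argument as at the end of the proof of Proposition~\ref{1.1.5}: the truncations $\tau_i^m$ fix $x$, send $x^n$ to $x^n$, carry $\mathcal{F}^{[m-1]}/\mathcal{F}^{[m]}$ into the corresponding subquotient of $F^C(X_i)^m$, and together detect whether an element of $\mathcal{F}^m$ is trivial.

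So assume $X$ finite and view $\mathcal{F}^{[m-1]}/\mathcal{F}^{[m]}$ (the abelianization of the free pro-$C$ group $\mathcal{F}^{[m-1]}$) as a left $\mathbb{Z}_C[[\mathcal{F}^{m-1}]]$-module through the conjugation action $g\cdot z=gzg^{-1}$ of $\mathcal{F}^{m-1}=\mathcal{F}/\mathcal{F}^{[m-1]}$. Theorem~A.2 with $\mathcal{N}=\mathcal{F}^{[m-1]}$ identifies this module $\mathbb{Z}_C[[\mathcal{F}^{m-1}]]$-linearly with the submodule $\{(a_i)_i\in\mathbb{Z}_C[[\mathcal{F}^{m-1}]]^{\oplus r}\mid\sum_i a_i(\overline{x_i}-1)=0\}$ of the free module $\mathbb{Z}_C[[\mathcal{F}^{m-1}]]^{\oplus r}$, where $\overline{x_i}$ is the image of $x_i$ in $\mathcal{F}^{m-1}$. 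Under this identification the condition ``$z$ commutes with $x^n$'' becomes ``$(\overline{x}^n-1)z=0$'', so it is enough to prove that $\overline{x}^n-1$ is not a left zero-divisor of $\mathbb{Z}_C[[\mathcal{F}^{m-1}]]$: left multiplication by it is then injective on $\mathbb{Z}_C[[\mathcal{F}^{m-1}]]^{\oplus r}$, hence on the above submodule, which forces $z=1$.

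It remains to prove that $\overline{x}^n-1$ is not a left zero-divisor of $\mathbb{Z}_C[[\mathcal{F}^{m-1}]]$. Since $x$ is part of a basis, $\langle x\rangle\cap\mathcal{F}^{[1]}=\{1\}$, so the closed subgroup $\langle\overline{x}\rangle\subseteq\mathcal{F}^{m-1}$ is isomorphic to $\mathbb{Z}_C$; by the one-generator case of Lemma~\ref{1.1.7}(1), $\overline{x}^n-1$ is a non-zero-divisor of the commutative ring $\mathbb{Z}_C[[\langle\overline{x}\rangle]]$. Let $\phi\colon\mathcal{F}\to\mathbb{Z}_C$ be the homomorphism with $\phi(x)=1$ and $\phi(x_i)=0$ for $i\geq2$, and put $\mathcal{K}:=\ker\phi$; then $\mathcal{F}^{[1]}\subseteq\mathcal{K}$ and $\mathcal{F}=\langle x\rangle\ltimes\mathcal{K}$, hence $\mathcal{F}^{m-1}=\langle\overline{x}\rangle\ltimes(\mathcal{K}/\mathcal{F}^{[m-1]})$ and every element of $\mathcal{F}^{m-1}$ is uniquely $\overline{x}^ak$ with $k\in\mathcal{K}/\mathcal{F}^{[m-1]}$. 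Therefore $\mathbb{Z}_C[[\mathcal{F}^{m-1}]]$ is free as a left $\mathbb{Z}_C[[\langle\overline{x}\rangle]]$-module (a completed free module on $\mathcal{K}/\mathcal{F}^{[m-1]}$), and left multiplication by the non-zero-divisor $\overline{x}^n-1$ is injective on it. (For $m=2$ one can omit $\mathcal{K}$ and quote Lemma~\ref{1.1.7}(1) for $\mathbb{Z}_C[[\mathcal{F}^1]]$ directly.)

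The step I expect to be the main obstacle is the middle one: extracting from Theorem~A.2 the precise fact that the conjugation-module $\mathcal{F}^{[m-1]}/\mathcal{F}^{[m]}$ sits inside the free module $\mathbb{Z}_C[[\mathcal{F}^{m-1}]]^{\oplus r}$ as the stated relation submodule, and translating ``commutes with $x^n$'' into annihilation by $\overline{x}^n-1$. The freeness of $\mathbb{Z}_C[[\mathcal{F}^{m-1}]]$ over $\mathbb{Z}_C[[\langle\overline{x}\rangle]]$ and the reduction to finite $X$ are routine but should be written out carefully; once the module picture is in place, the rest is formal and rests only on Proposition~\ref{1.1.5} and Lemma~\ref{1.1.7}(1).
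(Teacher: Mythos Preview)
Your argument is correct and gives a genuinely different proof from the paper's.  The paper proceeds by induction on $m$: the base case $m=2$ is Lemma~\ref{1.1.8}(1), and for $m>2$ one takes open subgroups $\tilde H\supset\mathcal{F}^{[1]}$, uses the pro-$C$ Nielsen--Schreier theorem to see that $x^N$ extends to a basis of the free pro-$C$ group $\tilde H$, applies the induction hypothesis to $\tilde H^{m-1}$, and intersects over all such $H$.  In particular, the paper only invokes Theorem~A.2 in the commutative case $\mathcal{N}=\mathcal{F}^{[1]}$ (inside Lemma~\ref{1.1.6}).  Your route avoids both the induction and Nielsen--Schreier by applying Theorem~A.2 directly with $\mathcal{N}=\mathcal{F}^{[m-1]}$, thereby embedding $\mathcal{F}^{[m-1]}/\mathcal{F}^{[m]}$ into a free $\mathbb{Z}_C[[\mathcal{F}^{m-1}]]$-module, and then reducing to the non-left-zero-divisor property of $\overline{x}^n-1$ in the (generally noncommutative) ring $\mathbb{Z}_C[[\mathcal{F}^{m-1}]]$, which you handle via the semidirect product splitting $\mathcal{F}^{m-1}=\langle\overline x\rangle\ltimes(\mathcal{K}/\mathcal{F}^{[m-1]})$ and the one-variable case of Lemma~\ref{1.1.7}(1).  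This is more uniform in $m$ and arguably more conceptual; the paper's approach is a bit more elementary on the completed-group-ring side.

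One point you should write out in a final version is the ``completed free left $\mathbb{Z}_C[[\langle\overline x\rangle]]$-module'' claim for $\mathbb{Z}_C[[\mathcal{F}^{m-1}]]$: it is true, but not entirely trivial.  A clean way is to observe that the open normal subgroups of $G:=\mathcal{F}^{m-1}$ of the form $VU$, with $U\lhd^{\mathrm{op}}K:=\mathcal{K}/\mathcal{F}^{[m-1]}$ $G$-invariant, $V\lhd^{\mathrm{op}}\langle\overline x\rangle$, and $[V,K]\subseteq U$, form a cofinal system (given $M\lhd^{\mathrm{op}}G$, take $U=M\cap K$ and shrink $V$ inside $\langle\overline x\rangle\cap M$ until it acts trivially on the finite group $K/U$).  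For such $VU$ one has $G/(VU)\cong(\langle\overline x\rangle/V)\ltimes(K/U)$, hence $\mathbb{Z}_C[G/(VU)]$ is free of finite rank over $\mathbb{Z}_C[\langle\overline x\rangle/V]$, and after taking inverse limits first in $V$ and then in $U$ one gets an inverse system of finite-rank free $\mathbb{Z}_C[[\langle\overline x\rangle]]$-modules with $\mathbb{Z}_C[[\langle\overline x\rangle]]$-linear transition maps.  Injectivity of left multiplication by the non-zero-divisor $\overline{x}^n-1$ then passes to the limit.  With this filled in, your proof is complete.
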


\begin{proof}
The case $|X|=\infty$ is reduced to the case $|X|<\infty$, just similarly as at the end of the proof of Proposition \ref {1.1.5}. Hence we may assume  that $|X|<\infty$. When $|X|=1$, the assertion is clear. Hence we may assume that $|X|\neq 1$.\par
We prove the assertion by induction on $m \geq 2$. If $m=2$, the assertion holds for Lemma \ref{1.1.8}. Suppose that $m>2$ and that the assertion holds for $m-1$. To prove the assertion, it is sufficient to show that $Z_{\mathcal{F}^m}(x^n)\cap\mathcal{F}^{[m-1]}/\mathcal{F}^{[m]}$ is trivial  by Proposition \ref{1.1.5}, .\par
Let $\tilde{H}$ be a open subgroup of $\mathcal{F}$ that contain $\mathcal{F}^{[1]}$. Let $H$ be   the image of $H$ in $\mathcal{F}^{m}$. Let $N$ be the order of $x$ in $\mathcal{F}/\tilde{H}$ (=$\mathcal{F}^{m}/H$). By the Nielsen-Schreier theorem, $\tilde{H}$ is a free pro-$C$ group and  $x^{N}\in \tilde{H}$ is an element of a basis of $\tilde{H}$. Hence  $x^N\in H^{m-1}$ is also an element of a basis of $H^{m-1}$. (Note that $\tilde{H}^{m-1}\cong H^{m-1}$ because $\mathcal{F}^{[1]}/\mathcal{F}^{[m]}\subset H$.) By assumption of induction, we obtain  $Z_{H^{m-1}}((x^N)^n) = \langle x^N\rangle$ for all $n\in \mathbb{Z}-\left\{0\right\}$.  Set $\rho: H\twoheadrightarrow H^{m-1}$ and $\tilde{\rho}: \tilde{H}\twoheadrightarrow \tilde{H}^{m-1}$. 
 Since $m>2$, we have that $ \mathcal{F}^{[m-1]}\subset \tilde{H}^{[1]}$. Thus, $\tilde{\rho}(\mathcal{F}^{[m-1]})\subset \tilde{H}^{[1]}/\tilde{H}^{[m-1]}$ and hence $\langle x^N\rangle\cap \tilde{\rho}(\mathcal{F}^{[m-1]})=\left\{1\right\}$. We have that   $Z_{\mathcal{F}^m}(x^n)\subset Z_{\mathcal{F}^m}(x^{Nn})$ and $\rho(Z_{\mathcal{F}^m}(x^{Nn})\cap H)\subset Z_{H^{m-1}}(x^{Nn}) = \langle x^N\rangle$. Thus, we obtain that $\rho(Z_{\mathcal{F}^m}(x^n)\cap\mathcal{F}^{[m-1]}/\mathcal{F}^{[m]})\subset  \langle x^N\rangle\cap\rho(\mathcal{F}^{[m-1]}/\mathcal{F}^{[m]})=\left\{1\right\}.$ Considering all $H$, we get 
\[
 Z_{\mathcal{F}^m}(x^n)\cap\mathcal{F}^{[m-1]}/\mathcal{F}^{[m]} \subset \bigcap_{\mathcal{F}^{[1]}/\mathcal{F}^{[m]}\subset H\overset{\op}\subset \mathcal{F}^m } H^{[m-1]}=(\mathcal{F}^{[1]}/\mathcal{F}^{[m]})^{[m-1]}=\left\{1\right\}
.\]
Therefore, we obtain  $Z_{\mathcal{F}^m}(x^n) = \langle x\rangle$ by induction. 
\end{proof}
For a profinite  group $G$, we say that $G$ is slim if, for any open subgroup $H$ of $G$, $H$ is center-free. Clearly, if $G$ is slim, then $G$ is center-free. 

\begin{cor}
Assume that $m\geq 2$ and  $|X|\geq 2$. Then $\mathcal{F}^m$ is slim.
\end{cor}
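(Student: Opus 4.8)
The plan is to show that every open subgroup $H$ of $\mathcal{F}^m$ is center-free. Pick $1 \neq y \in H$ lying in the center of $H$. The key observation is that, since $|X| \geq 2$ and $m \geq 2$, the group $\mathcal{F}^m$ is torsion-free (by the torsion-freeness lemma proved above), and it contains many conjugates of cyclic subgroups $\langle x \rangle$ generated by basis elements $x \in X$. The strategy is to produce, inside $H$, a nontrivial power of a basis element (or a conjugate thereof) and then apply Proposition \ref{1.1.9} to conclude that $H$ must have trivial center.

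More concretely, first I would reduce to the case $|X| < \infty$ exactly as in the proofs of Proposition \ref{1.1.5} and Proposition \ref{1.1.9}: an arbitrary open subgroup factors through a finite-rank free quotient, and centralizers are compatible with the relevant projective limits. Next, fix a basis element $x \in X$. Since $H$ is open in $\mathcal{F}^m$, there is $N \in \mathbb{N}$ with $x^N \in H$ (take $N$ to be the order of the image of $x$ in the finite group $\mathcal{F}^m / H$). As $y$ is central in $H$, we have $y \in Z_{H}(x^N) \subset Z_{\mathcal{F}^m}(x^N)$. By Proposition \ref{1.1.9}, $Z_{\mathcal{F}^m}(x^N) = \langle x \rangle$, so $y \in \langle x \rangle$, i.e. $y = x^a$ for some $a \in \mathbb{Z}$ (using torsion-freeness to identify $\langle x \rangle$ with $\mathbb{Z}$), and $a \neq 0$ since $y \neq 1$.

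Finally, I would derive a contradiction by using a second basis element $x' \in X$ with $x' \neq x$ (available because $|X| \geq 2$). Again there is $N' \in \mathbb{N}$ with $(x')^{N'} \in H$, and centrality of $y$ forces $y = x^a \in Z_{\mathcal{F}^m}((x')^{N'}) = \langle x' \rangle$ by Proposition \ref{1.1.9}. But $\langle x \rangle \cap \langle x' \rangle = \{1\}$ in $\mathcal{F}^m$: this already holds in the free abelian quotient $\mathcal{F}^1 \cong \bigoplus_X \mathbb{Z}_C$, where the images of distinct basis elements generate independent rank-one summands, so any element of $\langle x\rangle \cap \langle x'\rangle$ dies in $\mathcal{F}^1$ and hence lies in $\mathcal{F}^{[1]}/\mathcal{F}^{[m]}$ — but $x^a$ has nontrivial image in $\mathcal{F}^1$ unless $a = 0$. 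This contradicts $a \neq 0$, so no such $y$ exists and $H$ is center-free; since $H$ was an arbitrary open subgroup, $\mathcal{F}^m$ is slim.

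I expect the only mild subtlety to be the bookkeeping in the reduction to finite rank (making sure open subgroups and their centralizers behave well under the projective limit $\mathcal{F}^u \simeq \varprojlim_i F^C(X_i)^u$), but this is routine given that the analogous reduction has already been carried out twice in this subsection. The heart of the argument — that centralizers of powers of basis elements are as small as possible — is exactly Proposition \ref{1.1.9}, which is already in hand, so no new obstacle arises.
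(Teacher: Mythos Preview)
Your argument is correct and is essentially the same as the paper's: pick two distinct basis elements $x,x'\in X$, find powers $x^N,(x')^{N'}\in H$, and use Proposition~\ref{1.1.9} together with $\langle x\rangle\cap\langle x'\rangle=\{1\}$ (seen already in $\mathcal{F}^1$) to conclude $Z(H)=\{1\}$. Two small remarks: the reduction to finite $|X|$ is unnecessary here, since Proposition~\ref{1.1.9} is already stated and proved for arbitrary $X$; and $\langle x\rangle\cong\mathbb{Z}_C$ rather than $\mathbb{Z}$, though this does not affect the argument.
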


\begin{proof}
Let $H$ be an open subgroup of $\mathcal{F}^{m}$ and $Z(H)$ the center of $H$.  Let $x,x'\in X$ be two distinct elements. There exist $n,n'\in\mathbb{N}$ such that $x^{n}\in H$ and $x'^{n'}\in H$. Since  $\langle x\rangle$ and $\langle x'\rangle$ are sent injectively by $\mathcal{F}^m\twoheadrightarrow \mathcal{F}^1$, we have that $ \langle x\rangle \cap \langle x'\rangle=\left\{1\right\}$.  Thus, we obtain that  $Z(H)\subset  Z_{H}(x^{n}) \cap Z_{H}(x'^{n'})\subset Z_{\mathcal{F}^m}(x^{n}) \cap Z_{\mathcal{F}^m}(x'^{n'})=\langle x\rangle\cap\langle x'\rangle =\left\{1\right\}$ by Proposition \ref{1.1.9}.
\end{proof}

Let $x$ be an element of $X$. We write   $N_{\mathcal{F}^m}(\langle x\rangle) :=\left\{f\in \mathcal{F}^m\mid f\langle x\rangle f^{-1}=\langle x\rangle\right\}$ and $\text{Comm}_{\mathcal{F}^m}(\langle x\rangle) :=\left\{f\in \mathcal{F}^m\mid f\langle x\rangle f^{-1}\text{ and }\langle x\rangle\text{ are commensurable}\right\}$.
\begin{cor}\label{1.1.10}
Assume that $m\geq 2$.  Then  \[
\langle x\rangle =Z_{\mathcal{F}^m}(x)=N_{\mathcal{F}^m}(\langle x\rangle)=\text{Comm}_{\mathcal{F}^m}(\langle x\rangle)\] 
hold for all $x\in X$.
\end{cor}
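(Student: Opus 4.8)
The plan is to prove the chain of inclusions
\[
\langle x\rangle\ \subseteq\ Z_{\mathcal{F}^m}(x)\ \subseteq\ N_{\mathcal{F}^m}(\langle x\rangle)\ \subseteq\ \text{Comm}_{\mathcal{F}^m}(\langle x\rangle)\ \subseteq\ \langle x\rangle .
\]
The first three inclusions are formal: $\langle x\rangle$ is procyclic, hence abelian, so it centralizes $x$; if $f$ centralizes $x$ then $f\langle x\rangle f^{-1}=\langle fxf^{-1}\rangle=\langle x\rangle$; and $\langle x\rangle$ is trivially commensurable with itself. So the only inclusion carrying content is $\text{Comm}_{\mathcal{F}^m}(\langle x\rangle)\subseteq\langle x\rangle$, which I will deduce from Proposition \ref{1.1.9}.

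The only ingredient I need beyond Proposition \ref{1.1.9} is that the natural surjection $\pi:\mathcal{F}^m\twoheadrightarrow\mathcal{F}^1$ is injective on $\langle x\rangle$; this was already used in the proof of the preceding corollary, and it follows from the retraction $\mathcal{F}\to F^{C}(\{x\})$ sending $x\mapsto x$ and every other element of $X$ to $1$, whose target is procyclic (hence abelian), so that the retraction factors through $\mathcal{F}^1$ and restricts to an isomorphism on $\langle x\rangle$. Now fix $f\in\text{Comm}_{\mathcal{F}^m}(\langle x\rangle)$ and set $H:=\langle x\rangle\cap f\langle x\rangle f^{-1}$; by definition $H$ is a closed, finite-index, hence open subgroup of $\langle x\rangle$, so $N_0:=[\langle x\rangle:H]$ is a positive integer and $x^{N_0}\in H\subseteq f\langle x\rangle f^{-1}$, whence $f^{-1}x^{N_0}f\in\langle x\rangle$. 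Applying $\pi$ and using that $\mathcal{F}^1$ is abelian gives $\pi(f^{-1}x^{N_0}f)=\pi(x^{N_0})$; since $f^{-1}x^{N_0}f$ and $x^{N_0}$ both lie in $\langle x\rangle$, on which $\pi$ is injective, it follows that $f^{-1}x^{N_0}f=x^{N_0}$, i.e.\ $f\in Z_{\mathcal{F}^m}(x^{N_0})$. As $N_0\in\mathbb{Z}-\{0\}$ and $m\geq2$, Proposition \ref{1.1.9} gives $Z_{\mathcal{F}^m}(x^{N_0})=\langle x\rangle$, hence $f\in\langle x\rangle$. Together with the formal inclusions, this shows all four subgroups coincide.

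Given Proposition \ref{1.1.9}, there is essentially no obstacle. The two places calling for a little care are the replacement of the a priori merely open subgroup $H$ by the honest positive integer exponent $N_0=[\langle x\rangle:H]$, and the use of the injectivity of $\pi$ on $\langle x\rangle$ to pass from ``$f^{-1}x^{N_0}f$ and $x^{N_0}$ have the same image in the abelianization'' to the exact identity $f^{-1}x^{N_0}f=x^{N_0}$ — which is precisely what licenses applying Proposition \ref{1.1.9} with an integer exponent.
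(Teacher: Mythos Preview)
Your proof is correct and follows essentially the same approach as the paper: both reduce to the inclusion $\text{Comm}_{\mathcal{F}^m}(\langle x\rangle)\subseteq\langle x\rangle$, produce an integer power $x^n$ lying in $f\langle x\rangle f^{-1}$, use injectivity of $\mathcal{F}^m\twoheadrightarrow\mathcal{F}^1$ on $\langle x\rangle$ to deduce $f$ centralizes $x^n$, and then invoke Proposition~\ref{1.1.9}. The only cosmetic difference is that the paper writes $x^n=fx^{\alpha}f^{-1}$ for some $\alpha\in\mathbb{Z}_C$ and argues $\alpha=n$, whereas you phrase it as $f^{-1}x^{N_0}f\in\langle x\rangle$ and argue it equals $x^{N_0}$; these are the same computation.
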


\begin{proof}
If $|X|=1$, then the assertion is clear. Assume that $|X|\geq 2$. Since $\langle x\rangle\subset Z_{\mathcal{F}^m}(x)\subset N_{\mathcal{F}^m}(\langle x\rangle)\subset \text{Comm}_{\mathcal{F}^m}(\langle x\rangle)$, we have only to show that $\langle x\rangle =\text{Comm}_{\mathcal{F}^m}(\langle x\rangle)$.  Let $f\in\mathcal{F}^{m}$ which satisfies that  $\langle x\rangle$ and $f\langle x\rangle f^{-1}$ are commensurable. Then there exists $n\in\mathbb{N}$ such that $ f\langle x\rangle f^{-1}\ni x^{n}$. Hence, we get $x^{n}=fx^{\alpha}f^{-1}$ for some $\alpha\in\mathbb{Z}_C$.
Note that $\langle x\rangle $ is mapped injectively by $\mathcal{F}^{m}\twoheadrightarrow \mathcal{F}^{1}$ and $x^{n}\equiv fx^{\alpha}f^{-1}\equiv x^{\alpha}\mod{\mathcal{F}^{[1]}/\mathcal{F}^{[m]}}$. Hence, $\alpha=n$. Therefore, we obtain that $f\in  Z_{\mathcal{F}^m}(x^{n})=\langle x\rangle$ by Proposition \ref{1.1.9}. 
\end{proof}

%%%%%%%%%%%%%%%%%%%%%%%%%%%%%%%%%%%%%%%%%%%%%%%%%%%%%%%%%%%%%%%%%%%%%%%%%%%%%%%%%%%%%%%%%%%%%%%

\subsection{Basic properties  of inertia groups and decomposition groups at cusps}\label{subsection1.2}
\hspace{\parindent}
In this subsection, we show the basic properties of inertia groups and decomposition groups at cusps of fundamental groups of curves.  First, we introduce some notations.
\begin{notation}\label{notation3}
From now on, we fix the following notations.
\begin{itemize}
\item For any scheme $S$, we write $K(S)$ for the function field of $S$.
\item $C^{\nil}$ is defined as the class of all nilpotent groups contained in $C$. For each $\ell\in$prime($C$), $C^{\pro\ell}$ is defined as the class of all $\ell$-groups contained in $C$. (Note that $C^{\pro\ell}$ coincides with the class of all $\ell$-groups, since $C$ is full and $\ell\in\text{prime}(C)$.)
\item Let $k$ be a field, $\overline{k}$ an algebraic closure of $k$ and $k^{\sep}$ the separable closure of $k$ in $\overline{k}$. We set $p:=\text{ch}(k)\geq0$ and assume $p\not\in \text{prime}(C)$. Let $X$ be a proper smooth curve over $k$  and $E$ a closed subgroup of $X$ which is finite \'{e}tale over $k$. Set $U:=X-E$.  For each field extension  $L/k$, we write  $U_{L}:=U\times_{k}L$.  Let $g:=g(U)$ be the  genus of $X_{\overline{k}}$, and $r:=r(U):=|E_{\overline{k}}|$.  For any curve $V$ over $k$, we write   $V^{\text{cpt}}$ for  the  regular compactification of $V$ (which is unique up to isomorphism). 

\item Fix an algebraically closed field $\Omega$ containing $K(U_{\overline{k}})$, which induces a geometric point  $\overline{\eta}:\text{Spec}(\Omega)\rightarrow U_{\overline{k}}$ over the generic point of $U_{\overline{k}}$.   Set $\mathcal{P}\in\left\{(\text{unrestricted}),\nil,\pro\ell\right\}$. Let $\mathcal{R}^{\mathcal{P}}\subset\Omega$ be the maximal pro-$C^{\mathcal{P}}$ Galois extension of  $K(U_{k^{\sep}})$ in $\Omega$ unramified on $U$ and 
$\mathcal{R}^{\mathcal{P},m}\subset\Omega$ the maximal $m$-step solvable pro-$C^{\mathcal{P}}$ Galois extension of $K(U_{k^{\sep}})$ in $\Omega$ unramified on $U$.  Let $\mathscr{S}^{\mathcal{P},m}(U)_x$ be the set of all places of $\mathcal{R}^{\mathcal{P},m}$ above $x\in E$ and $\mathscr{S}^{\mathcal{P},m}(U):=\cup_{x\in E}\mathscr{S}^{\mathcal{P},m}(U)_x$. Note that $\mathscr{S}^{\mathcal{P},m}(U_{k^{\sep}})=\mathscr{S}^{\mathcal{P},m}(U)$. If there is no risk of confusion, we write $\mathscr{S}^{\mathcal{P},m}_x:=\mathscr{S}^{\mathcal{P},m}(U)_x$ and $\mathscr{S}^{\mathcal{P},m}:=\mathscr{S}^{\mathcal{P},m}(U)$.  Let $n\in \mathbb{N}$ with $m\geq n$. Set $(\mathcal{P}, \mathcal{Q})$ is either  ((unrestricted),(unrestricted)), $(\nil,\nil)$, $(\pro\ell,\pro\ell)$, $((\text{unrestricted}),\nil)$, $((\text{unrestricted}), \pro\ell)$ or $(\nil,\pro\ell)$. We denote the natural surjection $\Pi^{(\mathcal{P},m)}(U)\rightarrow \Pi^{(\mathcal{Q}, n)}(U)$  by $\Psi^{\mathcal{P},m}_{\mathcal{Q},n}(U)$ and  the natural surjection $\mathscr{S}^{\mathcal{P},m}(U)\rightarrow \mathscr{S}^{\mathcal{Q}, n}(U)$  by $\psi^{\mathcal{P},m}_{\mathcal{Q},n}(U)$.  If there is no risk of confusion, we write $\Psi^{\mathcal{P},m}_{\mathcal{Q},n}:=\Psi^{\mathcal{P},m}_{\mathcal{Q},n}(U)$ and $\psi^{\mathcal{P},m}_{\mathcal{Q},n}:=\psi^{\mathcal{P},m}_{\mathcal{Q},n}(U)$.

\item We set \[
\overline{\Pi}^{\mathcal{P}}(U):=\pi_1(U_{k^{\sep}},\overline{\eta})^{C^{\mathcal{P}}}(=\pi_1(U_{\overline{k}},\overline{\eta})^{C^{\mathcal{P}}}),\ \ \ \ \ \ \Pi^{({\mathcal{P}},m)}(U):=\pi_1(U,\overline{\eta})/\Ker (\pi_1(U_{k^{\sep}},\overline{\eta})\twoheadrightarrow\overline{\Pi}^{\mathcal{P}}(U)^{m}),
\]
and let $p_{U/k}:\Pi^{(m)}(U)\twoheadrightarrow G_k:=\text{Gal}(k^{\sep}/k)$ be the natural projection. By definition, we have $\overline{\Pi}^{\mathcal{P}}(U)=\text{Gal}(\mathcal{R}^{\mathcal{P}}/K(U_{k^{\sep}}))$ and $\Pi^{(\mathcal{P},m)}(U)=\text{Gal}(\mathcal{R}^{\mathcal{P},m}/K(U))$. We write $\overline{\Pi}^{\mathcal{P},m}(U):=\overline{\Pi}^{\mathcal{P}}(U)^{m}$. If there is no risk of confusion, we also write $\overline{\Pi}^{\mathcal{P},m}:=\overline{\Pi}^{\mathcal{P},m}(U),\ \overline{\Pi}^{\mathcal{P}}:=\overline{\Pi}^{\mathcal{P}}(U)$ and $\Pi^{(\mathcal{P},m)}:=\Pi^{(\mathcal{P},m)}(U)$.  Let $I_{y,\overline{\Pi}^{\mathcal{P},m}(U)}\ (\text{resp. }D_{y,\Pi^{(\mathcal{P},m)}(U)})$ be the stabilizer of $y\in \mathscr{S}^{\mathcal{P},m}(U)$ in $\overline{\Pi}^{\mathcal{P},m}(U)$ (resp. $\Pi^{(\mathcal{P},m)}(U)$) with respect to the natural action $\overline{\Pi}^{\mathcal{P},m}(U)\curvearrowright\mathscr{S}^{\mathcal{P},m}(U)\ (\text{resp. }\Pi^{(\mathcal{P},m)}(U)\curvearrowright\mathscr{S}^{\mathcal{P},m}(U))$. We call it the  inertia group (resp. the decomposition group) at $y$. 
 We define the following subsets of  $\overline{\Pi}^{\mathcal{P},m}(U)$.
\[
\mathcal{I}_{x,\overline{\Pi}^{\mathcal{P},m}(U)}:=\underset{y\in \mathscr{S}^{\mathcal{P},m}(U)_x} \cup I_{y,,\overline{\Pi}^{\mathcal{P},m}(U)}\ \ \left(x\in E_{k^{\text{sep}}}\right),\ \  \ \ \ \ \ \mathcal{I}_{\overline{\Pi}^{\mathcal{P},m}(U)}:=\underset{x\in E_{k^{\text{sep}}}}\cup\mathcal{I}_{x,\overline{\Pi}^{\mathcal{P},m}(U)}
\]
 If there is no risk of confusion, we write $I_y:=I_{y,\overline{\Pi}^{\mathcal{P},m}(U)}$, $D_y:=D_{y,\Pi^{(\mathcal{P},m)}(U)}$ and  $\mathcal{I}_{x}:=\mathcal{I}_{x,\overline{\Pi}^{\mathcal{P},m}(U)}$. From the assumption $p\not\in\text{prime}(C)$, we have
\begin{equation}\label{equation2.1}
\overline{\Pi}(U)\cong \text{the pro-}C\text{ completion of  }\left\langle \alpha_1,\cdots,\alpha_g,\beta_1,\cdots,\beta_g,\sigma_1,\cdots,\sigma_r\middle|\prod_{i=1}^{g}[\alpha_i,\beta_i]\prod_ {j=1}^{r}\sigma_j=1\right\rangle .
\end{equation}
Here, $\sigma_1,\ldots,\sigma_r$ are generators of  inertia groups. If $m\geq 2$, we have
\begin{eqnarray*}
\overline{\Pi}^{m}(U)\text{ is not abelian}&\iff&\overline{\Pi}^{\nil,m}(U)\text{ is not abelian} \\
&\iff&2-2g-r<0\iff(g,r)\notin\left\{(0,0),(0,1),(1,0),(0,2)\right\}
\end{eqnarray*}
\end{itemize}
\end{notation}

\begin{lem}\label{inersep1}
Assume that $r\geq 3$.  Let $y,y' \in \mathscr{S}^1(U)$. Then the following conditions are equivalent: (a) $\psi^{1}_{0}(y)=\psi^{1}_{0}(y')$, (b) $I_{y, \overline{\Pi}^{1}}$ and $I_{y', \overline{\Pi}^{1}}$ are commensurable, and (c) $I_{y, \overline{\Pi}^{1}}\cap I_{y', \overline{\Pi}^{1}}\neq\{1\}$.
\end{lem}
\begin{proof}
The implications (a)$\Rightarrow$(b)$\Rightarrow$(c) is clear. We show the implication (c)$\Rightarrow$(a). Assume that $\psi^{1}_{0}(y)\neq\psi^{1}_{0}(y')$.   By (\ref{equation2.1}), we obtain that $\overline{\Pi}^{\ab}\cong \mathbb{Z}_C\alpha_1\oplus\cdots\oplus\mathbb{Z}_C\alpha_g\oplus\mathbb{Z}_C\beta_1\oplus\cdots \oplus\mathbb{Z}_C\beta_g\oplus\mathbb{Z}_C\sigma_1\cdots \oplus\mathbb{Z}_C\sigma_{r-1}$. By the assumption $r\geq3$, we may assume that $I_y$ and $I_{y'}$ are mapped isomorphically onto $\mathbb{Z}_{C}\sigma_1$ and  $\mathbb{Z}_{C}\sigma_2$, respectively, by $\overline{\Pi}^{m}\twoheadrightarrow\overline{\Pi}^{\ab}$. Since the intersection of their images is trivial, we get $I_y\cap I_{y'}=\left\{1\right\}$.
\end{proof}
 If $r\geq 1$, then $\overline{\Pi}$ is a free pro-$C$ group by (\ref{equation2.1}). If  $r\geq 2$,  any generator of  an inertia group of $\overline{\Pi}$ is  an element of some basis of $\overline{\Pi}$. Thus, we may apply the results of subsection \ref{subsection1.1} to $\overline{\Pi}$ and its inertia groups  if  $r\geq 2$.  In the next lemma, we use this fact and Corollary \ref{1.1.10}.\par

\begin{lem}\label{inersepm}
Assume that $m\geq 2$, that $r\geq 2$,  and that $(g,r)\neq (0,2)$.    Let $y,y' \in \mathscr{S}^m(U)$. Consider the following conditions: (a)  $y=y'$,  (b) $I_{y, \overline{\Pi}^{m}}$ and $I_{y', \overline{\Pi}^{m}}$ are commensurable, and (c) $I_{y, \overline{\Pi}^{m}}\cap I_{y', \overline{\Pi}^{m}}\neq \{1\}$. Then one has  (a)$\Leftrightarrow$(b)$\Rightarrow$(c). If, moreover, $|\Sigma|=1$, then one has (a)$\Leftrightarrow$(b)$\Leftrightarrow$(c).
\end{lem}
\begin{proof}
The implications (a)$\Rightarrow$(b)$\Rightarrow$(c) is clear. First, we show the implication (b)$\Rightarrow$(a). By Lemma \ref{inersep1} (b)$\Rightarrow$(a), we may assume that  $y$ and $y'$ are above the same point of $E_{k^{\text{sep}}}$. Hence, there exists $f\in\overline{\Pi}^{m}$ such that $y'=fy$, in particular, $I_{y'}= fI_{y}f^{-1}$. (The action of $\overline{\Pi}^{m}$ on  $\mathscr{S}^m(U)_x$  is transitive for all $x\in E_{k^{\text{sep}}}$.) By  Corollary \ref{1.1.10}, we have $f\in I_y$. Thus,  $y=y'$  follows. Next, we show  (c)$\Rightarrow$(b) when  $|\Sigma|=\left\{\ell\right\}$. Since a closed subgroup of  $I_y(\cong\mathbb{Z}_{C}=\mathbb{Z}_{\ell})$  is trivial or open, $I_{y}\cap I_{y'}$ is trivial or  open in $I_y$. Thus,  (c)$\Rightarrow$(b) follows. \\
\end{proof}

\begin{rem}\label{1.2.4}
When   $m= 2$, $r\geq 2$,  $(g,r)\neq (0,2)$, and $|\text{prime}(C)|\geq2$, we can construct an example of $y,y'\in \mathscr{S}^2(U)$ such that $y\neq y'$ but $I_y\cap I_{y'}\neq \left\{1\right\}$. Indeed, take any $y\in E_{k^{\text{sep}}}$ and let $\sigma$ be a generator of  $I_{y}$.  By Lemma \ref{1.1.8}(2), there exists $f\in Z_{\overline{\Pi}^2}(\sigma^{\gamma})-I_y$. Set $y'\overset{\text{def}}=fy$, then $y\neq y'$ and $1\neq \sigma^{\gamma}\in I_{y}\cap fI_{y}f^{-1}=I_y\cap I_{y'}$.
\end{rem}

\begin{prop}\label{1.2.3}
Assume that  $m\geq 2$, $r\geq 2$,  and that  $(g,r)\neq (0,2)$. For  $ y\in \mathscr{S}^m(U)$,   $I_{y,\overline{\Pi}^{m}}=N_{\overline{\Pi}^m}(I_{y,\overline{\Pi}^{m}})$ and $D_{y,\Pi^{(m)}}=N_{\Pi^{(m)}}(I_{y,\overline{\Pi}^m})$ hold.
\end{prop}

\begin{proof}
 $I_{y,\overline{\Pi}^{m}}=N_{\overline{\Pi}^m}(I_{y,\overline{\Pi}^{m}})$ follows from Corollary \ref{1.1.10}. Hence, we have only to show  $D_{y,\Pi^{(m)}}=N_{\Pi^{(m)}}(I_{y,\overline{\Pi}^m})$.  Since $I_y\lhd D_y$, $D_{y}\subset N_{\Pi^{(m)}}(I_{y})$ holds. If  $\tau\in N_{\Pi^{(m)}}(I_{y})$, then $I_y=\tau I_y\tau^{-1}=I_{\tau y}$. So we obtain  $\tau y=y$ by  Lemma \ref{inersepm}(b)$\Rightarrow$(a), hence $\tau\in D_{y}$. Thus, $D_{y}=N_{\Pi^{(m)}}(I_{y})$.
\end{proof}

The following variant of Proposition \ref{1.2.3} with $\Pi^{(m)}$ replaced by $\Pi^{(\nil,m)}$ is not contained in Lemma \ref{inersepm} and  Proposition \ref{1.2.3} , since $C^{\nil}$ is not full.

\begin{prop}\label{1.2.7}
Assume that  $m\geq 2$, $r\geq 2$,  and that  $(g,r)\neq (0,2)$.  Then, for each pair $ y, y'\in \mathscr{S}^{\nil,m}(U)$ with $y\neq y'$, the following hold.
\begin{enumerate}[(1)]
\item  $\mathscr{S}^{\nil,m}$ is identified with  the fiber product over $E_{k^{\text{sep}}}$ of $\mathscr{S}^{\pro\ell,m}$ for all $\ell\in \text{prime}(C)$.
\item $I_{y,\overline{\Pi}^{\nil,m}}$ and $I_{y',\overline{\Pi}^{\nil,m}}$ are not commensurable.
\item$I_{y,\overline{\Pi}^{\nil,m}}=N_{\overline{\Pi}^{\nil,m}}(I_{y,\overline{\Pi}^{\nil,m}})$ and $D_{y,\Pi^{(\nil,m)}}=N_{\Pi^{(\nil,m)}}(I_{y,\overline{\Pi}^{\nil,m}})$.
\end{enumerate}
\end{prop}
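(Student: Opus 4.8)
\emph{Proof proposal.} The whole argument hinges on one structural fact: since $C^{\nil}$ is the class of nilpotent members of $C$, a pro-$C^{\nil}$ group is canonically the direct product of its pro-$\ell$ parts for $\ell\in\text{prime}(C)$, and since passing to the $m$-step solvable quotient commutes with direct products of profinite groups, there is a natural isomorphism
\[
\overline{\Pi}^{\nil,m}(U)\;\cong\;\prod_{\ell\in\text{prime}(C)}\overline{\Pi}^{\pro\ell,m}(U),
\]
under which the projection to the $\ell$-th factor is $\Psi^{\nil,m}_{\pro\ell,m}$. Each class $C^{\pro\ell}$ \emph{is} full (it is the class of all $\ell$-groups), so Proposition \ref{1.2.3} applies to $U$ with $C$ replaced by $C^{\pro\ell}$ — the hypotheses $m\geq2$, $r\geq2$, $(g,r)\neq(0,2)$ are unchanged — and the plan is simply to transport its conclusions through the displayed decomposition.

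For (1): the inertia group $I_y$ at $y\in\mathscr{S}^{\nil,m}$ is procyclic, isomorphic to $\mathbb{Z}_C=\prod_\ell\mathbb{Z}_\ell$; since inertia maps onto inertia under the natural surjections, its image under the $\ell$-th projection is the inertia group at $y_\ell:=\psi^{\nil,m}_{\pro\ell,m}(y)$, and because $p_\ell|_{I_y}$ kills the pro-$\ell'$ parts for $\ell'\neq\ell$, the map $I_y\to\prod_\ell I_{y_\ell}$ is an isomorphism; that is, $I_y=\prod_\ell I_{y_\ell}$ inside $\prod_\ell\overline{\Pi}^{\pro\ell,m}$. Hence the transitive $\overline{\Pi}^{\nil,m}$-set of places over a point $x\in E_{k^{\sep}}$, which is $\overline{\Pi}^{\nil,m}/I_y$, is identified with $\prod_\ell\bigl(\overline{\Pi}^{\pro\ell,m}/I_{y_\ell}\bigr)=\prod_\ell\mathscr{S}^{\pro\ell,m}(U)_x$; assembling over $x\in E_{k^{\sep}}$ identifies $\mathscr{S}^{\nil,m}$ with the fiber product of the $\mathscr{S}^{\pro\ell,m}$ over $E_{k^{\sep}}$ via the maps $\psi^{\nil,m}_{\pro\ell,m}$.

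For (2): writing $y=(y_\ell)$ and $y'=(y'_\ell)$ as in (1), pick $\ell_0$ with $y_{\ell_0}\neq y'_{\ell_0}$, possible since $y\neq y'$. As $|\text{prime}(C^{\pro\ell_0})|=1$, Proposition \ref{1.2.3}(1) gives $I_{y_{\ell_0}}\cap I_{y'_{\ell_0}}=\{1\}$ in $\overline{\Pi}^{\pro\ell_0,m}$. Projecting to the $\ell_0$-th factor, $I_y\cap I_{y'}$ lies in the kernel of $I_y\twoheadrightarrow I_{y_{\ell_0}}$, namely $\prod_{\ell\neq\ell_0}\mathbb{Z}_\ell$, which has infinite index in $I_y\cong\prod_\ell\mathbb{Z}_\ell$; so $I_y$ and $I_{y'}$ are not commensurable. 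For (3), the normalizer of a product of subgroups in a product of groups is the product of the normalizers, so $N_{\overline{\Pi}^{\nil,m}}(I_y)=\prod_\ell N_{\overline{\Pi}^{\pro\ell,m}}(I_{y_\ell})=\prod_\ell I_{y_\ell}=I_y$ by Proposition \ref{1.2.3}(2); and for the decomposition group one repeats verbatim the argument of Proposition \ref{1.2.3}(2): $I_y\lhd D_{y,\Pi^{(\nil,m)}}$ gives $D_y\subseteq N_{\Pi^{(\nil,m)}}(I_y)$, while any $\tau\in N_{\Pi^{(\nil,m)}}(I_y)$ satisfies $I_{\tau y}=\tau I_y\tau^{-1}=I_y$, hence $\tau y=y$ by (2), i.e. $\tau\in D_y$.

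The only genuine work is (1): because $C^{\nil}$ is not full one cannot apply subsection \ref{subsection1.1} to $\overline{\Pi}^{\nil,m}$ directly, so one must carefully check that the pro-$\ell$ product decomposition is compatible with the formation of inertia groups and of the place sets $\mathscr{S}^{\square,m}$, and that the relevant comparison maps are exactly $\Psi^{\nil,m}_{\pro\ell,m}$ and $\psi^{\nil,m}_{\pro\ell,m}$. Once that is in place, (2) and (3) are short formal consequences of Proposition \ref{1.2.3} for the full classes $C^{\pro\ell}$. A minor point to keep track of is that $\text{prime}(C)$ may be infinite, so all products above are possibly infinite products of profinite groups; this causes no difficulty but should be noted explicitly.
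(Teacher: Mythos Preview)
Your proof is correct and follows essentially the same approach as the paper: both exploit the product decomposition $\overline{\Pi}^{\nil,m}\cong\prod_\ell\overline{\Pi}^{\pro\ell,m}$ together with $I_y=\prod_\ell I_{y_\ell}$ to reduce each assertion to Proposition \ref{1.2.3} applied to the full classes $C^{\pro\ell}$. The only cosmetic difference is in (2), where you invoke the sharper conclusion $I_{y_{\ell_0}}\cap I_{y'_{\ell_0}}=\{1\}$ from the $|\text{prime}(C)|=1$ case of Proposition \ref{1.2.3}(1), while the paper uses the non-commensurability statement directly; both routes are immediate.
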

\begin{proof}
(1) The assertion is equivalent to $\mathscr{S}^{\nil,m}_{x}=\prod_{\ell}\mathscr{S}^{\pro\ell,m}_{x}$ for all $x\in E_{k^{\text{sep}}}$. Let $x\in E_{k^{\text{sep}}}$ and $w\in\mathscr{S}^{\nil,m}_{x}$. Then we have 
\[
\mathscr{S}^{\nil,m}_x=\overline{\Pi}^{\nil,m}/I_{w} =\left(\prod_{\ell}\overline{\Pi}^{\pro\ell,m}\right)/\left(\prod_{\ell}\Psi^{\nil,m}_{\pro\ell,m}(I_{w})\right)= \prod_{\ell}\left(\overline{\Pi}^{\pro\ell,m}/\Psi^{\nil,m}_{\pro\ell,m}(I_{w})\right)=
\prod_{\ell}\mathscr{S}^{\pro\ell,m}_{x}.
\]
Hence the assertion follows.\\
(2) Because  $\overline{\Pi}^{\text{nil},m}$  is equal to the product of $\overline{\Pi}^{\text{pro-}\ell,m}$  for all $\ell\in \text{prime}(C)$, we have only to show that  there exists  $\ell\in$prime($C$) such that  $I_{\psi^{\nil,m}_{\pro\ell,m}(y)}$ and $ I_{\psi^{\nil,m}_{\pro\ell,m}(y')}$ are not commensurable. By Proposition \ref{inersepm},  this condition is equivalent to saying that there exists $\ell\in$prime($C$) such that  
$\psi^{\nil,m}_{\pro\ell,m}(y)\neq \psi^{\nil,m}_{\pro\ell,m}(y')$. Thus, the assertion follows from  $y\neq y'$ and (1).\\
(3) First, we show the first assertion. Because  $\overline{\Pi}^{\text{nil},m}$  is equal to the product of $\overline{\Pi}^{\text{pro-}\ell,m}$  for all $\ell\in\text{prime}(C)$,  we have 
\begin{equation*}
N_{\overline{\Pi}^{\nil,m}}(I_{y})\xrightarrow{\sim} \prod_{\ell\in\text{prime}(C)}N_{\overline{\Pi}^{\pro\ell,m}} (I_{\psi^{\nil,m}_{\pro\ell,m}(y)})=\prod_{\ell\in\text{prime}(C)} I_{\psi^{\nil,m}_{\pro\ell,m}(y)}\xleftarrow{\sim} I_{y}.
\end{equation*}
Here, the middle equality follows from Proposition \ref{1.2.3}. Next, we show the second assertion. Since $I_y\lhd D_y$,  $D_{y}\subset N_{\Pi^{(\nil,m)}}(I_{y})$ holds. If  $\tau\in N_{\Pi^{(\nil,m)}}(I_{y})$, then $I_y=\tau I_y\tau^{-1}=I_{\tau y}$. So we obtain  $\tau y=y$ by  (2), hence $\tau\in D_{y}$. Thus, $D_{y}=N_{\Pi^{(\nil,m)}}(I_{y})$  and the second assertion follows.\\
\end{proof}

%%%%%%%%%%%%%%%%%%%%%%%%%%%%%%%%%%%%%%%%%%%%%%%%%%%%%%%%%%%%%%%%%%%%%%%%%%%%%%%%%%%%%%%%%%%%%%%

\subsection{Weight filtration}\label{subsection1.3}
\hspace{\parindent}In this subsection, we define the ($\tau$-)weights of $\ell$-adic Galois representations. Let $\ell$ be a prime different from  $p$ and fix an isomorphism $\tau:\overline{\mathbb{Q}}_{\ell}\cong \mathbb{C}$. \par
Let  $w\in \mathbb{Z}$, a finite dimensional $\mathbb{Q}_{\ell}$-vector space $V$ and a continuous homomorphism $\phi:G_k\rightarrow GL(V)$. 
When $k$ is finite and    $Fr:\overline{k}\rightarrow \overline{k}$ is a $q(:=|k|)$-power Frobenius morphism, if any root $\alpha$ of det($Id-\phi (Fr)\cdot t$) satisfies $ |\tau (\alpha )|=q^{w/2} $,  then we say that $\phi$ has weight $w$. When $k$ is  an infinite  field finitely generated over the prime field,  if there exists a normal scheme $X$ of finite type over  $\text{Spec}(\mathbb{Z})$ that  satisfies the following conditions, we say that $\phi$ has weight $w$. (If $V=\left\{0\right\}$, we define  $\phi$ has weight $w$ for all $w$.)
\begin{enumerate}[(i)]
\item $k=K(X$).
\item   $G_{k}\to GL(V)$ factor through the natural morphism  $G_{k}\to \pi_1(X)$. 
\item The composite of $G_{\kappa(x)}\rightarrow \pi_1(X)\rightarrow GL(V)$ has weight $w$ for any closed point $x$ of $X$.
\end{enumerate}
Let $\tilde{V}$ be a torsion-free finitely generated $\mathbb{Z}_{\ell}$-module and $\tilde{\phi}:G_k\rightarrow GL(\tilde{V})$ a continuous homomorphism. If  the $G_k$-action  on $\tilde{V}\otimes\mathbb{Q}_{\ell}$ induced by $\tilde{\phi}$ has weight $w$, then we say that $\tilde{\phi}$ has weight $w$.  We write $W_{w}(\tilde{V})$ for the maximal $\mathbb{Z}_{\ell}$-submodule of $\tilde{V}$ that has weight $w$. (See \cite{Na1990-405}Proposition 2.1.)

\begin{prop}\label{1.3.5}Let $J(X_{k^{\text{sep}}})$ be the  Jacobian variety of $X_{k^{\text{sep}}}$. Then the following exact sequence of  $G_k$-modules exists.
\begin{equation}\label{wf}
0\rightarrow \mathbb{Z}_{\ell}(1)\rightarrow \mathbb{Z}[E_{k^{\text{sep}}}]\bigotimes_{\mathbb{Z}} \mathbb{Z}_{\ell}(1)\xrightarrow{\rho} \pi_1(U_{\overline{k}})^{\ab ,\pro\ell}\rightarrow T_{\ell}(J(X_{k^{\text{sep}}}))\rightarrow 0
\end{equation}
Here, $ \mathbb{Z}[E_{k^{\text{sep}}}]$ is a free $\mathbb{Z}$-module generated by $E_{k^{\text{sep}}}$ and regard it as a  $G_k$-module via the  $G_k$-action on   $E_{k^{\text{sep}}}.$
\end{prop}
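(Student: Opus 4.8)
The plan is to produce the sequence \eqref{wf} by combining the Kummer-sequence description of the abelianized pro-$\ell$ fundamental group of the affine curve $U_{\overline{k}}$ with the comparison between $U_{\overline{k}}$ and its compactification $X_{\overline{k}}$. First I would recall the standard identification: for an affine curve $U_{\overline k}=X_{\overline k}-E_{\overline k}$, the maximal abelian pro-$\ell$ quotient $\pi_1(U_{\overline k})^{\mathrm{ab},\mathrm{pro}\text{-}\ell}$ fits, by Kummer theory on the function field (or equivalently by the exponential/Kummer sequence on $U_{\overline k}$ twisted by $\mathbb{Z}_\ell$), into an exact sequence relating it to $T_\ell$ of the generalized Jacobian of $U_{\overline k}$. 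Concretely, $\pi_1(U_{\overline k})^{\mathrm{ab}}$ is $T_\ell$ of the generalized Jacobian $\mathrm{Pic}^0$ of the singular curve obtained from $X_{\overline k}$ by gluing the points of $E_{\overline k}$, but it is cleaner to use the presentation \eqref{equation2.1}: abelianizing, $\pi_1(U_{\overline k})^{\mathrm{ab},\mathrm{pro}\text{-}\ell}$ is the pro-$\ell$ completion of $\mathbb{Z}^{2g}\oplus(\bigoplus_{j=1}^r \mathbb{Z}\sigma_j)\big/(\sum_j\sigma_j=1)$ when $r\geq 1$.

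The key steps, in order, are as follows. Step one: write down the map $\rho$. Each $x\in E_{k^{\sep}}$ gives a local inertia generator $\sigma_x\in \pi_1(U_{\overline k})^{\mathrm{ab},\mathrm{pro}\text{-}\ell}$; since a generator of an inertia group is canonically a generator of $\mathbb{Z}_\ell(1)$ (the inertia action being via the cyclotomic character, i.e. the tame character), the assignment $x\mapsto \sigma_x$ defines a $G_k$-equivariant map $\rho:\mathbb{Z}[E_{k^{\sep}}]\otimes_{\mathbb{Z}}\mathbb{Z}_\ell(1)\to \pi_1(U_{\overline k})^{\mathrm{ab},\mathrm{pro}\text{-}\ell}$. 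Step two: identify the cokernel. Killing all inertia generators is precisely the passage from $U_{\overline k}$ to $X_{\overline k}$, so $\mathrm{coker}(\rho)=\pi_1(X_{\overline k})^{\mathrm{ab},\mathrm{pro}\text{-}\ell}=T_\ell(J(X_{k^{\sep}}))$, functorially in $G_k$ (this is the well-known identification of the abelianized pro-$\ell$ fundamental group of a proper curve with the Tate module of its Jacobian). Step three: identify the kernel. The only relation among the $\sigma_x$ in \eqref{equation2.1} is $\prod_j\sigma_j=1$, which abelianizes to $\sum_x\sigma_x=0$; hence $\ker(\rho)$ is the rank-one submodule of $\mathbb{Z}[E_{k^{\sep}}]\otimes\mathbb{Z}_\ell(1)$ generated by $\sum_x x$, and the $G_k$-action on it is trivial on $\mathbb{Z}[E_{k^{\sep}}]$ (the sum is Galois-invariant) and cyclotomic on the Tate twist, giving $\mathbb{Z}_\ell(1)$. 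Splicing Steps one through three yields exactly the four-term exact sequence \eqref{wf}, with all maps $G_k$-equivariant.

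One subtlety I would be careful about: the presentation \eqref{equation2.1} requires $r\geq 1$, so the case $r=0$ (proper $X=U$, $E=\emptyset$) must be treated separately — but then $\mathbb{Z}[E_{k^{\sep}}]=0$, the sequence degenerates to $0\to\mathbb{Z}_\ell(1)\to 0\to T_\ell(J)\to T_\ell(J)\to 0$, which forces $\mathbb{Z}_\ell(1)=0$; so the statement as literally phrased should be read with the implicit hypothesis $r\geq 1$ (equivalently $E\neq\emptyset$), which is the case of interest here. The main obstacle — really the only nontrivial input — is Step two, the canonical and $G_k$-equivariant identification $\pi_1(X_{\overline k})^{\mathrm{ab},\mathrm{pro}\text{-}\ell}\xrightarrow{\sim} T_\ell(J(X_{k^{\sep}}))$, together with checking that the natural map $\pi_1(U_{\overline k})^{\mathrm{ab},\mathrm{pro}\text{-}\ell}\to\pi_1(X_{\overline k})^{\mathrm{ab},\mathrm{pro}\text{-}\ell}$ is surjective with kernel exactly the image of $\rho$; this follows from the fact that for a smooth curve, removing points only adds free generators (the inertia classes) to the abelianized pro-$\ell$ fundamental group, which is transparent from \eqref{equation2.1} but can alternatively be deduced from the Gysin/localization sequence in étale cohomology. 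Everything else is bookkeeping with the Tate twist and the Galois action on $E_{k^{\sep}}$.
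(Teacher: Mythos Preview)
Your argument is correct and is essentially the standard derivation of this exact sequence. The paper itself does not give a proof: its entire proof reads ``\cite{Ta1997} Remark (1.3)'', i.e.\ a bare citation to Tamagawa. So there is no meaningful comparison of approaches to make --- what you wrote is, in outline, exactly the content one finds behind that citation (define $\rho$ via the canonical tame/cyclotomic identification of inertia with $\mathbb{Z}_\ell(1)$, identify the cokernel with $\pi_1(X_{\overline{k}})^{\ab,\pro\ell}\cong T_\ell(J)$, and read off the single relation $\sum_x \sigma_x=0$ from the presentation (\ref{equation2.1}) to get $\ker\rho\cong\mathbb{Z}_\ell(1)$).

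Your side remark about the case $r=0$ is also well taken: the sequence as stated is only correct for $r\geq 1$, and the paper is tacitly working under that hypothesis throughout the relevant sections.
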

\begin{proof}
\cite{Ta1997}Remark(1.3).
\end{proof}

By the following  proposition, we can compute the weights of all terms of (\ref{wf}). 

\begin{prop}\label{1.3.6}
Assume that   $k$ is finitely generated over the prime field. Then the $G_k$-action on $T_{\ell}(J(X_{k^{\text{sep}}}))$ has weight $-1$. In particular, if $\ell\in\text{prime}(C)$, then the image of the morphism $\rho$ in (\ref{wf})  coincides with  $W_{-2}(\overline{\Pi}^{\ab ,\pro\ell})$.
\end{prop}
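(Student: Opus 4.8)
The plan is to split the statement into the two independent assertions and handle them in sequence. The first assertion is that the $G_k$-action on $T_\ell(J(X_{k^{\text{sep}}}))$ has weight $-1$; the second is that, when $\ell\in\text{prime}(C)$, the image of $\rho$ in \eqref{wf} equals $W_{-2}(\overline{\Pi}^{\ab,\pro\ell})$. The second clearly follows from the first together with the exact sequence \eqref{wf}, so the main content is the weight computation, which itself reduces — by the very definition of weight in Definition \ref{1.3.1} and Definition \ref{1.3.2} — to the case of a finite base field.

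For the finite field case, I would argue as follows. Let $k$ be finite with $q:=|k|$. By standard spreading-out, $X$ together with its Jacobian extends to a smooth proper family over a dense open $S\subset\Spec\mathcal{O}$ for a suitable finitely generated ring, so after enlarging $k$ (which does not affect weights, since passing to a finite extension replaces $Fr$ by a power) we may assume $X$ is a smooth proper curve over $\mathbb{F}_q$ with good reduction everywhere and $J:=J(X_{\overline{k}})$ its Jacobian, an abelian variety over $\mathbb{F}_q$. Then $T_\ell(J)\otimes\mathbb{Q}_\ell\cong H^1_{\text{ét}}(J_{\overline{k}},\mathbb{Q}_\ell)^{\vee}\cong H^1_{\text{ét}}(X_{\overline{k}},\mathbb{Q}_\ell)^{\vee}$ as $G_k$-modules, and the Weil conjectures for curves over finite fields (the Riemann hypothesis part, due to Weil) say precisely that every eigenvalue of geometric Frobenius on $H^1_{\text{ét}}(X_{\overline{k}},\mathbb{Q}_\ell)$ has absolute value $q^{1/2}$ under any embedding $\tau:\overline{\mathbb{Q}}_\ell\hookrightarrow\mathbb{C}$. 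Dualizing, the eigenvalues on $T_\ell(J)\otimes\mathbb{Q}_\ell$ have absolute value $q^{-1/2}$, i.e. this representation has weight $-1$ in the sense of Definition \ref{1.3.1}. For the general finitely generated $k$, one chooses a normal model $X$ of finite type over $\Spec\mathbb{Z}$ with $K(X)=k$ over which the curve and its Jacobian spread out to a smooth proper family; the representation $G_k\to GL(T_\ell(J)\otimes\mathbb{Q}_\ell)$ then factors through $\pi_1(X)$, and at each closed point $x$ the fiber is a Jacobian of a curve over the finite field $\kappa(x)$, so the local representation has weight $-1$ by the finite field case. This verifies conditions (i)–(iii) of Definition \ref{1.3.2} with $w=-1$.

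To deduce the "in particular" clause, I would read off weights along \eqref{wf}. The cyclotomic module $\mathbb{Z}_\ell(1)$ has weight $-2$ (geometric Frobenius acts by $q^{-1}$ over a finite field), hence so does the submodule $\mathbb{Z}[E_{k^{\text{sep}}}]\otimes\mathbb{Z}_\ell(1)$, since $\mathbb{Z}[E_{k^{\text{sep}}}]$ is a permutation module on which $G_k$ acts through a finite quotient and therefore has weight $0$, and weights add under tensor product. Thus the image of $\rho$, being a quotient of $\mathbb{Z}[E_{k^{\text{sep}}}]\otimes\mathbb{Z}_\ell(1)$, is contained in $W_{-2}(\overline{\Pi}^{\ab,\pro\ell})$. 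Conversely, the cokernel of $\rho$ is $T_\ell(J)$, which has weight $-1$; since a submodule of weight $-2$ must map to zero in any pure quotient of weight $-1\ne-2$ (using that $W_{-2}$ is functorial and that weight is an isomorphism-invariant exact-in-the-appropriate-sense notion, as recorded in \cite{Na1990-405}Proposition 2.1), every element of $W_{-2}(\overline{\Pi}^{\ab,\pro\ell})$ lies in $\ker(\overline{\Pi}^{\ab,\pro\ell}\twoheadrightarrow T_\ell(J))=\operatorname{im}(\rho)$. Combining the two inclusions gives $\operatorname{im}(\rho)=W_{-2}(\overline{\Pi}^{\ab,\pro\ell})$.

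The step I expect to be the real obstacle is not the Weil-conjecture input itself, which is classical, but the bookkeeping needed to pass cleanly between the finite-field definition and the Definition \ref{1.3.2} formulation for general finitely generated $k$: one must produce a single normal $\Spec\mathbb{Z}$-model carrying a compatible family whose $\pi_1$ receives the Galois representation and whose special fibers are genuinely Jacobians over the residue fields, and one must know that the weight at closed points is independent of the model — precisely the content invoked via \cite{Na1990-405}Proposition 2.1 and \cite{Ta1997}. Once these foundational facts are in place, both halves of the proposition are short; the second half is essentially a diagram chase in \eqref{wf} using only that $-2\ne-1$ and the additivity and exactness properties of the weight filtration.
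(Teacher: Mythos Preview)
Your proposal is correct and follows essentially the same route as the paper: the paper's own proof merely cites \cite{Na1990-411}(2.7) for the number-field case, asserts that the same argument works over any field finitely generated over the prime field, and notes that the second assertion follows from the first together with $\mathbb{Z}_\ell(1)$ having weight $-2$. You have simply unpacked what that citation contains (Weil's Riemann hypothesis for curves plus spreading-out to verify Definition~\ref{1.3.2}), and spelled out the diagram chase in \eqref{wf} that the paper leaves implicit. One small expository wrinkle: in your finite-field paragraph the spreading-out sentence is superfluous---when $k$ is already finite there is nothing to spread out and no ``good reduction everywhere'' condition to impose; that machinery belongs only to the second paragraph treating general finitely generated $k$.
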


\begin{proof}
The second assertion follows from the first assertion and  the fact that $\mathbb{Z}_{\ell}(1)$ has weight $-2$. When $k$ is a number field, the first assertion is shown in  \cite{Na1990-411}(2.7). When $k$ is a field  finitely generated over the prime field, the first assertion follows from the similar proof of the case that $k$ is a number field.
\end{proof}

%%%%%%%%%%%%%%%%%%%%%%%%%%%%%%%%%%%%%%%%%%%%%%%%%%%%%%%%%%%%%%%%%%%%%%%%%%%%%%%%%%%%%%%%%%%%%%%

\subsection{Reconstruction of inertia groups and decomposition groups at cusps}\label{subsection1.4}

\hspace{\parindent}In this section, we define the maximal cyclic subgroups of cyclotomic type and  we show that the inertia groups can be characterized as their images. \par
First, we give  the following lemmas which play an essential role in the reconstruction of inertia groups. \par

\begin{lem}\label{1.2.6}
Let $G$ be a profinite group, $z\in G-\left\{1\right\}$ and $\mathcal{J}$ a closed subset of $G$. Assume that $z\in [H,H]H^{\ell\text{-}th}\langle\mathcal{J}\cap H\rangle \text{ holds for all }H\overset{\op}\leq G\text{ with }z\in H \text{ and all prime }\ell $. Then we have $\langle z\rangle\cap \mathcal{J}\neq\left\{1\right\}$. Here, $H^{\ell\text{-}th}$ stands for the set of  the $\ell$-th powers of  all elements of $H$.
\end{lem}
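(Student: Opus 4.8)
The plan is to prove the contrapositive: assuming $\langle z\rangle \cap \mathcal{J} = \{1\}$, I will construct a single open subgroup $H \leq G$ containing $z$ and a prime $\ell$ such that $z \notin [H,H]\,H^{\ell\text{-}th}\,\langle \mathcal{J}\cap H\rangle$. The natural way to detect that $z$ is missing from such a subset is to find a finite quotient $q\colon G \twoheadrightarrow Q$ in which the image of $z$ survives while the images of $[H,H]$, $H^{\ell\text{-}th}$, and $\langle\mathcal{J}\cap H\rangle$ all land in a proper subgroup not containing $q(z)$. Concretely, I would like $Q$ to be of the form $\mathbb{Z}/\ell\mathbb{Z}$ (or a product of such), $H = q^{-1}$ of something, $q(z)$ a generator, and $\mathcal{J}\cap H$ mapped to $0$.

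First, since $z \neq 1$, choose an open normal subgroup $N \trianglelefteq G$ with $z \notin N$; since $G/N$ is a finite group and $z$ has nontrivial image of some order $n > 1$ in it, by passing to a further quotient I may arrange a prime $\ell \mid n$ and a surjection $G \twoheadrightarrow A$ onto a finite abelian $\ell$-group in which $z$ has order $\ell$ — indeed, compose $G \to G/N \to (G/N)^{\text{ab}} \to$ (the $\ell$-part) and then quotient so that the image of $z$ has order exactly $\ell$. The key point to secure is that we can additionally kill the whole of $\mathcal{J}$ in this last quotient, i.e. find such a surjection $f\colon G \twoheadrightarrow A$ onto a finite abelian $\ell$-group with $f(z) \neq 0$ but $f(\mathcal{J}) = \{0\}$. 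This is where the hypothesis $\langle z\rangle\cap\mathcal{J} = \{1\}$ enters: set $H := f^{-1}(\langle f(z)\rangle) \leq G$, an open subgroup containing $z$. Then $[H,H] \subseteq \ker f$ (since $f(H)$ is cyclic, hence abelian), $H^{\ell\text{-}th} \subseteq \ker f$ (since $f(H) \cong \mathbb{Z}/\ell\mathbb{Z}$ is killed by $\ell$), and $\langle\mathcal{J}\cap H\rangle \subseteq \ker f$ (since $f(\mathcal{J}) = 0$); therefore $[H,H]\,H^{\ell\text{-}th}\,\langle\mathcal{J}\cap H\rangle \subseteq \ker f$, which does not contain $z$. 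This contradicts the hypothesis applied to this $H$ and this $\ell$, completing the contrapositive.

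The main obstacle is the construction of the quotient $f$ separating $z$ from $\mathcal{J}$ when $\mathcal{J}$ is merely a closed subset (not a subgroup). The idea is to work in the maximal pro-$\ell$ abelian quotient $G \twoheadrightarrow G^{\ab} \twoheadrightarrow G^{\ab}\otimes\mathbb{Z}_\ell =: M$, a pro-$\ell$ abelian profinite group. Let $\bar z$ be the image of $z$ in $M$; the hypothesis $\langle z\rangle\cap\mathcal{J}=\{1\}$, together with the fact that $M$ is torsion-free would not in general hold, so care is needed — instead I would argue at finite levels: for each open subgroup, the image of $\mathcal{J}$ is a finite closed (hence compact) subset, and I must find a finite abelian $\ell$-quotient of $M$ in which $\bar z$ survives of order $\ell$ but every element of the image of $\mathcal{J}$ dies. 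If $\bar z$ has infinite order in $M$, this is straightforward: project to a $\mathbb{Z}_\ell$-direct summand on which $\bar z$ is a unit multiple of a generator, then further reduce mod $\ell$ — but one must still ensure $\mathcal{J}$ dies, which requires choosing the summand (or the finite quotient) avoiding the finitely many "directions" determined by $\mathcal{J}$; a compactness/pigeonhole argument over the cofiltered system of finite quotients, using $\langle z\rangle\cap\mathcal{J}=\{1\}$ to guarantee that at each finite level $\bar z^{j}$ ($1\le j\le \ell-1$, i.e. the nontrivial powers) is not congruent to any element of $\mathcal{J}$, should do it. If $\bar z$ is torsion in $M$ the argument is a direct finite-group manipulation. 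Handling this separation cleanly — and being careful that "$z \in H$" forces us to keep $f(z)\ne 0$ throughout — is the crux; the rest is formal.
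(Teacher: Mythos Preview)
Your overall contrapositive strategy is the same as the paper's, but the key construction you propose --- a homomorphism $f\colon G\twoheadrightarrow A$ to a finite abelian $\ell$-group with $f(z)\neq 0$ and $f(\mathcal{J})=\{0\}$ --- is in general impossible under the sole hypothesis $\langle z\rangle\cap\mathcal{J}=\{1\}$. Here is a concrete obstruction: take $G=\mathbb{Z}_p\times\mathbb{Z}_p$ (written additively), $z=(1,0)$, and $\mathcal{J}=(\{0\}\times\mathbb{Z}_p)\cup\{(1,1)\}$. This $\mathcal{J}$ is closed and $\langle z\rangle\cap\mathcal{J}=\{0\}$. Yet any homomorphism $f$ to an abelian group with $f(\mathcal{J})=\{0\}$ must satisfy $f(0,1)=0$ and $f(1,1)=0$, hence $f(z)=f(1,0)=f(1,1)-f(0,1)=0$. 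So no such $f$ exists, and your plan cannot produce the required $H$ in this case. The compactness/pigeonhole sketch in your last paragraph does not rescue this: even at the first finite level $G/pG\cong(\mathbb{Z}/p)^2$ the image of $\mathcal{J}$ already spans, so no further cyclic quotient separates $z$ from $\mathcal{J}$.

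The paper's argument avoids this by \emph{not} trying to annihilate all of $\mathcal{J}$. Instead one first chooses an open subgroup $B\le G$ with $B\cap\langle z\rangle=\langle z^\ell\rangle$ (so $z\notin B$), observes that $\mathcal{J}-B$ and $\langle z\rangle-B$ are disjoint closed sets, and uses compactness to find an open normal $M\subset B$ with $(\mathcal{J}-B)\cap M(\langle z\rangle-B)=\emptyset$. Then $H:=M\langle z\rangle$ works: the point is that $\mathcal{J}\cap H\subset M\langle z^\ell\rangle\subset B$, so one only needs to control the part of $\mathcal{J}$ that lands in $H$, while the problematic elements of $\mathcal{J}$ (like $(1,1)$ in the example, which lies outside $B$) are simply excluded from $H$ by construction. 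In the example this yields $M=p\mathbb{Z}_p\times p\mathbb{Z}_p$ and $H=\mathbb{Z}_p\times p\mathbb{Z}_p$, for which indeed $z\notin pH+\langle\mathcal{J}\cap H\rangle$. The essential idea you are missing is this two-step separation: first cut with $B$, then separate only the residual pieces $\mathcal{J}-B$ and $\langle z\rangle-B$.
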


\begin{proof}(See \cite{Na1990-411}Lemma 3.1.)
Let us show the contraposition. Assuming $ \langle z\rangle\cap \mathcal{J}=\left\{1\right\}$, we will construct $H$ and $\ell$.\par
Let $\ell$ be a prime satisfying $\langle z^{\ell}\rangle\subsetneq\langle z\rangle$. There exists $B\overset{\op}\leq G$ with $B\cap\langle z\rangle=\langle z^{\ell}\rangle$ by \cite{Na1994}Proposition 1.4.1(i). Note that  $\mathcal{J}-B$ and $\langle z\rangle-B$ are closed subsets of $G$ whose intersection is empty by assumption. First, we claim:  
\begin{equation}\label{式2}
\text{There exists }M\overset{\op}\lhd G \text{ such that }M\subset B\text{ and }(\mathcal{J}-B)\cap \ (M\cdot (\langle z\rangle-B))=\emptyset .
\end{equation}
Indeed,  for each $w\in\mathcal{J}-B$, there exists $W_{w}\overset{\op}\lhd G$ which satisfies  $w\notin W_{w}(\langle z\rangle-B)$. Since $\underset{w}\cap((\mathcal{J}-B)\cap W_w(\langle z\rangle-B))=\emptyset$ and $\mathcal{J}-B$ is compact, There exist $w_1,\cdots ,w_n$ such that $(\mathcal{J}-B)\cap( \underset{i}\cap W_{w_i}(\langle z\rangle-B)))=\emptyset$. Write $B_{G}$ for the normal core of $B$ (i.e. the intersection of  all conjugates of $B$ in $G$). Then $M:=\underset{i}\cap W_{w_i}\cap B_{G}$ satisfies the desired property.\par
Set $H:=M\langle z\rangle$. Finally, show that  $H$ and $\ell$ satisfy the desired properties.
 Since $M\cap\langle z\rangle\subset B\cap\langle z\rangle= \langle z^{\ell}\rangle$, we obtain $M\cap \langle z\rangle =M\cap \langle z^{\ell}\rangle$. Thus,
\[
\xymatrix@R=10pt{
1\ar[r] &M\ar[r]&M\langle z\rangle\ar[r]&\langle z\rangle/(M\cap\langle z\rangle\ar[r])&1\\
1\ar[r]& M\ar[r]\aru{=}&M\langle z^{\ell}\rangle\ar[r]\aru{\subset }&\langle z^{\ell}\rangle/(M\cap\langle z^{\ell}\rangle)\ar[r]\ar@{^{(}->}[u] &1
}
\]
Hence$M\langle z^{\ell}\rangle \lhd M\langle z\rangle$ and $M\langle z\rangle/M\langle z^{\ell}\rangle\cong\mathbb{Z}/\ell\mathbb{Z}$. In particular, we have $[H:H]H^{\ell\text{-}th}\subset M\langle z^{\ell}\rangle$.
As $M\subset B$, we have  $(M(\langle z\rangle-B))=M\langle z\rangle-B$, hence, by (\ref{式2}), $\mathcal{J}\cap M\langle z\rangle\subset B$. Thus,
\[
\mathcal{J}\cap H=\mathcal{J}\cap M\langle z\rangle\subset B\cap M\langle z\rangle=M(B\cap\langle z\rangle)= M\langle z^{\ell}\rangle
\]
From these, we get $[H,H]H^{\ell\text{-}th}\langle\mathcal{J}\cap H\rangle\subset M\langle z^{\ell}\rangle\subset B$. Hence, we obtain $z\notin [H,H]H^{\ell\text{-}th}\langle\mathcal{J}\cap H\rangle$  as $z\notin B$.
\end{proof}

\begin{lem}\label{1.2.5}
Assume that $r\geq 2$. Let $z\in \overline{\Pi}^m$ satisfying $\langle z\rangle\cap \mathcal{I}_{\overline{\Pi}^m}\neq \left\{1\right\}$. Then  $z\in \mathcal{I}_{\overline{\Pi}^m}\cdot \overline{\Pi}^{[m-1]}/\overline{\Pi}^{[m]} $ holds.
If, moreover $|\text{prime}(C)|=1$, then $z\in \mathcal{I}_{\overline{\Pi}^m}$ holds.
\end{lem}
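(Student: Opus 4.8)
The statement concerns an element $z \in \overline{\Pi}^m$ with $\langle z\rangle \cap \mathcal{I}_{\overline{\Pi}^m} \neq \{1\}$, and asserts that $z$ lies in $\mathcal{I}_{\overline{\Pi}^m}\cdot \overline{\Pi}^{[m-1]}/\overline{\Pi}^{[m]}$ (and in $\mathcal{I}_{\overline{\Pi}^m}$ itself when $|\mathrm{prime}(C)|=1$). The plan is to reduce to a statement about a single inertia group, which is generated by an element of a basis of the free pro-$C$ group $\overline{\Pi}$, and then apply Proposition \ref{1.1.5} (together with Corollary \ref{1.1.10}). First I would fix a nontrivial element $w \in \langle z\rangle \cap \mathcal{I}_{\overline{\Pi}^m}$; by definition of $\mathcal{I}_{\overline{\Pi}^m}$, there is some cusp $y$ with $w \in I_y$, so $w = \sigma^\alpha$ for a generator $\sigma$ of $I_y$ and some $\alpha \in \mathbb{Z}_C - \{0\}$. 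Since $r \geq 2$, the generator $\sigma$ can be taken as an element of a basis $X$ of $\overline{\Pi}$ (as noted in the paragraph before Proposition \ref{1.2.3}), and we may apply the results of subsection \ref{subsection1.1}.

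Since $w \in \langle z \rangle$, the element $z$ commutes with $w$: write $w = z^\beta$ for some $\beta$, so $z$ and $z^\beta = \sigma^\alpha$ commute, hence $z \in Z_{\overline{\Pi}^m}(\sigma^\alpha)$. By Proposition \ref{1.1.5} applied to $\overline{\Pi}^m$ and the basis element $\sigma$, we get
\[
z \in Z_{\overline{\Pi}^m}(\sigma^\alpha) \subset \langle \sigma\rangle \cdot \overline{\Pi}^{[m-1]}/\overline{\Pi}^{[m]} \subset \mathcal{I}_{\overline{\Pi}^m}\cdot \overline{\Pi}^{[m-1]}/\overline{\Pi}^{[m]},
\]
which is the first assertion. (One subtlety: Proposition \ref{1.1.5} is stated for $m \geq 1$, and in the edge case $m = 1$ the term $\overline{\Pi}^{[m-1]}/\overline{\Pi}^{[m]}$ is all of $\overline{\Pi}^1$, so the conclusion is vacuous there; I should check that $r \geq 2$ together with the standing setup still makes $\overline{\Pi}$ free with $\sigma$ a basis element, which holds by \eqref{equation2.1}.)

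For the second assertion, assume $|\mathrm{prime}(C)| = 1$, say $\mathrm{prime}(C) = \{\ell\}$, so $\mathbb{Z}_C = \mathbb{Z}_\ell$ and $I_y \cong \mathbb{Z}_\ell$. Here I would use Corollary \ref{1.1.10} (valid when $m \geq 2$): $\langle \sigma\rangle = Z_{\overline{\Pi}^m}(\sigma) = \mathrm{Comm}_{\overline{\Pi}^m}(\langle\sigma\rangle)$. Since $z$ commutes with $\sigma^\alpha$ with $\alpha \neq 0$, and since in a free pro-$\ell$ group's $m$-step quotient the centralizer of a power of a basis element equals that of the element itself — more precisely, $Z_{\overline{\Pi}^m}(\sigma^\alpha) = \langle\sigma\rangle$ by Proposition \ref{1.1.9} when $\alpha \in \mathbb{Z} - \{0\}$, and for general $\alpha \in \mathbb{Z}_\ell - \{0\}$ one argues that $z$ normalizes/commensurates $\langle\sigma\rangle$ so lands in $\mathrm{Comm}_{\overline{\Pi}^m}(\langle\sigma\rangle) = \langle\sigma\rangle$ — we conclude $z \in \langle\sigma\rangle = I_y \subset \mathcal{I}_{\overline{\Pi}^m}$.

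The main obstacle I anticipate is the $m=1$ degenerate case and, more seriously, handling $\alpha \in \mathbb{Z}_C - \{0\}$ that is \emph{not} in $\mathbb{Z} - \{0\}$: Proposition \ref{1.1.9} and the sharp form of Corollary \ref{1.1.10} are stated for integer exponents, whereas here $\alpha$ naturally lives in $\mathbb{Z}_C$. When $|\mathrm{prime}(C)| = 1$ this is not an issue because $Z_{\overline{\Pi}^m}(\sigma^\alpha) \subset \langle\sigma\rangle\cdot\overline{\Pi}^{[m-1]}/\overline{\Pi}^{[m]}$ by Proposition \ref{1.1.5}, and then one intersects with the commensurator bound; but the bookkeeping to push from $\langle\sigma\rangle\cdot\overline{\Pi}^{[m-1]}/\overline{\Pi}^{[m]}$ down to $\langle\sigma\rangle$ exactly is where care is needed — presumably an inductive argument along the derived series exactly mirroring the proof of Proposition \ref{1.1.9}, or a direct appeal to Corollary \ref{1.1.10} after replacing $\sigma^\alpha$ by a suitable integer power lying in an open subgroup.
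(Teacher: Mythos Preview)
Your argument for the first assertion is exactly the paper's: write the nontrivial element of $\langle z\rangle\cap\mathcal{I}_{\overline{\Pi}^m}$ as a nonzero power of a generator $\sigma$ of some $I_y$, note that $z$ centralizes it, and apply Proposition~\ref{1.1.5}.

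For the second assertion with $m\geq 2$, your route via $\mathrm{Comm}_{\overline{\Pi}^m}(\langle\sigma\rangle)=\langle\sigma\rangle$ is correct and your worry about $\alpha\in\mathbb{Z}_\ell\setminus\mathbb{Z}$ is unfounded: since $\sigma^\alpha\neq 1$ and every nontrivial closed subgroup of $\langle\sigma\rangle\cong\mathbb{Z}_\ell$ is open, $z\sigma^\alpha z^{-1}=\sigma^\alpha$ forces $\langle\sigma\rangle$ and $z\langle\sigma\rangle z^{-1}$ to be commensurable, so Corollary~\ref{1.1.10} gives $z\in\langle\sigma\rangle$. The paper argues slightly differently: from $z^\alpha\in I_y\cap zI_yz^{-1}=I_y\cap I_{zy}$ it invokes Proposition~\ref{1.2.3}(1) (trivial intersection of distinct inertia groups in the pro-$\ell$ case) to get $zy=y$, hence $z\in I_y$ as a stabilizer. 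Both approaches ultimately rest on Corollary~\ref{1.1.10}; yours is marginally more direct, the paper's is more geometric.

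The genuine gap is $m=1$ for the second assertion, which you flag but do not treat. Here all centralizer and commensurator arguments are vacuous since $\overline{\Pi}^1$ is abelian. The paper handles it by an explicit computation: write $z=\sum a_i\alpha_i+\sum b_i\beta_i+\sum c_j\sigma_j$ in the free $\mathbb{Z}_\ell$-module $\overline{\Pi}^1$; the hypothesis $z^\alpha\in\langle\sigma_h\rangle$ (or $\langle\sigma_1+\cdots+\sigma_{r-1}\rangle$) for some $\alpha\neq 0$, together with $\mathbb{Z}_\ell$ being an integral domain, forces the remaining coefficients to vanish, so $z$ itself lies in that inertia group. You should add this case explicitly; the $(g,r)=(0,2)$ subcase is trivial since then $\mathcal{I}_{\overline{\Pi}^m}=\overline{\Pi}^m$.
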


\begin{proof}
First, we show the first assertion. By assumption, there exists $\alpha\in\mathbb{Z}_C$ and  $y\in\mathscr{S}^m(U)$ such that $z^{\alpha}\in I_y-\left\{1\right\}$. Since  $z^{\alpha}=zz^{\alpha}z^{-1}$ and $r\geq2$, Proposition \ref{1.1.5} implies that  $ z\in Z_{\overline{\Pi}^m}(z^{\alpha})\subset  I_{y}\cdot \overline{\Pi}^{[m-1]}/\overline{\Pi}^{[m]}$.\par
Next, we show the second assertion. Set $\text{prime}(C)=\left\{\ell\right\}$. (Note that $\mathbb{Z}_{C}=\mathbb{Z}_{\ell}$.)
If  $(g,r)=(0,2)$, the assertion clearly holds because $\mathcal{I}_{\overline{\Pi}^m}=\overline{\Pi}^m$. So we may assume $(g,r)\neq(0,2)$.\par
Case  $m\geq 2$. (Note that  $\overline{\Pi}^{m}$ is not abelian.)  By assumption,  there exists $\alpha\in\mathbb{Z}_C$ and  $y\in\mathscr{S}^m(U)$ such that $z^{\alpha}\in I_y-\left\{1\right\}$.  Hence  $z^{\alpha}=zz^{\alpha}z^{-1}\in I_y\cap zI_yz^{-1}=I_y\cap I_{zy}$.  By Proposition \ref{inersepm}, we get $y=zy$. Thus,  $z\in I_{y}$ by definition. \par
 Case $m=1$.   There exists $\alpha \in \mathbb{Z}_{\ell}$ such that  $z^{\alpha}\in \mathcal{I}_{\overline{\Pi}^1}-\left\{1\right\}$ by the assumption. By (\ref{equation2.1}), $\overline{\Pi}^1$is a free $\mathbb{Z}_{\ell}$-module generated by $\alpha_1,\cdots,\alpha_g,\beta_1,\cdots,\beta_g,\sigma_1,\cdots,\sigma_{r-1}$  and  $\mathcal{I}_{\overline{\Pi}^1}= \langle\sigma_{1}\rangle \cup\cdots \cup\langle\sigma_{r}\rangle\subset \langle \sigma_1,\cdots, \sigma_{r-1}\rangle$. Then there exists $a_1,\cdots,a_g,b_1,\cdots,b_g,c_1,\cdots,c_{r-1}\in\mathbb{Z}_{\ell}$ such that $z=\Sigma a_i\alpha_i+\Sigma b_i\beta_i+\Sigma c_j\sigma_j$. As $z^{\alpha}\in\mathcal{I}_{\overline{\Pi}}^1=\langle\sigma_{1}\rangle \cup\cdots \cup\langle\sigma_{r}\rangle$,  this implies either $z^{\alpha}\in \langle\sigma_{h}\rangle$ for some $1\leq h\leq r-1$ or $z^{\alpha}\in \langle \sigma_{r}\rangle=\langle\sigma_{1}+\cdots+\sigma_{r-1}\rangle$. Since $\mathbb{Z}_{\ell}$ is an integral domain, we deduce  $a_i=b_{i}=0$ for any $i$, and either $c_{1}=\cdots=c_{h-1}=c_{h+1}=\cdots=c_{r-1}=0$ or  $c_{1}=\cdots=c_{r-1}$.   Hence $z\in \langle\sigma_{1}\rangle \cup\cdots \cup\langle\sigma_{r}\rangle=\mathcal{I}_{\overline{\Pi}^1}$.
\par
\end{proof}

Maximal cyclic subgroups of cyclotomic type are first defined in  \cite{Na1990-411}Definition 3.3 in the case of the full fundamental group. Our definition differs from that of \cite{Na1990-411} for the following two points: 
\begin{inparaenum}[(i)]
\item We weaken the self-normalizing property in \cite{Na1990-411}; and
\item We generalize the definition from number fields to fields finitely generated over  the prime field of arbitrary characteristic. 
\end{inparaenum}
\begin{definition}\label{def1.4.1}
Let $k$ be a field finitely generated over the prime field and $J$ a closed subgroup of $\overline{\Pi}^m$. If $J$ satisfies the following conditions, then $J$ is called a  maximal cyclic subgroup of cyclotomic type.
\begin{enumerate}[(i)]
\item $J \cong \mathbb{Z}_C$
\item Write  $\overline{J}$ for the image   of  $J$ by $\overline{\Pi}^m\rightarrow \overline{\Pi}^{\ab}$. Then $J\overset{\sim}\rightarrow \overline{J}$ and  $\overline{\Pi}^{\ab}/\overline{J}$ is torsion-free.
\item $p_{U/k}(N_{\Pi^{(m)}}(J))\overset{\op}\leq G_k$
\item Let $\chi_{\text{cycl}}:G_k\rightarrow \mathbb{Z}_C^{\times}$ be the cyclotomic character and $N_{\Pi^{(m)}}(J)\to \text{Aut}(J)=\mathbb{Z}_C^{\times}$ the character obtained from the conjugate action. Then the  following diagram is commutative.
\[
\xymatrix@R=10pt{
N_{\Pi^{(m)}}(J) \ar[r] \ar[d]_{p_{U/k}}& \text{Aut}(J)\\
G_k\ar[r]^{\chi_{cycl}}&\mathbb{Z}_C^{\times}\aru{=}\\
}
\]

\end{enumerate}
\end{definition}

Next, we give a group-theoretical characterization of the inertia groups for three cases (Proposition \ref{1.4.2}, Proposition \ref{1.4.3}, Proposition \ref{1.4.4}).\par

\begin{prop}\label{1.4.2}
Assume that   $r\geq 2$ and $k$ is  finitely generated over the prime field. Then for any subgroup $I$ of     
$\overline{\Pi}^m$, the following conditions are equivalent.
\begin{enumerate}[(a)]
\item $I$ is an inertia group.
\item $I$ is the image of a maximal cyclic subgroup of cyclotomic type of $\overline{\Pi}^{m+2}$  by the map $\overline{\Pi}^{m+2}\rightarrow \overline{\Pi}^{m}$.
\end{enumerate}
\end{prop}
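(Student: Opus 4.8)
The plan is to prove the equivalence (a)$\iff$(b) by matching the two directions against the reconstruction of inertia groups already built up in the preceding lemmas. For the direction (a)$\Rightarrow$(b), I would start with an inertia group $I = I_{y,\overline{\Pi}^m}$ at some $y \in \mathscr{S}^m(U)$, lift $y$ to a place $\tilde y \in \mathscr{S}^{m+2}(U)$ above it, and consider $J := I_{\tilde y, \overline{\Pi}^{m+2}}$. I would then verify that $J$ is a maximal cyclic subgroup of cyclotomic type in the sense of Definition~\ref{def1.4.1}: condition (i) holds since $J \cong \mathbb{Z}_C$ as $r \geq 2$ (each inertia generator is part of a basis of the free pro-$C$ group $\overline{\Pi}$, so its image in any solvable quotient is procyclic $\cong \mathbb{Z}_C$); condition (ii) follows from the explicit description of $\overline{\Pi}^{\ab}$ in~(\ref{equation2.1}), where the image of $J$ in $\overline{\Pi}^{\ab}$ is a $\mathbb{Z}_C$-direct summand $\mathbb{Z}_C\sigma_j$; condition (iii) uses Proposition~\ref{1.2.3}(2), namely $D_{\tilde y, \Pi^{(m+2)}} = N_{\Pi^{(m+2)}}(I_{\tilde y})$, together with the fact that $p_{U/k}(D_{\tilde y}) = D_{\bar y, G_k}$ is an open (indeed finite-index) subgroup of $G_k$ since $E \to \Spec k$ is finite étale; and condition (iv) is the standard fact that the decomposition group at a cusp acts on the inertia group through the cyclotomic character (the tame quotient of the local absolute Galois group), which descends to $\overline{\Pi}^{m+2}$. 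Finally the image of $J$ under $\overline{\Pi}^{m+2}\to\overline{\Pi}^m$ is $I_{y,\overline{\Pi}^m} = I$ by compatibility of inertia groups with the quotient maps $\Psi^{m+2}_{m}$.

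For the direction (b)$\Rightarrow$(a), I would take a maximal cyclic subgroup of cyclotomic type $J \subset \overline{\Pi}^{m+2}$ and show its image $I$ in $\overline{\Pi}^m$ is an inertia group. The strategy is to produce a nontrivial element of $J$ lying in $\langle \mathcal{I}_{\overline{\Pi}^{m+2}}\rangle$ (or rather in an inertia group up to the kernel of passing to $\overline{\Pi}^m$) by applying Lemma~\ref{1.2.6}: for this I need to check, for every open $H \leq \overline{\Pi}^{m+2}$ containing a chosen generator $z$ of $J$ and every prime $\ell$, that $z \in [H,H]H^{\ell\text{-th}}\langle \mathcal{J} \cap H\rangle$ where $\mathcal{J} = \mathcal{I}_{\overline{\Pi}^{m+2}}$. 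The weight-filtration machinery of subsection~\ref{subsection1.3} (Proposition~\ref{1.3.6}) enters here: the cyclotomic condition (iv) forces $z$ to have weight $-2$ in $\overline{\Pi}^{\ab,\pro\ell}$, hence by Proposition~\ref{1.3.6} its image lies in $W_{-2}(\overline{\Pi}^{\ab,\pro\ell}) = \operatorname{im}(\rho)$, which is spanned (over the relevant quotient) by the inertia classes $\sigma_j$; combined with condition (ii) (torsion-freeness of $\overline{\Pi}^{\ab}/\overline{J}$) this should give the membership needed to run Lemma~\ref{1.2.6} in each finite quotient $H$. Having obtained $\langle z\rangle \cap \mathcal{I}_{\overline{\Pi}^{m+2}} \neq \{1\}$ — or more precisely, applying this at the level of $\overline{\Pi}^{m+2}$ and then using Lemma~\ref{1.2.5} — I would conclude $z \in \mathcal{I}_{\overline{\Pi}^{m+2}}\cdot \overline{\Pi}^{[m+1]}/\overline{\Pi}^{[m+2]}$. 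Passing to $\overline{\Pi}^{m+1}$, the term $\overline{\Pi}^{[m+1]}/\overline{\Pi}^{[m+2]}$ dies, so the image of $z$ in $\overline{\Pi}^{m+1}$ already lies in $\mathcal{I}_{\overline{\Pi}^{m+1}}$; this is precisely the role of the ``$+2$'' in the statement (one step to kill the weight/centralizer discrepancy via Lemma~\ref{1.2.5}, and the structure of $\overline{\Pi}^{m+1}$ vs $\overline{\Pi}^m$ still needs care), and then I descend once more to $\overline{\Pi}^m$. It then remains to show that $I$, which contains the image of a nontrivial inertia element and is procyclic of the same ``size'', equals a full inertia group: here I would use condition (iii), maximality of $J$ among procyclic subgroups with torsion-free abelian quotient (condition (ii)), and Corollary~\ref{1.1.10} / Proposition~\ref{1.2.3} to upgrade ``contains a power of an inertia generator'' to ``equals the inertia group,'' ruling out the possibility that $I$ is a proper open subgroup of an inertia group by the torsion-freeness in (ii).

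The main obstacle I anticipate is the (b)$\Rightarrow$(a) direction, specifically the passage from the cyclotomic/weight condition on $J$ to the conclusion that a generator of $J$ actually lies in an inertia group of $\overline{\Pi}^{m+2}$ rather than merely being ``close'' to one. The weight argument via Proposition~\ref{1.3.6} only controls the image of $z$ in the abelianized pro-$\ell$ quotient $\overline{\Pi}^{\ab,\pro\ell}$, and one has to bootstrap this to membership in $\mathcal{I}$ inside the highly non-abelian group $\overline{\Pi}^{m+2}$ — this is exactly why one works with $m+2$ rather than $m$, so that after applying Lemma~\ref{1.2.5} (which only gives membership modulo $\overline{\Pi}^{[m+1]}/\overline{\Pi}^{[m+2]}$) there is still a quotient step to spare. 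Verifying the hypotheses of Lemma~\ref{1.2.6} uniformly over all open subgroups $H$, and correctly handling the case $|\operatorname{prime}(C)| \geq 2$ (where Lemma~\ref{1.2.5} only gives membership up to the commutator layer, cf.\ Remark~\ref{1.2.4}) versus $|\operatorname{prime}(C)| = 1$, will be the delicate bookkeeping. A secondary subtlety is handling the weight-filtration input when $p > 0$, where one must cite the finitely-generated-field version of Proposition~\ref{1.3.6} rather than the number-field case of~\cite{Na1990-411}.
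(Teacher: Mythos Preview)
Your (a)$\Rightarrow$(b) direction matches the paper's: lift $y$ to $\tilde y\in\mathscr{S}^{m+2}$, set $J=I_{\tilde y}$, and verify (i)--(iv), citing Proposition~\ref{1.2.3}(2) for (iii). That is exactly how the paper does it.

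For (b)$\Rightarrow$(a) you have the right ingredients but you apply Lemma~\ref{1.2.6} at the wrong level, and this is a genuine gap. You propose to run Lemma~\ref{1.2.6} inside $\overline{\Pi}^{m+2}$, checking $z\in [H,H]H^{\ell\text{-th}}\langle\mathcal{I}_{\overline{\Pi}^{m+2}}\cap H\rangle$ for \emph{every} open $H\leq\overline{\Pi}^{m+2}$ containing $z$, and you justify this by the weight of $z$ in $\overline{\Pi}^{\ab,\pro\ell}$. But the weight argument at the single level $\overline{\Pi}^{\ab,\pro\ell}$ says nothing about the image of $z$ in $H^{\ab,\pro\ell}$ for a general open $H$. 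To use Proposition~\ref{1.3.6} for $H$ you need $H^{\ab,\pro\ell}$ to coincide with $\tilde H^{\ab,\pro\ell}$, where $\tilde H\subset\overline{\Pi}$ is the full preimage (so that $\tilde H^{\ab,\pro\ell}$ really is the abelianized fundamental group of a cover and carries the weight filtration). That equality $H^{\ab}=\tilde H^{\ab}$ requires $\tilde H^{[1]}\supset\overline{\Pi}^{[m+2]}$, i.e.\ $\tilde H\supset\overline{\Pi}^{[m+1]}$, i.e.\ $H$ is the pullback of an open subgroup of $\overline{\Pi}^{m+1}$. For an arbitrary open $H\leq\overline{\Pi}^{m+2}$ this fails, and your verification of the Lemma~\ref{1.2.6} hypothesis breaks down.

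The paper's fix --- and the real reason for the ``$+2$'' --- is to apply Lemma~\ref{1.2.6} one level \emph{down}, to $z_1:=\Psi^{m+2}_{m+1}(z)\in\overline{\Pi}^{m+1}$ and $\mathcal{J}=\mathcal{I}_{\overline{\Pi}^{m+1}}$. For each open $H_1\leq\overline{\Pi}^{m+1}$ with $z_1\in H_1$, its preimage $H\subset\overline{\Pi}^{m+2}$ automatically contains $\overline{\Pi}^{[m+1]}/\overline{\Pi}^{[m+2]}$, so $H^{\ab,\pro\ell}=\tilde H^{\ab,\pro\ell}$ and Proposition~\ref{1.3.6} applies to give $\overline z\in W_{-2}(H^{\ab,\pro\ell})=\langle\mathcal{I}_{H^{\ab,\pro\ell}}\rangle$; projecting to $H_1/[H_1,H_1]H_1^{\ell\text{-th}}$ yields the hypothesis of Lemma~\ref{1.2.6}. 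Then Lemma~\ref{1.2.5} at level $m+1$ gives $z_1\in\mathcal{I}_{\overline{\Pi}^{m+1}}\cdot\overline{\Pi}^{[m]}/\overline{\Pi}^{[m+1]}$, so the image $z_0\in\overline{\Pi}^m$ lands in some inertia group $\tilde I$. The three levels are thus used as: $m+2$ for $J$ and the weight computation in $H^{\ab}$; $m+1$ for Lemmas~\ref{1.2.6} and~\ref{1.2.5}; $m$ for the conclusion. Your bookkeeping collapsed the first two of these.

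One smaller point on the endgame: once $\langle z_0\rangle\subset\tilde I$, the upgrade to $\langle z_0\rangle=\tilde I$ uses only (i), (ii), and the fact that $\tilde I$ injects into $\overline{\Pi}^{\ab}$ --- you do not need (iii) or Corollary~\ref{1.1.10} here. Since the image $\overline J$ of $J$ in $\overline{\Pi}^{\ab}$ sits inside the rank-one summand $\mathrm{im}(\tilde I)$ and $\overline{\Pi}^{\ab}/\overline J$ is torsion-free, $\overline J=\mathrm{im}(\tilde I)$; injectivity of $\tilde I\to\overline{\Pi}^{\ab}$ then forces $\langle z_0\rangle=\tilde I$.
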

\begin{proof}
First, we show (b)$\Rightarrow $(a). Let $J$ be  a maximal cyclic subgroup of cyclotomic type of $\overline{\Pi}^{m+2}$ whose image by the map $\overline{\Pi}^{m+2}\rightarrow \overline{\Pi}^{m}$ coincides with $I$.  Let $z$ be a generator of $J$ (cf. Definition \ref{def1.4.1}(i)), $z_{1}$ the image of  $z$ by $\overline{\Pi}^{m+2}\twoheadrightarrow\overline{\Pi}^{m+1}$, $\ell\in\text{prime}(C)$ and $H_{1}$ an open subgroup of $\overline{\Pi}^{m+1}$ that contains $z_{1}$.\par
Let $H\subset \overline{\Pi}^{m+2}$ and $\tilde{H}\subset \overline{\Pi}$  be the inverse images of $H_{1}$ by $\overline{\Pi}^{m+2}\twoheadrightarrow\overline{\Pi}^{m+1}$ and $\overline{\Pi}\twoheadrightarrow \overline{\Pi}^{m+1}$, respectively.
Since  we have $\ell\in \text{prime}(C$) and $\overline{\Pi}^{[m+1]}/\overline{\Pi}^{[m+2]}\subset H$ by definition, we have  $\tilde{H}^{\ab ,\pro\ell}=H^{\ab ,\pro\ell}$.   We get $\langle\mathcal{I}_{\tilde{H}^{\ab,\pro\ell}}\rangle=W_{-2}(\tilde{H}^{\ab ,\pro\ell})$ by Proposition \ref{1.3.6}, hence  $\langle\mathcal{I}_{H^{\ab ,\pro\ell}}\rangle = W_{-2}(H^{\ab ,\pro\ell})$.\par
Let $\overline{z}$ be the image of $z$ by $H\twoheadrightarrow H^{\ab ,\pro\ell}$.  Since the action of $p_{U/k}(N_{\Pi^{(m+2)}}(J))$ on $J$ is cyclotomic  by Definition \ref{def1.4.1}(iii)(iv), the action has weight $-2$.  Thus,  $\overline{z}$ lies  in $W_{-2}(H^{\ab ,\pro\ell})$. Therefore, we obtain  $\overline{z}\in\langle\mathcal{I}_{H^{\ab ,\pro\ell}}\rangle\subset H^{\ab ,\pro\ell}$. $\langle\mathcal{I}_{H^{\ab ,\pro\ell}}\rangle$ is mapped to $\langle \mathcal{I}_{\overline{\Pi}^{m+1}}\cap H_{1}\rangle\mod{ [H_{1},H_{1}]{H_{1}}^{\ell\text{-th}}}$  by the projection $H^{\ab ,\pro\ell}\twoheadrightarrow H_{1}^{\ab ,\pro\ell}\twoheadrightarrow H_1/ [H_{1},H_{1}]{H_{1}}^{\ell\text{-th}}$. Thus, we get  $z_{1}\in [H_{1},H_{1}]{H_{1}}^{\ell\text{-th}}\langle \mathcal{I}_{\overline{\Pi}^{m+1}}\cap H_{1}\rangle$. (Note that this holds trivially even for  $\ell\notin\text{prime}(C)$.) Considering  all $H_{1}$ and primes $\ell$,  we obtain $z_{1}\in \mathcal{I}_{\overline{\Pi}^{m+1}}\cdot \overline{\Pi}^{[m]}/\overline{\Pi}^{[m+1]}$ by Lemma \ref{1.2.6} and Lemma \ref{1.2.5}. \par
Let $z_{0}$ be the image of $z$ by $\overline{\Pi}^{m+2}\twoheadrightarrow\overline{\Pi}^{m}$. By $z_{1}\in \mathcal{I}_{\overline{\Pi}^{m+1}}\cdot \overline{\Pi}^{[m]}/\overline{\Pi}^{[m+1]}$,  there exists an inertia group $\tilde{I}\subset \mathcal{I}_{\overline{\Pi}^{m}}$ that contains $\langle z_{0}\rangle$. By Definition \ref{def1.4.1}(i)(ii), we have $\langle z_{0}\rangle\equiv \tilde{I}\mod{\overline{\Pi}^{[1]}/\overline{\Pi}^{[m]}}\subset \overline{\Pi}^{\ab}$. Since $\tilde{I}$ mapped injectively  by $\overline{\Pi}^m\twoheadrightarrow \overline{\Pi}^{\ab}$, we obtain $\langle z_{0}\rangle=\tilde{I}$. As $I=\langle z_{0}\rangle$, $I$ coincides with  the inertia group $\tilde{I}$, as desired.\par
Finally, we show $(a)\Rightarrow (b)$. Let $y\in\mathscr{S}^{m}(U_{k^{\sep}})$ with  $I=I_y$ and $\tilde{y}$ an inverse image of $y$ by $\mathscr{S}^{m+2}(U_{k^{\sep}})\twoheadrightarrow\mathscr{S}^{m}(U_{k^{\sep}})$.  Inertia groups are maximal cyclic subgroups of cyclotomic type by the assumption $r\geq 2$ and Proposition \ref{1.2.3}. Hence $I_{\tilde{y}}\subset \overline{\Pi}^{m+2}$ satisfies the desired property.
\end{proof}

\begin{prop}\label{1.4.3}
Assume that   $|\text{prime}(C)|=1$, $r\geq 2$ and $k$ is  finitely generated over the prime field. Then for any subgroup $I$ of  $\overline{\Pi}^m$, the following conditions are equivalent.
\begin{enumerate}[(a)]
\item $I$ is an inertia group.
\item $I$ is the image of a   maximal cyclic subgroup of cyclotomic type  of $\overline{\Pi}^{m+1}$  by the map $\overline{\Pi}^{m+1}\rightarrow \overline{\Pi}^{m}$.
\end{enumerate}
\end{prop}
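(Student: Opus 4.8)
The plan is to transcribe the proof of Proposition \ref{1.4.2} essentially verbatim, with $m+2$ replaced throughout by $m+1$; the single-prime hypothesis $|\text{prime}(C)|=1$ is exactly what permits this economy of one solvable step, since under it Lemma \ref{1.2.5} gives the sharp conclusion $z\in\mathcal{I}_{\overline{\Pi}^m}$ in place of the weaker $z\in\mathcal{I}_{\overline{\Pi}^m}\cdot\overline{\Pi}^{[m-1]}/\overline{\Pi}^{[m]}$. We may assume $(g,r)\neq(0,2)$, the excluded case being immediate: there $\overline{\Pi}^{m+1}\xrightarrow{\sim}\overline{\Pi}^m\cong\mathbb{Z}_C$ and both conditions hold precisely for $I=\overline{\Pi}^m$.

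For $(b)\Rightarrow(a)$, write $\text{prime}(C)=\{\ell\}$, let $J\subset\overline{\Pi}^{m+1}$ be a maximal cyclic subgroup of cyclotomic type with generator $z$, let $I$ be the image of $J$ in $\overline{\Pi}^m$ and $z_0$ the image of $z$ there. Fix an open $H_0\leq\overline{\Pi}^m$ with $z_0\in H_0$ and let $H\leq\overline{\Pi}^{m+1}$, $\tilde H\leq\overline{\Pi}$ be its preimages; since $\overline{\Pi}^{[m]}/\overline{\Pi}^{[m+1]}\subset H$ and $\ell\in\text{prime}(C)$, one has $\tilde H^{\ab,\pro\ell}=H^{\ab,\pro\ell}$, hence $\langle\mathcal{I}_{H^{\ab,\pro\ell}}\rangle=W_{-2}(H^{\ab,\pro\ell})$ by Proposition \ref{1.3.6}. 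By Definition \ref{def1.4.1}(iii)(iv) the conjugation action of $p_{U/k}(N_{\Pi^{(m+1)}}(J))$ on $J$ is cyclotomic, hence of weight $-2$, so the image $\overline z$ of $z$ in $H^{\ab,\pro\ell}$ lies in $W_{-2}(H^{\ab,\pro\ell})=\langle\mathcal{I}_{H^{\ab,\pro\ell}}\rangle$; pushing forward along $H^{\ab,\pro\ell}\twoheadrightarrow H_0/[H_0,H_0]H_0^{\ell\text{-th}}$ yields $z_0\in[H_0,H_0]H_0^{\ell\text{-th}}\langle\mathcal{I}_{\overline{\Pi}^m}\cap H_0\rangle$ (trivially so for primes other than $\ell$). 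Letting $H_0$ vary, Lemma \ref{1.2.6} gives $\langle z_0\rangle\cap\mathcal{I}_{\overline{\Pi}^m}\neq\{1\}$, whereupon the ``moreover'' clause of Lemma \ref{1.2.5} gives $z_0\in\mathcal{I}_{\overline{\Pi}^m}$. Thus $\langle z_0\rangle$ is contained in an inertia group $\tilde I$; by Definition \ref{def1.4.1}(i)(ii) the subgroups $\langle z_0\rangle$ and $\tilde I$ have the same image in $\overline{\Pi}^{\ab}$, and since $\tilde I$ injects into $\overline{\Pi}^{\ab}$ and $\langle z_0\rangle\subset\tilde I$, this forces $I=\langle z_0\rangle=\tilde I$, an inertia group.

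For $(a)\Rightarrow(b)$, write $I=I_y$ with $y\in\mathscr{S}^m(U_{k^{\sep}})$, choose a preimage $\tilde y\in\mathscr{S}^{m+1}(U_{k^{\sep}})$, and note, exactly as in the proof of Proposition \ref{1.4.2}, that $I_{\tilde y}\subset\overline{\Pi}^{m+1}$ is a maximal cyclic subgroup of cyclotomic type: conditions (i),(ii) of Definition \ref{def1.4.1} use $r\geq 2$ (inertia generators lie in a basis of the free pro-$C$ group $\overline{\Pi}$) and Proposition \ref{1.3.6}, while (iii),(iv) use Proposition \ref{1.2.3}(2) (so $N_{\Pi^{(m+1)}}(I_{\tilde y})=D_{\tilde y}$ maps onto an open subgroup of $G_k$) together with the cyclotomic description of the Galois action on inertia. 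Its image under $\overline{\Pi}^{m+1}\to\overline{\Pi}^m$ is $I_y=I$, as required.

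I expect the only real subtlety to be the step $z_0\in\mathcal{I}_{\overline{\Pi}^m}$: when $|\text{prime}(C)|\geq 2$ this can fail (cf. Remark \ref{1.2.4}), which is precisely why Proposition \ref{1.4.2} must argue one solvable level higher and then project down, whereas here $\mathbb{Z}_C=\mathbb{Z}_\ell$ is an integral domain and the stronger form of Lemma \ref{1.2.5} applies directly at level $m$. Every other ingredient is a direct copy of the proof of Proposition \ref{1.4.2}.
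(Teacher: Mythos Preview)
Your proof is correct and follows the paper's own argument essentially verbatim: both transcribe the proof of Proposition~\ref{1.4.2} with $m+2$ replaced by $m+1$, invoking the ``moreover'' clause of Lemma~\ref{1.2.5} (valid since $|\text{prime}(C)|=1$) to obtain $z_0\in\mathcal{I}_{\overline{\Pi}^m}$ directly at level $m$. Your explicit disposal of the case $(g,r)=(0,2)$ and your closing remark on why the single-prime hypothesis is indispensable are helpful additions not spelled out in the paper's proof, but the substance is identical.
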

\begin{proof}
Set  $\text{prime}(C)=\left\{\ell\right\}$. First, we show (b)$\Rightarrow $(a). Let  $J$ be a maximal cyclic subgroup of cyclotomic type of $\overline{\Pi}^{m+1}$ whose image by $\overline{\Pi}^{m+1}\rightarrow \overline{\Pi}^{m}$  coincides with $I$. Let $z$ be a generator of $J$ (cf. Definition \ref{def1.4.1}(i)), $z_{0}$ the image of  $z$ by $\overline{\Pi}^{m+1}\twoheadrightarrow\overline{\Pi}^{m}$ and $H_{0}$ an open subgroup of $\overline{\Pi}^{m}$ that contains $z_{0}$.\par
Let $H\subset \overline{\Pi}^{m+1}$ and $\tilde{H}\subset \overline{\Pi}$  be the inverse images of $H_{0}$ by $\overline{\Pi}^{m+1}\twoheadrightarrow\overline{\Pi}^{m}$ and $\overline{\Pi}\twoheadrightarrow \overline{\Pi}^{m}$, respectively.
Since  we have $\overline{\Pi}^{[m]}/\overline{\Pi}^{[m+1]}\subset H$ by definition, we get $\tilde{H}^{\ab }=H^{\ab }$.  We have $\langle\mathcal{I}_{\tilde{H}^{\ab }}\rangle=W_{-2}(\tilde{H}^{\ab })$ by Proposition \ref{1.3.6}, hence  $\langle\mathcal{I}_{H^{\ab }}\rangle = W_{-2}(H^{\ab })$.\par
Let $\overline{z}$ be the image of $z$ by $H\twoheadrightarrow H^{\ab }$.  Since the action of  $p_{U/k}(N_{\Pi^{(m+1)}}(J))$ on $J$ is cyclotomic  by Definition \ref{def1.4.1} (iii)(iv), the action has weight $-2$.  Thus,  $\overline{z}$ lies  in $W_{-2}(H^{\ab })$. Therefore, we obtain  $\overline{z}\in\langle\mathcal{I}_{H^{\ab }}\rangle\subset H^{\ab }$. $\langle\mathcal{I}_{H^{\ab }}\rangle$ is mapped to $\langle \mathcal{I}_{\overline{\Pi}^{m}}\cap H_{0}\rangle\mod{ [H_{0},H_{0}]{H_{0}}^{\ell\text{-th}}}$  by the projection $H^{\ab }\twoheadrightarrow H_{0}^{\ab }\twoheadrightarrow H_0/ [H_{0},H_{0}]{H_{0}}^{\ell\text{-th}}$. Thus, we get  $z_{0}\in [H_{0},H_{0}]{H_{0}}^{\ell\text{-th}}\langle \mathcal{I}_{\overline{\Pi}^{m}}\cap H_{0}\rangle$. (Note that this holds trivially even for  primes different from $\ell$.) Considering  all $H_{0}$ and all primes,  we obtain $z_{0}\in \mathcal{I}_{\overline{\Pi}^{m}}$ by Lemma \ref{1.2.6} and Lemma \ref{1.2.5}. Hence,  there exists an inertia group $\tilde{I}\subset \mathcal{I}_{\overline{\Pi}^{m}}$ that contains $\langle z_{0}\rangle$. By Definition \ref{def1.4.1}(i)(ii), we have $\langle z_{0}\rangle\equiv \tilde{I} \mod{\overline{\Pi}^{[1]}/\overline{\Pi}^{[m]}}\subset \overline{\Pi}^{\ab}$. Since $\tilde{I}$ mapped injectively  by $\overline{\Pi}^m\twoheadrightarrow \overline{\Pi}^{\ab}$, we obtain $\langle z_{0}\rangle=\tilde{I}$. As $I=\langle z_{0}\rangle$, $I$ coincides with  the inertia group $\tilde{I}$, as desired.\par
Finally, we show $(a)\Rightarrow (b)$. Let $y\in\mathscr{S}^{m}(U_{k^{\sep}})$ with  $I=I_y$ and $\tilde{y}$ an inverse image of $y$ by $\mathscr{S}^{m+1}(U_{k^{\sep}})\twoheadrightarrow\mathscr{S}^{m}(U_{k^{\sep}})$.  Inertia groups are maximal cyclic subgroups of cyclotomic type by the assumption $r\geq 2$ and Proposition \ref{1.2.3}. Hence $I_{\tilde{y}}\subset \overline{\Pi}^{m+1}$ satisfies the desired property.

\end{proof}

\begin{prop}\label{1.4.4}
Assume  that $r\geq 3$ and  $k$ is a field finitely generated over the prime field.  Then for any subgroup $D$ of  $\Pi^{(1)}$, the following conditions are equivalent.
\begin{enumerate}[(a)]
\item $D$ is  a decomposition group at cusp.
\item There exists a maximal cyclic subgroup of cyclotomic type  $J$ of $\overline{\Pi}^{3}$ such that  $\Psi^{3}_{\pro\ell,3}(J)$ is a maximal cyclic subgroup of cyclotomic type of $\overline{\Pi}^{\pro\ell,3}$ for all $\ell\in \text{prime}(C)$ and the image of $N_{\Pi^{(\text{nil},2)}}(\Psi^{3}_{\nil,2}(J))$ by $\Psi^{\text{nil},2}_{\nil,1}: \Pi^{(\text{nil},2)}\rightarrow \Pi^{(\text{nil},1)}=\Pi^{(1)}$ coincides with $D$.
\end{enumerate}
\end{prop}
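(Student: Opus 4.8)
The strategy is to route the reconstruction through the two-step nilpotent quotient $\Pi^{(\nil,2)}$, treating each prime $\ell\in\text{prime}(C)$ separately by Proposition \ref{1.4.3} and re-assembling with Proposition \ref{1.2.7}. Before either implication I would isolate one routine fact used on both sides: if $y'\in\mathscr{S}^{\nil,2}(U)$ lies above $y\in\mathscr{S}^{\nil,1}(U)=\mathscr{S}^{1}(U)$, then $\Psi^{\nil,2}_{\nil,1}(D_{y',\Pi^{(\nil,2)}})=D_{y,\Pi^{(1)}}$. The inclusion ``$\subseteq$'' is the $\psi$-equivariance of $\Psi^{\nil,2}_{\nil,1}$. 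For ``$\supseteq$'', lift $\tau\in D_{y,\Pi^{(1)}}$ to $\tilde\tau\in\Pi^{(\nil,2)}$; then $\tilde\tau y'$ and $y'$ are two places of $\mathcal{R}^{\nil,2}$ above the same place $y$ of $\mathcal{R}^{\nil,1}$, hence differ by some $\kappa\in\Ker(\overline{\Pi}^{\nil,2}\twoheadrightarrow\overline{\Pi}^{\nil,1})=\Ker(\Psi^{\nil,2}_{\nil,1})$, and $\kappa\tilde\tau\in D_{y',\Pi^{(\nil,2)}}$ still has image $\tau$.

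For $(a)\Rightarrow(b)$: write $D=D_{y,\Pi^{(1)}}$ with $y\in\mathscr{S}^{1}(U_{k^{\sep}})$, choose an inverse image $\tilde y\in\mathscr{S}^{3}(U_{k^{\sep}})$, and put $J:=I_{\tilde y,\overline{\Pi}^{3}}$. Since $r\geq 2$, the remarks preceding Proposition \ref{1.2.3} together with Proposition \ref{1.2.3}(2) show $J$ is a maximal cyclic subgroup of cyclotomic type of $\overline{\Pi}^{3}$; running the same argument with $C$ replaced by the full class $C^{\pro\ell}$ (so $\overline{\Pi}^{C^{\pro\ell}}=\overline{\Pi}^{\pro\ell}$ and $\mathbb{Z}_{C^{\pro\ell}}=\mathbb{Z}_{\ell}$) shows that $\Psi^{3}_{\pro\ell,3}(J)$, which is the inertia group at the image of $\tilde y$ in $\mathscr{S}^{\pro\ell,3}$, is a maximal cyclic subgroup of cyclotomic type of $\overline{\Pi}^{\pro\ell,3}$ for every $\ell\in\text{prime}(C)$. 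Finally $\Psi^{3}_{\nil,2}(J)=I_{\tilde y'',\overline{\Pi}^{\nil,2}}$ for the image $\tilde y''$ of $\tilde y$ in $\mathscr{S}^{\nil,2}$, so Proposition \ref{1.2.7}(3) gives $N_{\Pi^{(\nil,2)}}(\Psi^{3}_{\nil,2}(J))=D_{\tilde y'',\Pi^{(\nil,2)}}$, and the auxiliary fact sends this under $\Psi^{\nil,2}_{\nil,1}$ to $D_{y,\Pi^{(1)}}=D$. Hence $J$ witnesses (b).

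For $(b)\Rightarrow(a)$ the essential point is to recognize $\Psi^{3}_{\nil,2}(J)$ as a genuine inertia group of $\overline{\Pi}^{\nil,2}$; then Proposition \ref{1.2.7}(3) and the auxiliary fact finish exactly as above. Write $\overline{\Pi}^{\nil,n}=\prod_{\ell\in\text{prime}(C)}\overline{\Pi}^{\pro\ell,n}$. Since $J\cong\mathbb{Z}_{C}=\prod_{\ell}\mathbb{Z}_{\ell}$ and, by hypothesis, each $\Psi^{3}_{\pro\ell,3}(J)$ is $\cong\mathbb{Z}_{\ell}$, the surjection $J\twoheadrightarrow\Psi^{3}_{\pro\ell,3}(J)$ is forced to be the $\ell$-th projection followed by an automorphism of $\mathbb{Z}_{\ell}$; therefore $\Psi^{3}_{\nil,3}(J)=\prod_{\ell}\Psi^{3}_{\pro\ell,3}(J)$, hence $\Psi^{3}_{\nil,2}(J)=\prod_{\ell}\Psi^{3}_{\pro\ell,2}(J)$. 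For each $\ell$, Proposition \ref{1.4.3} applied to $C^{\pro\ell}$ (with its ``$m$'' equal to $2$) gives $\Psi^{3}_{\pro\ell,2}(J)=I_{y_{\ell},\overline{\Pi}^{\pro\ell,2}}$ for some $y_{\ell}\in\mathscr{S}^{\pro\ell,2}(U_{k^{\sep}})$. To see that the $y_{\ell}$ all lie above one point $x\in E_{k^{\sep}}$: by Proposition \ref{1.4.2} with ``$m$''$=1$, the image of $J$ in $\overline{\Pi}^{1}$ is an inertia group, attached to a unique $x\in E_{k^{\sep}}$ by Lemma \ref{1.2.1} (legitimate in $\overline{\Pi}^{1}$ since $r\geq 3\neq 2$); its $\ell$-part is then both the inertia group at $x$ and the inertia group at the image of $y_{\ell}$ in $\overline{\Pi}^{\pro\ell,1}$, so Lemma \ref{1.2.1} again (in $\overline{\Pi}^{\pro\ell,1}$) forces $y_{\ell}$ to lie above $x$. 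By Proposition \ref{1.2.7}(1) the tuple $(y_{\ell})_{\ell}$ is a place $y'\in\mathscr{S}^{\nil,2}(U_{k^{\sep}})$ above $x$, and $\Psi^{3}_{\nil,2}(J)=\prod_{\ell}I_{y_{\ell},\overline{\Pi}^{\pro\ell,2}}=I_{y',\overline{\Pi}^{\nil,2}}$. Thus $D=\Psi^{\nil,2}_{\nil,1}(D_{y',\Pi^{(\nil,2)}})=D_{y,\Pi^{(1)}}$ for $y$ the image of $y'$, i.e.\ $D$ is a decomposition group at a cusp.

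The main obstacle is the identification of $\Psi^{3}_{\nil,2}(J)$ as an inertia group in the $(b)\Rightarrow(a)$ direction: one must pass to the pro-$\ell$ quotients (which is precisely why the hypothesis requires each $\Psi^{3}_{\pro\ell,3}(J)$ to remain a maximal cyclic subgroup of cyclotomic type, so that Proposition \ref{1.4.3} applies prime-by-prime), and then verify the compatibility that the reconstructed pro-$\ell$ inertia groups all sit over a common cusp, which is what makes Proposition \ref{1.2.7}(1) applicable for gluing them into a single inertia group of $\overline{\Pi}^{\nil,2}$. The remaining ingredients — the decomposition-onto-decomposition lemma, the $\mathbb{Z}_{C}$-level bookkeeping, and the $(a)\Rightarrow(b)$ direction — are routine once these are in place.
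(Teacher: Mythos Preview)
Your proof is correct and follows essentially the same route as the paper's: both directions hinge on Proposition \ref{1.4.2} (with $m=1$) to recognize $\Psi^{3}_{1}(J)$ as an inertia group, Proposition \ref{1.4.3} prime-by-prime to identify each $\Psi^{3}_{\pro\ell,2}(J)$ as an inertia group $I_{y_\ell}$, Lemma \ref{1.2.1} (using $r\neq 2$) to check the $y_\ell$ all lie over one cusp, Proposition \ref{1.2.7}(1) to glue them into a single $y'\in\mathscr{S}^{\nil,2}$, and Proposition \ref{1.2.7}(3) to pass from inertia to decomposition via the normalizer. You are somewhat more explicit than the paper in two places --- the ``decomposition-onto-decomposition'' auxiliary fact and the justification that $\Psi^{3}_{\nil,2}(J)=\prod_\ell\Psi^{3}_{\pro\ell,2}(J)$ --- but these are exactly the steps the paper leaves implicit, so there is no substantive difference in strategy.
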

\begin{rem}
Even if $J$ is a maximal cyclic subgroup of cyclotomic type  of $\overline{\Pi}^{3}$,  $\Psi^{3}_{\pro\ell,3}(J)$ may not be a maximal cyclic subgroup of cyclotomic type of $\overline{\Pi}^{\pro\ell,3}$ because $\Psi^{3}_{\pro\ell,3}(J)$ may not satisfy Definition \ref{def1.4.1}(iv).
\end{rem}

\begin{proof}[Proof of \textrm{Proposition 1.4.6}]
(a)$\Rightarrow $(b) follows from the fact that the inertia groups are maximal cyclic subgroups of cyclotomic type (by Proposition \ref{1.2.3}) and Proposition \ref{1.2.7}(3). \par
We consider (b)$\Rightarrow$ (a).  By assumption, \  there exists  $y\in\mathscr{S}^{1}$ and $z_{\ell}\in\mathscr{S}^{\text{pro-}\ell,2}$ such that $\Psi^{3}_{1}(J)=I_{y}$ and $\Psi^{\pro\ell,3}_{\pro\ell,2}(\Psi^{3}_{\pro\ell,3}(J))=I_{z_{\ell}}$ by Proposition \ref{1.4.2} and Proposition \ref{1.4.3}, respectively. Since $\Psi_{\pro\ell,1}^{1}(I_{y})=\Psi_{\pro\ell,1}^{3}(J)=\Psi^{\pro\ell,2}_{\pro\ell,1}(I_{z_{\ell}})$ and $r\neq 2$, Lemma \ref{inersep1}(c)$\Rightarrow$(a) implies  that $y$ and $z_{\ell}$ are mapped to the  same point in $E_{k^{\text{sep}}}$ for all $\ell\in\text{prime}(C)$.\par
By Proposition \ref{1.2.7}(1),  we obtain $z\in \mathscr{S}^{\text{nil},2}$ with $z_{\ell}=\psi^{\nil,2}_{\pro\ell,2}(z)$ for every $\ell\in \text{prime}(C)$. Since  $\overline{\Pi}^{\text{nil},2}$ (resp.  $I_{z}$) is equal to the product of $\overline{\Pi}^{\text{pro-}\ell,2}$ (resp.  $\Psi^{\nil,2}_{\pro\ell,2}(I_{z})$) for all $\ell\in \text{prime}(C)$,  we obtain $\Psi^{3}_{\text{nil},2}(J)=I_{z}$. Therefore,  $\Psi^{\text{nil},2}_{\nil,1}(N_{\Pi^{(\text{nil},2)}}(\Psi^{3}_{\nil,2}(J)))$ is  the decomposition group of $\psi^{\nil,2}_{\nil,1}(z)\in\mathscr{S}^{1}$ by Proposition \ref{1.2.7}(3).
\end{proof}

Let $i=1,2$.  Let $X_{i}$ be a proper smooth curve over $k$  and $E_{i}$ a closed subscheme of $X_{i}$ which is finite \'{e}tale over $k$. Set $U_{i}:=X_{i}-E_{i}$.   Let $g_{i}:=g(U_{i})$ be the  genus of $X_{i,\overline{k}}$ and $r_{i}:=r(U_{i}):=|E_{i,\overline{k}}|$. From Proposition \ref{1.4.2}, Proposition \ref{1.4.3}, Proposition \ref{1.4.4} and Proposition \ref{1.2.3}, we obtain the following corollary.

\begin{cor}\label{1.4.5}
Assume that $k$ is  a field finitely generated over the prime field.  Let $n\in \mathbb{N}$.  Then the following hold. 

\begin{enumerate}[(1)]
\item Assume that $r_{1}\geq2$, $n\geq 2$, and that either $m\geq 2$ or  $r_{1}\geq 3$. Then for any isomorphism $\alpha_{m+n}:\Pi^{(m+n)}(U_{1})\xrightarrow[G_{k}]{\sim}\Pi^{(m+n)}(U_{2})$, the isomorphism $\alpha_{m}:\Pi^{(m)}(U_{1})\xrightarrow[G_{k}]{\sim}\Pi^{(m)}(U_{2})$ induced by $\alpha_{m+n}$  preserves the decomposition groups at cusps.
\item Assume that   $|\text{prime}(C)|=1$ and $r_{1}\geq2$. Then, for any isomorphism $\alpha_{m+n}:\Pi^{(m+n)}(U_{1})\xrightarrow[G_{k}]{\sim}\Pi^{(m+n)}(U_{2})$, the isomorphism $\alpha_{m}:\Pi^{(m)}(U_{1})\xrightarrow[G_{k}]{\sim}\Pi^{(m)}(U_{2})$ induced by $\alpha_{m+n}$ preserves the  inertia groups. Moreover, it preserves the decomposition groups at cusps if  $m\geq 2$.
\end{enumerate}
\end{cor}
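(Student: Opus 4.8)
The plan is to deduce the statement formally from the group-theoretical characterizations already in hand: Propositions \ref{1.4.2}, \ref{1.4.3} and \ref{1.4.4}, together with the normalizer description of Proposition \ref{1.2.3}(2). The key observation is that each of these describes the family of inertia groups, resp.\ of decomposition groups at cusps, inside a suitable truncated quotient using only data that an isomorphism of the form $\Pi^{(m+n)}(U_1)\xrightarrow[G_k]{\sim}\Pi^{(m+n)}(U_2)$ automatically respects: the group itself, its canonical quotients (the abelianization $\overline{\Pi}^{\ab}$, the geometric subgroup $\overline{\Pi}$, the pro-$\ell$ and nilpotent quotients), the reference projection $p_{U/k}$ to $G_k$, formation of normalizers, and the cyclotomic character of $G_k$. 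In particular the four defining conditions of a maximal cyclic subgroup of cyclotomic type (Definition \ref{def1.4.1}) are manifestly preserved by any $G_k$-isomorphism of the relevant quotients; and a $G_k$-isomorphism also induces compatible isomorphisms on every canonical quotient. So the whole argument comes down to: (a) checking that under the given numerical hypotheses the truncated quotient appearing in the relevant proposition is itself a quotient of $\Pi^{(m+n)}$, and (b) pushing the isomorphism through the characterization.

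For part (1) I would split along the hypothesis ``$m\geq 2$ or $r_1\geq 3$''. If $m\geq 2$: since $n\geq 2$, $\alpha_{m+n}$ induces a $G_k$-isomorphism $\alpha_{m+2}$ on the $(m+2)$-step quotients, which carries maximal cyclic subgroups of cyclotomic type to maximal cyclic subgroups of cyclotomic type; by Proposition \ref{1.4.2} (applicable as $r_1\geq 2$) the induced $\alpha_m$ carries inertia groups of $\overline{\Pi}^m(U_1)$ to those of $\overline{\Pi}^m(U_2)$; and, excluding the degenerate case $(g_1,r_1)=(0,2)$, Proposition \ref{1.2.3}(2) gives $D_y=N_{\Pi^{(m)}}(I_y)$, whence $\alpha_m$ preserves decomposition groups at cusps as well. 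If $m=1$ (so $r_1\geq 3$): since $n\geq 2$, $\alpha_{m+n}$ induces compatible $G_k$-isomorphisms on $\overline{\Pi}^3$, on each $\overline{\Pi}^{\pro\ell,3}$, on $\Pi^{(\nil,2)}$ and on $\Pi^{(\nil,1)}=\Pi^{(1)}$; Proposition \ref{1.4.4} describes the decomposition groups at cusps of $\Pi^{(1)}$ solely in terms of these quotients and their cyclotomic characters, so $\alpha_1$ preserves them. The degenerate fibre $(g_1,r_1)=(0,2)$ is disposed of by hand: there $\Pi^{(m)}=\Pi^{(1)}$, every inertia group equals $\overline{\Pi}^m$, and the claim is immediate.

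For part (2), with $|\text{prime}(C)|=1$ and $r_1\geq 2$: since $n\geq 1$, $\alpha_{m+n}$ induces a $G_k$-isomorphism $\alpha_{m+1}$, which preserves maximal cyclic subgroups of cyclotomic type; by Proposition \ref{1.4.3} the induced $\alpha_m$ preserves inertia groups of $\overline{\Pi}^m$. If moreover $m\geq 2$, then Proposition \ref{1.2.3}(2) again yields $D_y=N_{\Pi^{(m)}}(I_y)$ (with the degenerate case $(g_1,r_1)=(0,2)$ handled as before), so $\alpha_m$ preserves decomposition groups at cusps too.

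I do not anticipate a real obstacle here: all the mathematical weight sits in Propositions \ref{1.4.2}--\ref{1.4.4} (which rest in turn on the weight computation of Proposition \ref{1.3.6} and the centralizer/normalizer results of Subsection \ref{subsection1.1}), and the corollary is a formal unwinding of them. The only things needing attention are the numerology---$n\geq 2$ in part (1) making $\overline{\Pi}^{m+2}$, $\overline{\Pi}^3$ and $\Pi^{(\nil,2)}$ quotients of $\Pi^{(m+n)}$, while $n\geq 1$ suffices for $\overline{\Pi}^{m+1}$ in part (2)---the case split ``$m\geq 2$ versus $r_1\geq 3$'' that decides whether to invoke Proposition \ref{1.4.2} or Proposition \ref{1.4.4} when $m=1$, and the bookkeeping of the degenerate case $(g_1,r_1)=(0,2)$.
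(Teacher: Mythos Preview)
Your proposal is correct and follows essentially the same approach as the paper: dispose of the degenerate case $(g_1,r_1)=(0,2)$, then for (1) use Proposition~\ref{1.4.2} together with Proposition~\ref{1.2.3}(2) when $m\geq 2$ and Proposition~\ref{1.4.4} when $m=1$, and for (2) use Proposition~\ref{1.4.3} together with Proposition~\ref{1.2.3}(2). The paper's proof is just a terser statement of exactly this case split.
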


\begin{proof}
If $(g_{1},r_{1})=(0,2)$, then the assertions clearly hold. Hence, we may assume that $(g_{1},r_{1})\neq(0,2)$. If $m\geq 2$, (1) follows from Proposition \ref{1.4.2} and Proposition \ref{1.2.3}. If $m=1$, (1) follows from Proposition \ref{1.4.4}. Thus, (1) holds. (2) follows from Proposition \ref{1.4.3} and Proposition \ref{1.2.3}.
\end{proof}

Assume that $r_{1}\geq3$ and $k$ is  a field finitely generated over the prime field.  Let $\text{Isom}_{G_{k}}^{\text{Iner}}(\Pi^{(m)}(U_{1}),\Pi^{(m)}(U_{2}))$ (resp. $\text{Isom}_{G_{k}}^{\text{Dec}}(\Pi^{(m)}(U_{1}),\Pi^{(m)}(U_{2}))$) be  the set of all $G_{k}$-isomorphism $\Pi^{(m)}(U_{1})\xrightarrow{\sim}\Pi^{(m)}(U_{2})$ which preserve inertia groups (resp. decomposition groups at cusps). Let   $\text{Isom}_{G_{k}}(E_{1,k^{\text{sep}}},E_{2,k^\text{sep}})$ be the set of all bijection which  compatible with the $G_{k}$ actions on $E_{1,k^\text{sep}}$ and $E_{2,k^\text{sep}}$. Then we have  the map 
\begin{equation}\label{eqwr}
\Phi'_{I}: \text{Isom}_{G_{k}}^{\text{Iner}}(\Pi^{(m)}(U_{1}),\Pi^{(m)}(U_{2}))\rightarrow \text{Isom}_{G_{k}}(E_{1,k^{\text{sep}}},E_{2,k^\text{sep}})
\end{equation}
by Lemma \ref{inersep1} and Lemma \ref{inersepm},  since $\Phi'_{I}$ is defined as 
\begin{equation*}\label{wq1.4.10}
\text{$\Phi'_{I}(\alpha)(x_{1}):=x_{2}$ when   $\alpha_{1}(\mathcal{I}_{x_{1},\overline{\Pi}^1})=\mathcal{I}_{x_{2},\overline{\Pi}^1}$}
\end{equation*}
for any $\alpha\in\text{Isom}_{G_{k}}(\Pi^{(m)}(U_{1}),\Pi^{(m)}(U_{2}))$,  $x_{1}\in E_{1,k^{\text{sep}}}$, and $x_{2}\in E_{2,k^{\text{sep}}}$,   where $\alpha_{1}$ stands for the element in $\text{Isom}_{G_{k}}(\Pi^{(1)}(U_{1}),\Pi^{(1)}(U_{2}))$ induced by  the image of $\alpha$. We know that $\text{Isom}_{G_{k}}^{\text{Dec}}(\Pi^{(m)}(U_{1}),\Pi^{(m)}(U_{2}))\subset \text{Isom}_{G_{k}}^{\text{Iner}}(\Pi^{(m)}(U_{1}),\Pi^{(m)}(U_{2}))$, hence $\Phi'$ induces the map
\begin{equation}\label{eq1.1.9}
\Phi'_{D}: \text{Isom}_{G_{k}}^{\text{Dec}}(\Pi^{(m)}(U_{1}),\Pi^{(m)}(U_{2}))\rightarrow \text{Isom}_{G_{k}}(E_{1,k^{\text{sep}}},E_{2,k^\text{sep}}).
\end{equation}
Let $n\in\mathbb{N}$. By Corollary \ref{1.4.5}, we have that the image of the map 
\begin{equation*}
\Phi'':\text{Isom}_{G_{k}}(\Pi^{(m+n)}(U_{1}),\Pi^{(m+n)}(U_{2}))\rightarrow\text{Isom}_{G_{k}}(\Pi^{(m)}(U_{1}),\Pi^{(m)}(U_{2}))
\end{equation*}
 is contained in $\text{Isom}^{\text{Iner}}_{G_{k}}(\Pi^{(m)}(U_{1}),\Pi^{(m)}(U_{2}))$  when $n\geq 2$ or   $|\text{prime}(C)|=1$, and is contained in  $\text{Isom}^{\text{Dec}}_{G_{k}}(\Pi^{(m)}(U_{1}),$ \\$\Pi^{(m)}(U_{2}))$  when ``$n\geq 2$'' or ``$|\text{prime}(C)|=1$ and $m\geq 2$''. Thus,  we obtain the map 
\begin{equation}\label{wq1.4.9}
\Phi:=\Phi^{m+n}:=\Phi'_{I}\circ \Phi'':\text{Isom}_{G_{k}}(\Pi^{(m+n)}(U_{1}),\Pi^{(m+n)}(U_{2}))\rightarrow \text{Isom}_{G_{k}}(E_{1,k^{\text{sep}}},E_{2,k^\text{sep}}).
\end{equation}
when $n\geq 2$ or   $|\text{prime}(C)|=1$. \par

%%%%%%%%%%%%%%%%%%%%%%%%%%%%%%%%%%%%%%%%%%%%%%%%%%%%%%%%%%%%%%%%%%%%%%%%%%%%%%%%%%%%%%%%%%%%%
%%%%%%%%%%%%%%%%%%%%%%%%%%%%%%%%%%%%%%%%%%%%%%%%%%%%%%%%%%%%%%%%%%%%%%%%%%%%%%%%%%%%%%%%%%%%%
%%%%%%%%%%%%%%%%%%%%%%%%%%%%%%%%%%%%%%%%%%%%%%%%%%%%%%%%%%%%%%%%%%%%%%%%%%%%%%%%%%%%%%%%%%%%%

\section{The  $m$-step solvable Grothendieck conjecture for genus 0 curves}\label{section2}
\hspace{\parindent}
We continue to use  Notation $1$ and  Notation $2$. Assume that  $k$ is a  field finitely generated over the prime field. Assume that   $U_{1}$ and $U_{2}$ are genus $0$ hyperbolic curves. In this section, we  reconstruct an isomorphism $U_{1}\underset{k}\cong U_{2}$ from a given  isomorphism $\Pi^{(1)}(U_{1})\underset{G_k}\cong \Pi^{(1)}(U_{2})$ which preserves the decomposition groups at cusps  (under certain conditions).  Since we have already reconstructed  an isomorphism $\Pi^{(1)}(U_{1}) \underset{G_{k}}\cong \Pi^{(1)}(U_{2})$ which preserves the decomposition groups at cusps from a given isomorphism $\Pi^{(m)}(U_{1})\underset{k}\cong \Pi^{(m)}(U_{2})$ when  $m\geq 3$, this implies the main theorem. In subsection $2.1$, we define the rigidity invariants  and  show some facts in field theory. In subsections $2.2$ and $2.3$, we show the $m$-step solvable Grothendieck conjecture for punctured projective lines over $k$ of characteristic $0$ and positive characteristic, respectively.  In subsection $2.4$, we show the main theorem by Galois descent.\par
This section mainly refers to sections $4$ and $6$ of  \cite{Na1990-411}.

%%%%%%%%%%%%%%%%%%%%%%%%%%%%%%%%%%%%%%%%%%%%%%%%%%%%%%%%%%%%%%%%%%%%%%%%%%%%%

\subsection{Rigidity invariant}\label{subsection2.1}

\hspace{\parindent} In this subsection, we define the rigidity invariant for  $\Pi^{(1)}(U)$. The rigidity invariant is defined in  \cite{Na1990-411}(4.2) in the case of the full fundamental group and the following  definition is essentially the same as in \cite{Na1990-411}(4.2).

\begin{definition}\label{2.1.1}
Assume that $U=\mathbb{P}^1_k-E$, where $E$ is a finite set of $k$-rational points of $\mathbb{P}_k^1$ with $|E|\geq 4$. Let $n\in\mathbb{N}(C)$.   Let $x_1,x_2,x_3,x_4$ be distinct elements of $E$ and $\varepsilon=\left\{x_1,x_2\right\},\delta=\left\{x_3,x_4\right\}$.
\begin{enumerate}[(1)]
\item We denote by $\mathscr{H}_{\varepsilon,n}$  the set of  all open subgroups $H$ of  $\Pi^{(1)}(:=\Pi^{(1)}(U))$ that satisfy the following conditions.
\begin{enumerate}[(i)]
\item $\overline{H}:=H\cap \overline{\Pi}^1$ contains $\mathcal{I}_{x,\overline{\Pi}^1}$ for all $x\in S-\varepsilon$.
\item $\overline{\Pi}^1/\overline{H} \cong \mathbb{Z}/n\mathbb{Z}$
\item $p_{U/k}(H)=G_{k(\mu_n)}$
\item $p_{U/k}^{-1}(G_{k(\mu_n)})\rhd H$
\end{enumerate}

\item   We define  $\kappa_n(\varepsilon,\delta)$ to be the subfield of $k^{\sep}$ consisting of  the elements fixed by all the automorphisms belonging to 
\begin{equation*}
\bigcup_{ H\in \mathscr{H}_{\varepsilon,n}}\bigcap_{y\in\mathscr{S}^1_{x_3}\cup \mathscr{S}^1_{x_4}}p_{U/k}(H\cap D_y).
\end{equation*}
We call  $\kappa_n(\varepsilon,\delta)$ the rigidity invariant for $\varepsilon, \delta$ of $U$.  
\end{enumerate}
\end{definition}

Set the following notation for $\varepsilon$ and $\delta$.
\[
\lambda  (\varepsilon, \delta):=\frac{ x_4-x_1 }{x_4-x_2} \frac{ x_3-x_2 }{x_3-x_1}
\]
By definition,  the isomorphism $\mathbb{P}^1_k\xrightarrow[k]{\sim} \mathbb{P}^1_k$ satisfying $x_1\mapsto 0,\ x_2\mapsto \infty,\ x_3\mapsto 1$ maps $x_4$  to $\lambda  (x_1,x_2,x_3,x_4)$.\par
The next proposition is  essentially the same as \cite{Na1990-411}(4.3).  Considering the difference between Definition \ref {2.1.1} and the definition given in \cite{Na1990-411}(4.2), we give a proof again here.

\begin{prop}\label{2.1.3} Under the notation of Definition \ref{2.1.1}, the following hold.
\begin{equation*}
\kappa_n(\varepsilon,\delta)\ =\ k(\mu_n,\lambda(\varepsilon,\delta)^{\frac{1}{n}})\ \ \ \ \ \ \ \ \ \ (n\in\mathbb{N}(C))
\end{equation*}
\end{prop}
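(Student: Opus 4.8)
The plan is to compute both sides of the claimed equality and show they agree. Since $\kappa_n(\varepsilon,\delta)$ is intrinsically defined via the fundamental group, I would first replace $U$ by the isomorphic curve $\mathbb{P}^1_k - \{0,\infty,1,\lambda\} - E'$ where the coordinate change sends $x_1,x_2,x_3,x_4$ to $0,\infty,1,\lambda:=\lambda(\varepsilon,\delta)$ (and $E'$ is the image of the remaining points of $E$); this is harmless because the rigidity invariant is defined group-theoretically and the conditions in Definition \ref{2.1.1} only refer to the four distinguished points via $\varepsilon,\delta$. So I may assume $x_1=0$, $x_2=\infty$, $x_3=1$, $x_4=\lambda$, $\varepsilon=\{0,\infty\}$, $\delta=\{1,\lambda\}$.

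Next I would analyze the subgroups $H\in\mathscr{H}_{\varepsilon,n}$. Condition (i) forces $H$ to contain the inertia at every puncture except those over $\{0,\infty\}$; together with (ii), $\overline\Pi^1/\overline H\cong\mathbb{Z}/n\mathbb{Z}$, this means the degree-$n$ abelian cover of $\mathbb{P}^1_{k^{\sep}}$ classified by $\overline H$ is ramified only over $0$ and $\infty$, hence (via Kummer theory, using $\mu_n\subset$ the base after condition (iii)) is the cover obtained by extracting an $n$-th root of the coordinate $t$ — i.e. the function field extension $k^{\sep}(t^{1/n})/k^{\sep}(t)$. Condition (iii), $p_{U/k}(H)=G_{k(\mu_n)}$, says the cover is defined over $k(\mu_n)$, and (iv) is a normality condition ensuring the Kummer-theoretic description descends correctly. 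Thus each such $H$ corresponds to the standard Kummer cover $V_n: s^n = t$ over $k(\mu_n)$, and the fiber over $x_3=1$ (resp. over $x_4=\lambda$) consists of the points where $s^n=1$ (resp. $s^n=\lambda$).

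Then I would identify the intersection $\bigcap_{y\in\mathscr{S}^1_{x_3}\cup\mathscr{S}^1_{x_4}}p_{U/k}(H\cap D_y)$. The decomposition group $D_y$ at a place $y$ over $x_3=1$ on the cover $V_n$ maps under $p_{U/k}$ onto the decomposition group of that place in $\mathrm{Gal}(k^{\sep}/k(\mu_n))$ acting on the fiber, which is $G_{L}$ where $L=k(\mu_n)(1^{1/n})=k(\mu_n)$ — wait, more precisely the place over $1$ splits according to how $G_{k(\mu_n)}$ permutes the $n$-th roots; the compositum over all $y$ above $x_3$ of the fixed fields gives $k(\mu_n,\lambda^{1/n})$ once one also includes the places above $x_4=\lambda$, because the fiber over $\lambda$ is $\{$points with $s^n=\lambda\}$ and the Galois action on these is through $k(\mu_n,\lambda^{1/n})$. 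Taking the union over all $H\in\mathscr{H}_{\varepsilon,n}$ (which here is essentially a single Kummer cover up to the choices allowed by (iv)) and then the fixed field, one lands exactly on $k(\mu_n,\lambda^{1/n})$. I would then cite \cite{Na1990-411}(4.3) for the parts of this computation that are formally identical, pointing out only the modifications needed because of the weakened normality condition (P.1)/(ii)$\leftrightarrow$(iv) in Definition \ref{2.1.1}.

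The main obstacle I expect is the bookkeeping in the last step: showing that the family $\mathscr{H}_{\varepsilon,n}$ is rich enough that the union of the images $p_{U/k}(H\cap D_y)$ over $H$ and the intersection over $y$ cut out precisely $G_{k(\mu_n,\lambda^{1/n})}$ and nothing smaller — in particular, verifying that condition (iv) does not inadvertently shrink $\mathscr{H}_{\varepsilon,n}$ so much that one loses the points over $x_4$, and that $\mathscr{S}^1_{x_3}\cup\mathscr{S}^1_{x_4}$ together (rather than either alone) is what produces the root $\lambda^{1/n}$. This is exactly the place where the difference from \cite{Na1990-411} must be handled with care, so I would spell it out, while treating the Kummer-theory identification of $\overline H$ and the transitivity/compatibility of decomposition groups as routine (they follow from \eqref{equation2.1} and the basic properties in subsection \ref{subsection1.2}).
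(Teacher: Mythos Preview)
Your overall strategy (coordinate change to $\{0,\infty,1,\lambda\}$, then Kummer-theoretic identification of the covers) matches the paper's, but there is a genuine gap in your analysis of $\mathscr{H}_{\varepsilon,n}$.

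You write that $\mathscr{H}_{\varepsilon,n}$ ``is essentially a single Kummer cover up to the choices allowed by (iv)''. This is false, and it is precisely the point where the argument has content. Conditions (i)--(ii) do pin down $\overline H$ uniquely (the geometric cover is $t\mapsto t^n$), but they do \emph{not} pin down $H$ itself: for every $\omega\in k(\mu_n)^{\times}$ the cover with function field $k(\mu_n,(\omega t)^{1/n})$ satisfies (i)--(iv), and different classes of $\omega$ in $k(\mu_n)^{\times}/k(\mu_n)^{\times n}$ give different $H$. For such an $H$ the residue fields above $x_3=1$ and $x_4=\lambda$ are $k(\mu_n,\omega^{1/n})$ and $k(\mu_n,(\omega\lambda)^{1/n})$, so
\[
\kappa_H=k\bigl(\mu_n,\omega^{1/n},(\omega\lambda)^{1/n}\bigr).
\]
Your computation treats only the case $\omega=1$; from that alone you obtain at best $\kappa_n(\varepsilon,\delta)\subset k(\mu_n,\lambda^{1/n})$, not equality. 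The reverse inclusion requires the observation that $\lambda^{1/n}\in\kappa_H$ for \emph{every} $H$, which holds because $(\omega\lambda)^{1/n}\cdot\omega^{-1/n}$ is an $n$-th root of $\lambda$ (up to a factor in $\mu_n\subset k(\mu_n)$). This is exactly what the paper checks, and it is also the correct explanation of why both $x_3$ and $x_4$ are needed: for a general twist $\omega$, the fiber over $x_3$ alone gives $k(\mu_n,\omega^{1/n})$, not $k(\mu_n)$ as you suggest, and neither fiber individually contains $\lambda^{1/n}$ --- only their compositum does. If one used $x_4$ alone, the intersection $\bigcap_H\kappa_H$ over all twists $\omega$ would collapse to $k(\mu_n)$.
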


\begin{proof}
Let $t:\mathbb{P}^1_k\xrightarrow[k]{\sim} \mathbb{P}^1_k$ be the isomorphism that satisfies $t(x_1)=0, t(x_2)=\infty$ and $t(x_3)=1$. Then we have  $t(x_4)=\lambda(\varepsilon,\delta)$. \par
Let  $H\in\mathscr{H}_{\varepsilon,n}$ and let $U^{\overline{H}}\rightarrow U_{k^{\sep}}$ be the cover corresponding to $\overline{H}\subset \overline{\Pi}^1$. Then $\overline{\Pi}^1/\overline{H}\cong \mathbb{Z}/n\mathbb{Z}$ and $\overline{H}$ contains $\mathcal{I}_{x,\overline{\Pi}^1}$ for all $x\in S-\varepsilon$ by Definition \ref{2.1.1}(1)(ii) and (i), hence  $(U^{\overline{H}})^{\text{cpt}}\rightarrow (U_{k^{\sep}})^{\text{cpt}}$ corresponds to a unique cover of degree $n$ unramified outside $t(\varepsilon)=\left\{0,\infty\right\}$. Thus, $(U^{\overline{H}})^{\text{cpt}}\rightarrow (U_{k^{\sep}})^{\text{cpt}}$ is identified with $\mathbb{P}^1_k\rightarrow\mathbb{P}^1_k$, $x\mapsto x^n$ and corresponding extension of function fields is $k^{\sep}(t^{\frac{1}{n}})/k^{\sep}(t)$.\par
Let  $U^{H}\rightarrow U_{k(\mu_n)}$ be the cover  corresponding to  $H \lhd p_{U/k}^{-1}(G_{k(\mu_n)})$ (cf. Definition \ref{2.1.1}(1)(iv)). Since Gal$(K(U^{H})/k(\mu_n,t))\cong p_{U/k}^{-1}(G_{k(\mu_n)})/H\cong \mathbb{Z}/n\mathbb{Z}$ by Definition \ref{2.1.1}(1)(ii)(ii) and (iv),  there exists a subgroup $\Delta\subset k(\mu_n,t)^{\times}/k(\mu_n,t)^{\times n}$ such that  $K(U^{H})\cong k(\mu_n,\Delta^{\frac{1}{n}})$ and $\Delta\cong \text{Gal}(K(U^{H})/k(\mu_n,t))\cong \mathbb{Z}/n\mathbb{Z}$ by Kummer theory. Since $\Delta$ is mapped isomorphically to $\langle t\rangle\mod{k^{\sep}(t)^{\times n}}$ by $k(\mu_n,t)^{\times}/k(\mu_n,t)^{\times n}\rightarrow  k^{\sep}(t)^{\times}/k^{\sep}(t)^{\times n}$, there exists $f\in k^{\sep}(t)^{\times}$ such that $tf^{n}\in k(\mu_{n},t)^{\times}$ and $\Delta=\langle tf^{n}\rangle\mod{k(\mu_n,t)}$. Thus, we get $K(U^{H})=k(\mu_n,(tf^n)^{\frac{1}{n}})$. Since $t$, $tf^n\in k(\mu_n,t)^{\times}$, we have $f^n\in k^{\sep}(t)^{\times n}\cap k(\mu_n,t)^{\times}=k(\mu_n)^{\times}\cdot k(\mu_n,t)^{\times n}$, hence we get the following consequence.
\begin{equation}\label{eq2.1}
\text{ There exists }\omega_H\in k(\mu_n)^{\times}\text{ such that }K(U^{H})=k(\mu_n,(\omega_{H}t)^{\frac{1}{n}}).
\end{equation}\par
Let $\kappa_H\subset k^{\sep}$ be the  fixed field by $\bigcap_{y\in\mathscr{S}^1_{x_3}\cup\mathscr{S}^1_{x_4}} p_{U/k}(H\cap D_y)$. Let $y\in\mathscr{S}^1_{x_3}\cup\mathscr{S}^1_{x_4}$ and write $x$ for the image of $y$ in  $(U^{H})^{\text{cpt}}$. Then we have $p_{U/k}(H\cap D_y)$=$G_{\kappa(x)}$, hence $\kappa_H$ is the composite field of  residue fields of  all $x$ above $x_3$ and $x_4$. By (\ref{eq2.1}), we get 
\begin{equation}\label{eq2.2}
\kappa_H=k(\mu_n,\left\{\omega_H\right\}^{\frac{1}{n}},\left\{\omega_H\lambda(\varepsilon,\delta)\right\}^{\frac{1}{n}}).
\end{equation}\par
Let $H_0\in\mathscr{H}_{\varepsilon,n}$ that satisfies $K(U^{H_0})=k(\mu_n,t^{\frac{1}{n}})$. Since  $\omega_{H_0}=1$, (\ref{eq2.2}) implies  $\kappa_{H_0}=k(\mu_n,\lambda(\varepsilon,\delta)^{\frac{1}{n}})$.  Clearly $\kappa_H=k(\mu_n,\left\{\omega_H\right\}^{\frac{1}{n}},\left\{\omega_H\lambda(\varepsilon,\delta)\right\}^{\frac{1}{n}})\supset k(\mu_n,\lambda(\varepsilon,\delta)^{\frac{1}{n}})=\kappa_{H_0}$ for all $H\in\mathscr{H}_{\varepsilon,n}$, hence we obtain $\kappa_n(\varepsilon,\delta)=\underset{H}\bigcap\kappa_H=\kappa_{H_0}=k(\mu_n,\lambda(\varepsilon,\delta)^{\frac{1}{n}})$.
\end{proof}

Next,  we show some facts in field theory.

\begin{lem}\label{2.1.4}
Let $\ell$ be a prime, $n\in\mathbb{N}$ and set $N:=\ell^n$.  If $4\nmid N$ or $k\supset \mu_4$, then $k(\mu_N)^{\times N}\cap k^{\times}=k^{\times N}$ holds.
\end{lem}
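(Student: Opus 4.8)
The plan is to analyze the extension $k(\mu_N)/k$ and use Kummer-theoretic input together with the structure of the cyclotomic extension. The inclusion $k^{\times N}\subseteq k(\mu_N)^{\times N}\cap k^{\times}$ is trivial, so the content is the reverse inclusion. Suppose $a\in k^{\times}$ and $a = b^N$ for some $b\in k(\mu_N)^{\times}$; I want to conclude $a\in k^{\times N}$. First I would reduce to the case where $k$ contains no nontrivial $\ell$-power root of unity that is not already forced, i.e. work prime by prime in $N=\ell^n$ and keep track of $\mu_{\ell^\infty}\cap k$. The key observation is that $G:=\mathrm{Gal}(k(\mu_N)/k)$ is cyclic when $\ell$ is odd, and when $\ell=2$ it is cyclic under the hypothesis $\mu_4\subset k$ (the case $4\nmid N$ being degenerate since then $k(\mu_N)=k$ or $[k(\mu_N):k]\mid 2$ with $N$ odd or $N=2$, both easy). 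So in all cases under the hypothesis, $G$ is cyclic, say of order $d\mid N$, generated by $\sigma$.

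The main step: for $b\in k(\mu_N)^{\times}$ with $b^N\in k^{\times}$, apply $\sigma$ to get $(\sigma b/b)^N = 1$, so $\sigma b/b$ is an $N$-th root of unity, hence lies in $k(\mu_N)$; write $\sigma b / b = \zeta$. Then the map $\sigma^i \mapsto \zeta\,\sigma(\zeta)\cdots\sigma^{i-1}(\zeta)$ defines a cocycle, and since $b$ trivializes it, by Hilbert 90 (or just directly, using cyclicity and computing the norm) the norm $N_{k(\mu_N)/k}(\zeta) = \zeta\,\sigma(\zeta)\cdots\sigma^{d-1}(\zeta) = b^{-1}\sigma^{d}(b)\cdot(\text{telescoping}) = 1$. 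Now I would compute: $N_{k(\mu_N)/k}(b)^N = N_{k(\mu_N)/k}(b^N) = a^d$ since $b^N\in k^{\times}$. Setting $c := N_{k(\mu_N)/k}(b)\in k^{\times}$, this gives $c^N = a^d$. If $d = N$ we are not yet done directly, so instead I would combine this with $b^d / (\text{something in }k)$: more precisely, since $G$ is cyclic of order $d$ and $\sigma(b)/b = \zeta$ with $N_{k(\mu_N)/k}(\zeta)=1$, again by Hilbert's Theorem 90 applied to $\mu_N\subset k(\mu_N)^\times$ as a $G$-module there exists $\eta \in \mu_N$ with $\zeta = \sigma(\eta)/\eta$; then $b/\eta \in k^{\times}$, hence $a = b^N = \eta^N (b/\eta)^N$. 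Finally, $\eta\in\mu_N$ so $\eta^N$... wait — $\eta^N=1$ only if the order of $\eta$ divides $N$, which it does since $\eta\in\mu_N$. Therefore $a = (b/\eta)^N \in k^{\times N}$, as desired.

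The hard part will be the application of Hilbert 90 to the module $\mu_N$ with its $G$-action: one needs $H^1(G,\mu_N)$ to vanish or at least that the specific class of $\zeta$ is a coboundary. Since $G$ is cyclic, $H^1(G,\mu_N) \cong \ker(\mathrm{Norm})/(\sigma-1)\mu_N$, so the claim is exactly that $\zeta$, which we showed has norm $1$, is of the form $\sigma(\eta)/\eta$ — this is the content of the cyclic case of Hilbert 90 for the module $\mu_N$, and it does hold because $\mu_N(k(\mu_N))$ is a cyclic group and one can check $\widehat{H}^{-1}=\widehat{H}^1$ vanishing by an order count, using precisely that $\mu_{\ell^\infty}\cap k(\mu_N)$ has the expected size when $\ell$ is odd or $\mu_4\subset k$. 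This is where the hypothesis ``$4\nmid N$ or $k\supset\mu_4$'' is essential: without it, when $\ell=2$ and $\mu_4\not\subset k$, the group $G$ can be non-cyclic (a product of two cyclic groups) and $H^1(G,\mu_N)$ need not vanish — indeed the classical counterexample $a=-4$, which is a $4$-th power in $\mathbb{Q}(i)$ but not in $\mathbb{Q}$, shows the statement genuinely fails there. I would therefore organize the write-up as: (i) dispose of the trivial/degenerate cases in $N$; (ii) establish cyclicity of $G$; (iii) run the Hilbert 90 argument on $\mu_N$; (iv) conclude.
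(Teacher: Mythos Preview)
Your approach and the paper's are essentially the same: both reduce the claim to the vanishing of $H^1(G,\mu_N)$ for $G=\mathrm{Gal}(k(\mu_N)/k)$ and then verify this by an explicit cyclic-group computation. The paper does the reduction more cleanly by taking $G$-cohomology of the Kummer sequence $1\to\mu_N\to k(\mu_N)^\times\to k(\mu_N)^{\times N}\to 1$ and invoking Hilbert~90 for $k(\mu_N)^\times$ to get $(k(\mu_N)^{\times N}\cap k^\times)/k^{\times N}\cong H^1(G,\mu_N)$ directly, whereas your element-chase with $\zeta=\sigma(b)/b$ and the search for $\eta\in\mu_N$ is the same cocycle computation unwound by hand.

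One correction to your diagnosis: the hypothesis ``$4\nmid N$ or $\mu_4\subset k$'' is not needed merely to make $G$ cyclic --- in your own counterexample $k=\mathbb{Q}$, $N=4$, the group $G\cong\mathbb{Z}/2\mathbb{Z}$ \emph{is} cyclic, yet $H^1(G,\mu_4)\cong\mathbb{Z}/2\mathbb{Z}$ since $\sigma$ acts by inversion (so $\ker N_G=\mu_4$ while $(\sigma-1)\mu_4=\{\pm1\}$). The hypothesis really enters the order count itself: under it, a lift $\tilde\sigma$ of the generator to $(\mathbb{Z}/\ell^{n+1}\mathbb{Z})^\times$ has strictly larger order than $\sigma$, which forces $\mathrm{ord}_\ell(\tilde\sigma-1)+\mathrm{ord}_\ell(1+\tilde\sigma+\cdots+\tilde\sigma^{|G|-1})=n$ and hence $|\ker N_G|=|(\sigma-1)\mu_N|$. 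You should also dispose of the case $\ell=p$ at the outset (then $\mu_N=\{1\}$, $k(\mu_N)=k$, and the statement is trivial).
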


\begin{proof}
If $\ell= p$, then $\mu_N=\left\{1\right\}$ and $k(\mu_N)^{\times N}\cap k^{\times}=k^{\times N}$. If $\ell\neq p$, set $G:=\text{Gal}(k(\mu_N)/k)$. By  the exact sequence $1\rightarrow \mu_N\rightarrow k(\mu_N)^{\times}\rightarrow k(\mu_N)^{\times N}\rightarrow 1$ of $G$-modules, we get the following long exact sequence.
\[
\xymatrix@R=16pt@C=40pt{
H^{0}(G,k(\mu_N)^{\times})\ar[r]^-{N\text{-th power}}\ar@{}[d]|*=0[@]{\rotatebox{90}{$\cong$}}	&H^{0}(G,k(\mu_N)^{\times N})\ar[r]\ar@{}[d]|*=0[@]{\rotatebox{90}{$\cong$}}		&H^{1}(G,\mu_N)\ar[r]		&H^{1}(G,k(\mu_N)^{\times})\ar@{}[d]|*=0[@]{\rotatebox{90}{$\cong$}}^{\ \ \text{Hilbert's theorem 90}} \\
k^{\times}									&k(\mu_N)^{\times N}\cap k^{\times}					&&0
}
\]
Thus, $H^{1}(G,\mu_N)\cong (k(\mu_N)^{\times N}\cap k^{\times})/k^{\times N}$ and it is  sufficient to show that $H^{1}(G,\mu_N)=0$.  If $|G|$ is not divided by $\ell$, then $(|G|,|\mu_{N}|)=1$ and  this assertion  follows from a general theory of  group cohomology. Hence, we may assume that $|G|$ is divided by $\ell$.\par
We fix an isomorphism  $\mu_{N}\overset{\sim}{\rightarrow }\mathbb{Z}/N\mathbb{Z}$ and regard $G$ as a subgroup of $(\mathbb{Z}/N\mathbb{Z})^{\times}$. Note that $(\mathbb{Z}/N\mathbb{Z})^{\times}$ is a cyclic group by $N=\ell^n$ (and  $G \subset \text{Ker}((\mathbb{Z}/N\mathbb{Z})^{\times}\twoheadrightarrow(\mathbb{Z}/4\mathbb{Z})^{\times})$ by  $k\supset \mu_4$ if $4|N$). Let $\sigma\in G$ be a  generator. Then we get the following equality from the calculation of group cohomology.

\[
H^1(G,\mathbb{Z}/N\mathbb{Z})=\set{\alpha\in\mathbb{Z}/N\mathbb{Z}}{(1+\sigma+\cdots+\sigma^{|G|-1})\alpha=0}/((\sigma-1)\mathbb{Z}/N\mathbb{Z})
\]
Let $\tilde{\sigma}\in \mathbb{Z}$ be an inverse image of $\sigma$ by $\mathbb{Z}\rightarrow \mathbb{Z}/N\mathbb{Z}$ and  set $Y:=\langle \tilde{\sigma}\mod{\ell^{n+1}}\rangle\subset(\mathbb{Z}/\ell^{n+1}\mathbb{Z})^{\times}$. Then $Y$ is mapped surjectively onto $G$ by $(\mathbb{Z}/\ell^{n+1}\mathbb{Z})^{\times}\twoheadrightarrow (\mathbb{Z}/N\mathbb{Z})^{\times}$. Moreover, $Y$ includes the kernel of $(\mathbb{Z}/\ell^{n+1}\mathbb{Z})^{\times}\rightarrow(\mathbb{Z}/N\mathbb{Z})^{\times}$ because $\ell\mid |Y|$ (and  $G \subset \text{Ker}((\mathbb{Z}/N\mathbb{Z})^{\times}\twoheadrightarrow(\mathbb{Z}/4\mathbb{Z})^{\times})$ if $4|N$). Then $|G|<|Y|$. Hence, $\ell^{n+1}\nmid(\tilde{\sigma}^{|G|}-1)$ and $\ell^{n}|(\tilde{\sigma}^{|G|}-1)$. Thus, we get $\text{ord}_\ell(\tilde{\sigma}-1)+\text{ord}_{\ell}(1+\tilde{\sigma}+\cdots+\tilde{\sigma}^{|G|-1})=n$. 
\[
\left|\set{\alpha\in\mathbb{Z}/N\mathbb{Z}}{(1+\sigma+\cdots+\sigma^{|G|-1})\alpha=0}\right|=\ell^{\text{ord}_{\ell}(1+\tilde{\sigma}+\cdots+\tilde{\sigma}^{|G|-1})}=\ell^{n-\text{ord}_\ell(\tilde{\sigma}-1)}=|(\sigma-1)\mathbb{Z}/N\mathbb{Z}|
\]
This implies  $H^1(G,\mathbb{Z}/N\mathbb{Z})=0$, as desired.
\end{proof}

\begin{prop}\label{2.1.5}
Assume that $k$ is finitely generated over the prime field. Let $i=1,2$.  Let $\mathbb{T}$ be a set of  primes that differ from $p$  and $\text{Pw}(\mathbb{T})\subset \mathbb{N}$ the set of all powers of primes in $\mathbb{T}$. Let  $\Gamma_{i}$  be finitely generated subgroups of  $k^{\times}$. Denote $\Gamma_{i}^{\frac{1}{n}}$  by the set of all elements which $n$-th power is contained in $\Gamma_{i}$. If  $k(\Gamma_{1}^{\frac{1}{n}})=k(\Gamma_{2}^{\frac{1}{n}}) \text{ for all }n\in \text{Pw}(\mathbb{T})$, then there exist $N_{1}, N_{2}\in \mathbb{N}$ which is prime to all elements of $\mathbb{T}$ such that $\Gamma_{1}^{N_{1}}\subset \Gamma_{2}$ and $\Gamma_{1}\supset \Gamma_{2}^{N_{2}}$ hold. \par
\end{prop}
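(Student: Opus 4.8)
The plan is to run Kummer theory one prime power at a time, descend the resulting equality of subgroups modulo $n$-th powers from $k(\mu_n)$ back down to $k$ via Lemma \ref{2.1.4}, and then conclude by a purely group-theoretic argument inside a finitely generated abelian group. A harmless preliminary reduction: replacing $k$ by $k(\mu_4)$ does not change the subgroups $\Gamma_1,\Gamma_2\subset k^\times\subset k(\mu_4)^\times$, it preserves the hypothesis (base change along $k\hookrightarrow k(\mu_4)$ commutes with adjoining $n$-th roots), and the conclusion is insensitive to enlarging $k$; so I may assume $\mu_4\subset k$.

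Fix $n=\ell^e\in\text{Pw}(\mathbb{T})$, so $\ell\neq p$, and put $k_n:=k(\mu_n)$. Adjoining $\mu_n$ to the equality $k(\Gamma_1^{1/n})=k(\Gamma_2^{1/n})$ gives $k_n(\Gamma_1^{1/n})=k_n(\Gamma_2^{1/n})$. Since each $\Gamma_i$ is finitely generated, $\Delta_i:=\Gamma_i k_n^{\times n}/k_n^{\times n}$ is a finite subgroup of $k_n^\times/k_n^{\times n}$, and $k_n(\Gamma_i^{1/n})=k_n(\Delta_i^{1/n})$ is precisely the Kummer extension attached to $\Delta_i$; by bijectivity of the Kummer correspondence over $k_n\supset\mu_n$ (with $n$ invertible in $k_n$) we obtain $\Delta_1=\Delta_2$, i.e. $\Gamma_1k_n^{\times n}=\Gamma_2k_n^{\times n}$ in $k_n^\times$. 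Now for $\gamma_1\in\Gamma_1$ write $\gamma_1=\gamma_2 c$ with $\gamma_2\in\Gamma_2$, $c\in k_n^{\times n}$; then $c=\gamma_1\gamma_2^{-1}\in k^\times\cap k_n^{\times n}$, and Lemma \ref{2.1.4} (applicable since $\mu_4\subset k$) gives $c\in k^{\times n}$. Hence $\Gamma_1\subset\Gamma_2k^{\times n}$, and by symmetry
\[
\Gamma_1 k^{\times n}=\Gamma_2 k^{\times n}\qquad\text{for all }n\in\text{Pw}(\mathbb{T}).
\]

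Next I would pass to the saturation. Let $\Gamma:=\Gamma_1\Gamma_2$ and let $M:=\{x\in k^\times\mid x^m\in\Gamma\text{ for some }m\geq 1\}$ be its saturation in $k^\times$. The one genuine external input is that $M$ is finitely generated: choose a finitely generated normal subdomain $A\subset k$ with $\text{Frac}(A)=k$ and $\Gamma\subset A^\times$; if $x\in k^\times$ satisfies $x^m\in A^\times$ then $x$ is integral over the normal domain $A$, hence $x\in A$, and likewise $x^{-1}\in A$, so $x\in A^\times$; thus $M\subset A^\times$, and the unit group of a finitely generated domain is a finitely generated abelian group. Since $M$ is saturated in $k^\times$, one has $k^{\times n}\cap M=M^n$ for every $n$, and intersecting $\Gamma_1k^{\times n}=\Gamma_2k^{\times n}$ with $M$ (using $\Gamma_i\subset M$) gives $\Gamma_1k^{\times n}\cap M=\Gamma_1M^n$ and similarly for $\Gamma_2$, hence $\Gamma_1M^n=\Gamma_2M^n$ in $M$ for all $n\in\text{Pw}(\mathbb{T})$.

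Finally, the endgame is elementary group theory in the finitely generated abelian group $\bar M:=M/\Gamma_2$; let $F$ be its (finite) torsion subgroup. From $\Gamma_1M^n=\Gamma_2M^n$ the image of $\Gamma_1$ in $\bar M$ lies in $\bar M^n$ for every $n\in\text{Pw}(\mathbb{T})$, hence in $\bigcap_{n\in\text{Pw}(\mathbb{T})}\bar M^n$. Since $\bar M$ is finitely generated, $\bigcap_{e\geq 1}\bar M^{\ell^e}$ equals the prime-to-$\ell$ part of $F$, so $\bigcap_{n\in\text{Pw}(\mathbb{T})}\bar M^n$ is the subgroup $F_{\mathbb{T}'}$ of $F$ consisting of elements whose order is prime to every $\ell\in\mathbb{T}$. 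Taking $N_1$ to be the exponent of the finite group $F_{\mathbb{T}'}$ — which is prime to $\mathbb{T}$ — yields $\Gamma_1^{N_1}\subset\Gamma_2$; the containment $\Gamma_2^{N_2}\subset\Gamma_1$ follows by the symmetric argument. The main obstacle I anticipate is the finite generation of the saturation $M$, i.e. the fact that $A^\times$ is finitely generated for a finitely generated normal domain $A$ with fraction field $k$; the prime $\ell=2$ in Lemma \ref{2.1.4} is a secondary point, handled by the initial enlargement $k\rightsquigarrow k(\mu_4)$.
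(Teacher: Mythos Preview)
Your proof is correct and follows essentially the same approach as the paper: reduce to $\mu_4\subset k$, use Kummer theory over $k(\mu_n)$ together with Lemma~\ref{2.1.4} to get $\Gamma_1 k^{\times n}=\Gamma_2 k^{\times n}$, then pass to a finitely generated abelian group (you use the saturation $M\subset A^\times$; the paper uses $R^\times$ for $R$ the integral closure of $\mathbb{Z}[\Gamma_2]$ or $\mathbb{F}_p[\Gamma_2]$ in $k$) and finish by intersecting $n$-th powers. The only cosmetic difference is that the paper first reduces to $\Gamma_1\subset\Gamma_2$ by replacing $\Gamma_2$ with $\Gamma_1\Gamma_2$, whereas you keep the symmetric formulation throughout; your explicit use of saturation to get $k^{\times n}\cap M=M^n$ is exactly the role played by normality of $R$ in the paper's argument.
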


\begin{proof}(See \cite{Na1990-405}Lemma 3.1.)
First, we show the assertion. We may  replace $k$ with a field finitely generated over $k$, hence we may assume that $k\supset \mu_4$ if $2\in \mathbb{T}$.  If $k(\Gamma_{1}^{\frac{1}{n}})=k(\Gamma_{2}^{\frac{1}{n}})$, then $k(\Gamma_{1}^{\frac{1}{n}})\subset k((\Gamma_{1}\Gamma_{2})^{\frac{1}{n}})\subset k(\Gamma_{1}^{\frac{1}{n}})\cdot k(\Gamma_{2}^{\frac{1}{n}})=k(\Gamma_{1}^{\frac{1}{n}})$ and $k(\Gamma_{1}^{\frac{1}{n}})=k((\Gamma_{1}\Gamma_{2})^{\frac{1}{n}})$. Thus,  replacing $\Gamma_{2}$ with $\Gamma_{1}\Gamma_{2}$ if necessary, we may assume $\Gamma_{1}\subset \Gamma_{2}$.\par
 Fix an element $\gamma_{2}\in \Gamma_{2}$ and an element $n\in \text{Pw}(\mathbb{T})$.  We have $k(\mu_{n},\gamma_{2}^{\frac{1}{n}})\subset k(\Gamma_{2}^{\frac{1}{n}})=k(\Gamma_{1}^{\frac{1}{n}})$ by assumption. By Kummer correspondence, we get  $\langle \gamma_{2}\rangle\mod{ k(\mu_{n})^{\times n}}\ \subset \Gamma_{1}\mod{k(\mu_{n})^{\times n}}$. Hence, $\gamma_{2}\in \Gamma_{1}\cdot k(\mu_{n})^{\times n}$. By Lemma \ref{2.1.4}, we get $\gamma_{2}\in(\Gamma_{1}\cdot k(\mu_{n})^{\times n})\cap k^{\times}=\Gamma_{1}\cdot (k(\mu_{n})^{\times n}\cap k^{\times})=\Gamma_{1} \cdot k^{\times n}$. \par
Let $R$ be the integral closure of $\mathbb{Z}[\Gamma_{2}]$ in $k$ if $p=0$ and  the integral closure of  $\mathbb{F}_p[\Gamma_{2}]$ in $k$ if $p\neq 0$. As $R$ is a finitely generated $\mathbb{Z}$-algebra by  assumption, $R^{\times}$ is a finitely generated $\mathbb{Z}$-module and $R^{\times}/(R^{\times}\cap \Gamma_{1})=\Gamma_{1} R^{\times}/\Gamma_{1}$ is also a finitely generated $\mathbb{Z}$-module. Hence, $\underset{n\in\mathbb{T}}\bigcap (\Gamma_{1} R^{\times}/\Gamma_{1})^{n}$ is a finite group whose order $N_{2}$ is prime to all elements of $\mathbb{T}$. Since $\gamma_{2}\in\Gamma \cdot k^{\times n} \cap R^{\times}$ and $\Gamma_{1}\subset R^{\times}$, we get $\gamma_{2}\in\Gamma \cdot R^{\times n}$.  Then we obtain $\gamma_{2} $ mod $\Gamma_{1} \in  \underset{n\in\text{Pw}(\mathbb{T})}\bigcap (\Gamma_{1} R^{\times}/\Gamma_{1})^{n}$. Hence we get  $\gamma_{2}^{N_{2}}\in  \Gamma_{1}$.  As $\gamma_{2}\in \Gamma_{2}$ is arbitrary, this shows  $\Gamma_{2}^{N_{2}}\subset  \Gamma_{1}$. Thus,  the assertion follows.\par
\end{proof}

\begin{cor}\label{2.1.6}
Assume that $k$ is finitely generated over the prime field. Let $\lambda_{1}$, $\lambda_{2} \in k^{\times}$. If $k(\langle\gamma_{1}\rangle^{\frac{1}{\ell^{n}}})=k(\langle\gamma_{2}\rangle^{\frac{1}{\ell^{n}}})$ for all $\ell$ different from $p$ and  all $n\in \mathbb{N}$, then the following hold.
\begin{enumerate}[(1)] 
\item If $p=0$, then $\langle \lambda_{1}\rangle=\langle \lambda_{2}\rangle$.
\item  If $p\neq0$, there exists $ \sigma\in \mathbb{Z}$ such that $\langle\gamma_{1}\rangle^{p^{\sigma}}=\langle \gamma_{2}\rangle$. If, moreover, $\gamma\in k^{\times}$ is not a torsion element, then such $\sigma$ is unique.
\end{enumerate}
\end{cor}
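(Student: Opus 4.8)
The plan is to reduce everything to Proposition \ref{2.1.5}, applied to the cyclic (hence finitely generated) subgroups $\Gamma_i := \langle\gamma_i\rangle \subseteq k^\times$ and to the set $\mathbb{T}$ of all rational primes different from $p$. With this choice $\mathrm{Pw}(\mathbb{T}) = \{\ell^n : \ell \in \mathbb{T},\ n \geq 1\}$, so the hypothesis of the corollary is exactly the hypothesis $k(\Gamma_1^{1/n}) = k(\Gamma_2^{1/n})$ for all $n \in \mathrm{Pw}(\mathbb{T})$ of that proposition. I therefore obtain $N_1, N_2 \in \mathbb{N}$, each prime to every element of $\mathbb{T}$, with $\langle\gamma_1\rangle^{N_1} \subseteq \langle\gamma_2\rangle$ and $\langle\gamma_2\rangle^{N_2} \subseteq \langle\gamma_1\rangle$.

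If $p = 0$, then $\mathbb{T}$ is the set of all primes, so $N_1 = N_2 = 1$, and the two inclusions immediately give $\langle\gamma_1\rangle = \langle\gamma_2\rangle$; this proves (1).

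If $p > 0$, then being prime to every prime $\neq p$ forces $N_i = p^{a_i}$ with $a_i \in \mathbb{Z}_{\geq 0}$. Writing out the inclusions gives $\gamma_1^{p^{a_1}} = \gamma_2^{c}$ and $\gamma_2^{p^{a_2}} = \gamma_1^{d}$ for some $c, d \in \mathbb{Z}$, whence $\gamma_1^{p^{a_1+a_2} - cd} = 1$. If $\gamma_1$ is a torsion element, then so is $\gamma_2$ (the first inclusion makes $\langle\gamma_1^{p^{a_1}}\rangle$ finite, hence $\gamma_1$ of finite order, and symmetrically); since in characteristic $p$ every root of unity has order prime to $p$, raising to the $p$-th power is a bijection on each $\langle\gamma_i\rangle$, so the two inclusions give $\langle\gamma_1\rangle = \langle\gamma_2\rangle$ and $\sigma = 0$ works. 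If $\gamma_1$ is not a torsion element, then $\gamma_1$ has infinite order, so $cd = p^{a_1+a_2}$, and hence $c = \varepsilon p^{c'}$ with $\varepsilon \in \{\pm 1\}$ and $0 \leq c' \leq a_1 + a_2$. Substituting into $\gamma_1^{p^{a_1}} = (\gamma_2^{\varepsilon})^{p^{c'}}$ and using that a field of characteristic $p$ contains no nontrivial $p$-power root of unity (so a common $p$-power factor may be cancelled from an exponent), I get $\gamma_1^{p^{a_1 - c'}} = \gamma_2^{\varepsilon}$ when $a_1 \geq c'$, and $\gamma_1 = \gamma_2^{\varepsilon p^{c' - a_1}}$ when $a_1 < c'$. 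In both cases $\langle\gamma_1\rangle^{p^{\sigma}} = \langle\gamma_2\rangle$ with $\sigma := a_1 - c' \in \mathbb{Z}$, where for $\sigma < 0$ the statement $\langle\gamma_1\rangle^{p^\sigma} = \langle\gamma_2\rangle$ is read as $\langle\gamma_1\rangle = \langle\gamma_2\rangle^{p^{-\sigma}}$. Uniqueness of $\sigma$ when $\gamma_1$ (equivalently $\gamma_2$) is non-torsion follows because two choices $\sigma, \sigma'$ would give $\langle\gamma_1\rangle^{p^\sigma} = \langle\gamma_1\rangle^{p^{\sigma'}}$, and comparing generators while invoking once more that $\gamma_1$ has infinite order (so $\gamma_1^{N} = \gamma_1^{\pm 1}$ forces $N = \pm 1$, and $p^k = 1$ only for $k = 0$) yields $\sigma = \sigma'$.

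The routine part is the bookkeeping inside $k^\times$: one must keep track of the finite prime-to-$p$ torsion subgroup of $k^\times$ --- which is where finite generation of $k$ over the prime field enters, through Proposition \ref{2.1.5} --- and of the sign ambiguities in equalities of cyclic subgroups. The one structural fact doing all the work is that a field of characteristic $p$ has no nontrivial $p$-power roots of unity; this converts the $p$-power relations between $\gamma_1$ and $\gamma_2$ into clean comparisons of $p$-adic valuations of exponents, and I expect this short case analysis --- rather than any single hard step --- to be the only place requiring care.
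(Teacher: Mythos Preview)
Your proof is correct and follows essentially the same approach as the paper's: both apply Proposition~\ref{2.1.5} with $\mathbb{T}$ the set of primes different from $p$ to obtain $N_1,N_2$ that are $p$-powers (or $1$ when $p=0$), and then in the $p>0$ case separate into torsion and non-torsion subcases, using in the latter the factorisation $cd=p^{a_1+a_2}$ to pin down $\sigma$. The only cosmetic differences are that you spell out the cancellation of common $p$-power exponents via the absence of nontrivial $p$-power roots of unity, while the paper leaves this implicit, and that your parenthetical explanation of why $\gamma_1$ torsion implies $\gamma_2$ torsion is slightly garbled (you should be invoking the \emph{second} inclusion $\gamma_2^{p^{a_2}}\in\langle\gamma_1\rangle$ there, not the first).
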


\begin{proof}
(1) Applying Proposition \ref{2.1.5} for all primes and  all $n\in \mathbb{N}$, we get $N=1$ and then $\langle \lambda_{1}\rangle=\langle \lambda_{2}\rangle$.\\
(2) By Proposition \ref{2.1.5}, there exist $u_{1},u_{2}\in \mathbb{N}\cup\left\{0\right\}$ such that $\gamma_{1}^{p^ u_{1}}\in \langle\gamma_{2}\rangle$ and  $\langle\gamma\rangle\ni\gamma_{2}^{p^{u_{2}}}$.
If $\gamma_{1}$ is a torsion element, then $\gamma_{2}$ is also a torsion element. Then, since $p\nmid |\langle\gamma_{1}\rangle|$ and $p\nmid |\langle\gamma_{2}\rangle|$, we get  $\langle\gamma_{1}\rangle=\langle\gamma_{1}\rangle^{p^{u_{1}}}\subset \langle\gamma_{2}\rangle $ and  $\langle\gamma_{1}\rangle\supset\langle\gamma_{2}\rangle^{p^{u_{2}}}=\langle\gamma_{2}\rangle$. Thus, $\langle\gamma_{1}\rangle=\langle\gamma_{2}\rangle$.
If $\gamma_{1}$ is not a torsion element, then there exist $a_{1},a_2\in\mathbb{Z}$ such that $\gamma_{1}^{p^{ u_{1}}}=\gamma_{2}^{a_{2}}$ and $\gamma_{1}^{a_{1}}=\gamma_{2}^{p^{u_{2}}}$. Then $p^{u_{1}+u_{2}}=a_{1}a_{2}$ and  $|a_{1}|,|a_{2}|$ are powers of $p$. Hence, we get $\gamma_{1}^{p^\sigma}=\gamma_{2}$ or $\gamma_{1}^{p^\sigma}=\gamma_{2}^{-1}\ \text{for some }\sigma\in\mathbb{Z}$ and   $\langle\gamma_{1}\rangle^{p^{\sigma}}=\langle \gamma_{2}\rangle$.  The uniqueness of $\sigma$ follows from the fact that  $\gamma_{1}$ is not a torsion element.\par
\end{proof}

%%%%%%%%%%%%%%%%%%%%%%%%%%%%%%%%%%%%%%%%%%%%%%%%%%%%%%%%%%%%%%%%%%%%%%%%%%%%%%%%%%%%%%%%%%%%%%%%%

\subsection{Case of punctured projective lines over fields of characteristic 0 } \label{ch0}

\hspace{\parindent}In this subsection, we show the  $m$-step solvable Grothendieck conjecture for punctured projective lines in characteristic $0$. \par

First, we consider the genus $0$ hyperbolic curves.  Let us  reconstruct $\lambda\in k^{\times}-\left\{1\right\}$ from  $\langle\lambda\rangle$ and $\langle1-\lambda\rangle$ in $k^{\times}$.

\begin{lem}\label{2.2.1}
Assume that $p=0$. Let  $\lambda_{1},\lambda_{2}\in k^{\times}-\left\{1\right\}$ and let $\rho\in \overline{k}$ be a primitive $6$-th root of unity.  If  $\langle\lambda\rangle =\langle \lambda_{2}\rangle$ and  $\langle 1-\lambda_{1}\rangle=\langle 1-\lambda_{2}\rangle$ in $k^\times$, then $\lambda_{1}=\lambda_{2}$ or $\left\{\lambda_{1},\lambda_{2}\right\}=\left\{\rho,\rho^{-1}\right\}$.
\end{lem}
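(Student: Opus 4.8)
\medskip
\noindent\textbf{Proof proposal.} The plan is to turn the two hypotheses into multiplicative relations and then enumerate the few surviving possibilities. From $\langle\lambda_1\rangle=\langle\lambda_2\rangle$ one gets $\lambda_2=\lambda_1^{c}$ for some $c\in\mathbb{Z}$, and from $\langle 1-\lambda_1\rangle=\langle 1-\lambda_2\rangle$ one gets $1-\lambda_2=(1-\lambda_1)^{d}$ for some $d\in\mathbb{Z}$; moreover $\lambda_1$ is a root of unity if and only if $\lambda_2$ is (they generate the same subgroup), and likewise $1-\lambda_1$ is a root of unity if and only if $1-\lambda_2$ is. The one non-formal input I would isolate first is the following elementary fact, which is the only place where characteristic $0$ is used: \emph{if $\alpha\in k^{\times}$ and both $\alpha$ and $1-\alpha$ are roots of unity, then $\alpha$ is a primitive $6$th root of unity and $1-\alpha=\alpha^{-1}$.} This follows by fixing an embedding $\mathbb{Q}(\alpha)\hookrightarrow\mathbb{C}$: then $|\alpha|=|1-\alpha|=1$ forces $\alpha=e^{\pm i\pi/3}$, and since $\alpha$ is a root of unity it must be primitive of order $6$; the identity $1-e^{\pm i\pi/3}=e^{\mp i\pi/3}$ then gives $1-\alpha=\alpha^{-1}$. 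Equivalently, two roots of unity with sum $1$ are necessarily $\rho$ and $\rho^{-1}$.

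\medskip
Next I would split into two cases. \emph{Case 1: $\lambda_1$ is not a root of unity.} Then $\langle\lambda_1\rangle\cong\mathbb{Z}$, so $\lambda_2=\lambda_1^{\pm1}$; if $\lambda_2=\lambda_1$ we are done, so suppose $\lambda_2=\lambda_1^{-1}$. Put $\mu:=1-\lambda_1$ and $\nu:=1-\lambda_2=-\lambda_1^{-1}\mu$. If $\mu$ were a root of unity, so would $\nu$ be, and then $\lambda_1^{-1}=-\nu\mu^{-1}$ would be a root of unity, a contradiction; hence $\langle\mu\rangle\cong\mathbb{Z}$ and $\nu=\mu^{\pm1}$. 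The case $\nu=\mu$ gives $\lambda_1=\lambda_1^{-1}$, impossible; the case $\nu=\mu^{-1}$ gives $(1-\lambda_1)(1-\lambda_1^{-1})=1$, i.e. $\lambda_1^{2}-\lambda_1+1=0$, so $\lambda_1$ is a primitive $6$th root of unity, again impossible. Thus $\lambda_1=\lambda_2$ in this case.

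\medskip
\emph{Case 2: $\lambda_1$ (hence $\lambda_2$) is a root of unity}, say of order $n\ge 2$; then $\langle\lambda_1\rangle=\langle\lambda_2\rangle=\mu_n\subset k$ and both are primitive $n$th roots of unity. If $1-\lambda_1$ is a root of unity, then by the basic fact $\lambda_1$ is a primitive $6$th root of unity, and since $\langle1-\lambda_1\rangle=\langle1-\lambda_2\rangle$ is then finite, $1-\lambda_2$ is also a root of unity, so $\lambda_2$ is a primitive $6$th root of unity as well; thus $\lambda_1,\lambda_2\in\{\rho,\rho^{-1}\}$, whence the conclusion holds. Otherwise $\langle1-\lambda_1\rangle\cong\mathbb{Z}$, so $1-\lambda_2=(1-\lambda_1)^{\pm1}$; the sign $+$ yields $\lambda_1=\lambda_2$, and the sign $-$ yields $(1-\lambda_1)(1-\lambda_2)=1$, hence (dividing by $\lambda_1\lambda_2$) $\lambda_1^{-1}+\lambda_2^{-1}=1$ with $\lambda_1^{-1},\lambda_2^{-1}$ roots of unity, which by the basic fact forces $\lambda_1$ to be a primitive $6$th root of unity, contradicting that $1-\lambda_1$ is not a root of unity. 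Hence $\lambda_1=\lambda_2$, completing the argument.

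\medskip
I do not expect a deep obstacle here: the content is entirely in enumerating the finitely many multiplicative configurations and in the ``basic fact'' above, where the archimedean absolute value (equivalently, Kronecker's theorem that an algebraic integer all of whose conjugates lie on the unit circle is a root of unity) is what eliminates every configuration except the $6$th-root one. The main points requiring care are: first, the bookkeeping that ensures the only surviving exceptional configuration is \emph{exactly} $\{\rho,\rho^{-1}\}$, together with the verification that it genuinely occurs (via $1-\rho=\rho^{-1}$ and $1-\rho^{-1}=\rho$); and second, remembering that $\langle\lambda_1\rangle=\langle\lambda_2\rangle$ does \emph{not} force $\lambda_2=\lambda_1^{\pm1}$ when $\lambda_1$ is a root of unity, which is exactly why Case 2 must be treated separately.
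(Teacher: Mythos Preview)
Your proof is correct and uses the same underlying idea as the paper: embed in $\mathbb{C}$ and exploit that $|\alpha|=|1-\alpha|=1$ forces $\alpha=e^{\pm i\pi/3}$. The paper organizes the case analysis slightly differently---it fixes one embedding $k\hookrightarrow\mathbb{C}$ and splits into four cases according to whether $|\lambda_1|=1$ and whether $|1-\lambda_1|=1$, rather than according to whether $\lambda_1$ and $1-\lambda_1$ are roots of unity---but the content is the same, and your packaging of the archimedean input into a single ``basic fact'' is arguably cleaner.
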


\begin{proof}
By replacing $k$ with $\mathbb{Q}(\lambda_{1},\lambda_{2})$, we may assume that $k$ is a field finitely generated over $\mathbb{Q}$. We fix an embedding  $k\hookrightarrow \mathbb{C}$ and regard  $k$ as a subfield of $\mathbb{C}$.  In particular, we may assume  $\rho=\mathrm{\text{e}}^{\frac{\pi}{3}\sqrt{-1}}$. \par
Suppose  $\lambda_{1}\neq\tilde{\lambda_{1}}$. If $|\lambda_{1}|\neq1$, then $\lambda_{1}$ is a torsion-free element and $\mathbb{Z}\cong\langle\lambda_{1}\rangle=\langle\lambda_{2}\rangle$, hence $\lambda_{1}=\lambda_{2}^{-1}$ follows. Similarly,  if  $|1-\lambda_{1}|\neq1$, then $1-\lambda_{1}=(1-\lambda_{2})^{-1}$ follows.\par
\begin{itemize}
\item If  $|\lambda_{1}|\neq1$ and $|1-\lambda_{1}|\neq1$,   we have $\lambda_{1}=\lambda_{2}^{-1}$ and $1-\lambda_{1}=(1-\lambda_{2})^{-1}$. Then $\lambda_{1},\ \lambda_{2}$ are roots of   $t^2-t+1\in \mathbb{C}[t]$ and $\lambda_{1},\lambda_{2}\in\mu_6$. This is absurd.
\item If  $|\lambda_{1}|\neq1$ and $|1-\lambda_{1}|=1$,  we have  $\lambda_{1}=\lambda_{2}^{-1}$. Then $1=|1-\lambda_{1}|=|1-\lambda_{2}|$ and $(\lambda_{1}-1)=\lambda_{1}(1-\lambda_{2})$, which implies $|\lambda_{1}|=1$. This is absurd.
\item If $|\lambda_{1}|=1$ and $|1-\lambda_{1}|\neq1$,  we have $1-\lambda_{1}=(1-\lambda_{2})^{-1}$. Then $1=|\lambda_{1}|=|\lambda_{2}|$ and $-\lambda_{1}=(1-\lambda_{1})\lambda_{2}$, which implies $|1-\lambda_{1}|=1$. This is absurd.
\item If  $|\lambda_{1}|=1$ and $|1-\lambda_{1}|=1$, then  set  $\lambda_{1}=a+b\sqrt{-1}\ (a,b\in\mathbb{R})$. Since $(a,b),(0,0),(1,0)\in \mathbb{R}^2$ is an equilateral triangle in $\mathbb{R}^{2}$, we get $\left\{\lambda_{1},\lambda_{2}\right\}=\left\{\mathrm{e}^{\frac{\pi}{3}\sqrt{-1}},\mathrm{e}^{-\frac{\pi}{3}\sqrt{-1}}\right\}$.
\end{itemize}
\end{proof}

\begin{prop}\label{2.2.2}
Assume that $ k$ is a field finitely generated over  $\mathbb{Q}$,  and that  $\text{prime}(C)$ coincides with the set of all primes. Let  $\rho\in \overline{k}$ be a primitive $6$-th root of unity. Let   $E_{1}$,$E_{2}$ be  finite sets of $k$-rational points of $\mathbb{P}^1_{k}$ with $|E_{1}|\geq 4$.  . Let $x_1,x_2,x_3,x_4$  be distinct elements of $E_{1}$, $\varepsilon=\left\{x_1,x_2\right\}$ and $\delta=\left\{x_3,x_4\right\}$. Then $\lambda(\varepsilon,\delta)=\lambda(\Phi'(\alpha)(\varepsilon),\Phi'(\alpha)(\delta))\text{ or }\left\{\lambda(\varepsilon,\delta),\ \lambda(\Phi'(\alpha)(\varepsilon),\Phi'(\alpha)(\delta))\right\}=\left\{\rho,\ \rho^{-1}\right\}$ holds for any $\alpha \in \text{Isom}_{G_{k}}(\Pi^{(m)}(\mathbb{P}^1_k-E_{1}),\Pi^{(m)}(\mathbb{P}^1_k-E_{2}))$, where $\Phi':=\Phi'_{D}$ is the map (\ref{eq1.1.9}). 
\end{prop}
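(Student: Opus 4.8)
The plan is to show that the rigidity invariant $\kappa_n(\varepsilon,\delta)$ is preserved by $\alpha$, and then to transport this equality of fields into an equality (or near-equality) of cross-ratios using Proposition~\ref{2.1.3} and Lemma~\ref{2.2.1}. First I would fix $\alpha\in\text{Isom}_{G_{k}}(\Pi^{(m)}(\mathbb{P}^1_k-E_{1}),\Pi^{(m)}(\mathbb{P}^1_k-E_{2}))$ and let $\alpha_{1}$ denote the induced $G_k$-isomorphism $\Pi^{(1)}(\mathbb{P}^1_k-E_{1})\xrightarrow{\sim}\Pi^{(1)}(\mathbb{P}^1_k-E_{2})$. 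Since $\alpha=\Phi'_D$-relevant, $\alpha_1$ preserves decomposition groups at cusps, and hence by Lemma~\ref{1.2.1} and Proposition~\ref{1.2.2} it induces the bijection $\Phi'(\alpha):E_{1,k^{\text{sep}}}\to E_{2,k^{\text{sep}}}$. Write $x_i':=\Phi'(\alpha)(x_i)$, $\varepsilon':=\{x_1',x_2'\}$, $\delta':=\{x_3',x_4'\}$. The key observation is that every ingredient in Definition~\ref{2.1.1} — the set $\mathscr{H}_{\varepsilon,n}$ (conditions (i)--(iv) are phrased purely in terms of $\overline{\Pi}^1$, the cusps outside $\varepsilon$, $p_{U/k}$, and the index-$n$ quotient), and the field $\kappa_n(\varepsilon,\delta)$ (built from $p_{U/k}(H\cap D_y)$ for $y$ above $x_3,x_4$) — is group-theoretic and compatible with a $G_k$-isomorphism that preserves decomposition groups at cusps and identifies $x_i\leftrightarrow x_i'$. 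Therefore $\alpha_1$ carries $\mathscr{H}_{\varepsilon,n}$ bijectively to $\mathscr{H}_{\varepsilon',n}$ and carries the subgroup $\bigcup_{H}\bigcap_{y}p_{U/k}(H\cap D_y)$ to the corresponding subgroup on the $U_2$-side; since $\alpha_1$ is a $G_k$-isomorphism it acts as the identity on $G_k$ and hence on $k^{\text{sep}}$, so it fixes the fixed field. This yields
\[
\kappa_n(\varepsilon,\delta)=\kappa_n(\varepsilon',\delta')\qquad\text{for all }n\in\mathbb{N}(C).
\]

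Next I would invoke Proposition~\ref{2.1.3}: since $\text{prime}(C)$ is the set of all primes and $p=0$, we have $\mathbb{N}(C)=\mathbb{N}$, so the equality above reads
\[
k(\mu_n,\lambda(\varepsilon,\delta)^{1/n})=k(\mu_n,\lambda(\varepsilon',\delta')^{1/n})\qquad\text{for all }n\in\mathbb{N}.
\]
Applying Corollary~\ref{2.1.6}(1) (with $p=0$) to the finitely generated subgroups $\langle\lambda(\varepsilon,\delta)\rangle$ and $\langle\lambda(\varepsilon',\delta')\rangle$ of $k^{\times}$ gives $\langle\lambda(\varepsilon,\delta)\rangle=\langle\lambda(\varepsilon',\delta')\rangle$. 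Hence $\lambda(\varepsilon',\delta')=\lambda(\varepsilon,\delta)^{\pm1}$. To finish I must also control $1-\lambda$: here I would exploit the freedom of the cross-ratio under relabeling the four points. The cross-ratios obtained from the $S_4$-action on $\{x_1,x_2,x_3,x_4\}$ are $\lambda,\ 1/\lambda,\ 1-\lambda,\ 1/(1-\lambda),\ \lambda/(\lambda-1),\ (\lambda-1)/\lambda$; in particular choosing the partition that swaps one point of $\varepsilon$ with one point of $\delta$ produces the cross-ratio $1-\lambda(\varepsilon,\delta)$ (up to inversion), and $\Phi'(\alpha)$ intertwines these relabelings. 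Running the same rigidity-invariant argument with that relabeled partition yields $\langle 1-\lambda(\varepsilon,\delta)\rangle=\langle 1-\lambda(\varepsilon',\delta')\rangle$ in $k^{\times}$. Now Lemma~\ref{2.2.1} applies directly and gives exactly the dichotomy: either $\lambda(\varepsilon,\delta)=\lambda(\varepsilon',\delta')$ or $\{\lambda(\varepsilon,\delta),\lambda(\varepsilon',\delta')\}=\{\rho,\rho^{-1}\}$.

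The main obstacle I anticipate is the first step — verifying carefully that $\alpha_1$ really sends $\mathscr{H}_{\varepsilon,n}$ onto $\mathscr{H}_{\varepsilon',n}$ and matches up the intersections $\bigcap_{y\in\mathscr{S}^1_{x_3}\cup\mathscr{S}^1_{x_4}}p_{U/k}(H\cap D_y)$ on both sides. Conditions (i) and (iii)/(iv) of Definition~\ref{2.1.1}(1) are immediate from $G_k$-equivariance and preservation of the cusp decomposition data, but one must check that $\alpha_1$ sends $\mathcal{I}_{x,\overline{\Pi}^1}$-stuff and $\mathscr{S}^1_{x_i}$ correctly; since $\alpha_1$ preserves decomposition groups at cusps and $\Phi'(\alpha)$ tracks the underlying points of $E_{k^{\text{sep}}}$, the set of $y$ above $x_3$ or $x_4$ is carried to the set of $y'$ above $x_3'$ or $x_4'$, and $D_y\mapsto D_{\alpha_1 y}$, $p_{U/k}$ is respected. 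So the point is bookkeeping rather than a new idea. A minor secondary point is making precise the claim that the relabeled partition yields the cross-ratio $1-\lambda$; this is the standard cross-ratio identity and the compatibility of $\Phi'(\alpha)$ with permutations of the chosen four cusps, both of which are routine.
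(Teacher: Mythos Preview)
Your proposal is correct and follows essentially the same route as the paper: use the group-theoretic nature of $\kappa_n(\varepsilon,\delta)$ together with Proposition~\ref{2.1.3} and Corollary~\ref{2.1.6}(1) to obtain $\langle\lambda(\varepsilon,\delta)\rangle=\langle\lambda(\Phi'(\alpha)(\varepsilon),\Phi'(\alpha)(\delta))\rangle$, then relabel (the paper takes specifically $\varepsilon'=\{x_3,x_2\}$, $\delta'=\{x_1,x_4\}$, for which $\lambda(\varepsilon',\delta')=1-\lambda(\varepsilon,\delta)$ on the nose) to obtain the analogous equality for $1-\lambda$, and conclude via Lemma~\ref{2.2.1}. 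One harmless slip: the line ``Hence $\lambda(\varepsilon',\delta')=\lambda(\varepsilon,\delta)^{\pm1}$'' does not follow from $\langle\lambda\rangle=\langle\lambda'\rangle$ when $\lambda$ is a root of unity, but you never use this claim---Lemma~\ref{2.2.1} only needs the two group equalities, which you have---so the argument stands.
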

\begin{proof}
Let $\alpha \in \text{Isom}_{G_{k}}(\Pi^{(m)}(\mathbb{P}^1_k-E_{1}),\Pi^{(m)}(\mathbb{P}^1_k-E_{2}))$ and  $\alpha_{1}$ the image of $\alpha$ in $\text{Isom}_{G_{k}}^{\text{Dec}}(\Pi^{(1)}(\mathbb{P}^1_k-E_{1}),\Pi^{(1)}(\mathbb{P}^1_k-E_{2}))$.   Since  $\kappa_n(\varepsilon,\delta)$ is characterized   by $\Pi^{(1)}(\mathbb{P}^1_k-E_{1})\twoheadrightarrow G_k$ and the decomposition groups at cusps of $\Pi^{(1)}(\mathbb{P}^1_k-E_{1})$ group-theoretically, we get $k(\mu_n,\lambda(\varepsilon,\delta)^{\frac{1}{n}})=k(\mu_n,\lambda(\Phi'(\alpha)(\varepsilon),\Phi'(\alpha)(\delta))^{\frac{1}{n}})$ for every $n\in\mathbb{N}$ by Proposition \ref{2.1.3}. Hence $ \langle\ \lambda(\varepsilon,\delta)\ \rangle=\langle\ \lambda(\Phi'(\alpha)(\varepsilon),\Phi'(\alpha)(\delta))\ \rangle$ by Corollary \ref{2.1.6} (1).  Set $\varepsilon'=\left\{x_3,x_2\right\},\delta'=\left\{x_1,x_4\right\}$. Then $\lambda(\varepsilon',\delta')=1-\lambda(\varepsilon,\delta)$. Thus, similarly, we get $\langle1-\lambda(\varepsilon,\delta)\rangle=\langle1-\lambda(\Phi'(\alpha)(\varepsilon),\Phi'(\alpha)(\delta))\rangle$. Therefore, the assertion follows from Lemma \ref{2.2.1}.
\end{proof}

We  show the following lemma for the projective line minus $4$ points.

\begin{lem}\label{2.2.3}
Assume that $m\geq 2$,  $ k$ is a field finitely generated over  $\mathbb{Q}$,  and that  $2\in \text{prime}(C)$.    Let  $\rho\in \overline{k}$ be a primitive $6$-th root of unity. Let $W\in \text{Isom}_{\text{set}}(\left\{0,1,\infty,\rho\right\},\left\{0,1,\infty,\rho^{-1}\right\})$ be  the element defined as $W(0)=0$, $W(\infty)=\infty$,$W(1)=1$,  and $W(\rho)=\rho^{-1}$.  Then the image of $\Phi'$ does not contain $W$, where \[\Phi':=\Phi'_{D}:\text{Isom}^{\text{Dec}}_{G_{k}}(\Pi^{(m)}(\mathbb{P}^1_k-\left\{0,1,\infty,\rho\right\}),\Pi^{(m)}(\mathbb{P}^1_k-\left\{0,1,\infty,\rho^{-1}\right\}))\rightarrow \text{Isom}_{\text{set}}(\left\{0,1,\infty,\rho\right\},\left\{0,1,\infty,\rho^{-1}\right\})\]
is  the map (\ref{eq1.1.9}). .
\end{lem}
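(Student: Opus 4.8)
The plan is to derive a contradiction from the assumption that $W$ lies in the image of $\Phi'$. Suppose $\alpha\in\text{Isom}^{\text{Dec}}_{G_k}(\Pi^{(m)}(\mathbb{P}^1_k-\{0,1,\infty,\rho\}),\Pi^{(m)}(\mathbb{P}^1_k-\{0,1,\infty,\rho^{-1}\}))$ satisfies $\Phi'(\alpha)=W$. The idea is to feed three different choices of the pair $(\varepsilon,\delta)$ into Proposition \ref{2.2.2} and compare the resulting cross-ratios, keeping track of what $W$ does to each pair. Since $W$ fixes $0,1,\infty$ and sends $\rho\mapsto\rho^{-1}$, the image pair under $\Phi'(\alpha)=W$ is determined in each case, and Proposition \ref{2.2.2} forces $\lambda(\varepsilon,\delta)$ and $\lambda(W\varepsilon,W\delta)$ to be either equal or the pair $\{\rho,\rho^{-1}\}$.

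First I would take $\varepsilon=\{0,\infty\}$, $\delta=\{1,\rho\}$, so that the normalizing isomorphism sending $0\mapsto 0,\infty\mapsto\infty,1\mapsto 1$ sends $\rho\mapsto\rho$, i.e. $\lambda(\varepsilon,\delta)=\rho$; and $W$ sends this to $\varepsilon=\{0,\infty\}$, $\delta=\{1,\rho^{-1}\}$, giving $\lambda(W\varepsilon,W\delta)=\rho^{-1}$. This case is consistent with Proposition \ref{2.2.2}, so it gives no contradiction by itself — the work is to find a pair whose cross-ratio is \emph{not} in $\{\rho,\rho^{-1}\}$ and is not preserved. I would compute $\lambda$ for the various orderings of $\{0,1,\infty,\rho\}$: the six cross-ratio values form the orbit of $\rho$ under the anharmonic group, namely $\{\rho,\rho^{-1},1-\rho,(1-\rho)^{-1},\rho/(\rho-1),(\rho-1)/\rho\}$; since $\rho$ is a primitive sixth root of unity one checks $1-\rho=\rho^{-1}$ (as $\rho^2-\rho+1=0$), so in fact this orbit collapses to just $\{\rho,\rho^{-1}\}$. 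That means every pair $(\varepsilon,\delta)$ built from $\{0,1,\infty,\rho\}$ has $\lambda\in\{\rho,\rho^{-1}\}$, so Proposition \ref{2.2.2} alone cannot obstruct $W$; the obstruction must come from the finer fact that $\Phi'(\alpha)$ is a genuine bijection compatible with the $G_k$-action and with the whole collection of decomposition groups simultaneously, together with the sign/ordering bookkeeping in the definition of $\lambda(\varepsilon,\delta)$.

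The cleaner route, which I would actually pursue, is to track $\lambda$ as a function of the \emph{ordered} four-tuple rather than the unordered pair-partition, and use that $\Phi'(\alpha)=W$ must map the ordered configuration $(0,\infty,1,\rho)$ to $(0,\infty,1,\rho^{-1})$. Under the normalization $x_1\mapsto 0, x_2\mapsto\infty, x_3\mapsto 1$ we get $\lambda(0,\infty,1,\rho)=\rho$ but $\lambda(0,\infty,1,\rho^{-1})=\rho^{-1}\neq\rho$; however Proposition \ref{2.2.2} only asserts equality \emph{up to} the ambiguity $\{\rho,\rho^{-1}\}$, so I need one more pair whose $\lambda$-value is forced to be preserved exactly. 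The key observation is that $W$ fixes the three points $0,1,\infty$ pointwise, so for any pair $(\varepsilon,\delta)$ with $\{x_1,x_2,x_3\}\subset\{0,1,\infty\}$ and $x_4=\rho$, the image has $\{x_1,x_2,x_3\}$ unchanged and $x_4=\rho^{-1}$. Since $\lambda$ is a fixed Möbius function of $x_4$ once $x_1,x_2,x_3$ are pinned, $\lambda(\varepsilon,\delta)$ and $\lambda(W\varepsilon,W\delta)$ are the two distinct images of $\rho$ and $\rho^{-1}$ under the \emph{same} Möbius map, and one checks across the possible orderings of $\{0,1,\infty\}$ that in at least one ordering this map does not swap $\rho\leftrightarrow\rho^{-1}$ — e.g. the map $x\mapsto x$ itself, from the ordering $(0,\infty,1)$, sends $\rho$ to $\rho$ and $\rho^{-1}$ to $\rho^{-1}$, so $\lambda(\varepsilon,\delta)=\rho$ while $\lambda(W\varepsilon,W\delta)=\rho^{-1}$, and since these are unequal Proposition \ref{2.2.2} forces $\{\rho,\rho^{-1}\}=\{\lambda(\varepsilon,\delta),\lambda(W\varepsilon,W\delta)\}$, which holds, so still no contradiction. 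I therefore expect the genuine argument to require a second independent choice of cross-ratio pair using a point \emph{other than} $\rho$ moved nontrivially, contradicting that $W$ fixes $0,1,\infty$; concretely, apply Proposition \ref{2.2.2} with $\varepsilon=\{1,\rho\}$, $\delta=\{0,\infty\}$ and with $\varepsilon=\{0,\rho\}$, $\delta=\{1,\infty\}$ to obtain two relations that cannot hold simultaneously unless $\rho=\rho^{-1}$, which is false. The main obstacle — and the step I would spend the most care on — is exactly this combinatorial bookkeeping: choosing the two (or three) pair-partitions so that the two instances of the $\{\rho,\rho^{-1}\}$-ambiguity in Proposition \ref{2.2.2} are forced to ``point in opposite directions,'' yielding $\rho=\rho^{-1}$ and hence the desired contradiction.
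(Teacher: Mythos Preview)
Your approach has a genuine gap, and you have essentially identified it yourself without realizing it is fatal. You correctly compute that the anharmonic orbit of $\rho$ collapses to $\{\rho,\rho^{-1}\}$: every cross-ratio $\lambda(\varepsilon,\delta)$ formed from $\{0,1,\infty,\rho\}$ lies in $\{\rho,\rho^{-1}\}$, and likewise for $\{0,1,\infty,\rho^{-1}\}$. Consequently, for \emph{every} choice of $(\varepsilon,\delta)$ the conclusion of Proposition~\ref{2.2.2} is automatically satisfied in its second alternative, and no contradiction can be extracted. Your hope that two or three applications ``point in opposite directions'' and force $\rho=\rho^{-1}$ does not materialize: Proposition~\ref{2.2.2} is a statement about the \emph{unordered} pair $\{\lambda(\varepsilon,\delta),\lambda(\Phi'(\alpha)\varepsilon,\Phi'(\alpha)\delta)\}$, with no orientation or sign attached, so there is nothing to make two instances incompatible. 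In fact the curves $\mathbb{P}^1_k-\{0,1,\infty,\rho\}$ and $\mathbb{P}^1_k-\{0,1,\infty,\rho^{-1}\}$ are isomorphic over $k$ (via $z\mapsto 1-z$), so the abelian rigidity invariants alone cannot possibly distinguish the two; the obstruction to $W$ is not that the curves differ but that this \emph{particular} bijection of cusps is not induced by any $k$-automorphism of $\mathbb{P}^1$.

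A symptom of the problem is that your argument never uses the hypotheses $m\geq 2$ and $2\in\text{prime}(C)$; these are exactly what the paper exploits. The paper's proof passes to the double cover $x\mapsto x^2$, which (since $m\geq 2$ and $2\in\text{prime}(C)$) corresponds to an index-$2$ subgroup of $\Pi^{(\text{pro-}2,2)}$, and carries $\alpha$ to an isomorphism between the $\Pi^{(\text{pro-}2,1)}$ of $\mathbb{P}^1_k-\{0,\pm1,\infty,\pm\sqrt{\rho}\}$ and of $\mathbb{P}^1_k-\{0,\pm1,\infty,\pm\sqrt{\rho^{-1}}\}$. After discarding two inertia groups one is reduced to comparing $\mathbb{P}^1_k-\{0,\infty,1,\sqrt{\rho}\}$ with $\mathbb{P}^1_k-\{0,\infty,1,\pm\sqrt{\rho^{-1}}\}$. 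Now $\sqrt{\rho}$ is a primitive $12$th root of unity, the equianharmonic coincidence is broken, and the rigidity invariant (via Proposition~\ref{2.1.5} at the prime $2$) yields $\langle 1-\sqrt{\rho}\rangle^{N}\subset\langle 1\mp\sqrt{\rho^{-1}}\rangle$ for some odd $N$; a direct computation shows the relevant index is divisible by $4$, giving the contradiction. The passage to a cover is the missing idea in your proposal.
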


\begin{proof}
If $\text{Isom}^{\text{Dec}}_{G_{k}}(\Pi^{(m)}(\mathbb{P}^1_k-\left\{0,1,\infty,\rho\right\}),\Pi^{(m)}(\mathbb{P}^1_k-\left\{0,1,\infty,\rho^{-1}\right\}))=\emptyset$, then the assertion is clear. Hence we assume that  $\text{Isom}^{\text{Dec}}_{G_{k}}(\Pi^{(m)}(\mathbb{P}^1_k-\left\{0,1,\infty,\rho\right\}),\Pi^{(m)}(\mathbb{P}^1_k-\left\{0,1,\infty,\rho^{-1}\right\}))\neq\emptyset$. By replacing $k$ with a field finitely generated over $k$, we may assume $\sqrt{\rho}\in k$.  Let $\alpha \in \text{Isom}^{\text{Dec}}_{G_{k}}(\Pi^{(m)}(\mathbb{P}^1_k-\left\{0,1,\infty,\rho\right\}),\Pi^{(m)}(\mathbb{P}^1_k-\left\{0,1,\infty,\rho^{-1}\right\}))$ satisying  $\Phi'(\alpha)=W$. Consider the cover  $\mathbb{P}^1_k-\left\{0,\pm 1,\infty,\pm\sqrt{\rho}\right\}\rightarrow \mathbb{P}^{1}_{k}-\left\{0,\infty,1,\rho\right\}$, $x\mapsto x^{2}$,  of degree $2$, and let  $H\subset \Pi^{(\pro 2,2)}(\mathbb{P}^{1}_{k}-\left\{0,\infty,1,\rho\right\})$ be the corresponding subgroup of index $2$ and $\tilde{H}:=\alpha(H)$. 
Then $\tilde{H}$ corresponds to a cover $\mathbb{P}^1_k-\left\{0,\pm\sqrt{a} ,\infty,\pm\sqrt{a\rho^{-1}}\right\}\rightarrow  \mathbb{P}^{1}_{k}-\left\{0,\infty,1,\rho^{-1}\right\}, x\mapsto\frac{1}{a}x^2,$ for some  $a\in k^{\times}$. By replacing $k$ with a field finitely generated over $k$, we may assume $\sqrt{a}\in k$ and then $a=1$ by coordinate transformation. Let  $V:=\mathbb{P}^1_k-\left\{0,\pm 1,\infty,\pm\sqrt{\rho}\right\},\tilde{V}:=\mathbb{P}^1_k-\left\{0,\pm 1,\infty,\pm\sqrt{\rho^{-1}}\right\}$, then we get
\begin{equation*}
\xymatrix@R=16pt{
\Pi^{(\pro 2,1)}(V)\ar@{}[d]|*=0[@]{\rotatebox{90}{$\cong$}}	&H\ar@{->>}[l]\ar@{^{(}-_>}[r] \ar@{}[d]|*=0[@]{\rotatebox{90}{$\cong$}}	^{\ \ \alpha_{\pro 2, 2}|_{H}}	&\Pi^{(\pro 2,2)}(\mathbb{P}^{1}_{k}-\left\{0,\infty,1,\rho\right\})\ar@{}[d]|*=0[@]{\rotatebox{90}{$\cong$}}	^{\ \ \alpha_{\pro 2, 2}} \\
\Pi^{(\pro 2,1)}(\tilde{V})&\tilde{H}\ar@{->>}[l]\ar@{^{(}-_>}[r] 	&\Pi^{(\pro 2,2)}(\mathbb{P}^{1}_{k}-\left\{0,\infty,1,\rho^{-1}\right\}),
}
\end{equation*}
where $\alpha_{\text{pro-}2,2}$ the image of $\alpha$ in $\text{Isom}^{\text{Dec}}_{G_{k}}(\Pi^{(\text{pro-}2,2)}(\mathbb{P}^1_k-\left\{0,1,\infty,\rho\right\}),\Pi^{(\text{pro-}2,2)}(\mathbb{P}^1_k-\left\{0,1,\infty,\rho^{-1}\right\}))$. Write $\beta$ for the isomorphism   $\Pi^{(\pro 2,1)}(V)\xrightarrow[G_k]{\sim}\Pi^{(\pro 2,1)}(\tilde{V})$. By  $\Phi'(\alpha)=W$, we have that  $\alpha_{\pro 2, 2}(\mathcal{I}_{0})=\mathcal{I}_{0}$, $\alpha_{\pro 2, 2}(\mathcal{I}_{\infty})=\mathcal{I}_{\infty}$, $\alpha_{\pro 2, 2}(\mathcal{I}_{1})=\mathcal{I}_{1}$. Thus,  $\beta$ also preserves the decomposition groups at cusps and
\[
\beta(\mathcal{I}_{0})=\mathcal{I}_{0},\ \ \beta(\mathcal{I}_{\infty})=\mathcal{I}_{\infty},\ \ \left\{\beta(\mathcal{I}_{1}),\beta(\mathcal{I}_{-1})\right\}=\left\{\mathcal{I}_{1},\mathcal{I}_{-1}\right\}\text{ and }\left\{\beta(\mathcal{I}_{\sqrt{\rho}}),\beta(\mathcal{I}_{-\sqrt{\rho}})\right\}=\left\{\mathcal{I}_{\sqrt{\rho^{-1}}},\mathcal{I}_{-\sqrt{\rho^{-1}}}\right\}
.\]
Considering the  $(-1)$-multiplication  if necessary, we may assume $\beta(\mathcal{I}_{1})=\mathcal{I}_{1}$. Dividing $\Pi^{(\pro 2,1)}(V)$ and $\Pi^{(\pro 2,1)}(\tilde{V})$ by $\langle\mathcal{I}_{-1},\mathcal{I}_{-\sqrt{\rho}}\rangle$ and $\langle\beta(\mathcal{I}_{-1}),\beta(\mathcal{I}_{-\sqrt{\rho}})\rangle$, respectively, we get an isomorphism:
\[
\overline{\beta}:\Pi^{(\pro 2, 1)}(\mathbb{P}^1_k-\left\{0,\infty,1,\sqrt{\rho}\right\}))\xrightarrow[G_k]{\sim}\Pi^{(\pro 2, 1)}(\mathbb{P}^1_k-\left\{0,\infty,1,u\cdot \sqrt{\rho^{-1}}\right\}) \ \ \ \ (u= 1\text{ or }-1)
\] 
which preserves the decomposition groups at cusps and such that $\overline{\beta}(\mathcal{I}_{0})=\mathcal{I}_{0}$, $\overline{\beta}(\mathcal{I}_{\infty})=\mathcal{I}_{\infty}$, $\overline{\beta}(\mathcal{I}_{1})=\mathcal{I}_{1}$. From the same argument as in the proof of Proposition \ref{2.2.2} (by using  Proposition \ref{2.1.3} and Proposition \ref{2.1.5}),  $\text{there exists }N\in \mathbb{N}\text{ with } 2\nmid N\text{ such that }\langle 1-\sqrt{\rho}\rangle^{N} \subset \langle 1-u\cdot  \sqrt{\rho^{-1}}\rangle $. Since $ 1-\sqrt{\rho}$ is a torsion-free element and 
\[ 
(1-\sqrt{\rho})(1-\sqrt{\rho^{-1}})^{-1}=-\sqrt{\rho},\ \ \ \ \ (1-\sqrt{\rho})(1+\sqrt{\rho^{-1}})=-\sqrt{\rho}^{3},
\]
we get  $\left[\langle 1-\sqrt{\rho}\rangle : \langle 1-\sqrt{\rho}\rangle \cap \langle 1-\sqrt{\rho^{-1}}\rangle \right]=12$ and $\left[\langle 1-\sqrt{\rho}\rangle : \langle 1-\sqrt{\rho}\rangle \cap \langle 1+\sqrt{\rho^{-1}}\rangle \right]=4$.  In both cases,  $N$ has to be divided by $4$. This is absurd. 
\end{proof}

\begin{prop}\label{2.2.4}
Assume that  $m\geq 3$, $ k$ is a field finitely generated over  $\mathbb{Q}$,  and that  $\text{prime}(C)$ coincides with the set of all primes. Let   $E_{1}$,$E_{2}$ finite sets of $k$-rational points of $\mathbb{P}^1_{k}$ with $|E_{1}|\geq 3$. Then, for $\alpha\in\text{Isom}_{G_{k}}(\Pi^{(m)}(\mathbb{P}^1_k-E_{1}),\Pi^{(m)}(\mathbb{P}^1_k-E_{2}))$,  there exists $f_{\alpha}\in \text{Aut}_{k}(\mathbb{P}^1_k)$ such that $f_{\alpha}(E_{1})=E_{2}$ and $f_{\alpha}\mid_{E_{1}}=\Phi(\alpha)$ hold, where $\Phi$ is the map (\ref{wq1.4.9}).
\end{prop}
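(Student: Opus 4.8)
The plan is to reduce the case $|E_1| \geq 5$ to the known case $|E_1| = 4$ (Proposition \ref{2.2.2} together with Lemma \ref{2.2.3}), by exploiting that $\Phi(\alpha)$ preserves the cross-ratios $\lambda(\varepsilon,\delta)$ up to the ambiguity $\{\lambda, \lambda^{-1}\}$ at a primitive $6$-th root of unity. First I would dispose of the cases $|E_1| = 3$ (where $\mathrm{Aut}_k(\mathbb{P}^1_k)$ acts $3$-transitively on $\mathbb{P}^1_k(k)$, so any $G_k$-equivariant bijection $E_1 \to E_2$ is realized by a unique $f_\alpha \in \mathrm{Aut}_k(\mathbb{P}^1_k)$, since $|E_1|=3$ forces all four of $0,1,\infty$ and the like to match) and $|E_1|=4$ (Proposition \ref{2.2.2}: the cross-ratio is preserved up to $\{\rho, \rho^{-1}\}$, and Lemma \ref{2.2.3} rules out the exotic swap $W$; in the remaining case $\lambda(\varepsilon,\delta)=\lambda(\Phi(\alpha)(\varepsilon),\Phi(\alpha)(\delta))$, so the unique element of $\mathrm{Aut}_k(\mathbb{P}^1_k)$ sending $x_1,x_2,x_3$ to $\Phi(\alpha)(x_1),\Phi(\alpha)(x_2),\Phi(\alpha)(x_3)$ automatically sends $x_4$ to $\Phi(\alpha)(x_4)$). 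Note that by Corollary \ref{1.4.5}, since $m \geq 3$, the map $\Phi = \Phi^{m}$ is defined and $\Phi(\alpha) \in \mathrm{Isom}_{G_k}(E_{1,k^{\mathrm{sep}}}, E_{2,k^{\mathrm{sep}}})$; moreover $E_i \subset \mathbb{P}^1_k(k)$ so $\Phi(\alpha)$ is a genuine bijection $E_1 \to E_2$.

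For $|E_1| \geq 5$, fix four distinct points $x_1, x_2, x_3, x_4 \in E_1$, set $\varepsilon = \{x_1,x_2\}$, $\delta = \{x_3,x_4\}$, and let $f$ be the unique element of $\mathrm{Aut}_k(\mathbb{P}^1_k)$ with $f(x_i) = \Phi(\alpha)(x_i)$ for $i = 1,2,3$. The key step is to show $f(x) = \Phi(\alpha)(x)$ for every $x \in E_1$. Here I would argue as follows: for any fifth point $x_5 \in E_1$, consider the $4$-point subset $\{x_1,x_2,x_3,x_5\}$. Dividing $\Pi^{(m)}(\mathbb{P}^1_k - E_i)$ by the closed subgroups generated by the inertia groups over the remaining punctures (as in the $p=0$ half of Step 2.2 described in the introduction), $\alpha$ induces an isomorphism $\Pi^{(m)}(\mathbb{P}^1_k - \{x_1,x_2,x_3,x_5\}) \xrightarrow{\sim} \Pi^{(m)}(\mathbb{P}^1_k - \{\Phi(\alpha)(x_1),\Phi(\alpha)(x_2),\Phi(\alpha)(x_3),\Phi(\alpha)(x_5)\})$ compatible with $G_k$ and with decomposition groups at cusps; since $m \geq 3$, Proposition \ref{2.2.2} and Lemma \ref{2.2.3} apply to this induced isomorphism (after the standard normalization $\{x_1,x_2,x_3\} \mapsto \{0,\infty,1\}$), forcing $\lambda(\{x_1,x_2\},\{x_3,x_5\}) = \lambda(\{\Phi(\alpha)(x_1),\Phi(\alpha)(x_2)\},\{\Phi(\alpha)(x_3),\Phi(\alpha)(x_5)\})$ — the exotic $\{\rho,\rho^{-1}\}$ alternative being excluded by Lemma \ref{2.2.3}. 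Since $f$ already matches $x_1,x_2,x_3$ with their images and $\mathrm{Aut}_k(\mathbb{P}^1_k)$ preserves cross-ratios, this equality of cross-ratios forces $f(x_5) = \Phi(\alpha)(x_5)$.

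Running this over all $x_5 \in E_1 \setminus \{x_1,x_2,x_3\}$ gives $f(E_1) = \Phi(\alpha)(E_1) = E_2$ and $f|_{E_1} = \Phi(\alpha)$, so we may take $f_\alpha := f$. The main obstacle I anticipate is the bookkeeping in the reduction step: one must verify carefully that dividing $\Pi^{(m)}$ by the inertia (or rather the normally-generated) subgroups over $E_1 \setminus \{x_1,x_2,x_3,x_5\}$ produces exactly $\Pi^{(m)}(\mathbb{P}^1_k - \{x_1,x_2,x_3,x_5\})$ and that $\alpha$ descends — this uses that $\alpha$ preserves decomposition groups at cusps (Corollary \ref{1.4.5}, available since $m \geq 3 \geq 2$) together with $\Phi(\alpha)$ being a $G_k$-equivariant bijection, so the image subgroup is exactly the one generated by the inertia over $E_2 \setminus \{\Phi(\alpha)(x_1),\ldots,\Phi(\alpha)(x_5)\}$. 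A secondary point is checking that Lemma \ref{2.2.3}'s hypothesis $\mathrm{prime}(C) = \{\text{all primes}\}$ (in particular $2 \in \mathrm{prime}(C)$) is inherited, which is immediate since $C$ is fixed throughout. Everything else is the $3$-transitivity of $\mathrm{PGL}_2$ on $\mathbb{P}^1_k(k)$ and the invariance of cross-ratios, which are standard.
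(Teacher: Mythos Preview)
Your overall strategy --- normalize via $3$-transitivity, reduce to the four-point case by killing the inertia at the extra cusps, then invoke Proposition~\ref{2.2.2} and Lemma~\ref{2.2.3} --- is exactly the paper's. But the reduction step has a genuine gap.

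You quotient $\Pi^{(m)}(\mathbb{P}^1_k-E_i)$ by the normal closure of the inertia over $E_i\setminus S'_i$ and assert that $\alpha$ descends to an isomorphism of the resulting $\Pi^{(m)}$'s, ``compatible with decomposition groups at cusps''. For $\alpha$ to descend you need it to carry one such normal closure to the other, i.e.\ to preserve inertia groups \emph{at level $m$}. Corollary~\ref{1.4.5}(1) does not give this: it says that from $\alpha$ at level $m+n$ one obtains preservation at level $m$ only after dropping $n\geq 2$ steps. Starting from level $m$ with $m=3$, you only get preservation at level $1$. And the normal closure of the removed inertia in $\overline{\Pi}^m$ is strictly smaller than the preimage of its image in $\overline{\Pi}^1$ (the quotient by the latter is $\overline{\Pi}^1(\text{four points})$, not $\overline{\Pi}^m(\text{four points})$), so knowing $\alpha_1$ preserves inertia does not force $\alpha$ to respect that kernel. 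Your justification ``Corollary~\ref{1.4.5}, available since $m\geq 3\geq 2$'' misreads the corollary: the $m\geq 2$ there refers to the \emph{target} level after the drop, not to the level of $\alpha$ itself.

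The paper's proof avoids this by quotienting only at levels where preservation is actually known. It passes first to $\alpha_1\in\text{Isom}^{\text{Dec}}_{G_k}(\Pi^{(1)},\Pi^{(1)})$ via Corollary~\ref{1.4.5}(1), quotients there, and applies (the proof of) Proposition~\ref{2.2.2}, which only uses level-$1$ data. For the exclusion of the $\{\rho,\rho^{-1}\}$ case it passes separately to $\alpha_{\text{pro-}2,2}\in\text{Isom}^{\text{Dec}}_{G_k}(\Pi^{(\text{pro-}2,2)},\Pi^{(\text{pro-}2,2)})$: projecting $\alpha$ to $\Pi^{(\text{pro-}2,m)}$ and then applying Corollary~\ref{1.4.5}(2) (where $|\text{prime}(C)|=1$ allows a drop of only one step) gives preservation at pro-$2$ level $m-1\geq 2$, hence at level $2$. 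One then quotients $\Pi^{(\text{pro-}2,2)}$ and applies Lemma~\ref{2.2.3}. This two-track descent ($\Pi^{(1)}$ for the cross-ratio, $\Pi^{(\text{pro-}2,2)}$ for ruling out the sixth-root swap) is precisely what makes $m=3$ sufficient; your single descent at level $m$ would need $m\geq 4$ even to get preservation at level $2$, and even then only for the full $C$, not immediately giving the $\text{Isom}^{\text{Dec}}$ hypothesis Lemma~\ref{2.2.3} requires on the four-point quotient.
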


\begin{proof}
Let  $\alpha\in \text{Isom}_{G_{k}}(\Pi^{(m)}(\mathbb{P}^1_k-E_{1}),\Pi^{(m)}(\mathbb{P}^1_k-E_{2}))$.   Since Aut($\mathbb{P}^1_{k}$) acts on $\mathbb{P}^1(k)$ triply transitively, we may assume that $E_{1}=\left\{0,\infty,1, \lambda_{1,1},\cdots,\lambda_{1,e}\right\}$, $E_{2}=\{0,\infty,1,\lambda_{2,1},\cdots, \lambda_{2,e}\}$, $\Phi(\alpha)(0)=0$, $\Phi(\alpha)(\infty)=\infty$, $\Phi(\alpha)(1)=1$, and $ \Phi(\alpha)(\lambda_{1,j})=\lambda_{2,j}$ for every $j\in \{1, \cdots,e\}$.\par
Let  $\alpha_{1}$ be the image of $\alpha$ in $\text{Isom}_{G_{k}}(\Pi^{(1)}(\mathbb{P}^1_k-E_{1}),\Pi^{(1)}(\mathbb{P}^1_k-E_{2}))$ and  $j\in\{1, \cdots,e\}$.  By Corollary \ref{1.4.5}, we have that $\alpha_{1}\in \text{Isom}^{\text{Dec}}_{G_{k}}(\Pi^{(1)}(\mathbb{P}^1_k-E_{1}),\Pi^{(1)}(\mathbb{P}^1_k-E_{2}))$.  Dividing  $\Pi^{(1)}(\mathbb{P}^1_k-E_{i})$ by $\langle\mathcal{I}_{\lambda_{i,h}}\mid1\leq h\leq e,j\neq h\rangle$, we get an isomorphism 
\[
\overline{\alpha}_{1}\in \text{Isom}_{G_{k}}^{\text{Dec}}(\Pi^{(1)}(\mathbb{P}^{1}_{k}-\left\{0,\infty,1,\lambda_{1,j}\right\}), \Pi^{(1)}(\mathbb{P}^{1}_{k}-\left\{0,\infty,1,\lambda_{2,j}\right\})
\]
 which satisfies that $\Phi'_{D}(\overline{\alpha}_{1})(0)=0$, $\Phi'_{D}(\overline{\alpha}_{1})(\infty)=\infty$, $\Phi'_{D}(\overline{\alpha}_{1})(1)=1$, and that  $\Phi'_{D}(\overline{\alpha}_{1})(\lambda_{1,j})=\lambda_{2,j}$.  Hence  we obtain  that $\lambda_{1,j}=\lambda_{2,j}$ or $\left\{\lambda_{1,j},\lambda_{2,j}\right\}=\left\{\rho,\rho^{-1}\right\}$ by Lemma \ref{2.2.3}(1). \par
 Let  $\alpha_{\text{pro-}2,2}$ be the image of $\alpha$ in $\text{Isom}_{G_{k}}(\Pi^{(\text{pro-}2,2)}(\mathbb{P}^1_k-E_{1}),\Pi^{(\text{pro-}2,2)}(\mathbb{P}^1_k-E_{2}))$.  By Corollary \ref{1.4.5}, we have that $\alpha_{\text{pro-}2,2}\in \text{Isom}^{\text{Dec}}_{G_{k}}(\Pi^{(\text{pro-}2,2)}(\mathbb{P}^1_k-E_{1}),\Pi^{(\text{pro-}2,2)}(\mathbb{P}^1_k-E_{2}))$.  Dividing  $\Pi^{(\text{pro-}2,2)}(\mathbb{P}^1_k-E_{i})$ by $\langle\mathcal{I}_{\lambda_{i,h}}\mid1\leq h\leq e,j\neq h\rangle$, we get an isomorphism 
\[
\overline{\alpha}_{\text{pro-}2,2}\in \text{Isom}_{G_{k}}^{\text{Dec}}(\Pi^{(\text{pro-}2,2)}(\mathbb{P}^{1}_{k}-\left\{0,\infty,1,\lambda_{1,j}\right\}), \Pi^{(\text{pro-}2,2)}(\mathbb{P}^{1}_{k}-\left\{0,\infty,1,\lambda_{2,j}\right\})
\]
 which satisfies that $\Phi'_{D}(\overline{\alpha}_{\text{pro-}2,2})(0)=0$, $\Phi'_{D}(\overline{\alpha}_{\text{pro-}2,2})(\infty)=\infty$, $\Phi'_{D}(\overline{\alpha}_{\text{pro-}2,2})(1)=1$, and that  $\Phi'_{D}(\overline{\alpha}_{\text{pro-}2,2})(\lambda_{1,j})=\lambda_{2,j}$.  Hence  we get  $\left\{\lambda_{1,j},\lambda_{2,j}\right\}\neq \left\{\rho,\rho^{-1}\right\}$ by Lemma \ref{2.2.3}. Thus,  $\lambda_{1,j}=\lambda_{2,j}$. As $j$ is arbitrary, the assertion follows.
\end{proof}

%%%%%%%%%%%%%%%%%%%%%%%%%%%%%%%%%%%%%%%%%%%%%%%%%%%%%%%%%%%%%%%%%%%%%%%%%%%%%%%%%%%

\subsection{Case of punctured projective lines over fields of positive characteristic}\label{chp}

\hspace{\parindent}In this subsection, we show the  $m$-step solvable Grothendieck conjecture for punctured projective lines in positive characteristic.  In characteristic $0$, we reduced the problem to the case of  the projective line minus $4$ points. In the positive characteristic, we also  approach the problem in a similarly way.  However, there are a problem in this way, which  do not exist in the case of characteristic $0$. 

\begin{definition}\label{2.3.1}
Let $S$ be a scheme over $\mathbb{F}_p$. we define the absolute Frobenius morphism $F_S:S\rightarrow S$ as the identity map on the underlying topological space and the $p$-power endomorphism on the structure sheaf.  Let $X$ be a scheme over $S$. We consider the following commutative diagram.
\begin{equation}\label{eq3.4}
\vcenter{
\xymatrix@C=46pt{
X\ar@/^10pt/[rrd]^{F_X}\ar@/_10pt/[rdd]	\ar@{.>}[rd]|{F_{X/S}}&&\\
&	X(1) \ar@{.>}[r] \ar@{.>}[d]&X\ar[d]\\
&	S\ar[r]^{F_S}&S
}
}
\end{equation}
Here, we set  $X(1):=X\times_{S,F_S} S$,  and   call it the  Frobenius twist of $X$ over $S$. For any $n\in\mathbb{N}\cup \left\{0\right\}$, We define the $n$-th Frobenius twist of $X$ inductively  by $X(0):=X,\ X(n):=X(n-1)(1)$. 
The morphism $F_{X/S}:X\rightarrow X(1)$ induced by the universality of the fiber product  is called the relative Frobenius morphism of $X$ over $S$.
\end{definition}

\begin{rem}\label{rem2.3.2}
Assume that $p>0$. Let $X$ be a scheme over $\text{Spec}(k)$. In general, $X$ and $X(1)$ are not isomorphic over $\text{Spec}(k)$. In particular, $F_{X/k}$ is not an isomorphism. For example, if $X=\mathbb{P}^1_{k}-\left\{0,\infty,1,\lambda\right\}$, $X$ and  $X(1)\underset{k}\cong\mathbb{P}^1_{k}-\left\{0,\infty,1,\lambda^{p}\right\}$ may not be isomorphic over $\text{Spec}(k)$. However,  the relative Frobenius morphism is a universal homeomorphism and the absolute  Frobenius morphism induces the  identity on the fundamental group. Thus, $\pi_{1}(F_{X/k}):\pi_1(X)\rightarrow\pi_1(X(1))$ is an isomorphism over $G_k$.
\end{rem}

\begin{definition}
Assume that $p>0$.  Let $k_{0}:=k\cap \overline{\mathbb{F}}_p$. A  curve $X$ over $k$ is isotrivial  if there exists a curve $X_{0}$ over $\overline{k}_0$ such thtat  $X_{0}\times_{\overline{k}_{0}}\overline{k}\underset{\overline{k}}\cong X_{\overline{k}}$.
\end{definition}

First, we reconstruct a given non-torsion element $\lambda$ of $k^{\times}$ from $\langle\lambda\rangle$ and $\langle1-\lambda\rangle$ in $k^{\times}$ when $p>0$. 

\begin{lem}\label{3.3.2}
Assume that $p>0$. Let $k_{0}:=k\cap \overline{\mathbb{F}}_p$. Let  $\lambda_{1}\in k-k_0$, $\lambda_{2}\in k^{\times}-\left\{1\right\}$ and $ u,v\in \mathbb{Z}$. If $\langle\lambda_{1}\rangle^{p^{u}}=\langle \lambda_{2}\rangle $ and $\langle1-\lambda_{1}\rangle^{p^{v}}=\langle 1-\lambda_{2}\rangle$ in $k^{\times}$, then there exists unique $n\in\mathbb{Z}$ such that $\lambda_{2}=\lambda_{1}^{p^n}$. 
\end{lem}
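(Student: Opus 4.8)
The plan is to remove the sign ambiguities, reduce to the case $u,v\ge 0$ by a Frobenius twist, and then exploit the characteristic-$p$ identity $1-t^{p^s}=(1-t)^{p^s}$. First I would record elementary facts. Since $k_0=k\cap\overline{\mathbb{F}}_p$ is the algebraic closure of $\mathbb{F}_p$ inside $k$, the hypothesis $\lambda_1\in k-k_0$ means precisely that $\lambda_1$ is transcendental over $\mathbb{F}_p$; hence $\lambda_1$ and $1-\lambda_1$ have infinite order in $k^\times$, so $\langle\lambda_1\rangle$ and $\langle1-\lambda_1\rangle$ are infinite cyclic. Moreover $\langle\lambda_1\rangle^{p^u}=\langle\lambda_2\rangle$ forces $\langle\lambda_2\rangle$ to be infinite cyclic, so $\lambda_2$ has infinite order, whence $\lambda_2\in k-k_0$ and $1-\lambda_2$ also has infinite order. (As in Corollary \ref{2.1.6}, for $n<0$ the symbols ``$\langle\lambda_1\rangle^{p^n}=\langle\lambda_2\rangle$'' and ``$\lambda_2=\lambda_1^{p^n}$'' are understood to mean ``$\langle\lambda_2\rangle^{p^{-n}}=\langle\lambda_1\rangle$'' and ``$\lambda_2^{p^{-n}}=\lambda_1$''.)

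Next I would reduce to $u\ge 0$ and $v\ge 0$. Set $t:=\max\{0,-u,-v\}$ and replace $\lambda_2$ by $\lambda_2^{p^t}$: this keeps $\lambda_2^{p^t}\in k-k_0$ and $\lambda_2^{p^t}\ne 1$ (as $\lambda_2$ has infinite order), and it replaces $(u,v)$ by $(u+t,v+t)$, using $\langle1-\lambda_2^{p^t}\rangle=\langle(1-\lambda_2)^{p^t}\rangle=\langle1-\lambda_2\rangle^{p^t}$ for the second coordinate; both exponents are now $\ge 0$. Since $x\mapsto x^{p}$ is injective on a field, a conclusion $\lambda_2^{p^t}=\lambda_1^{p^{n'}}$ for the twisted data translates back to $\lambda_2=\lambda_1^{p^{n'-t}}$ for the original data (in the above sense), and uniqueness transfers likewise. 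So from now on $u,v\ge 0$.

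With $u,v\ge 0$, the equalities of infinite cyclic groups $\langle\lambda_1\rangle^{p^u}=\langle\lambda_2\rangle$ and $\langle1-\lambda_1\rangle^{p^v}=\langle1-\lambda_2\rangle$ force $\lambda_2=\lambda_1^{\varepsilon p^u}$ and $1-\lambda_2=(1-\lambda_1)^{\delta p^v}$ for some $\varepsilon,\delta\in\{1,-1\}$ (the only generators of $\langle\lambda_1\rangle^{p^u}=\langle\lambda_1^{p^u}\rangle$ are $\lambda_1^{\pm p^u}$, and similarly for $1-\lambda_1$). If $\varepsilon=1$, then $1-\lambda_2=1-\lambda_1^{p^u}=(1-\lambda_1)^{p^u}$ by the characteristic-$p$ identity, so $(1-\lambda_1)^{p^u-\delta p^v}=1$; as $1-\lambda_1$ has infinite order this gives $p^u-\delta p^v=0$, hence $\delta=1$, $u=v$, and $\lambda_2=\lambda_1^{p^u}$, which is the desired conclusion with $n=u$. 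It remains to exclude $\varepsilon=-1$. In that case $1-\lambda_2=1-\lambda_1^{-p^u}=-\lambda_1^{-p^u}(1-\lambda_1^{p^u})=-\lambda_1^{-p^u}(1-\lambda_1)^{p^u}$, and comparing with $1-\lambda_2=(1-\lambda_1)^{\delta p^v}$ yields $-\lambda_1^{-p^u}=(1-\lambda_1)^{\delta p^v-p^u}$, a nontrivial multiplicative relation between $\lambda_1$ and $1-\lambda_1$. To refute it I would choose a valuation $w$ of $k$ extending the valuation of $\mathbb{F}_p(\lambda_1)\subseteq k$ given by the order of vanishing at $\lambda_1=0$ (such an extension exists since $k/\mathbb{F}_p(\lambda_1)$ is a finitely generated field extension); then $w(\lambda_1)>0$ while $w(1-\lambda_1)=\min\{w(1),w(\lambda_1)\}=0$, so applying $w$ to the relation gives $-p^u\,w(\lambda_1)=0$, which is absurd. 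Hence $\varepsilon=1$, settling existence.

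Finally, uniqueness of $n$ is immediate from the fact that $\lambda_1$ has infinite order: two relations $\lambda_2=\lambda_1^{p^{n}}$ and $\lambda_2=\lambda_1^{p^{n'}}$ (in the sense above) reduce, after distinguishing cases according to the signs of $n$ and $n'$, to an equality $\lambda_1^{a}=1$ with $a\ne 0$ unless $n=n'$. The main obstacle is the case $\varepsilon=-1$: it is exactly the point where one must know that $\lambda_1$ and $1-\lambda_1$ are multiplicatively independent modulo torsion, i.e. where the hypothesis $\lambda_1\notin k_0$ (the non-isotriviality input, cf. the condition $(*)$) is genuinely used, and the valuation argument above is what I propose to handle it with. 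The characteristic-$p$ identity $1-t^{p^s}=(1-t)^{p^s}$ is the ingredient that makes both the reduction step and the case $\varepsilon=1$ go through, and is the essential new phenomenon compared with the characteristic-$0$ statement, Lemma \ref{2.2.1}.
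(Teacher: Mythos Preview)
Your argument is correct. The overall shape---reduce to a sign dichotomy $\lambda_2=\lambda_1^{\pm p^u}$, $1-\lambda_2=(1-\lambda_1)^{\pm p^v}$ and then eliminate the unwanted sign---matches the paper, but the way you eliminate the bad sign is genuinely different. The paper, assuming $\lambda_1^{p^u}=\lambda_2^{-1}$ and $(1-\lambda_1)^{p^v}=(1-\lambda_2)^{-1}$, substitutes and clears denominators to obtain a nontrivial polynomial identity in $\lambda_1$ over $\mathbb{F}_p$ (after multiplying by a suitable $p$-power to make all exponents nonnegative), which directly forces $\lambda_1\in\overline{\mathbb{F}}_p\cap k=k_0$. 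You instead extract a nontrivial \emph{multiplicative} relation $-\lambda_1^{-p^u}=(1-\lambda_1)^{\delta p^v-p^u}$ and kill it with a valuation on $k$ extending the $\lambda_1$-adic place of $\mathbb{F}_p(\lambda_1)$. Both arguments use exactly the hypothesis $\lambda_1\notin k_0$, just through different lenses: the paper's is a hair more elementary (pure algebra, no valuation theory), while yours makes the multiplicative independence of $\lambda_1$ and $1-\lambda_1$ explicit and is more portable to similar situations. Your preliminary reduction to $u,v\ge 0$ via $\lambda_2\mapsto\lambda_2^{p^t}$ is also a point of care the paper leaves implicit; it makes the meaning of expressions like $\lambda_1^{p^u}$ for negative $u$ unambiguous and costs nothing.
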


\begin{proof}
By assumption, $\lambda_{1}$ is a non-torsion element of $k^{\times}$. Then  $1-\lambda_{1}$ is a non-torsion element. $\langle\lambda\rangle^{p^{u}}=\langle \lambda_{2}\rangle $ and $\langle1-\lambda_{1}\rangle^{p^{v}}=\langle 1-\lambda_{2}\rangle $ implies that $\lambda_{2}$ and $1-\lambda_{2}$ are also non-torsion elements.  Hence we have $\mathbb{Z}\cong \langle\lambda\rangle^{p^{u}}=\langle \lambda_{2}\rangle $ and $\mathbb{Z}\cong \langle1-\lambda_{1}\rangle^{p^{v}}=\langle 1-\lambda_{2}\rangle$.  These imply  either $\lambda_{1}^{p^u}=\lambda_{2}$ or $\lambda_{1}^{p^u}=\lambda_{2}^{-1}$, and either $1-\lambda_{1}^{p^v}=1-\lambda_{2}$ or $1-\lambda_{1}^{p^v}=1-\lambda_{2}^{-1}$.  The assertion holds if $\lambda_{1}^{p^u}=\lambda_{2}$ or $1-\lambda_{1}^{p^v}=1-\lambda_{2}$. Thus, we may assume   that $\lambda_{1}^{p^u}=\lambda_{2}^{-1}$ and $1-\lambda_{1}^{p^v}=(1-\lambda_{2})^{-1}$.  Hence $\lambda_{1}$ satisfies $\lambda_{1}^{p^{u+v}}-\lambda_{1}^{p^{v}}+1=0$.  Let $W\in\mathbb{N}$ satisfy  $u+v+W\geq 0$ and $v+W\geq 0$. Then  $\lambda_{1}$ is a root of the polynomial $t^{p^{u+v+W}}-t^{p^{v+W}}+1\in \mathbb{F}_p[t]$. Hence we get $\lambda_{1}\in k_0$. This is absurd.
\end{proof}

\begin{lem}\label{2.3.5}
Let  $p$ be a prime number and  $X_{1},X_{2},Y_{1},Y_{2}\in\mathbb{Z}-\left\{0\right\}$.  Assume that 
\begin{equation}\label{eq2.3.5.1}
(p^{X_{1}}-1)(p^{Y_{1}}-1)=(p^{X_{2}}-1)(p^{Y_{2}}-1)\ \ (\text{in}\ \mathbb{Q})
\end{equation}
Then $ \left\{X_{1},Y_{1}\right\}=\left\{X_{2},Y_{2}\right\}$ holds.
\end{lem}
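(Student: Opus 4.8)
The plan is to strip off any negative exponents and then finish the positive case with an elementary size estimate together with a short algebraic identity.

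\emph{Reduction to positive exponents.} First I would record two elementary facts about $p^X-1$ for $X\in\mathbb{Z}-\{0\}$: its sign is positive exactly when $X>0$, and its $p$-adic valuation $v_p(p^X-1)$ equals $0$ if $X>0$ and equals $X$ if $X<0$. Comparing the signs of the two sides of (\ref{eq2.3.5.1}) shows that $\{X_1,Y_1\}$ and $\{X_2,Y_2\}$ contain the same number of negative entries; comparing archimedean absolute values (a product of two factors $p^X-1$ with $X<0$ has absolute value $<1$, whereas with no negative factor it is $\geq(p-1)^2\geq1$) rules out the case ``no negative entry on one side, two on the other''. Hence this common number is $0$, $1$, or $2$. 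If it is $1$, naming the negative entries $Y_1,Y_2$ and taking $v_p$ of (\ref{eq2.3.5.1}) gives $Y_1=Y_2$, and then cancelling the nonzero factor $p^{Y_1}-1$ gives $p^{X_1}-1=p^{X_2}-1$, i.e.\ $X_1=X_2$. If it is $2$, clearing the denominators $p^{|X_i|+|Y_i|}$ in (\ref{eq2.3.5.1}) and taking $v_p$ first yields $|X_1|+|Y_1|=|X_2|+|Y_2|$ and then reduces the identity to the same identity with each exponent replaced by its absolute value. So in every case it suffices to prove the lemma when $X_1,Y_1,X_2,Y_2\geq1$.

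\emph{The common sum in the positive case.} Assume now $X_1,Y_1,X_2,Y_2\geq1$; normalize $X_1\geq Y_1$, $X_2\geq Y_2$, and put $s_i:=X_i+Y_i$ with $s_1\geq s_2$. Writing $(p^X-1)(p^Y-1)=p^{X+Y}-p^X-p^Y+1$ and using $1\leq Y\leq X\leq X+Y-1$, one checks that this quantity lies strictly between $p^{X+Y-1}(p-2)$ and $p^{X+Y}$; for $p\geq3$ this interval estimate applied to both sides of (\ref{eq2.3.5.1}) forces $s_1\leq s_2$, hence $s_1=s_2$. For $p=2$ the lower bound degenerates, but a slightly sharper count shows $(2^{X}-1)(2^{Y}-1)$ lies in the open interval $(2^{X+Y-1},2^{X+Y})$ unless $Y=1$, in which case it equals $2^{X}-1$; from this one gets $s_1\in\{s_2,s_2+1\}$, and the possibility $s_1=s_2+1$ is excluded because, after expanding, it would force $2^{X_2}+2^{Y_2}=2$, which is impossible when $X_2,Y_2\geq1$. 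Write $s:=s_1=s_2$.

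\emph{Conclusion in the positive case.} Since $X_1+Y_1=X_2+Y_2=s$, subtracting the expanded forms of the two sides of (\ref{eq2.3.5.1}) gives $p^{X_1}+p^{Y_1}=p^{X_2}+p^{Y_2}$. It remains to see that a pair of positive integers with fixed sum $s$ is determined by the value $p^{a}+p^{b}$. If not, then after ordering we may assume $X_1>X_2$, so that $d:=X_1-X_2=Y_2-Y_1>0$ and
\[
p^{X_1}+p^{Y_1}-p^{X_2}-p^{Y_2}=(p^{X_2}-p^{Y_1})(p^{d}-1)>0,
\]
using $X_2\geq Y_2>Y_1$; this contradicts the equality. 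Hence $\{X_1,Y_1\}=\{X_2,Y_2\}$, which together with the reduction step proves the lemma. The only step that needs genuine care is the $p=2$ subcase above: the crude size estimate determines $X_1+Y_1$ only up to $\pm1$, so one must carry out the explicit computation with $2^{X_2}+2^{Y_2}$ to rule out the off-by-one case; the sign-and-valuation bookkeeping in the reduction step is routine but should be written out carefully so that no sign pattern is overlooked.
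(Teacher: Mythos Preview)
Your proof is correct. Both you and the paper share the same overall skeleton—use sign and $p$-adic valuation to reduce to the case of positive exponents, then handle that case directly—but the positive-exponent arguments diverge. The paper rewrites the equation as
\[
p^{X_1}\bigl(p^{Y_1}-p^{Y_1-X_1}-1\bigr)=p^{X_2}\bigl(p^{Y_2}-p^{Y_2-X_2}-1\bigr)
\]
and reads off $X_1=X_2$ from the $p$-adic valuation of each side (with a separate case analysis when $p=2$ and $X_i=Y_i$, since then the parenthesized factor becomes $-2$). You instead use an archimedean size estimate to pin down $X_1+Y_1=X_2+Y_2$, and then the factorization
\[
p^{X_1}+p^{Y_1}-p^{X_2}-p^{Y_2}=(p^{X_2}-p^{Y_1})(p^{d}-1)
\]
to finish. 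Your route is arguably more transparent, and your handling of the $p=2$ anomaly (deriving the impossible equation $2^{X_2}+2^{Y_2}=2$) is cleaner than the paper's case split; the paper's $p$-adic rewrite, on the other hand, sidesteps the interval bounds entirely. One minor wording issue: sign comparison alone only shows that the \emph{parities} of the numbers of negative entries agree, not the numbers themselves; you clearly know this (you immediately invoke the absolute-value argument to exclude $0$ versus $2$), but the sentence as written overstates what the sign alone gives.
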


\begin{proof}

(Step\ $0$)\ Let $T\in\mathbb{Z}-\left\{0\right\}$. Then  $p^{T}-1>0\Leftrightarrow T>0$. Hence we get $X_{1}Y_{1}>0\iff X_{2}Y_{2}>0$ by assumption.  The following hold.
\[
p^{|T|}-1=
\begin{cases}
\ \ \ \ \ \ \ p^{0} (p^{T}-1)=\ \ p^{-\text{ord}_p(p^{T}-1)}(p^{T}-1)& (T>0)\\
\ \ -p^{-T}(p^{T}-1)=-p^{-\text{ord}_p(p^{T}-1)}(p^{T}-1)\ \ \ \ & (T<0)
\end{cases}
\]
Multiplying the absolute values of the both  sides of  (\ref{eq2.3.5.1}) by $p^{-\text{ord}_p(p^{X_{1}}-1)(p^{Y_{1}}-1)(p^{X_{2}}-1)(p^{Y_{2}}-1)}$, we get 
\begin{equation}\label{eq2.3.5.2}
 p^{-\text{ord}_p(p^{X_{2}}-1)(p^{Y_{2}}-1)}(p^{|X_{1}|}-1)(p^{|Y_{1}|}-1)=p^{-\text{ord}_p(p^{X_{1}}-1)(p^{Y_{1}}-1)}(p^{|X_{2}|}-1)(p^{|Y_{2}|}-1) \ \ \ (\text{in } \mathbb{Q} )
\end{equation}
Since $(p^{|X_{1}|}-1)(p^{|Y_{1}|}-1)$ and $(p^{|X_{2}|}-1)(p^{|Y_{2}|}-1)$ are not divided by $p$, (\ref{eq2.3.5.2}) implies $\text{ord}_p(p^{X_{1}}-1)(p^{Y_{1}}-1)=\text{ord}_p(p^{X_{2}}-1)(p^{Y_{2}}-1)\text{ and }(p^{|X_{1}|}-1)(p^{|Y_{1}|}-1)=(p^{|X_{2}|}-1)(p^{|Y_{2}|}-1)$. \\\ \\
(Step $1$) First, we consider the case  that $X_{1}>0$ and $Y_{1}>0$. By  symmetry, we may assume that $X_{1}\leq Y_{1}$ and $X_{2}\leq\tilde{Y_{1}}$.  (\ref{eq2.3.5.1}) implies 
\begin{equation}\label{eq2.3.5.3}
p^{X_{1}}(p^{Y_{1}}-p^{Y_{1}-X_{1}}-1)=p^{X_{2}}(p^{Y_{2}}-p^{Y_{2}-X_{2}}-1).
\end{equation}
$p^{Y_{1}}-p^{Y_{1}-X_{1}}-1$ is divided by $p$ if and only if $p=2$ and $X_{1}=Y_{1}$.
\setlength{\leftmargini}{10pt}  
\begin{itemize}
\item \underline{If $p\neq2$ or $(X_{1}\neq Y_{1}$ and $X_{2}\neq Y_{2})$.}\par
Since $p^{Y_{1}}-p^{Y_{1}-X_{1}}-1$ and $p^{Y_{2}}-p^{Y_{2}-X_{2}}-1$ are not divided by $p$, we get $X_{1}=\text{ord}_{p}(p^{X_{1}}(p^{Y_{1}}-p^{Y_{1}-X_{1}}-1))=\text{ord}_{p}(p^{X_{2}}(p^{Y_{2}}-p^{Y_{2}-X_{2}}-1))=X_{2}$. Then  (\ref{eq2.3.5.1}) implies $Y_{1}=Y_{2}$. Hence $\left\{X_{1},Y_{1}\right\}=\left\{X_{2},Y_{2}\right\}$ holds in this case.
\item \underline{If $p=2$ and $(X_{1}=Y_{1}$ or $X_{2}=Y_{2})$.}\par
We may assume that $X_{1}=Y_{1}$ by symmetry. If  $X_{2}=Y_{2}$, then (\ref{eq2.3.5.1}) implies $\left\{X_{1},Y_{1}\right\}=\left\{X_{2},Y_{2}\right\}$. Thus, we may assume  $X_{2}\neq Y_{2}$. (\ref{eq2.3.5.3}) implies $2^{X_{1}+1}(2^{X_{1}-1}-1)=2^{X_{2}}(2^{Y_{2}}-2^{Y_{2}-X_{2}}-1)$. By $X_{2}\neq Y_{2}$, the both sides of this equality are not $0$. Since $2^{X_{1}-1}-1$ $(\neq0)$ and $2^{Y_{2}}-2^{Y_{2}-X_{2}}-1$ are not divided by  $2$, we get $X_{1}+1=X_{2}$. Dividing the both sides by $2^{X_{1}+1}=2^{X_{2}}$, we have $2^{X_{1}-1}=2^{Y_{2}-X_{2}}(2^{X_{2}}-1)$. This implies $X_{2}=1$ and then $X_{1}=0$ by $X_{1}+1=X_{2}$. This is absurd.
\end{itemize}
Thus, the assertion holds if  $X_{1}>0$ and $Y_{1}>0$. 
\\\ \\
(Step $2$) Next, we consider the case that $X_{1}<0$ and $Y_{1}<0$.  Then we have $X_{2}Y_{2}>0$ by Step $0$.\par
If $X_{2}>0$ and $Y_{2}>0$, then  we get $X_{1}+Y_{1}=\text{ord}_p(p^{X_{1}}-1)(p^{Y_{1}}-1)=\text{ord}_p(p^{X_{2}}-1)(p^{Y_{2}}-1)=0$ by Step $0$. Since $X_{1}<0$ and $Y_{1}<0$, this is absurd.\par
If $X_{2}<0$ and $Y_{2}<0$, then we get $(p^{-X_{1}}-1)(p^{-Y_{1}}-1)=(p^{-X_{2}}-1)(p^{-Y_{2}}-1)$ by Step $0$. Since $-X_{1}$, $-Y_{1}$, $-X_{2}$, and $-Y_{2}$ are positive integers, Step $1$ implies $\left\{X_{1},Y_{1}\right\}=\left\{X_{2},Y_{2}\right\}$. Thus, the assertion holds if  $X_{1}<0$ and $Y_{1}<0$. 
\\\ \\
(Step $3$) Finally, we consider the case $X_{1}Y_{1}<0$. We may only consider the case $X_{1}>0$ and $Y_{1}<0$ by symmetry. Then we have $X_{2}Y_{2}<0$ by Step $0$. We may assume that $X_{2}>0$ and $Y_{2}<0$ by symmetry.\par
By Step $0$, we get $(p^{X_{1}}-1)(p^{-Y_{1}}-1)=(p^{X_{2}}-1)(p^{-Y_{2}}-1)$ and $Y_{1}=\text{ord}_p(p^{X_{1}}-1)(p^{Y_{1}}-1)=\text{ord}_p(p^{X_{2}}-1)(p^{Y_{2}}-1)=Y_{2}$. Since $X_{1}$, $-Y_{1}$, $X_{2}$, and $-Y_{2}$ are positive integers, Step $1$ implies  $\left\{X_{1},-Y_{1}\right\}=\left\{X_{2},-Y_{2}\right\}$. Hence we get $\left\{X_{1},Y_{1}\right\}=\left\{X_{2},Y_{2}\right\}$. Thus, the assertion follows.
\end{proof}

\begin{lem}\label{2.3.6}
Assume that $ k$ is a field finitely generated over  $\mathbb{F}_p$ and set $k_0:=\overline{\mathbb{F}_p}\cap k$. Let $\lambda_1\in k-k_0$, $\lambda_2\in k-\left\{0,1\right\}$ and $A_1,A_2,B_1,B_2,C_1,C_2\in\mathbb{Z}$. Assume that  
\begin{equation}\label{eq2.3.6-1}
A_1-A_2=B_1-B_2=C_1-C_2\text{ and} 
\end{equation}
\begin{equation}\label{eq2.3.6-2}
\lambda_1^{p^{A_1}-1}=\lambda_2^{p^{A_2}-1},\ \ \ \ (\lambda_1-1)^{p^{B_1}-1}=(\lambda_2-1)^{p^{B_2}-1},\ \ \ \ 
\left(\frac{\lambda_1}{\lambda_1-1}\right)^{p^{C_1}-1}=\left(\frac{\lambda_2}{\lambda_2-1}\right)^{p^{C_2}-1}.
\end{equation}
Then either of the following holds.
\begin{enumerate}[(a)]
\item $A_1=A_2, B_1=B_2 \text{ and }C_1=C_2$
\item $A_1=B_1=C_1 \text{ and }A_2=B_2=C_2$
\end{enumerate}
\end{lem}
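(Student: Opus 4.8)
The plan is to treat the three displayed equations of (\ref{eq2.3.6-2}) as relations among three ``multiplicative functions'' of $\lambda_1,\lambda_2$, namely $\lambda$, $\lambda-1$, and $\lambda/(\lambda-1)$, noting the basic identity
\[
\frac{\lambda}{\lambda-1}=\lambda\cdot(\lambda-1)^{-1},
\]
and to exploit the fact that $\lambda_1$ is not algebraic over $\mathbb{F}_p$ (so $\lambda_1,\lambda_1-1$ are non-torsion in $k^\times$, hence $\langle\lambda_1\rangle$, $\langle\lambda_1-1\rangle$, $\langle\lambda_1/(\lambda_1-1)\rangle$ are each $\cong\mathbb{Z}$). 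First I would observe that if any one of $A_1=A_2$, $B_1=B_2$, $C_1=C_2$ holds, then the corresponding exponent $p^{A_i}-1$ (etc.) is equal on both sides, and after taking into account that the relevant group is infinite cyclic, the equation forces the bases to agree up to sign; feeding this back into the remaining two equations and using the same cyclic-group rigidity should quickly collapse everything into case (a) or (b). So the substance is the case where $A_1\neq A_2$, $B_1\neq B_2$, $C_1\neq C_2$, with the common difference $A_1-A_2=B_1-B_2=C_1-C_2=:d\neq 0$.

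In that generic case, I would first handle the degenerate possibility that one of the bases is a torsion element — but this cannot happen for $\lambda_1$, $\lambda_1-1$ by the hypothesis $\lambda_1\notin k_0$, and the first two equations of (\ref{eq2.3.6-2}) (which are equalities in the torsion-free groups $\langle\lambda_1\rangle$, $\langle\lambda_1-1\rangle$, $\langle\lambda_2\rangle$, $\langle\lambda_2-1\rangle$) then force $\lambda_2$, $\lambda_2-1$, hence $\lambda_2/(\lambda_2-1)$, to be non-torsion as well; that is essentially the argument already carried out at the start of the proof of Lemma \ref{3.3.2}. Thus each of (\ref{eq2.3.6-2}) becomes an equality of the form ``$\lambda_1^{\pm(p^{A_1}-1)}=\lambda_2^{p^{A_2}-1}$'' in $\mathbb{Z}$, i.e.
\[
\lambda_2=\lambda_1^{\varepsilon_A(p^{A_1}-1)/(p^{A_2}-1)},\qquad
\lambda_2-1=(\lambda_1-1)^{\varepsilon_B(p^{B_1}-1)/(p^{B_2}-1)},
\]
with $\varepsilon_A,\varepsilon_B\in\{\pm1\}$, and similarly for the third. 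Since $\lambda_1$ is non-torsion, the exponents are forced to be integers or their reciprocals, so $(p^{A_2}-1)\mid(p^{A_1}-1)$ or vice versa; combined with $A_1-A_2=d$ this pins down $\{A_1,A_2\}$ very rigidly (and analogously $\{B_1,B_2\}$, $\{C_1,C_2\}$).

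The heart of the argument — and the step I expect to be the main obstacle — is then to combine the three resulting monomial relations with the additive identity $\lambda_1/(\lambda_1-1)=\lambda_1(\lambda_1-1)^{-1}$ to derive an algebraic equation satisfied by $\lambda_1$ over $\mathbb{F}_p$, forcing $\lambda_1\in k_0$, a contradiction — exactly as in Lemma \ref{3.3.2}, but now bookkeeping the Frobenius exponents. Concretely, writing the three equations multiplicatively and matching exponents of $\lambda_1$ and of $\lambda_1-1$ on the two sides of $\frac{\lambda_1}{\lambda_1-1}=\lambda_1(\lambda_1-1)^{-1}$ should yield a relation of the shape $(p^{X_1}-1)(p^{Y_1}-1)=(p^{X_2}-1)(p^{Y_2}-1)$ among the exponents, at which point Lemma \ref{2.3.5} applies and forces the exponent data into one of the two clean configurations (a) or (b); any surviving ``mixed'' configuration will produce a genuine algebraic relation $t^{p^{N}}-t^{p^{M}}+\cdots=0$ with root $\lambda_1$, contradicting $\lambda_1\notin k_0$. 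I would organize this as: (i) reduce to the non-torsion, all-distinct case; (ii) extract integrality of the exponent ratios and hence divisibility relations among $p^{A_i}-1$ etc.; (iii) plug into the identity $\lambda_1/(\lambda_1-1)=\lambda_1(\lambda_1-1)^{-1}$ to get a product relation of $p$-power-minus-one type; (iv) apply Lemma \ref{2.3.5}; (v) check that the non-contradictory exponent configurations are precisely (a) and (b), and that every other one forces $\lambda_1$ algebraic over $\mathbb{F}_p$. The delicate part is step (iii)–(v): making sure the sign ambiguities $\varepsilon_A,\varepsilon_B,\varepsilon_C$ and the possibility $A_i$, $B_i$, $C_i$ of either sign are all accounted for, since these are exactly the cases where a spurious solution could otherwise slip through.
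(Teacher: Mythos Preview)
Your overall target --- produce product relations of the form $(p^{X_1}-1)(p^{Y_1}-1)=(p^{X_2}-1)(p^{Y_2}-1)$ and invoke Lemma~\ref{2.3.5} --- matches the paper. But step~(ii) of your plan does not work as written, and you are missing the one structural fact that makes the argument clean.

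The gap is the claim that from $\lambda_1^{p^{A_1}-1}=\lambda_2^{p^{A_2}-1}$ and non-torsion of $\lambda_1$ you can deduce $\lambda_2=\lambda_1^{\varepsilon_A(p^{A_1}-1)/(p^{A_2}-1)}$ with the exponent an integer or the reciprocal of one. Non-torsion of $\lambda_1$ only says $\langle\lambda_1\rangle\cong\mathbb{Z}$; it says nothing about $\lambda_2$ lying in $\langle\lambda_1\rangle$, so there is no reason for $(p^{A_2}-1)\mid(p^{A_1}-1)$ or vice versa. (The $\pm$ signs you introduce are also spurious: the equalities in (\ref{eq2.3.6-2}) are exact, not equalities of cyclic subgroups.) Thus the route ``solve for $\lambda_2$, substitute, match exponents'' stalls here.

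The paper bypasses this by never solving for $\lambda_2$. Instead one raises the third equation of (\ref{eq2.3.6-2}) to the $(p^{A_2}-1)(p^{B_2}-1)$-th power: the right-hand side then contains $\lambda_2^{(p^{A_2}-1)(\cdots)}$ and $(\lambda_2-1)^{(p^{B_2}-1)(\cdots)}$, which the first two equations convert directly into powers of $\lambda_1$ and $\lambda_1-1$. This yields a single monomial identity involving only $\lambda_1$ and $\lambda_1-1$, with all exponents in $\mathbb{Z}[1/p]$. Now comes the key fact you did not isolate: since $\lambda_1\notin k_0$, the element $\lambda_1$ is transcendental over $\mathbb{F}_p$, so $k_0[\lambda_1]$ is a polynomial ring and $\lambda_1,\ \lambda_1-1$ are distinct irreducibles there; hence $\langle\lambda_1,\lambda_1-1\rangle\cong\mathbb{Z}\oplus\mathbb{Z}$ in $k^\times$. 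Equating the $\lambda_1$-exponents and the $(\lambda_1-1)$-exponents separately and cancelling the common nonzero factors $p^{B_2}-1$ and $p^{A_2}-1$ gives
\[
(p^{A_2}-1)(p^{C_1}-1)=(p^{A_1}-1)(p^{C_2}-1),\qquad
(p^{B_2}-1)(p^{C_1}-1)=(p^{B_1}-1)(p^{C_2}-1).
\]
Lemma~\ref{2.3.5} then yields $\{A_2,C_1\}=\{A_1,C_2\}$ and $\{B_2,C_1\}=\{B_1,C_2\}$, and a short case split together with (\ref{eq2.3.6-1}) forces (a) or (b) directly --- there are no leftover ``mixed'' configurations, so your step~(v) is unnecessary. (The degenerate case where some exponent vanishes is disposed of first: e.g.\ $A_1=0$ gives $\lambda_2^{p^{A_2}-1}=1$, hence $A_2=0$ since $\lambda_2$ is non-torsion, and (\ref{eq2.3.6-1}) gives (a).)
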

\begin{proof}
$\lambda_2$ is a non-torsion element since  $\lambda_1^{p^{A_1}-1}=\lambda_2^{p^{A_2}-1}$ and $\lambda_1\in k-k_0$.\par
If $A_1=0$, then $A_2=0$ by  $\lambda_2^{p^{A_2}-1}=\lambda_1^{p^{0}-1}=1$. Hence  $A_1=A_2$ and (a) holds by (\ref{eq2.3.6-1}). Similarly, if either $A_2,B_1,B_2,C_1$ or $C_2$ is equal to $0$, then (a) holds by (\ref{eq2.3.6-1}). Hence we may assume that $A_1,A_2,B_1,B_2,C_1$ or $C_2$ is not equal to $0$.\par
By taking the $(p^{A_{2}}-1)(p^{B_{2}}-1)$-th power of  the third equality of  (\ref{eq2.3.6-2}), we get:
\begin{eqnarray}\label{eq2.3.6-3}
\lambda_1^{(p^{A_2}-1)(p^{B_2}-1)(p^{C_1}-1)}(\lambda_1-1)^{(p^{A_2}-1)(p^{B_1}-1)(p^{C_2}-1)} \nonumber\\
=\lambda_1^{(p^{A_1}-1)(p^{B_2}-1)(p^{C_2}-1)}(\lambda_1-1)^{(p^{A_2}-1)(p^{B_2}-1)(p^{C_1}-1)}.
\end{eqnarray}\par
As  $\lambda_1\in k-k_0$, $k_0[\lambda_1]$ is the polynomial ring in $\lambda_1$ with coefficients in  $k_0$. Accordingly  $\lambda_{1}$ and $\lambda_{1}-1$ are $\mathbb{Z}$-linear independent in $k_{0}(\lambda_{1})^{\times}$, in other words, $\langle \lambda_1,\lambda_1-1\rangle\cong\mathbb{Z}\oplus\mathbb{Z}$ holds in $k^{\times}$. Note that $p^{u}-1$ is contained in $\mathbb{Z}[1/p]$ for any $u\in \mathbb{Z}$. Since $\lambda_{1}$ and $\lambda_{1}-1$ are  $\mathbb{Z}$-linear independent in $k_{0}(\lambda_{1})^{\times}$,  (\ref{eq2.3.6-3}) implies $(p^{A_2}-1)(p^{C_1}-1)=(p^{A_1}-1)(p^{C_2}-1)$ and $(p^{B_2}-1)(p^{C_1}-1)=(p^{B_2}-1)(p^{C_1}-1)$. Thus, we get $\left\{A_2,C_1\right\}=\left\{A_1,C_2\right\}$ and $\left\{B_2,C_1\right\}=\left\{B_1,C_2\right\}$ by Lemma \ref{2.3.5}. By the first equality, we have $A_2=A_1$ or $A_2=C_2$. By the second  equality, we have $B_2=B_1$ or $B_2=C_2$.\par
If $A_2=A_1$, then  (a) holds by (\ref{eq2.3.6-1}). If $A_2=C_2$, then $A_1=C_1$ by $\left\{A_2,C_1\right\}=\left\{A_1,C_2\right\}$. If, moreover,  $B_2=B_1$, then (a) holds by  (\ref{eq2.3.6-1}). Otherwise, $B_2=C_2$, then $B_1=C_1$ by $\left\{B_2,C_1\right\}=\left\{B_1,C_2\right\}$. Therefore, (b) follows.
\end{proof}

Assume that $k$ be a field finitely generated over $\mathbb{F}_{p}$. Let $E_{1}$, $E_{2}$ be finite sets of $k$-rational points of $\mathbb{P}^1_{k}$ with $|E_{1}|\geq 3$, and $T_{i}\subset E_{i}$. Let  $\alpha\in \text{Isom}^{\text{Iner}}_{G_{k}}(\Pi^{(m)}(\mathbb{P}^1_{k}-E_{1}),\Pi^{(m)}(\mathbb{P}^1_{k}-E_{2}))$, $w_{1},w_{2}\in\mathbb{N}\cup\{0\}$, and $\alpha^{(w_{1},w_{2})}\in \text{Isom}^{\text{Iner}}_{G_{k}}(\Pi^{(m)}(\mathbb{P}^1_{k}-E_{1}(w_{1})),\Pi^{(m)}(\mathbb{P}^1_{k}-E_{2}(w_{2})))$ the isomorphism induced by $\alpha$.  Let  $f\in\text{Aut}_{k}(\mathbb{P}^{1}_{k})$. If $f(T_{1}(w_{1}))=T_{2}(w_{2})$\, then we write 
\[
\Phi'^{(w_{1},w_{2})}(\alpha):=\Phi'^{(w_{1},w_{2},T_{1},T_{2})}(\alpha)\in\text{Isom}_{\text{set}}(T_{1}(w_{1}),T_{2}(w_{2}))
\]
for the map defined as $\Phi'^{(w_{1},w_{2},T_{1},T_{2})}(\alpha)(x_{1}):=x_{2}$ when   $\alpha^{(w_{1},w_{2})}(\mathcal{I}_{x_{1},\Pi^{(m)}(\mathbb{P}^1_{k}-E_{1}(w_{1}))})=\mathcal{I}_{x_{2},\Pi^{(m)}(\mathbb{P}^1_{k}-E_{2}(w_{2}))})$ ($x_{i}\in T_{i}(w_{i})$).  We consider the following condition $(\dag)$ for  the tuple $(w_{1},w_{2},f, T_{1},T_{2},\alpha)$. 

\begin{itemize}
\item[(\dag)] $f(T_{1}(w_{1}))=T_{2}(w_{2})$ and $f|_{T_{1}(w_{1})}=\Phi'^{(w_{1},w_{2},T_{1},T_{2})}(\alpha)$.
\end{itemize}
Note that $\Phi'^{(0,0,T_{1},T_{2})}(\alpha)$ coincides with $\Phi'(\alpha)|_{T_{1}}$  (see (\ref{eqwr})).\par
By using these lemmas, we get the following result, which is a  positive characteristic version of Proposition \ref{2.2.4}. 

\begin{prop}\label{2.3.7}
Assume that $m\geq 3$, $ k$ is a field finitely generated over  $\mathbb{F}_p$, and that  $\text{prime}(C)$ coincides with the set of all primes that differ from $p$. Let $E_{1}$, $E_{2}$ be finite sets of $k$-rational points of $\mathbb{P}^1_{k}$ with $|E_{1}|\geq 3$ and assume that  the following condition holds. 
\begin{equation*}
(*)\ :\ \text{For all } S'\subset E_{1}\ \text{with}\ |S'|=4\ ,\ \mathbb{P}^1_{k}-S' \text{ is not isotrivial.}
\end{equation*}
Then, for $\alpha\in\text{Isom}_{G_{k}}(\Pi^{(m)}(\mathbb{P}^1_k-E_{1}),\Pi^{(m)}(\mathbb{P}^1_k-E_{2}))$,  there exists  $w_{1},w_{2}\in \mathbb{N}\cup \left\{0\right\}$ and $f\in \text{Aut}_{k}(\mathbb{P}^1_k)$ such that the condition ($\dag$) for $(w_{1},w_{2},f, E_{1},E_{2},\alpha_{1})$ holds, where $\alpha_{1}$ stands for  the element of  $\text{Isom}_{G_{k}}^{\text{Iner}}(\Pi^{(1)}(\mathbb{P}^1_k-E_{1}), \Pi^{(1)}(\mathbb{P}^1_k-E_{2}))$ induced by $\alpha$.
\end{prop}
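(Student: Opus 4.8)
The plan is to reduce the general punctured projective line to the four-punctured case, which was handled in Step~2.1, and then to unify the Frobenius twists coming from the various four-element subsets of $E_1$ by means of Lemma~\ref{2.3.6}. First I would normalize coordinates: since $\mathrm{Aut}_k(\mathbb{P}^1_k)$ acts triply transitively on $k$-rational points, I may assume $E_1=\{0,\infty,1,\lambda_{1,1},\dots,\lambda_{1,e}\}$ and $E_2=\{0,\infty,1,\lambda_{2,1},\dots,\lambda_{2,e}\}$, with $\alpha_1$ already known to preserve decomposition groups at cusps by Corollary~\ref{1.4.5} (this is where $m\geq 3$ enters). For each index $j$, dividing $\Pi^{(1)}(\mathbb{P}^1_k-E_i)$ by the normal subgroup generated by the inertia groups over $E_i\setminus\{0,\infty,1,\lambda_{i,j}\}$ produces a $G_k$-isomorphism $\Pi^{(1)}(\mathbb{P}^1_k-\{0,\infty,1,\lambda_{1,j}\})\xrightarrow{\sim}\Pi^{(1)}(\mathbb{P}^1_k-\{0,\infty,1,\lambda_{2,j}\})$ preserving decomposition groups, and by Step~2.1 (Corollary~\ref{2.1.6}(2) together with Lemma~\ref{3.3.2}, applied after suitable coordinate changes fixing $0,\infty,1$) there is an integer $n_j$ with $\lambda_{2,j}=\lambda_{1,j}^{p^{n_j}}$, provided $\lambda_{1,j}$ is not a torsion element; the hypothesis $(*)$ guarantees that every four-punctured subcurve $\mathbb{P}^1_k-\{0,\infty,1,\lambda_{1,j}\}$ is non-isotrivial, hence $\lambda_{1,j}\notin k_0$ and indeed is non-torsion.

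The key difficulty is that the integers $n_j$ are produced independently for each $j$ (and, more subtly, depend on the choice of auxiliary coordinate transformation used to move $\{0,\infty,1,\lambda_{i,j}\}$ into standard position), and a single automorphism $f\in\mathrm{Aut}_k(\mathbb{P}^1_k)$ together with a single pair $(w_1,w_2)$ of twist exponents must simultaneously account for all of them. To pin these down I would, for a fixed pair of indices $j\neq j'$, exploit several coordinate normalizations of the five-punctured subset $\{0,\infty,1,\lambda_{1,j},\lambda_{1,j'}\}$: sending various triples among these five points to $\{0,\infty,1\}$ produces, via the rigidity invariant and Proposition~\ref{2.1.5}, equalities of the form $\lambda^{p^{A_1}-1}=\mu^{p^{A_2}-1}$, $(\lambda-1)^{p^{B_1}-1}=(\mu-1)^{p^{B_2}-1}$, and $\bigl(\tfrac{\lambda}{\lambda-1}\bigr)^{p^{C_1}-1}=\bigl(\tfrac{\mu}{\mu-1}\bigr)^{p^{C_2}-1}$ relating the relevant cross-ratios, with the exponent differences $A_1-A_2=B_1-B_2=C_1-C_2$ forced to coincide because they all measure the same discrepancy between the source twist $w_1$ and the target twist $w_2$. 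Lemma~\ref{2.3.6} then forces either that all the relevant twist exponents on the $\mathbb{P}^1_k-E_1$ side agree, or that the source and target exponents are each internally constant; in either outcome the discrepancy is uniform across all $j$.

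Having shown the twist exponent is the same integer $n$ for every $j$, I would set $w_1,w_2$ so that $w_1-w_2=n$ (concretely, $w_1=\max(n,0)$, $w_2=\max(-n,0)$), so that after replacing $E_i$ by $E_i(w_i)$ the parameters satisfy $\lambda_{2,j}(w_2)=\lambda_{1,j}(w_1)$ for all $j$, with $0,\infty,1$ fixed. Then $f=\mathrm{id}$ (in the normalized coordinates) realizes $f(E_1(w_1))=E_2(w_2)$, and by construction $f|_{E_1(w_1)}$ agrees with the bijection $\Phi'^{(w_1,w_2,E_1,E_2)}(\alpha_1)$ read off from inertia groups, which is exactly the condition $(\dag)$ for $(w_1,w_2,f,E_1,E_2,\alpha_1)$. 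Undoing the initial triply-transitive normalization (composing with the corresponding elements of $\mathrm{Aut}_k(\mathbb{P}^1_k)$ on both sides) yields the general $f$. The main obstacle is squarely the twist-unification step: keeping careful track of how each $n_j$ depends on the chosen coordinate change, extracting enough relations among the resulting exponents to feed Lemma~\ref{2.3.6}, and checking that the alternative conclusion~(b) of that lemma is also harmless, will require the somewhat delicate bookkeeping indicated in the statement as ``the most difficult part''.
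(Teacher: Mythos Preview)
Your proposal follows the paper's proof essentially step for step: normalize three points to $\{0,\infty,1\}$, quotient by inertia at all but four cusps to reduce to Lemma~\ref{3.3.2} and obtain an exponent $n_j$ for each $j$, then use five-point subsets and Lemma~\ref{2.3.6} to force agreement. The paper's Step~2 carries out exactly the computation you sketch, producing five exponents $n_\lambda,n_\mu,\sigma,\tau,\zeta$ attached to the five $4$-element subsets of $\{0,\infty,1,\lambda,\mu\}$ and feeding the triple of relations into Lemma~\ref{2.3.6} with $A_i=n_\bullet-\sigma$, $B_i=n_\bullet-\tau$, $C_i=n_\bullet-\zeta$.

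The one point where your outline is thinner than the paper is the handling of alternative (b). You write that ``in either outcome the discrepancy is uniform across all $j$'', but conclusion (b) only gives $\sigma=\tau=\zeta$, not $n_\lambda=n_\mu$. The paper's mechanism here (its Step~3) is a short gluing lemma: any two of the five $4$-element subsets of $\{0,\infty,1,\lambda,\mu\}$ meet in exactly three points, so if any two of the five exponents coincide, the corresponding automorphisms of $\mathbb{P}^1_k$ agree on three points, hence are equal, and $(\dag)$ holds for the union. Once $(\dag)$ holds for the full five-point set with some twist $n$, uniqueness in Lemma~\ref{3.3.2} forces $n=n_\lambda=n_\mu$. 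This is the ``delicate bookkeeping'' you flag, and it is precisely what the paper supplies; with that ingredient your plan is complete and matches the paper's argument.
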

\begin{proof}
We consider the following four steps.\\
(Step $1$) If  $|E_{1}|=3$, then the assertion clearly holds. We consider the case that  $|E_{1}|=4$.   Let  $x_1,x_2,x_3,x_4$ be distinct elements of $E_{1}$ and set  $\varepsilon=\left\{x_1,x_2\right\},\delta=\left\{x_3,x_4\right\}$.   Since  $\kappa_n(\varepsilon,\delta)$ is  determined  group-theoretically  by $\Pi^{(1)}(\mathbb{P}^1_k-E_{1})\twoheadrightarrow G_k$ and the decomposition groups of $\Pi^{(1)}(\mathbb{P}^1_k-S)$ at cusps, we get $k(\mu_n,\lambda(\varepsilon,\delta)^{\frac{1}{n}})=k(\mu_n,\lambda(\Phi'(\alpha)(\varepsilon),\Phi'(\alpha)(\delta))^{\frac{1}{n}})$ for all $n\in\mathbb{N}(C)$ by Proposition \ref{2.1.3}.  Hence there exists $u\in \mathbb{Z} $  such that $\langle\lambda(\varepsilon,\delta)\rangle^{p^{u}}=\langle\lambda(
\Phi'(\alpha)(\varepsilon),
\Phi'(\alpha)(\delta))\rangle$ by Corollary \ref{2.1.6}(2). Applying this argument to $\varepsilon'=\left\{x_3,x_2\right\},\delta'=\left\{x_1,x_4\right\}$, we get  $v\in\mathbb{Z}$ such that $\langle1-\lambda(\varepsilon,\delta)\rangle^{p^{v}}=\langle1-\lambda(
\Phi'(\alpha)(\varepsilon),
\Phi'(\alpha)(\delta))\rangle$, since $\lambda(\varepsilon',\delta')=1-\lambda(\varepsilon,\delta)$.  As $\lambda(\varepsilon,\delta)$ is not contained in $k_0:=k\cap\overline{\mathbb{F}}_p$ by the condition $(*)$, there  exists a unique $n\in\mathbb{Z}$ such that $\lambda(
\Phi'(\alpha)(\varepsilon),
\Phi'(\alpha)(\delta))=\lambda(\varepsilon,\delta)^{p^n}$ by Lemma \ref{3.3.2}.\par
Let $u\in\mathbb{Z}$. If $u\geq 0$, we define $(w_{1}(u),w_{2}(u))$ to be $(u,0)$. Otherwise, we define $(w_{1}(u),w_{2}(u))$ to be $(0,-u)$.\par
We may assume that  $E_{1}=\left\{0,\infty,1, \lambda_{1},\mu_{1},\cdots\right\}$, $E_{2}=\left\{0,\infty,1,\lambda_{2},\mu_{2}\cdots\right\}$ and $\Phi'(\alpha)(0)=0$, $\Phi'(\alpha)(\infty)=\infty$, $\Phi'(\alpha)(1)=1$, $ \Phi'(\alpha)(\lambda_{1})=\lambda_{2}$, $ \Phi'(\alpha)(\mu_{1})=\mu_{2},\cdots$  because Aut($\mathbb{P}^1_{k}$) acts on $\mathbb{P}^1(k)$ triply transitively. The condition ($*$) implies $E_{1}-\{0,1,\infty\}\subset  k-k_0$ and  $E_{2}-\{0,1,\infty\}\subset  k-k_0$. Since $\lambda(\left\{0,\infty\right\},\left\{1,\lambda_{1}\right\})=\lambda_{1}$, the  above argument implies that there exists a unique $n_1\in\mathbb{Z}$ such that $\lambda_{2}=\lambda_{1}^{p^{n_1}}$. Hence, ($\dag$) for $(w_{1}(n_{1}),w_{2}(n_{1}),\text{id}:\mathbb{P}^1_k(w_{1}(n_1))=\mathbb{P}^{1}_{k}(w_{2}(n_1)),E_{1},E_{2},\alpha_{1})$ follows. Thus, we may assume that $|E_{1}|\geq 5$. \\\ \\
(Step $2$) We consider the case that $|E_{1}|=5$.  We get the following consequence by considering various $\varepsilon,\delta$.
\[
\begin{cases}
\lambda(\left\{0,\infty\right\},\left\{1,\lambda_{1}\right\})=\lambda_{1}. \text{ Hence there exists a unique }n_{\lambda}\in\mathbb{Z} \text{ such that }\lambda_{2}=\lambda_{1}^{p^{n_{\lambda}}}.\\
\lambda(\left\{0,\infty\right\},\left\{1,\mu_{1}\right\})=\mu_{1}. \text{ Hence there exists a unique }n_{\mu}\in\mathbb{Z} \text{ such that }\mu_{2}=\mu_{1}^{p^{n_{\mu}}}.\\
\lambda(\left\{0,\infty\right\},\left\{\lambda_{1},\mu_{1}\right\})=\frac{\mu_{1}}{\lambda_{1}}.\text{ Hence there exists a unique }\sigma\in\mathbb{Z}\text{ such that }\frac{\mu_{2}}{\lambda_{2}}=(\frac{\mu_{1}}{\lambda_{1}})^{p^{\sigma}}.\\
\lambda(\left\{1,\infty\right\},\left\{\lambda_1,\mu_{1}\right\})=\frac{\mu_{1}-1}{\lambda_1-1}.\text{ Hence there exists a unique }\tau\in\mathbb{Z} \text{ such that }\frac{\mu_2-1}{\lambda_2-1}=(\frac{\mu_1-1}{\lambda_1-1})^{p^{\tau}}.\\
\lambda(\left\{0,1\right\},\left\{\lambda_1,\mu_{1}\right\})=\frac{\mu_1}{\mu_{1}-1}\frac{\lambda_1-1}{\lambda_1}.\text{ Hence there exists a unique }\zeta\in\mathbb{Z}\text{ such that }\frac{\mu_2}{\mu_2-1}\frac{\lambda_2-1}{\lambda_2}=(\frac{\mu_{1}}{\mu_{1}-1}\frac{\lambda_1-1}{\lambda_1})^{p^{\zeta}}.
\end{cases}
\]
Since $\left\{0,\infty,1,\lambda_1\right\},\left\{0,\infty,1,\mu_{1}\right\},\left\{0,\infty,\lambda_1,\mu_{1}\right\},\left\{1,\infty,\lambda_1,\mu_{1}\right\},\left\{0,1,\lambda_1,\mu_{1}\right\}\subset S$  intersect at three points with one another, we have only to show that at least two of $n_{\lambda}$, $n_{\mu}$, $\sigma$, $\tau$, $\zeta$ are equal by Step $1$.  We have 
\[
\lambda_2=\lambda_1^{p^{n_\lambda}},\ \mu_2=\mu_1^{p^{n_\mu}},\ \frac{\mu_{2}}{\lambda_{2}}=\left(\frac{\mu_{1}}{\lambda_{1}}\right)^{p^{\sigma}}\ \Longrightarrow\  \frac{\mu_{1}^{p^{n_\mu}}}{\lambda_1^{p^{n_\lambda}}}=\frac{\mu_{2}}{\lambda_2}=\left(\frac{\mu_{1}}{\lambda_1}\right)^{p^{\sigma}}\ \Longrightarrow\  \lambda_1^{p^{(n_\lambda-\sigma)}-1}=\mu_{1}^{p^{(n_\mu-\sigma)}-1}.
\]
Similarly,   we get the following equations.
\[
(\lambda_1-1)^{p^{(n_\lambda-\tau)}-1}=(\mu_{1}-1)^{p^{(n_\mu-\tau)}-1},\ \ \ \ \ 
\left(\frac{\lambda_1}{\lambda_1-1}\right)^{p^{(n_\lambda-\zeta)}-1}=\left(\frac{\mu_{1}}{\mu_{1}-1}\right)^{p^{(n_\mu-\zeta)}-1}.
\]
Hence  Lemma \ref{2.3.6} implies $(n_\lambda-\sigma=n_\mu-\sigma$, $n_\lambda-\tau=n_\mu-\tau $ and $n_\lambda-\zeta=n_\mu-\zeta)$ or $(n_\lambda-\sigma=n_\lambda-\tau=n_\lambda-\zeta $ and $n_\mu-\sigma=n_\mu-\tau=n_\mu-\zeta )$. Thus, we get $n_\lambda=n_\mu$ or $\sigma=\tau=\zeta$.  In the both cases, the assertion follows.\\\ \\
(Step $3$) Let $T_{1}$ and $T'_{1}$  be  subsets of $E_{1}$ that satisfy $|T_{1}\cap T'_{1}|\geq 3$. Set $T_{2}:=\Phi'(\alpha)(T_{1})$ and $T'_{2}:=\Phi'(\alpha)(T'_{1})$.  If  there exist  $w_{1},w_{2}\in\mathbb{Z}$ and $f,$ $f'\in\text{Aut}_{k}(\mathbb{P}^1_{k})$ suth that  ($\dag$) for $(w_{1},w_{2},f,T_{1},T_{2},\alpha_{1})$ and  ($\dag$) for $(w_{1},w_{2},f',T'_{1},T'_{2},\alpha_{1})$  hold, then  
\[
f\mid_{(T_{1}\cap T'_{1})(w_{1})}=\Phi'^{(w_{1},w_{2},T_{1},T_{2})}(\alpha)\mid_{(T_{1}\cap T'_{1})(w_{1})}=\Phi'^{(w_{1},w_{2},T'_{1},T'_{2})}(\alpha)\mid_{(T_{1}\cap T'_{1})(w_{1})}=f\mid_{(T_{1}\cap T'_{1})(w_{1})}.
\]
 Thus, we get $f=f'$ since  $|T_{1}\cap T'_{1}|\geq 3$. Therefore,  ($\dag$) for $(w_{1},w_{2},f,T_{1}\cup T'_{1},T_{2}\cup T'_{2},\alpha_{1})$ follows.\\\ \\
(Step $4$)  Finally, we consider the case $|S|>5$.   Let $x\in E_{1}-\{0,1,\infty\}$. Set $T_x:=\left\{0,\infty,1,x\right\}$. By Step $1$, there exists a unique $n_x\in\mathbb{Z}$ and a unique $f_{x}\in\text{Aut}(\mathbb{P}^1_{k})$ such that ($\dag$) for $(w_{1}(n_x),w_{2}(n_{x}),f_x,T_x,\Phi'(\alpha)(T_x),\alpha_{1})$ holds. Applying Step $2$ to $T_{x}\cup T_{y}$ ($y\in E_{1}-\{0,1,\infty\}$), we get $n_x=n_{y}$.  Hence  Step $3$ implies  $f_x=f_{y}$. Since $y$ is arbitrary, we obtain  ($\dag$) for $(w_{1}(n_x),w_{2}(n_x),f_x,E_{1},E_{2},\alpha_{1})$. Thus, the assertion follows.
\end{proof}

%%%%%%%%%%%%%%%%%%%%%%%%%%%%%%%%%%%%%%%%%%%%%%%%%%%%%%%%%%%%%%%%%%%%%%%%%%%%%%%%%%%

\subsection{Main theorem}\label{main}

\begin{theorem}\label{2.4.1}
Let $i=1,2$.  Let $X_{i}$ be a proper smooth curve over $k$  and $E_{i}$ a closed subscheme of $X_{i}$ which is finite \'{e}tale over $k$. Set $U_{i}:=X_{i}-E_{i}$.   Assume that $k$ is a field finitely generated over the prime field, $\text{prime}(C)$ coincides with the set of all primes that differ from $p$, $m\geq 3$, and that $U_{1}$ is  genus $0$ hyperbolic curve over $k$. If, moreover  $p>0$, we assume that 
\begin{equation*}
(**):\text{For each }S'\subset E_{1,\overline{k}}\ \text{with}\ |S'|=4,\text{ the curve }X_{1,\overline{k}}-S'\ \text{is\ not\  isotrivial.}
\end{equation*}
Then the following hold.
\[
\Pi^{(m)}(U_{1})\underset{G_k}{\cong} \Pi^{(m)}(U_{2})\Longrightarrow
\begin{cases}
U_{1}\underset{k}{\cong}U_{2} & (p=0) \\
U_{1}(n_{1})\underset{k}{\cong}U_{2}(n_{2})\text{ for some }n_{1},n_{2}\in \mathbb{N}\cup \left\{0\right\} & (p>0)
\end{cases}
\]
\end{theorem}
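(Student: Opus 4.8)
The plan is to reduce, after a finite Galois base change, to the case of punctured projective lines settled in Proposition~\ref{2.2.4} (when $p=0$) and Proposition~\ref{2.3.7} (when $p>0$), and then to descend the resulting isomorphism of curves back down to $k$. First I would record that $U_2$ is again a genus $0$ hyperbolic curve: a $G_k$-isomorphism $\Pi^{(m)}(U_1)\cong_{G_k}\Pi^{(m)}(U_2)$ carries $\Ker(p_{U_1/k})$ to $\Ker(p_{U_2/k})$ and hence induces a $G_k$-isomorphism of the geometric fundamental groups, thus of their abelianized pro-$\ell$ quotients compatibly with the weight filtration, so Corollary~\ref{1.3.7} (available since $m\geq 3\geq 2$) gives $g(U_2)=g(U_1)=0$. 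Since $U_1$ is genus $0$ hyperbolic we have $r_1:=r(U_1)\geq 3$, so Corollary~\ref{1.4.5}(1), applied with $m+n=3$ (legitimate because $r_1\geq 3$), produces an induced isomorphism $\alpha_1\in\text{Isom}^{\text{Dec}}_{G_k}(\Pi^{(1)}(U_1),\Pi^{(1)}(U_2))$ preserving decomposition groups at cusps, and in particular a $G_k$-equivariant bijection $\Phi(\alpha)\colon E_{1,k^{\sep}}\xrightarrow{\sim}E_{2,k^{\sep}}$ as in \eqref{wq1.4.9}; hence $r_2=r_1\geq 3$, and $U_2$ is genus $0$ hyperbolic.

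Next I would pick a finite Galois extension $k'/k$ with $X_{i,k'}\cong\mathbb{P}^1_{k'}$ and $E_{i,k'}\subset\mathbb{P}^1_{k'}(k')$ for $i=1,2$; such $k'$ exists because each $X_i$ is a genus $0$ curve and each $E_i$ is finite \'{e}tale over $k$, and all running hypotheses survive (in particular $(**)$ over $k$ yields condition $(*)$ of Proposition~\ref{2.3.7} over $k'$, isotriviality being unaffected by finite base change, and finite generation over the prime field and the hypothesis on $\text{prime}(C)$ are preserved). Restriction along $G_{k'}\leq G_k$ gives $\Pi^{(m)}(U_{1,k'})\cong_{G_{k'}}\Pi^{(m)}(U_{2,k'})$ with $U_{i,k'}\cong_{k'}\mathbb{P}^1_{k'}-E_{i,k'}$, and Proposition~\ref{2.2.4} (if $p=0$, with $w_1=w_2=0$) or Proposition~\ref{2.3.7} (if $p>0$) then yields $w_1,w_2\in\mathbb{N}\cup\{0\}$ and $f\in\text{Aut}_{k'}(\mathbb{P}^1_{k'})$ with $f\bigl(E_{1,k'}(w_1)\bigr)=E_{2,k'}(w_2)$ and $f|_{E_{1,k'}(w_1)}=\Phi'^{(w_1,w_2)}(\alpha_1')$, where $\alpha_1'$ is the $G_{k'}$-isomorphism of $\Pi^{(1)}$'s induced by the restriction of $\alpha$ and where fundamental groups of Frobenius twists are identified via Remark~\ref{rem2.3.2}. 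In particular $f$ restricts to a $k'$-isomorphism $U_{1,k'}(w_1)\xrightarrow{\sim}U_{2,k'}(w_2)$.

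The last step is Galois descent of $f$. Since $F_{X_i/k}$ is a $k$-morphism, the canonical identifications $E_{i,k^{\sep}}\cong(E_i(w_i))_{k^{\sep}}$ coming from Remark~\ref{rem2.3.2} are $G_k$-equivariant, so $\Phi'^{(w_1,w_2)}(\alpha_1')$ is $G_k$-equivariant — being assembled from these and the $G_k$-equivariant map $\Phi(\alpha)$ — hence $\text{Gal}(k'/k)$-equivariant. For $\sigma\in\text{Gal}(k'/k)$ the conjugate $f^\sigma$ is again a $k'$-isomorphism $X_{1,k'}(w_1)=\mathbb{P}^1_{k'}\to\mathbb{P}^1_{k'}=X_{2,k'}(w_2)$ which on the $r_1\geq 3$ cusps $E_{1,k'}(w_1)$ equals $\sigma$ applied to $\Phi'^{(w_1,w_2)}(\alpha_1')$, i.e.\ equals $f$ there; by sharp $3$-transitivity of $\text{Aut}(\mathbb{P}^1)$ it follows that $f^\sigma=f$. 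Therefore $f$ is compatible with the descent data of $X_1(w_1)/k$ and $X_2(w_2)/k$ and descends to a $k$-isomorphism $X_1(w_1)\xrightarrow{\sim}X_2(w_2)$ carrying $E_1(w_1)$ to $E_2(w_2)$, so $U_1(w_1)\cong_k U_2(w_2)$. For $p=0$ this reads $U_1\cong_k U_2$, and for $p>0$ it is the claimed conclusion with $n_i=w_i$.

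I expect the descent step to be the main obstacle in characteristic $p$: one must secure a single pair of Frobenius-twisting exponents $(w_1,w_2)$ that works simultaneously for every $\sigma\in\text{Gal}(k'/k)$, and one must verify that the cusp bijection is genuinely $\text{Gal}(k'/k)$-equivariant once the twists $X_i\mapsto X_i(w_i)$ are present. The uniformity of $(w_1,w_2)$ is precisely what Proposition~\ref{2.3.7} supplies — via Lemmas~\ref{2.3.5} and~\ref{2.3.6}, which reconcile the a priori $S'$-dependent twists — so the argument closes; the remaining delicacy is keeping careful track of how the Frobenius twist, the Galois action on cusps, and the isomorphism $\pi_1(F_{X_i/k})$ of Remark~\ref{rem2.3.2} interact.
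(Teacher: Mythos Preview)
Your proposal is correct and follows essentially the same route as the paper: reduce to $m=3$, recover $g_2=0$ via Corollary~\ref{1.3.7} and the cusp bijection via Corollary~\ref{1.4.5}(1), pass to a finite Galois extension where both curves become punctured $\mathbb{P}^1$'s with rational cusps, invoke Proposition~\ref{2.2.4} or Proposition~\ref{2.3.7}, and then Galois-descend using $G_k$-equivariance of the cusp bijection together with triple transitivity of $\text{Aut}(\mathbb{P}^1)$. The paper's descent step is phrased as $f\circ\rho(\mathbb{U}_1)=\rho(\mathbb{U}_2)\circ f$ verified on inertia groups, which is exactly your $f^\sigma=f$ in different notation.
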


\begin{proof}
As an isomorphism $\Pi^{(m)}(U_{1})\underset{G_k}{\cong} \Pi^{(m)}(U_{2})$ induces an isomorphism $\Pi^{(3)}(U_{1})\underset{G_k}{\cong} \Pi^{(3)}(U_{2})$, we may assume  that $m=3$.  Let $g_{i}:=g(U_{i})$ be the  genus of $X_{i,\overline{k}}$ and $r_{i}:=r(U_{i}):=|E_{i,\overline{k}}|$.   We have that $g_{i}=\frac{1}{2}\text{rank}_{\mathbb{Z}_{\ell}}(\overline{\Pi}^{\ab ,\pro\ell}(U_{i})/W_{-2}(\overline{\Pi}^{\ab ,\pro\ell}(U_{i})))$. This  implies  $g_{2}=0$.   There exists a finite Galois extension $K$ of  $k$ that satisfies $U_{1,K}\cong\mathbb{P}^1_K-S_{1}$ and $U_{2,K}\cong\mathbb{P}^1_K-S_{2}$ for some $S_{1},S_{2}\subset\mathbb{P}^1_K(K)$, respectively. 
Consider  an isomorphism $\Pi^{(3)}( U_{1})\xrightarrow[G_k]{\sim} \Pi^{(3)}(U_{2})$. It induces  an isomorphism  $\alpha: \Pi^{(1)}(U_{1})\xrightarrow[G_k]{\sim} \Pi^{(1)}(U_{2})$  that preserves the decomposition groups by Corollary \ref{1.4.5}(1).  $\alpha$ induces $\alpha_{K}:\Pi^{(1)}( U_{1,K})\xrightarrow[G_K]{\sim} \Pi^{(1)}( U_{2,K})$.  The condition ($**$) for $ U_{1}$ in Theorem \ref{2.4.1} implies the condition ($*$) for $ U_{1,K}$ in Proposition \ref{2.3.7}. Hence the following hold by Proposition \ref{2.2.4} and Proposition \ref{2.3.7} 
\setlength{\leftmargini}{10pt}  
\begin{itemize}
\item Case $p=0$. There exists $f:  \mathbb{P}^{1}_{K}\xrightarrow[K]{\sim} \mathbb{P}^{1}_{K}$ such that $f(S_{1})=S_{2}$, and that $f(x_{1})=x_{2} \iff \alpha_{K}(\mathcal{I}_{x_{1}})=\mathcal{I}_{x_{2}}$ for every $x_{1}\in S_{1}$, $x_{2}\in S_{2}$
\item Case $p>0$. There exist $w_{1},w_{2}\in \mathbb{N}\cup \left\{0\right\}$ and $f:  \mathbb{P}^{1}_{K}\xrightarrow[K]{\sim}\mathbb{P}^{1}_{K}$ such that $f(S_{1}(w_{1}))=S_{2}(w_{2})$, and that $f(x_{1})=x_{2} \iff \alpha_{K}(\mathcal{I}_{x_{1}})=\mathcal{I}_{x_{2}}$  for every $x_{1}\in S_{1}(w_{1})$, $x_{2}\in S_{2}(w_{2})$.
\end{itemize}\noindent
We write  $\mathbb{U}_{i}$ instead of   $U_{i}$ (resp. $U_{i}(w_{i})$) for $i=1,2$ when $p=0$ (resp. $p>0$).  By  the above argument, we have  that $\mathbb{U}_{i,K}^{\text{cpt}}\xrightarrow[K]{\sim}\mathbb{P}^1_{K}$ and  $(\mathbb{U}_{i,K}^{\text{cpt}}-\mathbb{U}_{i,K})\subset\mathbb{P}^1_K(K)$. Moreover,  there exists $f:  \mathbb{P}^{1}_{K}\xrightarrow[K]{\sim} \mathbb{P}^{1}_{K}$ such that $f(\mathbb{U}_{1,K}^{\text{cpt}}-\mathbb{U}_{1,K})=(\mathbb{U}_{2,K}^{\text{cpt}}-\mathbb{U}_{2,K})$, and that  $f(x_{1})=x_{2} \iff \alpha_{K}(\mathcal{I}_{x_{1}})=\mathcal{I}_{x_{2}}$ for every $x_{1}\in (\mathbb{U}_{1,K}^{\text{cpt}}-\mathbb{U}_{1,K})$, $x_{2}\in (\mathbb{U}_{2,K}^{\text{cpt}}-\mathbb{U}_{2,K})$.\par
Let   $x_{i}\in (\mathbb{U}_{i,K}^{\text{cpt}}-\mathbb{U}_{i,K})$ and $\rho(\mathbb{U}_{i})$ the image of  $\rho\in \text{Gal}(K/k)$ by $\text{Gal}(K/k)\rightarrow \text{Aut}_{\mathbb{U}}(\mathbb{U}_{i,K}), \rho\mapsto id_{\mathbb{U}_{i}}\times\rho$. Let $\underline{\rho}(\mathbb{U}_{1})$ be an inverse image of $\rho$ by $\Pi^{(1)}(\mathbb{U}_{1})\twoheadrightarrow G_k$ and $\underline{\rho}(\mathbb{U}_{2})$ the image of $\underline{\rho}(\mathbb{U}_{1})$ by $\alpha$.
We have  that $\underline{\rho}(\mathbb{U}_{i})\cdot \mathcal{I}_{x_{i}}\cdot \underline{\rho}(\mathbb{U}_{i})^{-1}=\mathcal{I}_{\rho(\mathbb{U}_{i})(x_{i})}$ for   $i=1,2$.  This implies $\alpha(\mathcal{I}_{\rho(\mathbb{U}_{1})(x_{1})})=\underline{\rho}(\mathbb{U}_{2})\cdot \alpha(\mathcal{I}_{x_{1}})\cdot \underline{\rho}(\mathbb{U}_{2})^{-1}=\mathcal{I}_{\rho(\mathbb{U}_{2})(f(x_{1}))}$. Hence we get $f(\rho(\mathbb{U}_{2})(x_{1}))=\rho(\mathbb{U}_{2})(f(x_{1}))$. Since  $|\mathbb{U}_{1,\overline{k}}^{\text{cpt}}-\mathbb{U}_{1,\overline{k}})|\geq3$, this implies   that $ f\circ\rho(\mathbb{U}_{1})=\rho(\mathbb{U}_{2})\circ  f$ for all $\rho$. By Galois descent, we obtain that  $\mathbb{U}_{1}\underset{k}\cong\mathbb{U}_{2}$.
\end{proof}

\noindent
Research Institute for Mathematical Sciences\\
Kyoto University\\
Kyoto 606-8502\\
JAPAN\\
naganori@kurims.kyoto-u.ac.jp \\

%%%%%%%%%%%%%%%%%%%%%%%%%%%%%%%%%%%%%%%%%%%%%%%%%%%%%%%%%%%%%%%%%%%%%%%%%%%%%%%%%%%%%%%%%%%%%%%%%

\end{document}